\documentclass[11pt]{amsart} \textwidth=14.5cm \oddsidemargin=1cm
\usepackage{amsmath,wasysym}
\usepackage{amsxtra}
\usepackage{amscd}
\usepackage{amsthm}
\usepackage{amsfonts}
\usepackage{amssymb}
\usepackage{eucal}
\usepackage{hyperref,mathrsfs}
\usepackage{tikz}
\usepackage{tikz-cd}
\usepackage{stmaryrd}
\usepackage[all]{xy}
\usepackage{bbm} % \mathbbm
\usepackage{upgreek}
\usepackage{extarrows}
\allowdisplaybreaks

\hypersetup{colorlinks,linkcolor=blue, urlcolor=blue,
	citecolor=blue}

\newtheorem{thm}{Theorem} [section]
\newtheorem{lem}[thm]{Lemma}
\newtheorem{cor}[thm]{Corollary}
\newtheorem{prop}[thm]{Proposition}

\theoremstyle{definition}

\newtheorem{example}[thm]{Example}

\newtheorem{rem}[thm]{Remark}

\newtheorem*{thmA}{Main Result 1}
\newtheorem*{thmB}{Main Result 2}
\newtheorem*{thmC}{Main Result 3}

\numberwithin{equation}{section}

\newcommand{\Bm}{\mathfrak{B}}
\newcommand{\C}{\mathbb{C}}
\newcommand{\D}{\mathcal{D}}

\newcommand{\End}{{\mathrm{End}}}

\newcommand{\ff}{\mathbf{f}}
\newcommand{\Fm}{\mathfrak{F}}
\newcommand{\g}{\mathbf{g}}
\newcommand{\Hom}{\mathrm{Hom}}
\newcommand{\id}{\mathbbm1}
\newcommand{\K}{\mathbb{K}}
\newcommand{\N}{\mathbb{N}}

\newcommand{\Ob}{\mathscr{O}}
\newcommand{\rank}{\mathrm{rank}}

\newcommand{\Z}{\mathbb{Z}}
\newcommand{\Mat}{\mathrm{Mat}}
\newcommand{\nc}{\newcommand}
\nc{\browntext}[1]{\textcolor{brown}{#1}}
\nc{\greentext}[1]{\textcolor{green}{#1}}
\nc{\redtext}[1]{\textcolor{red}{#1}}
\nc{\bluetext}[1]{\textcolor{blue}{#1}}
\nc{\brown}[1]{\browntext{#1}}
\nc{\green}[1]{\greentext{#1}}
\nc{\red}[1]{\redtext{#1}}
\nc{\blue}[1]{\bluetext{#1}}

\title[Construction of affine quantum Schur algebras and $\mathrm{i}$quantum groups]
{K-theoretic construction of affine quantum Schur algebras and $\mathrm{i}$quantum groups with unequal parameters}

\author[Li Luo]{Li Luo}
\author[Zheming Xu]{Zheming Xu}
\author[Yang Yang]{Yang Yang}
\address{School of Mathematical Sciences, Key Laboratory of MEA (Ministry of Education) \& Shanghai Key Laboratory of PMMP, East China Normal University, Shanghai 200241, China}
\email{lluo@math.ecnu.edu.cn (Luo)}
\email{zxu0@stu.ecnu.edu.cn (Xu)}
\email{52275500011@stu.ecnu.edu.cn (Yang)}

\begin{document}

\begin{abstract}
We present an equivariant K-theoretic construction of affine quantum Schur algebras of type C with three parameters by taking advantage of Kato's exotic framework. This enables the derivation of an equivariant K-theoretic realization of affine iquantum groups of type AIII with three parameters. Additionally, in Appendix A, we provide an equivariant K-theoretic construction of affine quantum Schur algebras of types B/C/F/G with two parameters, following Antor's recent work on affine Hecke algebras with two parameters.
\end{abstract}

\maketitle

\tableofcontents

%-----------------------------------------------------
\section{Introduction}
%-----------------------------------------------------
\subsection{Background}
There are two different geometric realizations of the affine Hecke algebra $\widetilde{\mathbf{H}}$ associated with an algebraic group $G$. One is due to Iwahori and Matsumoto (cf. \cite{IM65}), saying that $\widetilde{\mathbf{H}}$ is isomorphic to the convolution algebra of $I$-bi-invariant functions on $G(\mathbb{Q}_p)$ with compact support, where $\mathbb{Q}_p$ is the $p$-adic field and $I$ is an Iwahori subgroup of $G(\mathbb{Q}_p)$. The other is initiated by Kazhdan and Lusztig \cite{Lu85,KL85,KL87}, showing that $\widetilde{\mathbf{H}}$ is isomorphic to the equivariant $K$-group of the Steinberg variety attached to the Langlands dual of $G$. The isomorphism between these two geometric constructions is called the Langlands reciprocity, whose categorification was given by Bezrukavnikov in \cite{Be16}.

The above two geometric realizations have been carried out for affine quantum Schur algebras (of type A) and further for affine quantum $\mathfrak{gl}_n$ by using $n$-step flag varieties instead of complete flag varieties.
See Lusztig and Du-Fu's work \cite{Lu99, DF15} for the Iwahori-Matsumoto-type realization of affine quantum Schur algebras and affine quantum $\mathfrak{gl}_d$, which is the affine version of the influential work \cite{BLM90} due to Beilinson, Lusztig and MacPherson (abbr. BLM); and Ginzburg and Vasserot's work \cite{GV93, Va98} for their equivariant K-theoretic counterparts.

At this stage, finite and affine type B/C/D analogs of BLM-type and equivariant K-theoretic constructions have also been obtained. In contrast to type A, it is not the Drinfeld-Jimbo quantum groups but rather iquantum groups that occur in these realizations. Historically, iquantum groups first appeared in Letzter's work \cite{Le99} on quantum symmetric pairs, and their affinization was later provided by Kolb \cite{Ko14}.
Thanks to Bao and Wang's work \cite{BW18} on a reformulation of the Kazhdan-Lusztig theory for types B/C/D based on a Schur-type duality, iquantum groups (especially of type AIII), as well as quantum Schur algebras (especially of types B/C/D), have begun to attract attention. 
These quantum algebras exhibit geometric structures as rich as those of type A. Bao, Kujawa, Li and Wang \cite{BKLW18} gave a geometric construction of the quantum Schur algebras of finite type B/C and hence of the quasi-split iquantum groups of type AIII as subalgebras of their inverse limits, which generalizes the BLM construction of quantum Schur algebras and quantum $\mathfrak{gl}_n$ to type B/C. A BLM-type realization of quantum Schur algebras of affine type C and quasi-split iquantum groups of affine type AIII was obtained in \cite{FLLLW20, FLLLW23}, while an equivariant K-theoretic construction of quasi-split iquantum groups of affine type AIII was developed in \cite{SW24, LSX26} (see also \cite{FMX22} for finite type AIII). Furthermore, the first author and his collaborators \cite{LW22, CLW24} generalized the notion of $n$-step partial flag variety in terms of a finite subset $Q_\mathbf{f}$ consisting of orbits of Weyl groups on the (co)weight lattice for arbitrary finite/affine type, and then introduced a series of generalized quantum Schur algebras. For some special choices of $Q_\mathbf{f}$, this recovers the original finite/affine quantum Schur algebras of classical types. These generalized quantum Schur algebras also admit a BLM-type realization (see \cite{CLW24}) and an equivariant K-theoretic realization (see \cite{LXY26}), which form a Schur algebra analog of Langlands reciprocity.

For the non-simply-laced case (i.e. types B/C/F/G), Hecke algebras and quantum Schur algebras with unequal parameters arise naturally, yet their geometric architectures have not yet been fully investigated. The Iwahori-Matsumoto-type realization of Hecke algebras works only for very restricted cases, namely for certain specific integral powers of a single parameter. A significant breakthrough was made by Kato \cite{Ka09}, who provided an equivariant K-theoretic construction of affine Hecke algebras of type C with three independent parameters. Recently, Antor \cite{An25} successfully established an equivariant K-theoretic construction of all affine Hecke algebras of types B/C/F/G with two independent parameters. The key innovation in these works is the introduction of an exotic representation, a direct sum of two or three submodules whose nonzero weight sets correspond to short and long roots, respectively. 

In this paper, we show that this exotic setup can be systematically adapted to realize affine quantum Schur algebras with unequal parameters as well. Concretely, we provide an equivariant K-theoretic construction of affine quantum Schur algebras of type C with three parameters and of types B/C/F/G with two parameters. Addressing this issue is not a mere technical extension. As mentioned above, affine type A quantum Schur algebras serve as a crucial bridge between affine quantum $\mathfrak{gl}_n$ and Hecke algebras; their geometric realization is a prerequisite for a deeper understanding of Langlands-type correspondences at the level of affine quantum $\mathfrak{gl}_n$. The same philosophy applies to the case of affine type C with three parameters. As a further application of our construction, we establish algebra homomorphisms from affine iquantum groups to affine quantum Schur algebras via the equivariant K-theoretic approach, which is a multi-parameter analog of the works \cite{SW24, LSX26}. 
Finally, we note that an algebraic BLM-type realization (using the length of the Weyl group elements instead of the dimensions of certain varieties) of the affine quantum Schur algebras of type C and the affine iquantum groups of type AIII with three parameters can be found in a companion paper \cite{LY25} (see also \cite{LL21} for finite quantum Schur algebras and iquantum groups of type AIII with two parameters).

\subsection{Main results}
%-------------------------------------
\subsubsection{Type C with three parameters}
Let $G=\mathrm{Sp}_{2d}(\C)$ be a complex symplectic group. Fix a maximal torus and a Borel subalgebra $T\subseteq B\subseteq G$, and let $W=N_G(T)/T$ be the associated Weyl group (of type C). Let $\mathbb{V}=V_1\oplus V_1\oplus V_2$ be Kato's exotic representation of $G$, where $V_1$ is the standard module of $G$ and $V_2=\wedge^2V_1$.  The set $R$ of nonzero weights of $\mathbb{V}$ corresponds to the root system $\Pi$ of $G$ by a $W$-equivariant map $\Psi: \Pi\to R$.
In the K-theoretic construction with unequal parameters, the positive part $\mathbb{V}^+=\bigoplus_{\lambda\in\Psi(\Pi^+)}\mathbb{V}[\lambda]$ of $\mathbb{V}$ plays a role analogous to that of the nilradical subalgebra $\mathfrak{n}\subseteq\mathfrak{g}=\mathrm{Lie}(G)$ in the case of a single parameter. In this context, the cotangent bundle of the complete flag variety $G/B$ is replaced by $F=G\times ^{B}\mathbb{V}^+$. Let $\mathcal{N}$ be the exotic nilpotent cone, i.e. the image of the projection $F\to \mathbb{V}$, $(g,v)\mapsto g\cdot v$, which admits a $G\times (\C^\times)^3$-action. The exotic Steinberg variety is defined as $\mathbf{Z}=F\times_\mathcal{N} F$. Kato \cite{Ka09} proved that the affine Hecke algebra $\widetilde{\mathbb{H}}$ of type C with three parameters over $\mathbb{Z}[q_0^{\pm1},q_1^{\pm1},q_2^{\pm1}]$ is isomorphic to the equivariant K-group $K^{G\times (\C^\times)^3}(\mathbf{Z})$. 

Fix a finite set $\Lambda_\mathbf{f}$ of $W$-orbits in the weight lattice of $G$. The affine quantum Schur algebra with three parameters is defined as the endomorphism algebra $\widetilde{\mathbb{S}}_\mathbf{f}=\mathrm{End}_{\widetilde{\mathbb{H}}}(\bigoplus_{\mathbf{v}\in\Lambda_\mathbf{f}}x_\mathbf{v}\widetilde{\mathbb{H}})$, where $x_\mathbf{v}\in\widetilde{\mathbb{H}}$ is the $q$-symmetrizer associated with the $W$-orbit $\mathbf{v}$.
Each $W$-orbit $\mathbf{v}\in\Lambda_\mathbf{f}$ determines a root sub-system $\Pi_\mathbf{v}\subseteq\Pi$ and parabolic subgroups $P_\mathbf{v}\subseteq G$ and $W_\mathbf{v}\subseteq W$. Set $\mathbb{V}_\mathbf{v}^+=\bigoplus_{\lambda\in\Psi(\Pi^+)\backslash\Psi(\Pi_\mathbf{v}^+)}\mathbb{V}[\lambda]$, which is a $P_{\mathbf{v}}$-module. We introduce the exotic generalized Steinberg variety
$\mathbf{Z_f}=\bigsqcup_{\mathbf{v},\mathbf{w}\in\Lambda_\mathbf{f}} F_\mathbf{v}\times_{\mathcal{N}} F_\mathbf{w}$, where $F_\mathbf{v}=G\times^{P_\mathbf{v}}\mathbb{V}_\mathbf{v}^+$.

\begin{thmA}[Theorem~\ref{main}]
The affine type C quantum Schur algebra $\widetilde{\mathbb{S}}_\mathbf{f}$ with three parameters is isomorphic to the equivariant K-group $K^{G\times (\C^\times)^3}(\mathbf{Z_f})$.
\end{thmA}

We emphasize that this K-theoretic construction for quantum Schur algebras is not merely an imitation of the approach for affine Hecke algebras established in \cite{Ka09}.  In our case, we generally cannot obtain a good basis of $K^{\breve{G}}(\mathbf{Z_f})$ through the cellular fibration lemma. Instead, we adopt the indirect approach introduced in our previous work \cite{LXY26} to address this issue.

\subsubsection{Affine iquantum groups of type AIII}
%=========================================================
When we take $\Lambda_\mathbf{f}$ as in \eqref{eq:lambdaj} and \eqref{eq:lambdai}, the exotic generalized Steinberg varieties are denoted by $\mathbf{Z}_n^\imath$ and $\mathbf{Z}_n^\jmath$, respectively. The associated $\widetilde{\mathbb{S}}_\mathbf{f}\simeq K^{G\times (\C^\times)^3}(\mathbf{Z}_n^\imath)$ (resp., $K^{G\times (\C^\times)^3}(\mathbf{Z}_n^\jmath)$) is just the three-parameter affine quantum Schur algebra $\mathbb{S}_{n,d}^{\imath\imath}$ (resp., $\mathbb{S}_{n,d}^{\imath\jmath}$) studied in \cite{FLLLWW20, LY25}. These two cases are of central importance due to their close connections with the affine type AIII iquantum groups $\mathbb{U}^{\imath\imath}$ and $\mathbb{U}^{\imath\jmath}$. Thus, as a three-parameter analog of the work \cite{SW24, LSX26}, we derive the following equivariant K-theoretic realization of $\mathbb{U}^{\imath\imath}$ and $\mathbb{U}^{\imath\jmath}$, which is proven by checking the generating relations of the Drinfeld new presentations of $\mathbb{U}^{\imath\imath}$ and $\mathbb{U}^{\imath\jmath}$ given in \cite{LWZ24, LPWZ25}, respectively.
\begin{thmB}[Theorems~\ref{thm:ii} \& \ref{thm:ij}] 
We provide explicit formulations for the homomorphisms from the affine iquantum groups $\mathbb{U}^{\imath\imath}$ and  $\mathbb{U}^{\imath\jmath}$ to (a localization of) the equivariant K-groups $K^{G\times (\C^\times)^3}(\mathbf{Z}_n^\imath)$ and $K^{G\times (\C^\times)^3}(\mathbf{Z}_n^\jmath)$, respectively.
\end{thmB}

%As mentioned in \cite[\S3.3]{FLLLWW20}, some specializations of the parameters $q,q_0,q_1$ indicate different types, e.g. $q_0=q_1$ for affine type B with two parameters $q,q_1$, which can be further specialized to the case of a single parameter by letting $q_0=q_1=q$; $q_0=1$ and $q_1=q^2$ for affine type C with a single parameter; and $q_0=q_1=1$ for affine type D. This means that affine quantum Schur algebras and affine iquantum groups associated with algebraic groups other than $Sp_{2d}$ can be obtained at some specialization of those with three parameters.  

In contrast to Vasserot's work \cite{Va98} on type A, there is a non-closed $\mathrm{Sp}_{2d}$-orbit on the double ``$n$-step'' flag variety for type C when we consider the iquantum group $\mathbb{U}^{\imath\imath}$. Su and Wang \cite{SW24} treated this non-closed orbit using a localization method. In our exotic setting for the case of three parameters, we also need to apply localization to overcome this complication. For the case of a single parameter, one can instead take $G=\mathrm{O}_{2d}$ to realize $\mathbb{U}^{\imath\imath}$ and thereby avoid the localization method (cf. \cite[Appendix A]{LSX26}).

%---------------------------------------------------
\subsubsection{Types B/C/F/G with two parameters}
%===============================================================
Kato's exotic setup for the case of three parameters has been generalized to the case of two parameters by Antor in \cite{An25}, where $G$ is an algebraic group of types B/C/F/G over an algebraic closed field $\mathbf{k}$ with the special characteristic so that the adjoint module $\mathfrak{g}$ of $G$ has a submodule $\mathfrak{g}_s$ whose non-zero weights are exactly the short roots. Antor introduced the exotic representation as $\mathbb{V}=\mathfrak{g}_s\oplus \mathfrak{g}/\mathfrak{g}_s$, which enables the definition of related exotic objects such as the exotic flag variety $F$ and the exotic Steinberg variety $\mathbf{Z}$. As demonstrated in \cite{An25}, the two-parameter affine Hecke algebra of types B/C/F/G over $\mathbb{Z}[q_1^{\pm1},q_2^{\pm1}]$ is isomorphic to the equivariant K-group $K^{G\times (\mathbf{k}^\times)^2}(\mathbf{Z})$. Similarly, the generalized exotic Steinberg variety $\mathbf{Z_f}$ and the two-parameter affine Schur algebra $\mathbb{S}_\mathbf{f}$ of types B/C/F/G over $\mathbb{Z}[q_1^{\pm1},q_2^{\pm1}]$ can also be introduced for a finite set $\Lambda_\mathbf{f}$ of $W$-orbits. In Appendix A, we present a two-parameter counterpart of Main Result 1.

\begin{thmC}[Theorems~\ref{main2}]
The two-parameter affine quantum Schur algebra $\widetilde{\mathbb{S}}_\mathbf{f}$ of types B/C/F/G is isomorphic to the equivariant K-group $K^{G\times (\mathbf{k}^\times)^2}(\mathbf{Z_f})$.
\end{thmC}

Of course, for types B and C, we can still further consider the construction of two-parameter iquantum groups. Since they are special cases of the three-parameter ones and the construction methods are similar, we will not elaborate on this process in this paper.

%--------------------------
\subsection{Organization}
%----------------------------
Here is the layout of the paper.

In Section~\ref{sec:pre}, we recall some basic notions, including Kato's exotic geometric setting.
Section~\ref{sec:conssch} is devoted to the equivariant K-theoretic realization of affine quantum Schur algebras of type C with three parameters, while Section~\ref{sec:consiquan} is devoted to the equivariant K-theoretic realization of the quasi-split affine iquantum groups of type AIII with three parameters. 
In Appendix~\ref{Apped:A}, we employ Antor's exotic setting to give the equivariant K-theoretic realization of affine quantum Schur algebras of type B/C/F/G with two parameters. In Appendix~\ref{Apped:B}, we summarize some standard facts about algebraic equivariant K-theory for the convenience of the readers.

\subsubsection*{Acknowledgments}
The work is partially supported by the National Key R\&D Program of China (No. 2024YFA1013802), the NSF of China (Nos. 12371028 and 125B2001).

%====================================================
\section{Preliminaries}\label{sec:pre}
%======================================================
In this section, we recall combinatorial and geometric structures of the symplectic group and its Weyl group, particularly including Kato's exotic setting. We also give the definition of the Hecke and Schur algebras of affine type C with three parameters.

%-------------------------------------------------------
\subsection{Symplectic group and its Weyl group orbits}
%------------------------------------------------------
Throughout this paper (except for Appendix~\ref{Apped:A}), we take $G=\mathrm{Sp}_{2d}(\C)$ the complex symplectic group of rank $d$, which is a connected, simply connected, simple algebraic group over $\C$. 

Fix a maximal torus and a Borel subgroup $T\subseteq B\subseteq G$. Let $W=N_G(T)/T$ be the Weyl group of $G$ associated with $T$. 
Let $$\Pi=\{\pm\epsilon_{i}\pm\epsilon_{j}\}_{1\le i,j\le d}\cup\{\pm2\epsilon_{i}\}_{1\le i\le d}$$ be the root system of $G$ with simple roots $$\Delta=\{\alpha_1=\epsilon_1-\epsilon_2, \ldots, \alpha_{d-1}=\epsilon_{d-1}-\epsilon_d, \alpha_d=2\epsilon_d\}.$$
By abuse of notation, we also consider the Weyl group $W$ as the permutation group of the weight lattice $Q=\sum_{i=1}^d \mathbb{Z}\epsilon_i$
generated by simple reflections $s_1,\ldots,s_d$ associated with $\alpha_1,\ldots,\alpha_d$, respectively.

Take a finite $W$-invariant subset $Q_\ff\subseteq Q$, and denote
	\begin{align*}
		\Lambda =Q/W,\quad \Lambda_\ff =Q_\ff/W
	\end{align*} the sets of $W$-orbits.
 The choice of $Q_\ff$ is flexible and far from unique. Given $n\in\mathbb{N}$, the two most important choices are 
\begin{align*}
  Q_n^\jmath=\{\sum_{i=1}^da_i\epsilon_i \in Q \mid |a_i| \leq n,\ \forall i\} \quad \text{and}\quad
 Q_n^\imath=\{\sum_{i=1}^da_i\epsilon_i \in Q \mid 0\neq|a_i|\leq n,\ \forall i\},
\end{align*}
which are in close relation to iquantum groups of type AIII.
 In these two cases, the set $\Lambda_\ff$ can be identified, respectively, as
 \begin{align}\label{eq:lambdaj}
 \Lambda_{n,d}^\jmath&=\{\mathbf{v}=(v_1, v_2,\ldots,v_{2n+1}) \in \N^{2n+1}~|~v_i=v_{2n+2-i},\sum_{i=1}^{2n+1}v_i=2d\} \quad \text{and}\\
 \label{eq:lambdai}
 \Lambda_{n,d}^\imath&=\{\mathbf{v}=(v_1, v_2,\ldots,v_{2n})\in\N^{2n}~|~v_i=v_{2n+1-i},\sum_{i=1}^{2n}v_i=2d\},
 \end{align}
where $\mathbf{v} \in \Lambda_{n,d}^\imath$ represents the orbit consisting of the weights $\sum_{i=1}^d a_i\epsilon_i\in Q_n^\imath$ such that \begin{equation}\label{eq:vk}
v_k=\#\{1\leq i\leq d~|~|a_i|=n+1-k\}\quad  (k=1,2,\ldots,n), 
\end{equation}
and $\mathbf{v} \in \Lambda_{n,d}^\jmath$ represents the orbit consisting of the weights $\sum_{i=1}^d a_i\epsilon_i\in Q_n^{\jmath}$ with $v_1,\ldots v_n$ as in \eqref{eq:vk} and $v_{n+1}=2\#\{1\leq i\leq d~|~a_i=0\}$.

%--------------------
\subsection{Parabolic subgroups}
 %----------------------
 For each $W$-orbit $\mathbf{v} \in \Lambda$, there exists a unique anti-dominant weight $\mathbf{i}_\mathbf{v}\in\mathbf{v}$. Let
 $$\Delta_\mathbf{v}=\{\alpha_k \in \Delta~|~s_k\mathbf{i}_\mathbf{v} =\mathbf{i}_\mathbf{v}\},$$
 and $\Pi_\mathbf{v}=\Pi_\mathbf{v}^+ \sqcup\Pi_\mathbf{v}^-$ the associated root system.
 Let $W_\mathbf{v}$ be the parabolic subgroup of $W$ associated with $\Delta_\mathbf{v}$.
Denote
	$$\D_\mathbf{v}=\{w \in W  ~|~  \ell(vw)= \ell(w)-\ell(v),\ \forall v \in W_\mathbf{v}\}.$$
	Then $\D_\mathbf{v}$ (resp., $\D_\mathbf{v}^{-1}$) is the set of distinguished longest length
	right (resp., left) coset representatives of $W_\mathbf{v}$ in $W$.
Let $\D_{\mathbf{v}\mathbf{w}}=\D_\mathbf{v} \cap \D_\mathbf{w}^{-1}$ be the set of longest length double coset representatives of $W_\mathbf{v}\backslash W/W_\mathbf{w}$.
For $w \in W$, denote by $w_{\mathbf{v}\mathbf{w}}$ the unique longest element in $W_\mathbf{v} wW_\mathbf{w}$.

Denote by $P_{\mathbf{v}}$ the standard parabolic subgroup associated with $\Delta_{\mathbf{v}}$. For any $w\in W$, we denote
 \begin{equation*}
 W_{\mathbf{v}\mathbf{w}}^w=W_\mathbf{v} \cap w W_\mathbf{w} w^{-1} \quad \text{and} \quad P_{\mathbf{v}\mathbf{w}}^w=P_\mathbf{v} \cap w P_\mathbf{w} w^{-1}.
 \end{equation*} 
 The Weyl group of the Levi part of $P_{\mathbf{v}\mathbf{w}}^w$ is just $W_{\mathbf{v}\mathbf{w}}^w$. We remark that $W_{\mathbf{v}\mathbf{w}}^w=W_{\mathbf{v}\mathbf{w}}^{w'}$ if and only if $ P_{\mathbf{v}\mathbf{w}}^w= P_{\mathbf{v}\mathbf{w}}^{w'}$ if and only if $w$ and $w'$ are in the same double coset representatives of $W_\mathbf{v}\backslash W/W_\mathbf{w}$ (i.e., $W_\mathbf{v} wW_\mathbf{w}=W_\mathbf{v} w'W_\mathbf{w}$).

Let us fix a regular $W$-orbit $\mathbf{d}\in\Lambda$, e.g., 
$\mathbf{d}=(\underbrace{1,\ldots,1}_{2d})\in\Lambda_{d,d}^{\imath}$.
It is clear that $\Delta_\mathbf{d}=\emptyset$, $W_\mathbf{d}=\{\id\}$ and $P_\mathbf{d}=B$. 

When $\Delta_{\mathbf{v}}=\{\alpha_i\}$, we simply write $P_i=P_\mathbf{v}$.

%----------------------------------
\subsection{Flag varieties}
%-----------------------------------
Let $\Fm_\mathbf{v}=G/P_\mathbf{v}$ be the partial variety associated with $\mathbf{v}$, which carries a natural $G$-action. In particular, $\Fm_\mathbf{d}$ is just the complete flag variety $\Bm=G/B$.

For $w \in W$, denote by
$\Ob_{\mathbf{v},w,\mathbf{w}}= G\cdot(P_\mathbf{v}, wP_\mathbf{w})$ the diagonal $G$-orbit in $\Fm_\mathbf{v}\times\Fm_\mathbf{w}$. 
The orbits $\Ob_{\mathbf{v},w,\mathbf{w}}$ and $\Ob_{\mathbf{v},w',\mathbf{w}}$ coincide if and only if $W_\mathbf{v} wW_\mathbf{w}=W_\mathbf{v} w'W_\mathbf{w}$. We have a natural isomorphism $\Ob_{\mathbf{v},w,\mathbf{w}}\simeq G/P^w_{\mathbf{v}\mathbf{w}}$ via $g\cdot(P_\mathbf{v},wP_\mathbf{w})\mapsto gP^w_{\mathbf{v}\mathbf{w}}$.

Set $$\Fm_{\ff}=\bigsqcup_{\mathbf{v}\in\Lambda_{\ff}}\Fm_{\mathbf{v}}.$$ There is a one-to-one correspondence between the set $$\Xi_{\ff}=\{(\mathbf{v},w,\mathbf{w})~|~\mathbf{v},\mathbf{w}\in\Lambda_\ff,\ w\in\D_{\mathbf{v}\mathbf{w}}\}$$ and the set of all $G$-diagonal orbits on $\Fm_\ff\times\Fm_\ff=\bigsqcup_{\mathbf{v,w}\in\Lambda_{\ff}}\Fm_{\mathbf{v}}\times \Fm_{\mathbf{w}}$, sending $(\mathbf{v},w,\mathbf{w})$ to the orbit $\Ob_{\mathbf{v},w,\mathbf{w}}$. 
Actually,
$$\Fm_\mathbf{v}\times\Fm_\mathbf{w}=\bigsqcup_{w\in\D_{\mathbf{v}\mathbf{w}}} \Ob_{\mathbf{v},w,\mathbf{w}}.$$

For $(\mathbf{v},w,\mathbf{w})$ and $(\mathbf{v}',w',\mathbf{w}')\in\Xi_\mathbf{f}$, $\Ob_{\mathbf{v},w,\mathbf{w}}\subseteq\overline{\Ob}_{\mathbf{v}',w',\mathbf{w}'}$ if and only if $\mathbf{v}=\mathbf{v}', \mathbf{w}=\mathbf{w}'$ and $w\leq w'$ under the Bruhat order.

Let $V=\C^{2d}$ be the standard module of $G$, equipped with a non-degenerate skew-symmetric bilinear form. For each $\mathbf{v} \in \Lambda_{n,d}^\jmath$ (resp., $\Lambda_{n,d}^\imath$), we have a canonical $G$-equivariant isomorphism 
\begin{align*}
\Fm_\mathbf{v}\simeq \{(0=V_0\subseteq V_1\subseteq\cdots\subseteq V_{2n+1}=V)~|~V_i=V_{2n+1-i}^\perp,\ \dim V_i=\bar{v}_i, \ \forall i\}\\
(\text{resp., } \Fm_\mathbf{v}\simeq\{(0=V_0\subseteq V_1\subseteq\cdots\subseteq V_{2n}=V)~|~V_i=V_{2n-i}^\perp,\ \dim V_i=\bar{v}_i, \ \forall i\}),
\end{align*}
where 
\begin{equation}\label{def:tildevi}
\bar{v}_i=v_1+\cdots+v_i.
\end{equation}
Denote $\Fm_n^\jmath=\bigsqcup_{\mathbf{v}\in\Lambda_{n,d}^\jmath}\Fm_\mathbf{v}$ and $\Fm_n^\imath=\bigsqcup_{\mathbf{v}\in\Lambda_{n,d}^\imath}\Fm_\mathbf{v}$.
In these cases, the set $\Xi_{\ff}$ can be described, respectively, by the following sets of matrices: (cf. \cite[\S6]{BKLW18})
\begin{align*}
\Xi_{n,d}^\jmath&=\{A=(a_{ij})\in\mathrm{Mat}_{2n+1}(\mathbb{N})~|~\sum_{i,j=1}^{2n+1}{a_{ij}}=2d,\ a_{ij}=a_{2n+2-i,2n+2-j},\ \forall i,j\},\\
\Xi_{n,d}^\imath&=\{A=(a_{ij})\in\mathrm{Mat}_{2n}(\mathbb{N})~|~\sum_{i,j=1}^{2n}{a_{ij}}=2d,\ a_{ij}=a_{2n+1-i,2n+1-j},\ \forall i,j\}.
\end{align*} 
Precisely, we present $(\mathbf{v},w,\mathbf{w})\in\Xi_{\ff}$ by $A=(a_{ij})\in \Xi_{n,d}^\mathfrak{c}$ ($\mathfrak{c}=\imath,\jmath$)  such that \begin{align*}
\mathbf{v}&=\mathrm{ro}(A):=(\sum_{j=1}a_{1j},\sum_{j=1}a_{2j},\ldots), \\
\mathbf{w}&=\mathrm{co}(A):=(\sum_{i=1}a_{i1},\sum_{i=1}a_{i2},\ldots),
\end{align*}
and a condition about the unique minimal length representative in the double coset $W_\mathbf{v}wW_\mathbf{w}$. We refer to \cite{FLLLW23} for this condition, which is a bit wordy but unnecessary in this paper.
Moreover, for $A=(a_{ij}),\ B=(b_{ij})\in\Xi_{n,d}^\mathfrak{c}\ (\mathfrak{c}=\jmath,\imath)$, $\Ob_A\subseteq \overline{\Ob}_B$ if and only if
$$\mathrm{ro}(A)=\mathrm{ro}(B),\quad \mathrm{co}(A)=\mathrm{co}(B) \quad \text{and} \quad \sum_{r\leq i, s\geq j} a_{rs}\leq \sum_{r\leq i, s\geq j} b_{rs}, \quad \forall i<j.$$

%------------------------
\subsection{The exotic representation}
%--------------------------------
Recall that $V=\C^{2d}$ is the standard module of $G$.  
 Kato \cite{Ka09} introduced $$\mathbb{V}=V^{\oplus 2} \oplus \wedge^2V,$$ which is named the {\em exotic representation} of $G$. The set of non-zero weights of $\mathbb{V}$ is $$R=\{\pm\epsilon_i\pm\epsilon_j\}_{1\le i,j\le d}\cup\{\pm\epsilon_i\}_{1\le i\le d},$$ which corresponds one-to-one to the root system $\Pi$
by a $W$-equivariant map
\begin{equation}\label{def:Psi}
\Psi:\Pi\to R, \qquad \pm\epsilon_i\pm\epsilon_j\mapsto\pm\epsilon_i\pm\epsilon_j; \quad \pm2\epsilon_i\mapsto\pm\epsilon_i.
\end{equation}

 For $\mathbf{v}\in\Lambda$, set $$\mathbb{V}_{\mathbf{v}}^+=\bigoplus\limits_{\lambda\in\Psi(\Pi^+)\backslash\Psi(\Pi_{\mathbf{v}}^+)}\mathbb{V}[\lambda],$$
which is a $P_{\mathbf{v}}$-module. When $P_\mathbf{v}=B$ (resp., $P_{\mathbf{v}}=P_i$), we simply write $\mathbb{V}^+=\mathbb{V}_{\mathbf{v}}^+$ (resp., $\mathbb{V}^+_i=\mathbb{V}_{\mathbf{v}}^+$).

\begin{lem}\label{lem int}
  If $w \in \D_{\mathbf{v}\mathbf{w}}$ and $P_{\mathbf{u}}\subseteq P_{\mathbf{w}}$ for some $\mathbf{u,v,w}\in\Lambda_{n,d}^\jmath$, then 
  $$w(\Pi^{+}_{\mathbf{w}})\cap\Pi^+=\emptyset\quad \text{and}\quad \mathbb{V}_{\mathbf{v}}^{+}\cap w(\mathbb{V}^{+}_{\mathbf{w}})=\mathbb{V}_{\mathbf{v}}^{+}\cap w(\mathbb{V}^{+}_{\mathbf{u}}).$$
\end{lem}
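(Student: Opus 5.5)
The plan is to reduce both assertions to statements about roots, using three facts: $W$ acts on $\mathbb{V}$ through representatives in $N_G(T)$, every space involved is a direct sum of weight spaces, and $\Psi$ is a $W$-equivariant bijection.

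\emph{First assertion.} Since $w\in\D_{\mathbf{v}\mathbf{w}}\subseteq\D_\mathbf{w}^{-1}$, the element $w$ is the longest element of its coset $wW_\mathbf{w}$. Hence $\ell(ws_k)<\ell(w)$ for every $\alpha_k\in\Delta_\mathbf{w}$. By the standard criterion, $\ell(ws_k)<\ell(w)$ holds if and only if $w(\alpha_k)\in\Pi^-$, so $w$ sends every simple root of $\Delta_\mathbf{w}$ to a negative root. Every $\beta\in\Pi^+_\mathbf{w}$ is a nonnegative integer combination of the roots in $\Delta_\mathbf{w}$. Therefore $w(\beta)$ is a nonnegative combination of negative roots, and since it is a root, it is negative. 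This gives $w(\Pi^+_\mathbf{w})\subseteq\Pi^-$, i.e. $w(\Pi^+_{\mathbf{w}})\cap\Pi^+=\emptyset$.

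\emph{Second assertion: reduction to weights.} A representative $\dot w\in N_G(T)$ maps $\mathbb{V}[\lambda]$ onto $\mathbb{V}[w\lambda]$. Because $\Psi$ is bijective and $W$-equivariant, we get
\[
w(\mathbb{V}^+_\mathbf{x})=\bigoplus_{\mu\in\Psi(w(\Pi^+\setminus\Pi^+_\mathbf{x}))}\mathbb{V}[\mu]\qquad(\mathbf{x}=\mathbf{u},\mathbf{w}).
\]
The weight-space decomposition of $\mathbb{V}$ is direct. So the intersection of two sums of (nonzero-weight) weight spaces is the sum over the common weights. It therefore suffices to prove the set-theoretic identity
\[
\Psi(\Pi^+\setminus\Pi^+_\mathbf{v})\cap\Psi(w(\Pi^+\setminus\Pi^+_\mathbf{w}))=\Psi(\Pi^+\setminus\Pi^+_\mathbf{v})\cap\Psi(w(\Pi^+\setminus\Pi^+_\mathbf{u})).
\]
Since $\Psi$ is injective, this is equivalent to
\[
(\Pi^+\setminus\Pi^+_\mathbf{v})\cap w(\Pi^+\setminus\Pi^+_\mathbf{w})=(\Pi^+\setminus\Pi^+_\mathbf{v})\cap w(\Pi^+\setminus\Pi^+_\mathbf{u}).
\]

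\emph{Second assertion: proof of the root identity.} The hypothesis $P_\mathbf{u}\subseteq P_\mathbf{w}$ means $\Delta_\mathbf{u}\subseteq\Delta_\mathbf{w}$, hence $\Pi^+_\mathbf{u}\subseteq\Pi^+_\mathbf{w}$. Consequently
\[
\Pi^+\setminus\Pi^+_\mathbf{u}=(\Pi^+\setminus\Pi^+_\mathbf{w})\sqcup(\Pi^+_\mathbf{w}\setminus\Pi^+_\mathbf{u}).
\]
This decomposition immediately gives the inclusion ``$\subseteq$''. For ``$\supseteq$'', the extra piece $w(\Pi^+_\mathbf{w}\setminus\Pi^+_\mathbf{u})$ lies in $\Pi^-$ by the first assertion. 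So it does not meet $\Pi^+\setminus\Pi^+_\mathbf{v}$, and the two intersections coincide.

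The only point needing care is the bookkeeping in the reduction step: $w(\mathbb{V}^+_\mathbf{x})$ must be defined using an $N_G(T)$-representative, and we must use that $\Psi$ commutes with set differences because it is a bijection. Nothing in the argument uses the specific form of $\Lambda^\jmath_{n,d}$, so the proof works for arbitrary $W$-orbits.
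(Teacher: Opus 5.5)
Your proposal is correct and follows essentially the same route as the paper. Maximality of $w$ in $wW_{\mathbf{w}}$ forces $w(\Delta_{\mathbf{w}})\subseteq\Pi^-$, hence $w(\Pi^+_{\mathbf{w}})\subseteq\Pi^-$; then the splitting $\mathbb{V}^+_{\mathbf{u}}=\mathbb{V}^+_{\mathbf{w}}\oplus\bigoplus_{\lambda\in\Psi(\Pi^+_{\mathbf{w}}\setminus\Pi^+_{\mathbf{u}})}\mathbb{V}[\lambda]$ shows that $w$ carries the extra summand into negative weight spaces, which miss $\mathbb{V}^+_{\mathbf{v}}$, and your reduction to weight sets just makes explicit the distributivity of intersection over the weight decomposition that the paper uses tacitly.
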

\begin{proof}
If $w(\Pi^{+}_{\mathbf{w}})\cap\Pi^+\neq\emptyset$, then there exists a simple root $\alpha\in\Delta_{\mathbf{w}}$ such that $w(\alpha)$ is a positive root. Denote $w'=w\cdot s_{\alpha}\in W_{\mathbf{v}}wW_{\mathbf{w}}$. It is clear that $w'(\alpha)$ is a negative root. Now $\ell(w')=|w'(\Pi^+)\cap\Pi^{-}|=|(w(\Pi^+)\cap\Pi^{-})\bigsqcup\{w'(\alpha)\}|=\ell(w)+1$, which contradicts the condition $w \in \D_{\mathbf{v}\mathbf{w}}$. Thus, $w(\Pi^{+}_{\mathbf{w}})\cap\Pi^+=\emptyset$

Since the map $\Psi$ defined in \eqref{def:Psi} is $W$-equivariant, $w(\Pi^{+}_{\mathbf{w}})\cap\Pi^+=\emptyset$ implies $w(\Psi(\Pi^{+}_{\mathbf{w}}))\cap\Psi(\Pi^+)=\emptyset$. Note that $\mathbb{V}^{+}_{\mathbf{u}}=\mathbb{V}^{+}_{\mathbf{w}}\oplus\bigoplus\limits_{\lambda\in\Psi(\Pi^+_\mathbf{w})}\mathbb{V}^{+}_{\mathbf{u}}[\lambda]$. By the previous arguments, $\mathbb{V}^{+}_{\mathbf{v}}\cap w(\bigoplus\limits_{\lambda\in\Psi(\Pi^{+}_{\mathbf{w}})}\mathbb{V}^{+}_{\mathbf{u}}[\lambda])=0$. Hence $\mathbb{V}^{+}_{\mathbf{v}}\cap w(\mathbb{V}^{+}_{\mathbf{u}})=(\mathbb{V}^{+}_{\mathbf{v}}\cap w(\mathbb{V}^{+}_{\mathbf{w}}))\oplus(\mathbb{V}^{+}_{\mathbf{v}}\cap w(\bigoplus\limits_{\lambda\in\Psi(\Pi^{+}_{\mathbf{w}})}\mathbb{V}^{+}_{\mathbf{u}}[\lambda]))=\mathbb{V}^{+}_{\mathbf{v}}\cap w(\mathbb{V}^{+}_{\mathbf{w}})$ as desired.
\end{proof}

%

%------------------------------------------
 \subsection{Generalized exotic Steinberg varieties}
%------------------------------------------
For $\mathbf{v}\in\Lambda$,
denote $$F_{\mathbf{v}}=G\times^{P_{\mathbf{v}}} \mathbb{V}_{\mathbf{v}}^+.$$
There are two canonical maps $$\begin{tikzcd}
                        & F_\mathbf{v} \arrow[ld, "\pi_\mathbf{v}"'] \arrow[rd, "\mu_{\mathbf{v}}"] &            \\
\mathfrak{F}_\mathbf{v} &                                                                           & \mathbb{V}
\end{tikzcd}$$ where $\pi_\mathbf{v}$ is the canonical projection and $\mu_\mathbf{v}$ is the moment map sending $(g,v)\mapsto g\cdot v$.
The {\em exotic nilpotent cone} $\mathcal{N}\subset\mathbb{V}$ is defined as the image of $\mu_{\mathbf{d}}$; recall that $\mathbf{d}$ is a regular $W$-orbit.
Set $$F_{\ff}=\bigsqcup\limits_{\mathbf{v}\in\Lambda_{\ff}}F_{\mathbf{v}}.$$
We introduce the {\em generalized exotic Steinberg varieties} as follows:  $$\mathbf{Z}_{\mathbf{v}\mathbf{w}}=F_{\mathbf{v}}\times_{\mathcal{N}}F_{\mathbf{w}}\quad \text{and}\quad \mathbf{Z}_{\ff}=F_{\ff}\times_{\mathcal{N}}F_{\ff}=\bigsqcup\limits_{\mathbf{v},\mathbf{w}\in\Lambda_{\ff}}\mathbf{Z}_{\mathbf{v}\mathbf{w}}.$$

Let $\pi_{\mathbf{v}\mathbf{w}}: \mathbf{Z}_{\mathbf{v}\mathbf{w}}\to\Fm_{\mathbf{v}}\times\Fm_{\mathbf{w}}$ be the canonical projection. 
Denote $$\mathbf{T}_{\Ob_{\mathbf{v},w,\mathbf{w}}}=\pi^{-1}_{\mathbf{v}\mathbf{w}}(\Ob_{\mathbf{v},w,\mathbf{w}}) \quad\mbox{and}\quad \mathbf{T}_{\mathbf{v},w,\mathbf{w}}=\overline{\pi^{-1}_{\mathbf{v}\mathbf{w}}(\Ob_{\mathbf{v},w,\mathbf{w}})}.$$

When we take $\mathbf{v}=\mathbf{w}=\mathbf{d}$ a regular $W$-orbit, the above setup goes back to the one in \cite{Ka09}. In this case, We simply denote $F=F_{\mathbf{d}}$, $\mathbf{Z}=\mathbf{Z}_{\mathbf{dd}}=F\times_{\mathcal{N}}F$ and $\pi=\pi_{\mathbf{dd}}:\mathbf{Z}\to \Bm\times\Bm$.
By abuse of notation, we shall always omit the orbit $\mathbf{d}$ if it occurs in the subscript, just like $F, \mathbf{Z}$ and $\pi$. For example, we write $\mathbf{T}_{\Ob_{\mathbf{v},w}}$ and $\mathbf{T}_{\mathbf{v},w}$ for $\mathbf{T}_{\Ob_{\mathbf{v},w,\mathbf{d}}}$ and $\mathbf{T}_{\mathbf{v},w,\mathbf{d}}$, respectively; write $\mathbf{T}_{\Ob_{w}}$ and $\mathbf{T}_{w}$ for $\mathbf{T}_{\Ob_{\mathbf{d},w,\mathbf{d}}}$ and $\mathbf{T}_{\mathbf{d},w,\mathbf{d}}$, respectively.

\begin{lem}
 The variety $\mathbf{Z}_{\mathbf{v}\mathbf{w}}$ consists of $|\D_{\mathbf{v}\mathbf{w}}|$ irreducible components $\mathbf{T}_{\mathbf{v},w,\mathbf{w}}$ $( w\in\D_{\mathbf{v}\mathbf{w}})$.
\end{lem}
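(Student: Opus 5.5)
We use the stratification of $\Fm_\mathbf{v}\times\Fm_\mathbf{w}$ into the $G$-orbits $\Ob_{\mathbf{v},w,\mathbf{w}}$ with $w\in\D_{\mathbf{v}\mathbf{w}}$. Since $\pi_{\mathbf{v}\mathbf{w}}$ is surjective onto $\Fm_\mathbf{v}\times\Fm_\mathbf{w}$ (take the zero section), we get
$$\mathbf{Z}_{\mathbf{v}\mathbf{w}}=\bigsqcup_{w\in\D_{\mathbf{v}\mathbf{w}}}\mathbf{T}_{\Ob_{\mathbf{v},w,\mathbf{w}}},$$
a finite disjoint union of locally closed pieces. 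It then suffices to show two things. First, each $\mathbf{T}_{\Ob_{\mathbf{v},w,\mathbf{w}}}$ is irreducible. Second, all these pieces have the same dimension. Granting both, the closures $\mathbf{T}_{\mathbf{v},w,\mathbf{w}}$ are irreducible closed subsets of equal dimension covering $\mathbf{Z}_{\mathbf{v}\mathbf{w}}$. None is contained in another, since a proper closed irreducible subset of an irreducible variety has strictly smaller dimension and distinct closures of distinct disjoint pieces cannot coincide. Hence they are exactly the irreducible components, and there are $|\D_{\mathbf{v}\mathbf{w}}|$ of them.

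For irreducibility, we describe the fibre of $\pi_{\mathbf{v}\mathbf{w}}$ over the base point $(P_\mathbf{v},wP_\mathbf{w})$ of $\Ob_{\mathbf{v},w,\mathbf{w}}\simeq G/P^w_{\mathbf{v}\mathbf{w}}$. A point of $\mathbf{Z}_{\mathbf{v}\mathbf{w}}$ over it is a vector $x\in\mathbb{V}$ with $x\in\mathbb{V}^+_\mathbf{v}$ and $x\in w(\mathbb{V}^+_\mathbf{w})$. Here we use that $\mu_\mathbf{v}(F_\mathbf{v})\subseteq\mathcal{N}$, because $\mathbb{V}^+_\mathbf{v}\subseteq\mathbb{V}^+$, so the fibre product condition over $\mathcal N$ is automatic. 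The fibre is therefore the linear space $\mathbb{V}^+_\mathbf{v}\cap w(\mathbb{V}^+_\mathbf{w})$, which is $P^w_{\mathbf{v}\mathbf{w}}$-stable. This gives
$$\mathbf{T}_{\Ob_{\mathbf{v},w,\mathbf{w}}}\simeq G\times^{P^w_{\mathbf{v}\mathbf{w}}}\bigl(\mathbb{V}^+_\mathbf{v}\cap w(\mathbb{V}^+_\mathbf{w})\bigr),$$
a vector bundle over an irreducible orbit, hence irreducible and smooth.

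The dimension count is the main obstacle. We need
$$\dim G/P^w_{\mathbf{v}\mathbf{w}}+\dim\bigl(\mathbb{V}^+_\mathbf{v}\cap w\mathbb{V}^+_\mathbf{w}\bigr)$$
to be independent of $w\in\D_{\mathbf{v}\mathbf{w}}$; the expected value is $\dim\mathbb{V}^+_\mathbf{v}+\dim\mathbb{V}^+_\mathbf{w}+\dim G/P_\mathbf{v}+\dim G/P_\mathbf{w}-\dim\mathbb V$ up to the standard shift, mirroring the $T^*$ case. We compute weight by weight, using that $\Psi$ is a $W$-equivariant bijection and that each weight space $\mathbb{V}[\lambda]$ has dimension depending only on whether $\lambda$ is short or long, i.e. on the $W$-orbit. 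Thus each root $\alpha\in\Pi$ carries a multiplicity $m_\alpha=\dim\mathbb{V}[\Psi(\alpha)]$ that is constant on $W$-orbits.

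By Lemma~\ref{lem int}, $w(\Pi^+_\mathbf{w})\subseteq\Pi^-$, and symmetrically $w^{-1}(\Pi^+_\mathbf{v})\subseteq\Pi^-$ since $w\in\D_\mathbf{w}^{-1}$. Hence $\dim(\mathbb{V}^+_\mathbf{v}\cap w\mathbb{V}^+_\mathbf{w})=\sum m_\alpha$ over $\alpha\in\Pi^+\cap w\Pi^+$, and $\dim G/P^w_{\mathbf{v}\mathbf{w}}$ is a count of roots outside $\Pi_\mathbf{v}\cap w\Pi_\mathbf{w}$. This dimension formula needs care, because $P^w_{\mathbf{v}\mathbf{w}}$ is an intersection of parabolics and its Lie algebra is spanned by the roots in $(\Pi^+\cup\Pi_\mathbf{v})\cap w(\Pi^+\cup\Pi_\mathbf{w})$ together with $\mathfrak t$.

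We then reorganise the sum as
$$\sum_{\alpha\in\Pi^+}\Bigl(m_\alpha[\alpha\in w\Pi^+]+[\alpha\notin\text{Lie}]\Bigr).$$
Since $\mathbb{V}$ is not the adjoint representation, the fibre and base contributions do not cancel term by term as in the $T^*$ case. So rather than chasing an explicit dimension, we follow Kato's argument for the regular case~\cite{Ka09}. We show each $\mathbf{T}_{\Ob_{\mathbf{v},w,\mathbf{w}}}$ has dimension $\dim F_\mathbf{v}+\dim F_\mathbf{w}-\dim\mathcal N$-type value by reducing to $\mathbf{v}=\mathbf{w}=\mathbf{d}$: pull back along the smooth maps $F\times_{\mathcal N}F\to F_\mathbf{v}\times_{\mathcal N}F_\mathbf{w}$ restricted over orbits, which are fibrations with fibre $P_\mathbf{v}/B\times P_\mathbf{w}/B$ on the open locus. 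If that reduction is delicate, the fallback is the direct root count, checked separately for the short and long root subsets with their parameters $m_\alpha$.
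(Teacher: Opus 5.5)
The gap is in your second step. Your decomposition $\mathbf{Z}_{\mathbf{v}\mathbf{w}}=\bigsqcup_{w}\mathbf{T}_{\Ob_{\mathbf{v},w,\mathbf{w}}}$ is fine, and so is the irreducibility of each piece via $G\times^{P^w_{\mathbf{v}\mathbf{w}}}(\mathbb{V}^+_\mathbf{v}\cap w\mathbb{V}^+_\mathbf{w})$. But the pieces are \emph{not} equidimensional, so neither a root count nor a reduction to Kato's regular case can prove that they are. Finish your own count with $w\in\D_{\mathbf{v}\mathbf{w}}$ the longest representative, using $\dim\Ob_{\mathbf{v},w,\mathbf{w}}=\dim\Fm_\mathbf{v}+\ell(w)-|\Pi^+_\mathbf{w}|$, $|\Pi^+\cap w\Pi^+|=|\Pi^+|-\ell(w)$, and the fact that $m_\alpha=2$ exactly for the long roots $2\epsilon_i$. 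This gives
\[
\dim\mathbf{T}_{\Ob_{\mathbf{v},w,\mathbf{w}}}=\dim\Fm_\mathbf{v}+\dim\Fm_\mathbf{w}+d-\ell_{\mathrm{long}}(w),\qquad \ell_{\mathrm{long}}(w)=\#\{\alpha\in\Pi^+\ \text{long}\mid w^{-1}\alpha<0\}.
\]
Here $\ell_{\mathrm{long}}(w)$ is the number of occurrences of $s_d$ in any reduced word for $w$. Already for $d=1$ and $\mathbf{v}=\mathbf{w}=\mathbf{d}$, the piece $\mathbf{T}_{\Ob_{\id}}=\Delta F$ has dimension $3$. By contrast, $\mathbf{T}_{\Ob_{s_1}}\simeq\Ob_{s_1}$ has dimension $2$, because its fibre $\mathbb{V}^+\cap s_1\mathbb{V}^+$ is $0$. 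So Kato's $\mathbf{Z}$ is itself not equidimensional, and the uniform value you are aiming for does not exist. In any case, there is no map $F\to F_\mathbf{v}$ over $\mathcal{N}$ to pull back along.

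The missing idea is to replace equality of dimensions by monotonicity along the closure order. Suppose $\mathbf{T}_{\mathbf{v},w,\mathbf{w}}\subseteq\mathbf{T}_{\mathbf{v},w',\mathbf{w}}$ with $w\neq w'$.
\begin{enumerate}
\item Applying $\pi_{\mathbf{v}\mathbf{w}}$ gives $\Ob_{\mathbf{v},w,\mathbf{w}}\subseteq\overline{\Ob}_{\mathbf{v},w',\mathbf{w}}$, so $w<w'$ in the Bruhat order.
\item By the subword property, $\ell_{\mathrm{long}}(w)\le\ell_{\mathrm{long}}(w')$, so the formula above gives $\dim\mathbf{T}_{\Ob_{\mathbf{v},w,\mathbf{w}}}\ge\dim\mathbf{T}_{\Ob_{\mathbf{v},w',\mathbf{w}}}$.
\item On the other hand, $\mathbf{T}_{\Ob_{\mathbf{v},w,\mathbf{w}}}$ is disjoint from $\mathbf{T}_{\Ob_{\mathbf{v},w',\mathbf{w}}}$. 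It therefore lies in $\mathbf{T}_{\mathbf{v},w',\mathbf{w}}\setminus\mathbf{T}_{\Ob_{\mathbf{v},w',\mathbf{w}}}$, the complement of a dense open subset of an irreducible variety, whose dimension is strictly less than $\dim\mathbf{T}_{\Ob_{\mathbf{v},w',\mathbf{w}}}$.
\end{enumerate}
Steps 2 and 3 contradict each other. Hence the $|\D_{\mathbf{v}\mathbf{w}}|$ closures are pairwise non-comparable and are exactly the irreducible components. (The paper's own proof consists only of a reference to Kato's Lemma 1.5 for the regular case.)
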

\begin{proof}
The proof is similar to that of \cite[Lemma~1.5]{Ka09}.
\end{proof}

Let $M_{i}$ ($i=1,2,3$) be smooth varieties. For locally closed subvarieties $N\subseteq M_1\times M_2$ and $N'\subseteq M_2\times M_{3}$, denote their set-theoretic composition by $$N\circ N'=\left\{(m_1,m_3)\in M_1\times M_{3}~\middle| \exists \ m_2 \in M_2, \ \text{such that} \ (m_1,m_2,m_3) \in N \times_{M_2} N'\right\},$$ which is also a locally closed subvariety in $M_1\times M_{3}$; see \cite[\S2.7.5]{CG97} for details. We note that although $N$ and $N'$ are supposed to be closed in {\em loc.cit.}, the definition remains valid for locally closed subvarieties.

Let $p_{12}$  (resp., $p_{23}$): $\Fm_\mathbf{v}\times\Fm_{\mathbf{w}}\times\Fm_{\mathbf{u}}\to\Fm_{\mathbf{v}}\times\Fm_{\mathbf{w}}$ (resp., $\Fm_{\mathbf{w}}\times\Fm_{\mathbf{u}}$) be the natural projection.
Denote $F_{\mathbf{v}\mathbf{w}\mathbf{u}}=F_{\mathbf{v}}\times F_{\mathbf{w}}\times F_{\mathbf{u}}$.
Let $pr_{12}:F_{\mathbf{v}\mathbf{w}\mathbf{u}}\to F_{\mathbf{v}}\times F_{\mathbf{w}}$ and $pr_{23}:F_{\mathbf{v}\mathbf{w}\mathbf{u}}\to F_{\mathbf{w}}\times F_{\mathbf{u}}$ be natural projections.
\begin{lem}\label{inter trans}
    Let $\mathbf{v},\mathbf{w},\mathbf{u}\in\Lambda_\ff$ and $w\in\D_{\mathbf{v}\mathbf{w}}$. If $P_\mathbf{u}\subseteq P_{\mathbf{w}}$, then $pr_{12}^{-1}(\mathbf{T}_{\Ob_{\mathbf{v},w,\mathbf{w}}})$ and $pr_{23}^{-1}(\mathbf{T}_{\mathbf{w},\id,\mathbf{u}})$ intersect transversely in $F_{\mathbf{v}\mathbf{w}\mathbf{u}}$. Moreover, the natural projection $$pr'_{13}:pr_{12}^{-1}(\mathbf{T}_{\Ob_{\mathbf{v},w,\mathbf{w}}})\cap pr_{23}^{-1}(\mathbf{T}_{\mathbf{w},\id,\mathbf{u}})\to\mathbf{T}_{\Ob_{\mathbf{v},w,\mathbf{w}}}\circ\mathbf{T}_{\mathbf{w},\id,\mathbf{u}}=\mathbf{T}_{\Ob_{\mathbf{v},w,\mathbf{u}}}$$ is an isomorphism.
\end{lem}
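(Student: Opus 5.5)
Since every space and map involved is $G$-equivariant, I would reduce all computations to the fibre over a base point of $\Fm_\mathbf{v}$. First I describe the pieces explicitly. For $\mathbf{w},\mathbf{u}$ with $P_\mathbf{u}\subseteq P_\mathbf{w}$, the orbit $\Ob_{\mathbf{w},\id,\mathbf{u}}\simeq G/P_\mathbf{u}$ is closed, being the graph of the projection $\Fm_\mathbf{u}\to\Fm_\mathbf{w}$. I expect
\[
\mathbf{T}_{\mathbf{w},\id,\mathbf{u}}=\{((gP_\mathbf{w},v),(gP_\mathbf{u},v))\mid v\in g\mathbb{V}^+_\mathbf{w}\}\simeq G\times^{P_\mathbf{u}}\mathbb{V}^+_\mathbf{w}.
\]
The reason is that $\mathbb{V}^+_\mathbf{w}\subseteq\mathbb{V}^+_\mathbf{u}$, since $\Pi_\mathbf{u}^+\subseteq\Pi_\mathbf{w}^+$. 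Hence $\pi_{\mathbf{w}\mathbf{u}}^{-1}(\Ob_{\mathbf{w},\id,\mathbf{u}})$ is this smooth closed vector bundle over $G/P_\mathbf{u}$, and $\mathbf{T}_{\mathbf{w},\id,\mathbf{u}}$ is the graph of a closed embedding $F_\mathbf{w}\times_{\Fm_\mathbf{w}}\Fm_\mathbf{u}\hookrightarrow F_\mathbf{u}$. Similarly,
\[
\mathbf{T}_{\Ob_{\mathbf{v},w,\mathbf{w}}}\simeq G\times^{P^w_{\mathbf{v}\mathbf{w}}}\bigl(\mathbb{V}^+_\mathbf{v}\cap w\mathbb{V}^+_\mathbf{w}\bigr),
\]
a smooth vector bundle over $\Ob_{\mathbf{v},w,\mathbf{w}}$.

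Next I compute the set-theoretic intersection. A point of $pr_{12}^{-1}(\mathbf{T}_{\Ob_{\mathbf{v},w,\mathbf{w}}})\cap pr_{23}^{-1}(\mathbf{T}_{\mathbf{w},\id,\mathbf{u}})$ is a triple $((x,v),(y,v),(z,v))$. Here $(x,y)\in\Ob_{\mathbf{v},w,\mathbf{w}}$, $z\in\Fm_\mathbf{u}$ lies over $y$, and $v$ lies in the appropriate fibres. Since $z$ determines $y$, the projection to the $(1,3)$ factor is injective. Its image consists of pairs $(x,z)$ with $(x,\bar z)\in\Ob_{\mathbf{v},w,\mathbf{w}}$, where $\bar z$ is the image of $z$ in $\Fm_\mathbf{w}$, together with $v\in x\mathbb{V}^+_\mathbf{v}\cap \bar z\mathbb{V}^+_\mathbf{w}$.

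The pairs $(x,z)$ lying over $\Ob_{\mathbf{v},w,\mathbf{w}}$ form a union of orbits $\Ob_{\mathbf{v},w',\mathbf{u}}$ with $w'\in W_\mathbf{v}wW_\mathbf{w}$. Since $w\in\D_{\mathbf{v}\mathbf{w}}$ is longest in its double coset, the longest $W_\mathbf{v}\times W_\mathbf{u}$-representative is $w$ itself. This is where $\mathbf{T}_{\Ob_{\mathbf{v},w,\mathbf{u}}}$ in the statement should be read as the preimage of this union, i.e. of $\Ob_{\mathbf{v},w,\mathbf{w}}$ pulled back to $\Fm_\mathbf{v}\times\Fm_\mathbf{u}$. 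On each piece I invoke Lemma~\ref{lem int}: over $(P_\mathbf{v},w'P_\mathbf{u})$ the fibre conditions $\mathbb{V}^+_\mathbf{v}\cap w'\mathbb{V}^+_\mathbf{w}$ and $\mathbb{V}^+_\mathbf{v}\cap w'\mathbb{V}^+_\mathbf{u}$ coincide. Hence the fibres agree and the image is exactly $\mathbf{T}_{\Ob_{\mathbf{v},w,\mathbf{w}}}\circ\mathbf{T}_{\mathbf{w},\id,\mathbf{u}}$, which equals the preimage of the orbit in $\mathbf{Z}_{\mathbf{v}\mathbf{u}}$. I would justify Lemma~\ref{lem int} for general $\Lambda_\ff$ by the same root argument, and use $W_\mathbf{v}$-translation to pass from $w$ to $w'$.

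Transversality is the main obstacle, and I would check it by a dimension and tangent count at a point. Both preimages are smooth. $F_{\mathbf{v}\mathbf{w}\mathbf{u}}$ is smooth of dimension $\sum\dim F_\bullet$. The subvariety $pr_{23}^{-1}(\mathbf{T}_{\mathbf{w},\id,\mathbf{u}})$ has codimension $\dim F_\mathbf{u}$, since $\mathbf{T}_{\mathbf{w},\id,\mathbf{u}}$ is a graph over $F_\mathbf{w}\times_{\Fm_\mathbf{w}}\Fm_\mathbf{u}$, which has the dimension of $F_\mathbf{w}$ plus that of $P_\mathbf{w}/P_\mathbf{u}$. Its intersection with $pr_{12}^{-1}(\mathbf{T}_{\Ob_{\mathbf{v},w,\mathbf{w}}})$ is the graph-like variety computed above, which is smooth. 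Transversality then amounts to showing that the tangent space of the intersection has the expected dimension, i.e. that the differential of the relevant defining map is surjective.

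I would verify this by trivialising locally. Near the point, $pr_{23}^{-1}(\mathbf{T}_{\mathbf{w},\id,\mathbf{u}})$ is cut out by two conditions: that the $F_\mathbf{u}$-coordinate equals the image of the $(F_\mathbf{w},\Fm_\mathbf{u})$ data under the closed embedding, and that $z$ lies over $y$. Both are graph conditions in the free $F_\mathbf{u}$ and $\Fm_\mathbf{u}$-fibre directions. Graph conditions in free directions are automatically transverse to anything pulled back from the first two factors. Because $pr_{12}^{-1}(\cdots)=\mathbf{T}_{\Ob}\times F_\mathbf{u}$ is such a pullback, transversality follows.

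Finally, $pr'_{13}$ is a bijective morphism between smooth varieties. It has an explicit inverse, obtained by recovering $y=\bar z$, which is a morphism. Hence $pr'_{13}$ is an isomorphism.
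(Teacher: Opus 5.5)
You are right that $\Ob_{\mathbf{v},w,\mathbf{w}}\circ\Ob_{\mathbf{w},\id,\mathbf{u}}$ is the full preimage of $\Ob_{\mathbf{v},w,\mathbf{w}}$ in $\Fm_\mathbf{v}\times\Fm_\mathbf{u}$. In general this is a union of several orbits, whereas the paper's proof treats it as the single orbit $\Ob_{\mathbf{v},w,\mathbf{u}}$. But your repair fails at the step where you invoke Lemma~\ref{lem int} on every piece.

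That lemma needs $w'(\Pi^+_\mathbf{w})\cap\Pi^+=\emptyset$, and the elements of $W_\mathbf{v}wW_\mathbf{w}$ with this property all lie in $W_\mathbf{v}w$. So the lemma only controls the piece $\Ob_{\mathbf{v},w,\mathbf{u}}$. The other pieces are $\Ob_{\mathbf{v},wb,\mathbf{u}}$ with $b\in W_\mathbf{w}$. No $W_\mathbf{v}$-translation reaches them, and $b$ preserves $\mathbb{V}^+_\mathbf{w}$ but not $\mathbb{V}^+_\mathbf{u}$. On those pieces the fibre $\mathbb{V}^+_\mathbf{v}\cap wb\,\mathbb{V}^+_\mathbf{w}$ of the composition can be strictly smaller than the fibre $\mathbb{V}^+_\mathbf{v}\cap wb\,\mathbb{V}^+_\mathbf{u}$ of $\pi_{\mathbf{v}\mathbf{u}}^{-1}$.

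Here is a concrete counterexample. Take $d=1$, $\Lambda_\ff=\Lambda^\jmath_{1,1}$, $\mathbf{v}=\mathbf{u}=(1,0,1)$ (so $P_\mathbf{v}=P_\mathbf{u}=B$), $\mathbf{w}=(0,2,0)$ (so $P_\mathbf{w}=G$ and $\mathbb{V}^+_\mathbf{w}=0$), and $w=s_1$. Then $\mathbf{T}_{\Ob_{\mathbf{v},w,\mathbf{w}}}\circ\mathbf{T}_{\mathbf{w},\id,\mathbf{u}}$ is the zero section over $\Bm\times\Bm$, of dimension $2$. But $\pi_{\mathbf{v}\mathbf{u}}^{-1}(\Bm\times\Bm)=\mathbf{Z}$ also contains the $3$-dimensional component $\mathbf{T}_{\id}=\Delta F$. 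Indeed, at $w'=\id$ one has $\mathbb{V}^+\cap\mathbb{V}^+_\mathbf{w}=0$ but $\mathbb{V}^+\cap\mathbb{V}^+_\mathbf{u}=\mathbb{V}^+$. So the statement as you reread it is also false.

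What is true is the version over the open orbit. This is what the paper's argument really establishes: a single-point fibre over $(P_\mathbf{v},wP_\mathbf{u})$, plus Lemma~\ref{lem int} applied at $w$ itself.

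The composition is the pullback of the vector bundle $\mathbf{T}_{\Ob_{\mathbf{v},w,\mathbf{w}}}$ along $\Ob_{\mathbf{v},w,\mathbf{w}}\times_{\Fm_\mathbf{w}}\Fm_\mathbf{u}\to\Ob_{\mathbf{v},w,\mathbf{w}}$. Since $w$ is longest in its double coset, $\Ob_{\mathbf{v},w,\mathbf{u}}$ is the open dense orbit of this base. Only over it does the composition coincide with $\mathbf{T}_{\Ob_{\mathbf{v},w,\mathbf{u}}}$.

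Your injectivity argument and explicit inverse are fine: they give an isomorphism onto the composition everywhere. Only the identification of the target with $\mathbf{T}_{\Ob_{\mathbf{v},w,\mathbf{u}}}$ must be restricted to $\Ob_{\mathbf{v},w,\mathbf{u}}$. This restricted form is exactly what Lemma~\ref{res2} uses: there everything is cut down to $\widetilde S$, and $Z'=Z\cap\widetilde S$ lies entirely over $\Ob_{\mathbf{v},w,\mathbf{u}}$.

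Your transversality argument is correct and genuinely different from the paper's tangent-space computation at the single point $(P_\mathbf{v},wP_\mathbf{w},wP_\mathbf{u},v)$. Its clean form is as follows. The map $\mathbf{T}_{\mathbf{w},\id,\mathbf{u}}\simeq F_\mathbf{w}\times_{\Fm_\mathbf{w}}\Fm_\mathbf{u}\to F_\mathbf{w}$ is smooth, being a $P_\mathbf{w}/P_\mathbf{u}$-bundle. Hence $F_\mathbf{v}\times\mathbf{T}_{\mathbf{w},\id,\mathbf{u}}$ meets every smooth subvariety of the form $A\times F_\mathbf{u}$ transversely, at every point. This covers all orbits, which a one-point check does not.

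One slip: the codimension of $pr_{23}^{-1}(\mathbf{T}_{\mathbf{w},\id,\mathbf{u}})$ is $\dim F_\mathbf{u}-\dim P_\mathbf{w}/P_\mathbf{u}$, not $\dim F_\mathbf{u}$. You never actually need that count.
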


\begin{proof}
  Consider the natural $G$-equivariant map $$p_{13}:p_{12}^{-1}(\Ob_{\mathbf{v},w,\mathbf{w}})\cap p_{23}^{-1}(\Ob_{\mathbf{w},\id,\mathbf{u}})\to\Ob_{\mathbf{v},w,\mathbf{w}}\circ\Ob_{\mathbf{w},\id,\mathbf{u}}=\Ob_{\mathbf{v},w,\mathbf{u}}.$$
  We claim that it is an isomorphism.
  In fact, for $x=(P_\mathbf{v},wP_\mathbf{u})\in\Ob_{\mathbf{v},w,\mathbf{u}}$, the fiber $p_{13}^{-1}(x)=\{P_{\mathbf{v}}\}\times(P_\mathbf{v} wP_\mathbf{w}/P_\mathbf{w}\cap wP_{\mathbf{w}}/P_{\mathbf{w}})\times\{wP_\mathbf{u}\}=\{(P_\mathbf{v},wP_\mathbf{w},wP_\mathbf{u})\}$ is a single point. So the claim follows from the $G$-equivariantness.
  
By the claim, we find that $p_{12}^{-1}(\Ob_{\mathbf{v},w,\mathbf{w}})\cap p_{23}^{-1}(\Ob_{\mathbf{w},\id,\mathbf{u}})$ is a $G$-orbit in $\Fm_{\mathbf{v}}\times\Fm_{\mathbf{w}}\times\Fm_{\mathbf{u}}$ with $(P_\mathbf{v},wP_\mathbf{w},wP_\mathbf{u})$ as its representative.
  Noting that $pr_{12}^{-1}(\mathbf{T}_{\Ob_{\mathbf{v},w,\mathbf{w}}})\cap pr_{23}^{-1}(\mathbf{T}_{\mathbf{w},\id,\mathbf{u}})$ is a $G$-vector bundle over $p_{12}^{-1}(\Ob_{\mathbf{v},w,\mathbf{w}})\cap p_{23}^{-1}(\Ob_{\mathbf{w},\id,\mathbf{u}})$,
  we have \begin{align*}
      &pr_{12}^{-1}(\mathbf{T}_{\Ob_{\mathbf{v},w,\mathbf{w}}})\cap pr_{23}^{-1}(\mathbf{T}_{\mathbf{w},\id,\mathbf{u}})\\&=\{(gP_\mathbf{v},gwP_{\mathbf{w}},gwP_\mathbf{u},v)\mid g\in G, v\in g(\mathbb{V}_{\mathbf{v}}^{+}\cap w(\mathbb{V}^{+}_{\mathbf{w}})\cap w(\mathbb{V}^{+}_{\mathbf{u}}))\}\\&\overset{\text{\tiny Lemma~\ref{lem int}}}{=}\{(gP_\mathbf{v},gwP_{\mathbf{w}},gwP_\mathbf{u},v)\mid g\in G,v\in g(\mathbb{V}_{\mathbf{v}}^{+}\cap w(\mathbb{V}^{+}_{\mathbf{u}}))\},
  \end{align*} that forces $pr'_{13}$ to be an isomorphism.

  To show that $pr_{12}^{-1}(\mathbf{T}_{\Ob_{\mathbf{v},w,\mathbf{w}}})$ and $pr_{23}^{-1}(\mathbf{T}_{\mathbf{w},\id,\mathbf{u}})$ intersect transversely, it suffices to show that they intersect transversely at the point $z=(P_\mathbf{v},wP_\mathbf{w},wP_\mathbf{u},v)$. Compute directly for the following tangent spaces:
  \begin{align*}
  T_{z}(F_{\mathbf{v}\mathbf{w}\mathbf{u}})&=\mathfrak{g}/\mathfrak{p}_{\mathbf{v}}\oplus\mathfrak{g}/\mathfrak{p}^{w}_{\mathbf{w}}\oplus\mathfrak{g}/\mathfrak{p}^{w}_{\mathbf{u}}\oplus\mathbb{V}^{+}_{\mathbf{v}}\oplus w(\mathbb{V}^{+}_{\mathbf{w}})\oplus w(\mathbb{V}^{+}_{\mathbf{u}}),
\\
  T_{z}(pr_{12}^{-1}(\mathbf{T}_{\Ob_{\mathbf{v},w,\mathbf{w}}}))&=
\{(x+\mathfrak{p}_{\mathbf{v}},x+\mathfrak{p}^{w}_{\mathbf{w}},y+\mathfrak{p}^{w}_{\mathbf{u}},v,v,u)\mid x,y\in\mathfrak{g},v\in\mathbb{V}^{+}_{\mathbf{v}}\cap w(\mathbb{V}^{+}_{\mathbf{w}}),u\in w(\mathbb{V}^{+}_{\mathbf{u}})\},
\\
T_{z}(pr_{23}^{-1}(\mathbf{T}_{\mathbf{w},\id,\mathbf{u}}))&=\{(x+\mathfrak{p}_{\mathbf{v}},y+\mathfrak{p}^{w}_{\mathbf{w}},y+\mathfrak{p}^{w}_{\mathbf{u}},v,u,u)\mid x,y\in\mathfrak{g},v\in\mathbb{V}^{+}_{\mathbf{v}},u\in w(\mathbb{V}^{+}_{\mathbf{w}})\cap w(\mathbb{V}^{+}_{\mathbf{u}})\}.
  \end{align*}
  Since $\mathbb{V}^{+}_{\mathbf{v}}\cap w(\mathbb{V}^{+}_{\mathbf{w}})+w(\mathbb{V}^{+}_{\mathbf{w}})\cap w(\mathbb{V}^{+}_{\mathbf{u}})=w(\mathbb{V}^+_{\mathbf{w}})$, we have $$T_{z}(pr_{12}^{-1}(\mathbf{T}_{\Ob_{\mathbf{v},w,\mathbf{w}}}))+T_{z}(pr_{23}^{-1}(\mathbf{T}_{\mathbf{w},\id,\mathbf{u}}))=T_{z}(F_{\mathbf{v}\mathbf{w}\mathbf{u}}).$$ Hence, $pr_{12}^{-1}(\mathbf{T}_{\Ob_{\mathbf{v},w,\mathbf{w}}})$ and $pr_{23}^{-1}(\mathbf{T}_{\mathbf{w},\id,\mathbf{u}})$ intersect transversely at $z$. The proof is complete.
\end{proof}
\begin{lem}\label{intr-2}
    Let $w$ be the unique shortest element in $W_\mathbf{v}w$ for $\mathbf{v}\in\Lambda_{\ff}$. The two $pr_{12}^{-1}(\mathbf{T}_{\mathbf{v},\id})$ and $pr_{23}^{-1}(\mathbf{T}_{\Ob_w})$ intersect transversely. Moreover, $$pr'_{13}:pr_{12}^{-1}(\mathbf{T}_{\mathbf{v},\id})\cap pr_{23}^{-1}(\mathbf{T}_{\Ob_w})\to\mathbf{T}_{\mathbf{v},\id}\circ\mathbf{T}_{\Ob_w}=\mathbf{T}_{\Ob_{\mathbf{v},w}}$$ is an isomorphism.
\end{lem}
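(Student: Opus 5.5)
We follow the template of Lemma~\ref{inter trans}, now with the closed component $\mathbf{T}_{\mathbf{v},\id}=\mathbf{T}_{\mathbf{v},\id,\mathbf{d}}$ in the first factor and the orbit piece $\mathbf{T}_{\Ob_w}$ in the second. Here $w$ is minimal in $W_\mathbf{v}w$, and we work in $F_{\mathbf{v}\mathbf{d}\mathbf{d}}=F_\mathbf{v}\times F\times F$.

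\textbf{Step 1: reduce to a single $G$-orbit on flags.} Consider
$$p_{13}:p_{12}^{-1}(\Ob_{\mathbf{v},\id,\mathbf{d}})\cap p_{23}^{-1}(\Ob_{w})\to\Ob_{\mathbf{v},w,\mathbf{d}}.$$
A point of the source is $(gP_\mathbf{v},gB,gwB)$. We show that $p_{13}$ is an isomorphism by computing the fiber over $(P_\mathbf{v},wB)$. It consists of the $bB$ with $b\in P_\mathbf{v}/B$ and $(bB,wB)\in\Ob_w$, i.e. $wB\in bBwB$. Equivalently, $b\in P_\mathbf{v}\cap BwBw^{-1}\cdots$; more concretely, we need $x\in W_\mathbf{v}$ with $BxB\cdot BwB\ni w$. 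Since $w$ is minimal in $W_\mathbf{v}w$, we have $\ell(xw)=\ell(x)+\ell(w)$, so $BxBwB=BxwB$. This forces $x=\id$, and the fiber is the single point $B$. Hence the source is the single $G$-orbit of $(P_\mathbf{v},B,wB)$. Its stabilizer is $B\cap wBw^{-1}$, which equals $P_\mathbf{v}\cap wBw^{-1}$ by the minimality.

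\textbf{Step 2: fibers over the orbit.} Over the point $(P_\mathbf{v},B,wB)$, the fiber of $pr_{12}^{-1}(\mathbf{T}_{\mathbf{v},\id})\cap pr_{23}^{-1}(\mathbf{T}_{\Ob_w})$ is $\{v\}$ with
$$v\in \mathbb{V}^+_\mathbf{v}\cap\mathbb{V}^+\cap w(\mathbb{V}^+)=\mathbb{V}^+_\mathbf{v}\cap w(\mathbb{V}^+).$$
The first coordinate is forced to equal the second, and over the open orbit $\mathbf{T}_{\mathbf{v},\id}$ coincides with the vector bundle with fiber $\mathbb{V}^+_\mathbf{v}$ since $\mathbb{V}^+_\mathbf{v}\subseteq\mathbb{V}^+$.

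Over $(P_\mathbf{v},wB)$, the fiber of $\mathbf{T}_{\Ob_{\mathbf{v},w}}$ is also $\mathbb{V}^+_\mathbf{v}\cap w(\mathbb{V}^+)$. So $pr'_{13}$ is a $G$-equivariant bijective morphism of homogeneous vector bundles over isomorphic bases, with identical fibers; hence it is an isomorphism. The same computation shows that the image is the composition $\mathbf{T}_{\mathbf{v},\id}\circ\mathbf{T}_{\Ob_w}$.

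\textbf{Step 3: transversality at $z=(P_\mathbf{v},B,wB,v)$.} We write the tangent spaces as in Lemma~\ref{inter trans}:
$$T_z(F_{\mathbf{v}\mathbf{d}\mathbf{d}})=\mathfrak{g}/\mathfrak{p}_\mathbf{v}\oplus\mathfrak{g}/\mathfrak{b}\oplus\mathfrak{g}/\mathfrak{b}^w\oplus\mathbb{V}^+_\mathbf{v}\oplus\mathbb{V}^+\oplus w(\mathbb{V}^+).$$
The first subspace is $\{(x,x,y,v,v,u)\}$ with $v\in\mathbb{V}^+_\mathbf{v}$ and $u\in w(\mathbb{V}^+)$. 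The second is $\{(x',y',y',v',u',u')\}$ with $v'\in\mathbb{V}^+_\mathbf{v}$ and $u'\in\mathbb{V}^+\cap w(\mathbb{V}^+)$.

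The $\mathfrak{g}$-components span everything, because $x,y,x'$ are free. For the vector components, we must hit an arbitrary $(a,b,c)$. Take $v'=a-v$ and $u=c-u'$, and solve $v+u'=b$ with $v\in\mathbb{V}^+_\mathbf{v}$ and $u'\in\mathbb{V}^+\cap w(\mathbb{V}^+)$. This requires
$$\mathbb{V}^+=\mathbb{V}^+_\mathbf{v}+(\mathbb{V}^+\cap w\mathbb{V}^+).$$

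\textbf{Main obstacle.} The crux is this decomposition. Since $\mathbb{V}^+$ is a sum of weight spaces, it suffices to show $w^{-1}\Psi(\Pi^+_\mathbf{v})\subseteq\Psi(\Pi^+)$, i.e. $w^{-1}(\Pi^+_\mathbf{v})\subseteq\Pi^+$, using the $W$-equivariance of $\Psi$. This is the mirror of the first claim of Lemma~\ref{lem int}. Minimality of $w$ in $W_\mathbf{v}w$ gives $\ell(s_\alpha w)>\ell(w)$ for all $\alpha\in\Delta_\mathbf{v}$, hence $w^{-1}(\alpha)>0$. Extending to all of $\Pi^+_\mathbf{v}$ uses the standard fact that the minimal coset representative maps the whole positive subsystem $\Pi^+_\mathbf{v}$ to positive roots. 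We expect this, together with the Bruhat computation in Step 1, to be the only non-formal points.
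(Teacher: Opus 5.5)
Your proposal is correct and follows the same route as the paper. The paper likewise first reduces to the $G$-equivariant isomorphism $p_{13}:p_{12}^{-1}(\Ob_{\mathbf{v},\id})\cap p_{23}^{-1}(\Ob_w)\to\Ob_{\mathbf{v},w}$ (it cites \cite{LXY26} where you give the direct Bruhat argument) and then runs the tangent-space argument of Lemma~\ref{inter trans}, for which your identity $\mathbb{V}^+=\mathbb{V}^+_\mathbf{v}+(\mathbb{V}^+\cap w(\mathbb{V}^+))$, deduced from $w^{-1}(\Pi^+_\mathbf{v})\subseteq\Pi^+$, is exactly the needed input.

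One wording fix: $\Ob_{\mathbf{v},\id}$ is the closed (minimal) orbit in $\Fm_\mathbf{v}\times\Bm$, not an open one. That closedness is what gives $\mathbf{T}_{\mathbf{v},\id}=\pi_{\mathbf{v}\mathbf{d}}^{-1}(\Ob_{\mathbf{v},\id})$ and justifies restricting to $p_{12}^{-1}(\Ob_{\mathbf{v},\id})$ in Step 1.
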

\begin{proof}
    By the proof of \cite[Proposition 3.6]{LXY26}, one sees that  $$p_{13}:p^{-1}_{12}(\Ob_{\mathbf{v},\id})\cap p^{-1}_{23}(\Ob_w)\to\Ob_{\mathbf{v},\id}\circ\Ob_w=\Ob_{\mathbf{v},w}$$ is a $G$-equivariant isomorphism. The lemma follows from an argument similar to that of Lemma \ref{inter trans}.
\end{proof}

\subsection{Affine Hecke and Schur algebras}
Let $q_0,q_1,q_2$ be indeterminates. Denote $\mathcal{A}=\mathbb{Z}[q_0^{\pm 1},q_1^{\pm 1},q_2^{\pm 1}]$.

The affine Hecke algebra $\widetilde{\mathbb{H}}$ of type $\widetilde{C}_{d}$ with three parameters is the $\mathcal{A}$-algebra generated by $T_i$ ($1\le i\le d$) and $e^{\lambda}$ ($\lambda\in Q$) subject to the following relations:

\noindent{\bf(Toric relations)}$$e^0=1, \quad e^{\lambda}\cdot e^{\mu}=e^{\lambda+\mu}  \quad \forall \lambda,\mu\in Q;$$
\noindent{\bf (Hecke relations)} 
$$(T_i+1)(T_i-q_2)=0 \quad \forall 1\le i< d, \quad \text{and} \quad (T_d+1)(T_d+q_0q_1)=0;$$
\noindent{\bf (braid relations)}\begin{align*}
    &T_iT_j=T_jT_i,\quad \text{if} \  |i-j|>1,\\
    &T_iT_{i+1}T_i=T_{i+1}T_iT_{i+1}, \quad \text{if} \  1\le i<d-1,\\
    &(T_{d-1}T_d)^2=(T_dT_{d-1})^2;
\end{align*}
\noindent{\bf (Bernstein-Lusztig relations)}
$$T_i e^{\lambda}-e^{s_i\lambda}T_i=\left\{\begin{array}{ll}
(1-q_2)\frac{e^{\lambda}-e^{s_i\lambda}}{e^{\alpha_i}-1}, &\text{if} \  i\neq d,\\
\frac{(1+q_0q_1)-(q_0+q_1)e^{\epsilon_d}}{e^{\alpha_d}-1}(e^{\lambda}-e^{s_i\lambda}), &\text{if} \  i=d.
\end{array}\right.$$
\begin{rem}\label{rem1}
    The affine Hecke algebra $\widetilde{\mathbb{H}}$ coincides with the one studied in \cite[\S 2.3]{FLLLWW20} by the following correspondence: our $q_0\leftrightarrow$ their $q_0$, our $q_1\leftrightarrow$ their $-q_1$, our $q_2\leftrightarrow$ their $q^2$, our $T_{i}\leftrightarrow$ their $-qT_{d-i}$ for $1\le i<d$, our $T_d\leftrightarrow$ their $-q_0T_0$.
\end{rem}

Suppose $w=s_{i_1}\cdots s_{i_k}$ is a reduced form of $w\in W$. Define $T_{w}=T_{i_1}\cdots T_{i_k}$ and $q_{w}=q_{s_{i_1}}\cdots q_{s_{i_k}}$, where $q_{s_i}=-q_2\ (1\le i<d)$ or $q_0q_1\ (i=d)$. It is known that both $T_w$ and $q_w$ are independent of the choice of a reduced form of $w$.

Let $\mathbb{H}$ be the $\mathcal{A}$-subalgebra of $\widetilde{\mathbb{H}}$ generated by $\{T_i\}_{1\le i\le n}$, which is just the Hecke algebra of finite type $C_d$ (with multi-parameter). Denote by $\mathcal{A}[Q]$ the $\mathcal{A}$-subalgebra of $\widetilde{\mathbb{H}}$ generated by $e^\lambda$ ($\lambda\in Q$).
Set
$$x_{\mathbf{v}}=\sum_{w\in W_{\mathbf{v}}}q_{w}^{-1}T_{w}\in\mathbb{H}\quad \text{and} \quad \widetilde{\mathbb{T}}_{\ff}=\bigoplus_{\mathbf{v}\in\Lambda_{\ff}}x_\mathbf{v}\widetilde{\mathbb{H}}.$$
The following lemma is standard.
\begin{lem}\label{rk-Tf}
    Each summand $x_\mathbf{v}\widetilde{\mathbb{H}}$ of $\widetilde{\mathbb{T}}_{\ff}$ is a free $\mathcal{A}[Q]$-module with a basis $\{x_\mathbf{v} T_{w}\mid w\in\D_{\mathbf{v}}\}$. Therefore, $\widetilde{\mathbb{T}}_{\ff}$ is a free $\mathcal{A}[Q]$-module of rank $\sum_{\mathbf{v}\in\Lambda_\ff}|\D_\mathbf{v}|$.
\end{lem}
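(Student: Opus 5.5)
The plan is to use the Bernstein presentation of $\widetilde{\mathbb{H}}$ to reduce the statement to a basis of $x_\mathbf{v}\mathbb{H}$ over $\mathcal{A}$. The first step is the standard PBW-type fact: $\widetilde{\mathbb{H}}$ is a free left and right $\mathcal{A}[Q]$-module with basis $\{T_w\mid w\in W\}$. More precisely, the multiplication map $\mathbb{H}\otimes_{\mathcal{A}}\mathcal{A}[Q]\to\widetilde{\mathbb{H}}$ is an isomorphism of $(\mathbb{H},\mathcal{A}[Q])$-bimodules. Spanning follows from the Bernstein--Lusztig relations. The right-hand side of each such relation lies in $\mathcal{A}[Q]$, since $e^\lambda-e^{s_i\lambda}$ is divisible by $e^{\alpha_i}-1$ in $\mathcal{A}[Q]$. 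For $i=d$ one needs the same divisibility for the numerator $\frac{(1+q_0q_1)-(q_0+q_1)e^{\epsilon_d}}{e^{2\epsilon_d}-1}(e^\lambda-e^{s_d\lambda})$; writing $\lambda=\lambda'+m\epsilon_d$, the difference $e^\lambda-e^{s_d\lambda}$ is divisible by $e^{\epsilon_d}-1$ only, so I will check the combined expression is a Laurent polynomial, as in Kato or Lusztig's graded setting. Hence one can always move $e^\lambda$ to the right of $T_i$. Linear independence is the standard part; I would take it from Lusztig's theory or from Kato's faithful polynomial representation (or from the K-theoretic realization of $\widetilde{\mathbb{H}}$ in \cite{Ka09}), and I expect this citation, rather than a proof, to be what is needed.

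Granting this, $x_\mathbf{v}\widetilde{\mathbb{H}}=x_\mathbf{v}\mathbb{H}\cdot\mathcal{A}[Q]\cong x_\mathbf{v}\mathbb{H}\otimes_{\mathcal{A}}\mathcal{A}[Q]$, because $\mathcal{A}[Q]$ is free over $\mathcal{A}$ and the tensor decomposition is compatible with left multiplication by $\mathbb{H}$. So it suffices to show that $x_\mathbf{v}\mathbb{H}$ is $\mathcal{A}$-free with basis $\{x_\mathbf{v}T_w\mid w\in\D_\mathbf{v}\}$.

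For this finite statement, first note that $x_\mathbf{v}T_s=q_s\, x_\mathbf{v}$ up to the sign convention for $s\in\Delta_\mathbf{v}$ (the quadratic relations give $T_s x_\mathbf{v}$ proportional to $x_\mathbf{v}$). Each $w\in W$ factors as $w=uw'$ with $u\in W_\mathbf{v}$ and $w'$ the shortest element of $W_\mathbf{v}w$, and $\ell(w)=\ell(u)+\ell(w')$. Then $T_w=T_uT_{w'}$, and $x_\mathbf{v}T_u$ is a unit times $x_\mathbf{v}$, since $q_u$ is invertible in $\mathcal{A}$. So $x_\mathbf{v}\mathbb{H}$ is spanned by $x_\mathbf{v}T_{w'}$ over the minimal representatives $w'$. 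Converting to the longest representatives $w=w_\mathbf{v}^0w'\in\D_\mathbf{v}$, we get $x_\mathbf{v}T_w$ equal to a unit times $x_\mathbf{v}T_{w'}$, so these also span.

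For independence, expand in the $T$-basis of $\mathbb{H}$: $x_\mathbf{v}T_{w'}=\sum_{u\in W_\mathbf{v}}q_u^{-1}T_{uw'}$, and these sums have pairwise disjoint supports in the cosets $W_\mathbf{v}w'$. This gives both freeness of $x_\mathbf{v}\mathbb{H}$ and the rank $|\D_\mathbf{v}|=|W/W_\mathbf{v}|$. Summing over $\mathbf{v}\in\Lambda_\mathbf{f}$ gives the rank of $\widetilde{\mathbb{T}}_\mathbf{f}$. I expect the main obstacle to be the PBW/independence statement for $\widetilde{\mathbb{H}}$ with three parameters, together with the exact form of $T_sx_\mathbf{v}$ under the given sign conventions, which I would verify from the quadratic relations in the rank-one case.
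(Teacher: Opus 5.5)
Your argument is correct and is the standard proof the paper is implicitly invoking when it calls the lemma standard (the paper gives no proof). The PBW decomposition $\widetilde{\mathbb{H}}=\bigoplus_{w\in W}T_w\mathcal{A}[Q]$ reduces the claim to $x_\mathbf{v}\mathbb{H}$. There the factorization $T_{w_0^{\mathbf{v}}w'}=T_{w_0^{\mathbf{v}}}T_{w'}$, with $w_0^{\mathbf{v}}$ the longest element of $W_\mathbf{v}$, together with the disjointness of the supports $W_\mathbf{v}w'$, finishes the job.

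Two small corrections, neither affecting the argument:
\begin{enumerate}
\item For $\lambda=\lambda'+m\epsilon_d$ one has $s_d\lambda=\lambda-m\alpha_d$. So $e^\lambda-e^{s_d\lambda}$ is in fact divisible by $e^{\alpha_d}-1=e^{2\epsilon_d}-1$, and the $i=d$ Bernstein--Lusztig term is visibly in $\mathcal{A}[Q]$. Had it been divisible only by $e^{\epsilon_d}-1$, your proposed check would fail, since the numerator equals $(1+q_0)(1+q_1)\neq0$ at $e^{\epsilon_d}=-1$.
\item The scalar is $x_\mathbf{v}T_i=-x_\mathbf{v}$ for $\alpha_i\in\Delta_\mathbf{v}$ (Lemma~\ref{rann}), not $q_s$. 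It is still a unit, which is all you use.
\end{enumerate}
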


\begin{lem}\label{rann}
 For $\alpha_i\in\Delta_\mathbf{v}$, we always have $x_{\mathbf{v}}T_{i}=-x_\mathbf{v}$. Moreover, the right annihilator $r_{\widetilde{\mathbb{H}}}(x_\mathbf{v})$ of $x_\mathbf{v}$ in $\widetilde{\mathbb{H}}$ is the right ideal $I_\mathbf{v}$ generated by $\{1+T_i\mid \alpha_i\in\Delta_\mathbf{v},i=1,\cdots,d\}$.
\end{lem}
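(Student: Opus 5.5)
The plan has two parts. First prove $x_{\mathbf{v}}T_i=-x_\mathbf{v}$ for $\alpha_i\in\Delta_\mathbf{v}$ by the usual coset argument. Then compute the right annihilator using the basis of Lemma~\ref{rk-Tf} together with a triangularity argument.

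\textbf{Step 1.} For $\alpha_i\in\Delta_\mathbf{v}$, split $W_\mathbf{v}=\{w,ws_i : \ell(ws_i)>\ell(w)\}$ into pairs. For each pair, $T_{ws_i}=T_wT_i$ and $q_{ws_i}=q_wq_{s_i}$. Hence $x_\mathbf{v}=\sum_{w}q_w^{-1}T_w(1+q_{s_i}^{-1}T_i)$, the sum running over the minimal coset representatives $w$. It therefore suffices to check that $(1+q_{s_i}^{-1}T_i)(1+T_i)=0$.
\begin{itemize}
\item For $i<d$ we have $q_{s_i}=-q_2$, and the Hecke relation gives $T_i^2=(q_2-1)T_i+q_2$. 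Expanding, $(1-q_2^{-1}T_i)(1+T_i)=1+T_i-q_2^{-1}T_i-q_2^{-1}T_i^2=0$.
\item For $i=d$ we have $q_{s_d}=q_0q_1$, and $(T_d+1)(T_d+q_0q_1)=0$ gives the same cancellation.
\end{itemize}
Consequently $x_\mathbf{v}(1+T_i)=0$, i.e. $x_\mathbf{v}T_i=-x_\mathbf{v}$, and so $I_\mathbf{v}\subseteq r_{\widetilde{\mathbb{H}}}(x_\mathbf{v})$.

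\textbf{Step 2.} For the reverse inclusion, use the Bernstein presentation $\widetilde{\mathbb{H}}=\bigoplus_{w\in W}T_w\mathcal{A}[Q]$. This is a free right $\mathcal{A}[Q]$-module; equivalently, $\widetilde{\mathbb{H}}=\mathbb{H}\otimes\mathcal{A}[Q]$. Every $w\in W$ factors uniquely as $w=uy$ with $u\in W_\mathbf{v}$ and $y$ a minimal length representative of its coset in $W_\mathbf{v}\backslash W$, with $\ell(w)=\ell(u)+\ell(y)$, so $T_w=T_uT_y$. Using the relations $T_i\equiv -1$ modulo $I_\mathbf{v}$ for $\alpha_i\in\Delta_\mathbf{v}$, we get $T_u\equiv(-1)^{\ell(u)}$ modulo $I_\mathbf{v}$. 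Since $I_\mathbf{v}$ is a right ideal and $T_w=T_uT_y$, this gives $T_w\equiv \pm T_y$ modulo $I_\mathbf{v}$. Hence every $h\in\widetilde{\mathbb{H}}$ can be written as
\[
h\equiv\sum_y T_yf_y \pmod{I_\mathbf{v}},\qquad f_y\in\mathcal{A}[Q],
\]
with $y$ running over the minimal coset representatives.

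\textbf{Step 3.} Now suppose $x_\mathbf{v}h=0$. By Step 2 this means $\sum_y x_\mathbf{v}T_yf_y=0$. It then suffices to show that the elements $x_\mathbf{v}T_y$ ($y$ minimal coset representatives) are $\mathcal{A}[Q]$-linearly independent, since this forces all $f_y=0$ and hence $h\in I_\mathbf{v}$. Independence follows from a triangularity argument: expanding, $x_\mathbf{v}T_y=\sum_{u\in W_\mathbf{v}}q_u^{-1}T_{uy}$, and the products $uy$ are distinct elements of $W$, so the sets of $T_w$ occurring for different $y$ are disjoint. Linear independence thus follows from the freeness of $\{T_w\}_{w\in W}$ over $\mathcal{A}[Q]$ in the Bernstein presentation. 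This is essentially the content of Lemma~\ref{rk-Tf}, up to the switch between minimal and maximal coset representatives.

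The main obstacle is that the rank statement in Lemma~\ref{rk-Tf} uses the longest representatives $\D_\mathbf{v}$, whereas the factorization in Step 2 uses the minimal ones. I will handle this by working directly with the minimal representatives and the direct computation in Step 3, rather than citing Lemma~\ref{rk-Tf} verbatim. The two index sets are in bijection via $y\mapsto w_\mathbf{v}^0y$, where $w_\mathbf{v}^0$ is the longest element of $W_\mathbf{v}$, and $T_{w_\mathbf{v}^0y}\equiv\pm T_y$ modulo $I_\mathbf{v}$. So either version of the basis works.
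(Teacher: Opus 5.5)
Your proof is correct and follows essentially the same route as the paper. The paper also decomposes $\widetilde{\mathbb{H}}$ along $W_\mathbf{v}$ times coset representatives, pushes the $\mathbb{H}_\mathbf{v}$-part into $I_\mathbf{v}$ (its induction on $\ell(h)$ is exactly your reduction $T_u\equiv(-1)^{\ell(u)}$ modulo the right ideal $I_\mathbf{v}$), and then concludes from the $\mathcal{A}[Q]$-independence of the elements $x_\mathbf{v}T_w$ given by Lemma~\ref{rk-Tf}. The differences are only in presentation: you verify $x_\mathbf{v}(1+T_i)=0$ directly rather than citing \cite{FLLLWW20}, and you prove the independence yourself from the Bernstein basis using minimal coset representatives rather than quoting Lemma~\ref{rk-Tf}. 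This also makes explicit the paper's terse reduction to an $h\in\mathbb{H}_\mathbf{v}$ with $x_\mathbf{v}h=0$.
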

\begin{proof}
The first assertion follows from \cite[Lemma 3.1]{FLLLWW20} and Remark \ref{rem1}. Let us prove the second assertion.
   Note that $\widetilde{\mathbb{H}}$ has an $\mathcal{A}$-basis $\{T_yT_we^{\lambda}\mid y\in W_\mathbf{v},w\in\D_\mathbf{v},\lambda\in X^*(T)\}$. Suppose $\tilde{h}\in r_{\widetilde{\mathbb{H}}}(x_\mathbf{v})$. Thanks to Lemma~\ref{rk-Tf}, we assume $\widetilde{h}=hT_we^\lambda$,
   $h\in\mathbb{H}_\mathbf{v}$, $w\in\D_\mathbf{v}$ and $x_\mathbf{v} h=0$, where $\mathbb{H}_\mathbf{v}$ is the $\mathcal{A}$-subalgebra generated by $T_i$ ($\alpha_i\in\Delta_\mathbf{v}$). For $h=\sum_{y\in W_\mathbf{v}} a_yT_y$ ($a_y\in\mathcal{A}$), set $L_h=\{y\in W_\mathbf{v}\mid \text{$\ell(y)$ is maximal with $a_y \neq 0$}\}$. Define the length of $h$ by $\ell(h)=\ell(y), y\in L_h$. Now we use induction on $\ell(h)$. For $\ell(h)=0$, there is nothing to prove. Suppose that it holds for all elements in $\widetilde{\mathbb{H}}_\mathbf{v}$ with length less than $\ell(h)$. For $y\in L_h$, let $s_{i_y}$ be a simple reflection in $W_\mathbf{v}$ such that $\ell(s_{i_y}y)=\ell(y)-1$.
   Consider $$h=\sum_{y\in W_\mathbf{v}\backslash L_h} a_yT_y+\sum_{y\in L_h}-a_yT_{s_{i_y}y}+\sum\limits_{y\in L_h}a_y(T_{i_y}T_{s_{i_y}y}+T_{s_{i_y}y})=h'+h'',$$ where $h'=\sum_{y\in W_\mathbf{v}\backslash L_h} a_yT_y+\sum_{y\in L_h}-a_yT_{s_{i_y}y}$ and $h''=\sum_{y\in L_h}a_y(T_{i_y}T_{s_{i_y}y}+T_{s_{i_y}y})$. Clearly, we have $x_\mathbf{v} h'=0$ and $h''\in I_\mathbf{v}$. Since $\ell(h')\le \ell(h)-1$, we have $h'\in I_\mathbf{v}$ by the induction assumption, which forces $h\in I_\mathbf{v}$. The proof is complete.
\end{proof}

The affine quantum Schur algebra $\widetilde{\mathbb{S}}_{\ff}$ of type $\widetilde{C}_d$ (associated with $\Lambda_\mathbf{f}$) with three parameters is an $\mathcal{A}$-algebra defined as $$\widetilde{\mathbb{S}}_{\ff}=\mathrm{End}_{\widetilde{\mathbb{H}}}(\widetilde{\mathbb{T}}_{\ff}).$$
\begin{example}\label{ex:Sch}
   If we take $\Lambda_\ff=\Lambda_{n,d}^\jmath$ (resp., $\Lambda_{n,d}^\imath$) as in \eqref{eq:lambdaj} (resp., \eqref{eq:lambdai}), then the affine quantum Schur algebra $\widetilde{\mathbb{S}}_{\ff}$ matches the Schur algebra $\mathbb{S}_{\mathfrak{n},d}^{\imath\jmath}$ (resp., $\mathbb{S}_{\eta,d}^{\imath\imath}$) defined in \cite[\S 4]{FLLLWW20}.
\end{example}

%%===================================
\section{Construction of affine quantum Schur algebras}\label{sec:conssch}
%====================================

In this section, we shall provide a realization of affine quantum Schur algebras $\mathbb{S}_\mathbf{f}$ via the equivariant K-theory. To facilitate reading, we have placed the foundational content on equivariant K-theory required for this paper in Appendix~\ref{Apped:B}.

%------------------------------------
\subsection{Representation rings and localization}
For an algebraic group $H$, let $R(H)=K^G(pt)$ denote the rational representation ring of $H$. 
There are natural ring isomorphisms for representation rings of $G$ and its maximal torus $T$ (see \cite[\S6.1]{CG97}):
$$R(T)\simeq\Z[Q] \quad\text{and}\quad R(G)\simeq R(T)^W.$$ The latter isomorphism is a `group' analog of the Chevalley restriction theorem. In fact, it can be further generalized to the following result, obtained in \cite[Theorems 1.2 \& 2.2]{St75}.
\begin{lem}\label{lem:PW}
Let $P$ be a parabolic subgroup of $G$ containing $T$, and let $W'$ be the Weyl group of the Levi part of $\mathrm{Lie}(P)$. Then $R(P)\simeq R(T)^{W'}$ is a free $R(G)$-module with rank $|W/W'|$.
\end{lem}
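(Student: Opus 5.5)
The plan is to reduce to the case of a Levi subgroup and then invoke Steinberg's theorem on representation rings of reductive groups. First, write $P=L\ltimes U$ with $L\supseteq T$ a Levi factor and $U$ the unipotent radical. Since $U$ is unipotent, every finite-dimensional rational $P$-module has a $P$-stable filtration on whose graded pieces $U$ acts trivially. Indeed, the fixed points $M^U$ are nonzero and $P$-stable because $U$ is normal in $P$, so we can induct on dimension. Hence the class of any $P$-module in $R(P)$ equals the class of its associated graded, which is inflated from $L$. This shows that restriction $R(P)\to R(L)$ is a ring isomorphism, with inverse given by inflation along $P\to L$. Restricting further to $T$ embeds $R(L)$ into $R(T)\simeq\Z[Q]$, since characters determine representations. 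The image is the $W'$-invariants, where $W'=N_L(T)/T$ is the Weyl group of the Levi part; this is the standard highest-weight argument applied to the connected reductive group $L$, exactly as in the isomorphism $R(G)\simeq R(T)^W$ recalled above. This gives $R(P)\simeq R(T)^{W'}$.

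For the freeness statement, I would use Steinberg's results in \cite{St75}. Since $G=\mathrm{Sp}_{2d}(\C)$ is simply connected, $R(T)$ is a free module over $R(T)^W=R(G)$ of rank $|W|$. Pittie--Steinberg in fact give an explicit basis $\{e_w\}_{w\in W}$ of the form $e_w=w^{-1}\bigl(\prod_{\alpha_i\in\Delta,\,w^{-1}\alpha_i<0}\varpi_i\bigr)$, with $\varpi_i$ the fundamental weights. The derived group of $L$ is also simply connected, being a Levi of a simply connected group, so the same theorem gives that $R(T)$ is free over $R(T)^{W'}$ of rank $|W'|$.

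To conclude that $R(T)^{W'}$ is free over $R(T)^{W}$ of rank $|W/W'|$, I would use Steinberg's relative version (\cite[Theorem 2.2]{St75}). Taking the Steinberg basis elements indexed by minimal-length coset representatives of $W/W'$, one checks that these are $W'$-invariant and form an $R(T)^W$-basis of $R(T)^{W'}$. The rank can then be confirmed by tensoring with the fraction field: $\mathrm{Frac}(R(T))^{W'}$ has degree $|W|/|W'|$ over $\mathrm{Frac}(R(T))^{W}$ by Galois theory.

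The main obstacle is this last freeness step. Freeness of $R(T)$ over both invariant rings does not formally imply freeness of the intermediate ring; it only gives projectivity. Here I would argue via grading, filtering by dominance order to reduce to the associated graded monomial rings, where projective modules are free. Alternatively, I would simply cite \cite[Theorem 2.2]{St75} directly, which is what the statement indicates.
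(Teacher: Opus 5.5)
Your route is the paper's. Its proof is nothing but the citation \cite[Theorems 1.2 \& 2.2]{St75}, and your reduction $R(P)\simeq R(L)\simeq R(T)^{W'}$ (filter by $U$-fixed vectors, then apply highest-weight theory to $L$) is correct and supplies what that citation leaves implicit.

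However, your description of the relative basis is wrong, and the ``one checks that these are $W'$-invariant'' step would fail. Steinberg's elements $e_v=e^{v^{-1}\lambda_v}$, with $\lambda_v=\sum_{v^{-1}\alpha_i<0}\varpi_i$, are single monomials, and in general they are not $W'$-invariant. Take $G=\mathrm{Sp}_4$ and $W'=\langle s_1\rangle$, so $\varpi_1=\epsilon_1$ and $\varpi_2=\epsilon_1+\epsilon_2$. Then $s_2$ is a minimal-length representative of both $s_2W'$ and $W's_2$, yet $e_{s_2}=e^{s_2\varpi_2}=e^{\epsilon_1-\epsilon_2}$ is moved by $s_1$. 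In fact only two of the eight monomials $e_v$, namely $1$ and $e^{-\epsilon_1-\epsilon_2}$, are $s_1$-fixed, while $|W/W'|=4$. So no subset of Steinberg's basis of $R(T)$ can be a basis of $R(T)^{W'}$.

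The fix is to symmetrize. Assume $W'$ is standard, which you may after conjugating by $W$; conjugation fixes $R(T)^W$.
\begin{itemize}
\item If $u$ is minimal in $uW'$, then $u^{-1}\lambda_u$ is $W'$-dominant.
\item Every $W'$-dominant weight $y$ is uniquely of the form $u^{-1}(\lambda_u+\mu)$ with such a $u$ and $\mu$ dominant: take $u$ minimal with $uy$ dominant.
\item Such a $u$ is the minimal element of the double coset $W_{\lambda_u}uW'$. Hence Steinberg's leading-term argument goes through, with weights ordered first by dominance of their dominant conjugates, and within a $W$-orbit by the length of the minimal $u_y$ with $u_yy$ dominant, longer being lower.
\end{itemize}
This shows that the $W'$-orbit sums of the $e^{u^{-1}\lambda_u}$ form an $R(T)^W$-basis of $R(T)^{W'}$. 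In the example they are $1$, $e^{\epsilon_1-\epsilon_2}+e^{\epsilon_2-\epsilon_1}$, $e^{-\epsilon_1}+e^{-\epsilon_2}$ and $e^{-\epsilon_1-\epsilon_2}$.

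This gives freeness directly, so your projective-to-free detour via ``grading'' is unnecessary. With this correction, or with a plain citation of \cite[Theorem~2.2]{St75} as in the paper, the argument is complete, and your Galois-theoretic rank count is fine.
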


In order to indicate the three parameters, we introduce an extended group $\breve{H}=H\times (\C^\times)^3$ for any algebraic group $H$ so that $$R(\breve{H})\simeq R(H)[q_0^{\pm1},q_1^{\pm1},q_2^{\pm1}],$$ where $q_i$ ($i=0,1,2$) means the tautological representation of $(i+1)$-th factor of $(\C^\times)^3$. In particular, we define a $\breve{G}$-action on $\mathbb{V}$ by $$(g,z_0,z_1,z_2)\cdot(v_0,v_1,v_2)\mapsto (z^{-1}_0gv_0,z^{-1}_1gv_1,z^{-1}_2gv_2),$$ where $v_0,v_1\in V$ and $v_2\in \wedge^2V$.

Let $R(\breve{G})_{loc}$ be the localization of $R(\breve{G})$ at $(0)$. For an $R(\breve{G})$-module $M$, denote $M_{loc}=M \otimes_{R(\breve{G})} R(\breve{G})_{loc}$.

Let $\Fm_{\mathbf{v}}\stackrel{p_\mathbf{v}}\longleftarrow\Ob_{\mathbf{v},w,\mathbf{w}}\stackrel{p_{\mathbf{w}}}\longrightarrow\Fm_{\mathbf{w}}$ be the two canonical projections. 
Denote $$\mathscr{R}_{\mathbf{v}\mathbf{w}}^w=R(\breve{P}_\mathbf{v})R(w\breve{P}_\mathbf{w} w^{-1}),$$
where $w\breve{P}_\mathbf{w} w^{-1}$ means $wP_\mathbf{w} w^{-1}\times(\C^\times)^3$.
Note that $R(\breve{P}_\mathbf{v})R(w\breve{P}_\mathbf{w} w^{-1})$ is just the subring of $R(\breve{P}_{\mathbf{v}\mathbf{w}}^w)$ generated by the elements $p^*_{\mathbf{v}}(\chi')$ and $p^*_{\mathbf{w}}(\chi'')$, where $\chi'\in R(\breve{P}_\mathbf{v})$
and $\chi''\in R(\breve{P}_\mathbf{w})$.
As the same as \cite[Lemma~5.19]{LXY26}, we have
\begin{align} \label{loc gal}
\mathscr{R}_{\mathbf{v}\mathbf{w},loc}^w=R(\breve{P}^w_{\mathbf{v}\mathbf{w}})_{loc}.
\end{align}

\begin{rem}
    It may be true that $\mathscr{R}_{\mathbf{v}\mathbf{w}}^w$ equals $R(\breve{P}^w_{\mathbf{v}\mathbf{w}})$. But we are not able to prove this stronger statement. Anyway, the result in the above lemma is enough for our usage.
\end{rem}
%------------------------------------

\subsection{A geometric filtration of $K^{\breve{G}}(\mathbf{Z}_{\mathbf{v}\mathbf{w}})$}

Let ``$\le$" be the Bruhat order on $W$. We fix a total order ``$\preceq$" on $W$ compatible with the Bruhat order. Usually, the choice of ``$\preceq$" is not unique.
For $w \in W$ and $\mathbf{v},\mathbf{w}\in \Lambda_{n,d}^\jmath$ or $\Lambda_{n,d}^\imath$, recall $w_{\mathbf{v}\mathbf{w}}$ is the unique longest element in $W_\mathbf{v} wW_\mathbf{w}$. Set
\begin{align*}
    &\mathbf{Z}_{\mathbf{v}\mathbf{w}}^{\le w}=\bigsqcup\limits_{y\le w_{\mathbf{v}\mathbf{w}},y\in\D_{\mathbf{v}\mathbf{w}}}\mathbf{T}_{\Ob_{\mathbf{v},y,\mathbf{w}}},\quad
    \mathbf{Z}_{\mathbf{v}\mathbf{w}}^{\preceq w}=\bigsqcup\limits_{y\preceq w_{\mathbf{v}\mathbf{w}},y\in\D_{\mathbf{v}\mathbf{w}}}\mathbf{T}_{\Ob_{\mathbf{v},y,\mathbf{w}}} \quad \text{and}\\
    &\mathbf{Z}_{\mathbf{v}\mathbf{w}}^{\prec w}=\bigsqcup\limits_{y\prec w_{\mathbf{v}\mathbf{w}},y\in\D_{\mathbf{v}\mathbf{w}}}\mathbf{T}_{\Ob_{\mathbf{v},y,\mathbf{w}}},
\end{align*}
where $w_{\mathbf{v}\mathbf{w}}$ is the unique longest element in $W_\mathbf{v}wW_\mathbf{w}$.
Note that for $w \in \D_{\mathbf{v}\mathbf{w}}$, $\mathbf{T}_{\Ob_{\mathbf{v},w,\mathbf{w}}}$ is an open $\breve{G}$-subvariety of both $\mathbf{Z}_{\mathbf{v}\mathbf{w}}^{\le w}$ and $\mathbf{Z}_{\mathbf{v}\mathbf{w}}^{\preceq w}$, and $\mathbf{Z}_{\mathbf{v}\mathbf{w}}^{\prec w}$ is a closed $\breve{G}$-subvariety of $\mathbf{Z}_{\mathbf{v}\mathbf{w}}^{\preceq w}$.
\begin{lem}\label{exsq. 1}
    For $w \in \D_{\mathbf{v}\mathbf{w}}$, we have the following commutative diagram whose rows are exact:
    \begin{equation}\label{diag:exsq}
        \begin{tikzcd}
            &                                                 & K^{\breve{G}}(\mathbf{Z}^{\le w}_{\mathbf{v}\mathbf{w}}) \arrow[d] \arrow[r] & {K^{\breve{G}}(\mathbf{T}_{\Ob_{\mathbf{v},w,\mathbf{w}}})} \arrow[r] \arrow[d, equal] & 0 \\
0 \arrow[r] & K^{\breve{G}}(\mathbf{Z}^{\prec w}_{\mathbf{v}\mathbf{w}}) \arrow[r] & K^{\breve{G}}(\mathbf{Z}^{\preceq w}_{\mathbf{v}\mathbf{w}}) \arrow[r]       & {K^{\breve{G}}(\mathbf{T}_{\Ob_{\mathbf{v},w,\mathbf{w}}})} \arrow[r]                                & 0
\end{tikzcd}.
    \end{equation}
     Moreover, we have a filtration
     \begin{equation}\label{filtra}
         K^{\breve{G}}(\mathbf{Z}_{\mathbf{v}\mathbf{w}})\supsetneq\cdots\supsetneq K^{\breve{G}}(\mathbf{Z}_{\mathbf{v}\mathbf{w}}^{\preceq w})\supsetneq K^{\breve{G}}(\mathbf{Z}^{\prec w}_{\mathbf{v}\mathbf{w}})\supsetneq\cdots \supsetneq K^{\breve{G}}(\mathbf{Z}^{\preceq\id}_{\mathbf{v}\mathbf{w}})=K^{\breve{G}}(\mathbf{T}_{\mathbf{v},\id,\mathbf{w}}).
     \end{equation}
\end{lem}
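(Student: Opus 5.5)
The plan is to use the localization long exact sequence in equivariant K-theory (Appendix~\ref{Apped:B}), together with the vanishing of $K_1$ for vector bundles over homogeneous spaces.

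\emph{Geometric input.} First I will check the closedness statements. By the closure relation for orbits recalled above, $\overline{\Ob}_{\mathbf{v},y,\mathbf{w}}$ is the union of the orbits $\Ob_{\mathbf{v},y',\mathbf{w}}$ with $y'\le y$. Since $\preceq$ refines the Bruhat order, the union $\bigsqcup_{y\preceq w_{\mathbf{v}\mathbf{w}}}\Ob_{\mathbf{v},y,\mathbf{w}}$ is closed in $\Fm_\mathbf{v}\times\Fm_\mathbf{w}$. Taking $\pi_{\mathbf{v}\mathbf{w}}^{-1}$ shows that $\mathbf{Z}^{\preceq w}_{\mathbf{v}\mathbf{w}}$ is closed in $\mathbf{Z}_{\mathbf{v}\mathbf{w}}$, and likewise $\mathbf{Z}^{\prec w}_{\mathbf{v}\mathbf{w}}$ and $\mathbf{Z}^{\le w}_{\mathbf{v}\mathbf{w}}$ are closed. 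Moreover $\mathbf{Z}^{\preceq w}_{\mathbf{v}\mathbf{w}}\setminus\mathbf{Z}^{\prec w}_{\mathbf{v}\mathbf{w}}=\mathbf{T}_{\Ob_{\mathbf{v},w,\mathbf{w}}}$. Next, as in the proof of Lemma~\ref{inter trans}, $\mathbf{T}_{\Ob_{\mathbf{v},w,\mathbf{w}}}$ is a $\breve{G}$-equivariant vector bundle over $\Ob_{\mathbf{v},w,\mathbf{w}}\simeq G/P^w_{\mathbf{v}\mathbf{w}}$ with fiber $\mathbb{V}^+_\mathbf{v}\cap w(\mathbb{V}^+_\mathbf{w})$. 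By the Thom isomorphism and induction, $K_i^{\breve{G}}(\mathbf{T}_{\Ob_{\mathbf{v},w,\mathbf{w}}})\simeq K_i^{\breve{P}^w_{\mathbf{v}\mathbf{w}}}(pt)$. This equals $R(\breve{P}^w_{\mathbf{v}\mathbf{w}})$ for $i=0$. For $i=1$ it equals $K_1^{\breve{L}}(pt)$ with $\breve L$ the reductive Levi factor, and this vanishes over $\C$.

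\emph{Exactness of the bottom row.} Apply the long exact sequence of the closed embedding $\mathbf{Z}^{\prec w}_{\mathbf{v}\mathbf{w}}\hookrightarrow\mathbf{Z}^{\preceq w}_{\mathbf{v}\mathbf{w}}$ with open complement $\mathbf{T}_{\Ob_{\mathbf{v},w,\mathbf{w}}}$:
\begin{equation*}
K_1^{\breve G}(\mathbf{T}_{\Ob_{\mathbf{v},w,\mathbf{w}}})\to K^{\breve G}(\mathbf{Z}^{\prec w}_{\mathbf{v}\mathbf{w}})\to K^{\breve G}(\mathbf{Z}^{\preceq w}_{\mathbf{v}\mathbf{w}})\to K^{\breve G}(\mathbf{T}_{\Ob_{\mathbf{v},w,\mathbf{w}}})\to 0 .
\end{equation*}
Since the $K_1$ term vanishes, the bottom row is exact.

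\emph{Top row and commutativity.} The set $\mathbf{Z}^{\le w}_{\mathbf{v}\mathbf{w}}$ is closed in $\mathbf{Z}^{\preceq w}_{\mathbf{v}\mathbf{w}}$ and contains $\mathbf{T}_{\Ob_{\mathbf{v},w,\mathbf{w}}}$ as an open subset. Its complement in $\mathbf{Z}^{\le w}_{\mathbf{v}\mathbf{w}}$ is the union over $y<w_{\mathbf{v}\mathbf{w}}$, which is closed. So restriction to an open subset is surjective, giving the top row. The vertical map is the pushforward along the closed embedding $\mathbf{Z}^{\le w}\hookrightarrow\mathbf{Z}^{\preceq w}$. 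The square commutes by base change, because the open restriction of this pushforward to $\mathbf{T}_{\Ob_{\mathbf{v},w,\mathbf{w}}}$ is the identity.

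\emph{Filtration.} Iterating injectivity along the total order $\preceq$ on $\D_{\mathbf{v}\mathbf{w}}$ yields the chain \eqref{filtra}. The bottom step is $\mathbf{Z}^{\preceq\id}_{\mathbf{v}\mathbf{w}}=\mathbf{T}_{\Ob_{\mathbf{v},\id,\mathbf{w}}}$, whose orbit is closed, so it equals $\mathbf{T}_{\mathbf{v},\id,\mathbf{w}}$. Each inclusion is strict because the successive quotient is $R(\breve{P}^w_{\mathbf{v}\mathbf{w}})\neq0$.

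The main obstacle is the vanishing of $K_1^{\breve G}$ of the open stratum. This rests on the reduction to the Levi factor and on the fact that $K_1$ of a point vanishes in equivariant algebraic K-theory of coherent sheaves. I would quote this from Appendix~\ref{Apped:B} and \cite{CG97}, as is done in \cite{LXY26}.
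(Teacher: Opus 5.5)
\textbf{Gap: $K_1$ of a point does not vanish.} The step that fails is the one you flag as the main obstacle. In algebraic K-theory, $K_1$ of a point is not zero. Already for the trivial group, $K_1(\mathrm{Spec}\,\C)=\C^\times$. For the connected reductive group $\breve L$, the category of finite-dimensional representations is semisimple and every simple object has endomorphism ring $\C$. Hence $K_1^{\breve L}(pt)\simeq R(\breve L)\otimes_{\Z}\C^\times\neq 0$; the vanishing you have in mind is that of topological $K^{-1}(pt)$.

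So $K_1^{\breve G}(\mathbf{T}_{\Ob_{\mathbf{v},w,\mathbf{w}}})\neq 0$, and the localization sequence alone gives only right exactness of the bottom row. Left exactness, i.e. injectivity of $K^{\breve G}(\mathbf{Z}^{\prec w}_{\mathbf{v}\mathbf{w}})\to K^{\breve G}(\mathbf{Z}^{\preceq w}_{\mathbf{v}\mathbf{w}})$, is the whole content of the lemma; the rest is formal. You therefore have to show that the connecting map $\partial$ vanishes, equivalently that restriction to the open stratum is surjective on $K_1$. In the cellular fibration lemma this surjectivity comes from the section $\iota_{i*}\widetilde{\pi}_i^*$ over a common smooth base. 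That is not available $\breve G$-equivariantly here, because the strata lie over different homogeneous spaces $G/P^w_{\mathbf{v}\mathbf{w}}$.

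\textbf{How the paper does it.} The paper passes to $\breve T$. The variety $\mathbf{Z}_{\mathbf{v}\mathbf{w}}$ has a $\breve T$-equivariant cellular fibration over a point, and $\mathbf{Z}^{\prec w}_{\mathbf{v}\mathbf{w}}\subseteq\mathbf{Z}^{\preceq w}_{\mathbf{v}\mathbf{w}}$ fit into it, so the map is injective in $K^{\breve T}$. Then $K^{\breve T}(-)\simeq R(\breve T)\otimes_{R(\breve G)}K^{\breve G}(-)$ (Chriss--Ginzburg 6.1.22), and faithful flatness of $R(\breve T)$ over $R(\breve G)$ descends injectivity to $K^{\breve G}$.

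\textbf{Repairing your route.} The localization sequence and the Thom and induction isomorphisms are all $K_*(\C)$-linear, and $K_1^{\breve L}(pt)=K_1(\C)\cdot K_0^{\breve L}(pt)$. Lift each $a\in K_0^{\breve G}(\mathbf{T}_{\Ob_{\mathbf{v},w,\mathbf{w}}})$ to some $\tilde a\in K_0^{\breve G}(\mathbf{Z}^{\preceq w}_{\mathbf{v}\mathbf{w}})$. Then $\tilde a\cdot\lambda$, for $\lambda\in\C^\times$, restricts to $a\cdot\lambda$, so restriction is surjective on $K_1$ and $\partial=0$.

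With either fix, the rest of your argument goes through: closedness of the unions of strata, the top row, commutativity by base change, and strictness of the filtration from the nonzero pieces $R(\breve P^w_{\mathbf{v}\mathbf{w}})$.
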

\begin{proof}
We only need to prove that the bottom row of \eqref{diag:exsq} is left exact, which would lead to the whole lemma.
    Note that $\mathbf{Z}_{\mathbf{v}\mathbf{w}}$ owns a $\breve{T}$-equivariant cellular fibration over a single point. We can fit $\mathbf{Z}^{\prec w}_{\mathbf{v}\mathbf{w}}$ and $\mathbf{Z}^{\preceq w}_{\mathbf{v}\mathbf{w}}$ into this cellular fibration. Hence we have an injective map $K^{\breve{T}}(\mathbf{Z}^{\prec w}_{\mathbf{v}\mathbf{w}})\to K^{\breve{T}}(\mathbf{Z}^{\preceq w}_{\mathbf{v}\mathbf{w}})$ by Lemma \ref{cfl}. Thanks to \cite[6.1.22(a)]{CG97}, we have a commutative diagram with an injective bottom map:
    $$\begin{tikzcd}
K^{\breve{T}}(\mathbf{Z}_{\mathbf{v}\mathbf{w}}^{\prec w}) \arrow[r] \arrow[d, "\simeq"] & K^{\breve{T}}(\mathbf{Z}_{\mathbf{v}\mathbf{w}}^{\preceq w}) \arrow[d, "\simeq"] \\
R(\breve{T})\otimes_{R(\breve{G})}K^{\breve{G}}(\mathbf{Z}_{\mathbf{v}\mathbf{w}}^{\prec w}) \arrow[r]     & R(\breve{T})\otimes_{R(\breve{G})}K^{\breve{G}}(\mathbf{Z}^{\preceq w}_{\mathbf{v}\mathbf{w}})
\end{tikzcd}.$$ Note that $R(\breve{T})$ is a finite integral extension of $R(\breve{G})$ by \cite[Theorem~1.2\&1.3]{St75}. In particular, it is a faithfully flat $R(\breve{G})$-algebra. Hence, $K^{\breve{G}}(\mathbf{Z}^{\prec w}_{\mathbf{v}\mathbf{w}})\to K^{\breve{G}}(\mathbf{Z}^{\preceq w}_{\mathbf{v}\mathbf{w}})$ is also injective. The proof is complete.
\end{proof}

%------------------------------
\subsection{Basis of $K^{\breve{G}}(F_\mathbf{v}\times_{\mathcal{N}}F)$}
%------------------------------
Recall $\mathbf{d}\in\Lambda$ is a regular $W$-orbit. We have a canonical cellular $\breve{G}$-fibration
\begin{equation}\label{cfb}
    \begin{tikzcd}
F_\mathbf{v}\times_{\mathcal{N}}F \arrow[d] & \cdots \arrow[l, hook'] & \mathbf{Z}^{\preceq w}_{\mathbf{v}\mathbf{d}} \arrow[l, hook'] \arrow[lld] & \mathbf{Z}^{\prec w}_{\mathbf{v}\mathbf{d}} \arrow[l, hook'] \arrow[llld] & \cdots \arrow[l, hook'] & {\mathbf{T}_{\mathbf{v},\id }} \arrow[l, hook'] \arrow[llllld] \\
\Bm                                      &                         &                                                                   &                                                                  &                         &
\end{tikzcd}.
\end{equation}
The following proposition is obtained by the cellular fibration lemma (see Lemma~\ref{cfl}).
\begin{prop}\label{bas1}
 The equivariant K-group $K^{\breve{G}}(F_\mathbf{v}\times_{\mathcal{N}}F)$ is a free $R(\breve{T})$-module with a basis$$\{[\mathcal{O}_{\mathbf{T}_{\mathbf{v},w}}]\mid w\in\D_{\mathbf{v}}\}.$$
 In particular, its $R(\breve{T})$-rank is $|\D_{\mathbf{v}}|$.
\end{prop}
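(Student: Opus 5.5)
Since $\mathbf{Z}_{\mathbf{vd}}=F_\mathbf{v}\times_{\mathcal{N}}F$, the plan is to apply the cellular fibration lemma (Lemma~\ref{cfl}) to the filtration \eqref{cfb}, projecting to the second factor $\Bm$. First note that when the second orbit is $\mathbf{d}$ we have $W_\mathbf{d}=\{\id\}$, so the double cosets $W_\mathbf{v}wW_\mathbf{d}=W_\mathbf{v}w$ are right cosets. They are indexed by the distinguished longest representatives $\D_\mathbf{v}=\D_{\mathbf{v}\mathbf{d}}$, and $\mathbf{Z}_{\mathbf{v}\mathbf{d}}=\bigsqcup_{w\in\D_\mathbf{v}}\mathbf{T}_{\Ob_{\mathbf{v},w}}$. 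Ordering these pieces by the total order $\preceq$, which refines the Bruhat order, gives a filtration by closed $\breve G$-stable subvarieties $\mathbf{Z}^{\preceq w}_{\mathbf{v}\mathbf{d}}$. Each successive difference $\mathbf{Z}^{\preceq w}_{\mathbf{v}\mathbf{d}}\setminus\mathbf{Z}^{\prec w}_{\mathbf{v}\mathbf{d}}$ is exactly $\mathbf{T}_{\Ob_{\mathbf{v},w}}$. Closedness follows from the closure relations among the orbits $\Ob_{\mathbf{v},y,\mathbf{d}}$ together with properness of $\pi_{\mathbf{v}\mathbf{d}}$ over $\Fm_\mathbf{v}\times\Bm$.

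Next, verify that each stratum is a $\breve G$-equivariant vector bundle over $\Bm$ via the second projection; this is the key step. The plan is to factor $\mathbf{T}_{\Ob_{\mathbf{v},w}}\to\Ob_{\mathbf{v},w}\to\Bm$. The first map is a vector bundle with fiber $g(\mathbb{V}^+_\mathbf{v}\cap w\mathbb{V}^+)$ over $g\cdot(P_\mathbf{v},wB)$. The second map $\Ob_{\mathbf{v},w}\simeq G/P^w_{\mathbf{v}\mathbf{d}}\to G/B$ is a fibration whose fiber over $B$ is $B w^{-1}P_\mathbf{v}/P_\mathbf{v}$, after swapping roles to $(w^{-1}$-translate$)$. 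This fiber is a Bruhat cell in $\Fm_\mathbf{v}$ and hence an affine space. Combining the two, $\mathbf{T}_{\Ob_{\mathbf{v},w}}\simeq G\times^B E_w$, where $E_w$ is the $B$-variety formed by the vector bundle over the affine cell $Bw^{-1}P_\mathbf{v}/P_\mathbf{v}$. This $E_w$ is $B$-equivariantly an affine space, since a vector bundle over an affine space can be trivialized using the unipotent radical of the stabilizer. This gives a cellular fibration over $\Bm$ in the sense of Lemma~\ref{cfl}. The same fibration was used for \cite[Lemma~1.5]{Ka09} and in \cite{LXY26}, so I would mirror those arguments.

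Lemma~\ref{cfl} then gives short exact sequences and a splitting, so $K^{\breve G}(\mathbf{Z}_{\mathbf{vd}})\simeq\bigoplus_{w\in\D_\mathbf{v}}K^{\breve G}(\mathbf{T}_{\Ob_{\mathbf{v},w}})$ as $R(\breve T)$-modules. By the Thom isomorphism and induction, $K^{\breve G}(\mathbf{T}_{\Ob_{\mathbf{v},w}})\simeq K^{\breve G}(\Bm)\simeq R(\breve B)\simeq R(\breve T)$, free of rank one. A basis element there is the class of the structure sheaf of the open stratum. Lifting each basis element to $[\mathcal{O}_{\mathbf{T}_{\mathbf{v},w}}]$, the structure sheaf of the closure inside $\mathbf{Z}^{\preceq w}_{\mathbf{v}\mathbf{d}}$, restricts to $[\mathcal{O}_{\mathbf{T}_{\Ob_{\mathbf{v},w}}}]$. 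A triangular induction along the filtration then shows that these lifts form an $R(\breve T)$-basis. The rank statement follows because the index set is $\D_\mathbf{v}$.

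The main obstacle is the stratum-by-stratum vector-bundle claim over $\Bm$. In particular, one must handle the identification of the fiber product with the intersection $\mathbb{V}^+_\mathbf{v}\cap w\mathbb{V}^+$, which requires every point of $\mathbf{T}_{\Ob_{\mathbf{v},w}}$ to lie over a single orbit. One must also check that the $R(\breve T)$-module structure (via $K^{\breve G}(\Bm)$ acting on the second factor) is compatible with the filtration.
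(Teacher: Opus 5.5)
Your proposal is correct and uses the same argument as the paper: the paper simply applies the cellular fibration lemma (Lemma~\ref{cfl}(b1)) to the filtration \eqref{cfb} over $\Bm$, and your stratum-by-stratum check just spells out why that fibration is ``canonical''. Two harmless slips: the open strata $\mathbf{T}_{\Ob_{\mathbf{v},w}}\to\Bm$ are affine fibrations (fiber a vector bundle over the cell $Bw^{-1}P_\mathbf{v}/P_\mathbf{v}$) rather than vector bundles, which is exactly what Lemma~\ref{cfl} requires, and closedness of $\mathbf{Z}^{\preceq w}_{\mathbf{v}\mathbf{d}}$ needs only continuity of $\pi_{\mathbf{v}\mathbf{d}}$, not properness.
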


For each weight $\lambda\in \breve{T}^*$, let $e^\lambda\in R(\breve{T})\simeq K^{\breve{G}}(\mathfrak{B})$ be the virtual character of $\breve{T}$ associated with $\lambda$. The $R(\breve{T})$-actions on $K^{\breve{G}}(\mathbf{Z}_{\mathbf{v}\mathbf{d}})$ and $K^{\breve{G}}(\mathbf{Z})$, mentioned in the above two corollaries, are given by $e^\lambda\cdot-=\pi^*(e^\lambda)\otimes^{\mathbb{L}}-$, where $\pi$ is the cellular fibration \eqref{cfb} and $\otimes^{\mathbb{L}}$ is the derived tensor product.
We write 
\begin{equation*}%\label{notation:elambda}
e_{\mathbf{v},w}^\lambda:=e^\lambda \cdot [\mathcal{O}_{\mathbf{T}_{\mathbf{v},w}}]\in K^{\breve{G}}(\mathbf{Z}_{\mathbf{v} \mathbf{d}}), \quad\mbox{(particularly, $e^\lambda_{w}:=e^\lambda\cdot[\mathcal{O}_{\mathbf{T}_{w}}]\in K^{\breve{G}}(\mathbf{Z})$)}.
\end{equation*}

The equivariant K-group $K^{\breve{G}}(F_\mathbf{v}\times_{\mathcal{N}}F)$ carries a natural $K^{\breve{G}}(\mathbf{Z})$-module structure by means of the right convolution
$$*:K^{\breve{G}}(F_\mathbf{v}\times_{\mathcal{N}}F)\times K^{\breve{G}}(\mathbf{Z})\to K^{\breve{G}}(F_\mathbf{v}\times_{\mathcal{N}}F).$$ Note that $K^{\breve{G}}(\Delta F)\simeq R(\breve{T})$ is a subalgebra of $K^{\breve{G}}(\mathbf{Z})$, where $\Delta F$ is the image of the diagonal embedding $F\hookrightarrow F\times F$.
\begin{prop}\label{tm coin}
    The $R(\breve{T})$-module structure on $K^{\breve{G}}(F_\mathbf{v}\times_{\mathcal{N}}F)$ induced by $$*:K^{\breve{G}}(F_\mathbf{v}\times_{\mathcal{N}}F)\times K^{\breve{G}}(\Delta F)\to K^{\breve{G}}(F_\mathbf{v}\times_{\mathcal{N}}F)$$ coincides with the usual one under the natural isomorphism $K^{\breve{G}}(\Delta F)\simeq R(\breve{T})\simeq K^{\breve{G}}(\Bm)$.
\end{prop}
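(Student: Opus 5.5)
The plan is to reduce the claim to a base-change (projection) formula for convolution with classes supported on the diagonal. Write $\Delta F\subseteq F\times F$ for the diagonal, and $\mathcal{L}_\lambda$ for the $\breve{G}$-equivariant line bundle on $\Bm$ attached to $\lambda$, so that $e^\lambda\in R(\breve{T})\simeq K^{\breve{G}}(\Bm)$ is $[\mathcal{L}_\lambda]$. The class in $K^{\breve{G}}(\Delta F)$ corresponding to $e^\lambda$ is $\Delta_*(p^*[\mathcal{L}_\lambda])$, where $p:F\to\Bm$ is the bundle projection and $\Delta:F\to F\times F$ the diagonal embedding. It is legitimate to identify $K^{\breve{G}}(\Delta F)\simeq K^{\breve{G}}(F)\simeq K^{\breve{G}}(\Bm)$: $F\to\Bm$ is a $\breve{G}$-equivariant vector bundle, so Thom isomorphism applies, and $K^{\breve{G}}(\Bm)\simeq K^{\breve{T}}(pt)=R(\breve{T})$ by induction. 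The goal is then to show
$$\mathcal{F}*\Delta_*p^*[\mathcal{L}_\lambda]=\pi^*(e^\lambda)\otimes^{\mathbb{L}}\mathcal{F}\qquad\text{for all }\mathcal{F}\in K^{\breve{G}}(F_\mathbf{v}\times_{\mathcal{N}}F),$$
where $\pi:F_\mathbf{v}\times_{\mathcal{N}}F\to\Bm$ is the projection onto the second factor followed by $F\to\Bm$, as in \eqref{cfb}.

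Unwinding the convolution, I work in $F_\mathbf{v}\times F\times F$ with projections $pr_{12}$, $pr_{23}$, $pr_{13}$. By definition,
$$\mathcal{F}*\Delta_*p^*[\mathcal{L}_\lambda]=pr_{13*}\bigl(pr_{12}^*\mathcal{F}\otimes^{\mathbb{L}}pr_{23}^*\Delta_*p^*[\mathcal{L}_\lambda]\bigr).$$
The key geometric input is that $F_\mathbf{v}\times\Delta F$ is the graph-type subvariety $\mathrm{id}\times\Delta:F_\mathbf{v}\times F\hookrightarrow F_\mathbf{v}\times F\times F$. I apply the projection formula to this closed embedding:
$$pr_{23}^*\Delta_*(p^*\mathcal{L}_\lambda)=(\mathrm{id}\times\Delta)_*\,pr_2^*p^*\mathcal{L}_\lambda.$$
This holds by flat base change, since $pr_{23}$ is flat. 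Then
$$pr_{12}^*\mathcal{F}\otimes^{\mathbb{L}}(\mathrm{id}\times\Delta)_*(\cdots)=(\mathrm{id}\times\Delta)_*\bigl((\mathrm{id}\times\Delta)^*pr_{12}^*\mathcal{F}\otimes pr_2^*p^*\mathcal{L}_\lambda\bigr).$$
Moreover $pr_{12}\circ(\mathrm{id}\times\Delta)=\mathrm{id}$ and $pr_{13}\circ(\mathrm{id}\times\Delta)=\mathrm{id}$ on $F_\mathbf{v}\times F$. So the convolution collapses to $\mathcal{F}\otimes^{\mathbb{L}}pr_2^*p^*\mathcal{L}_\lambda$, restricted to the support $F_\mathbf{v}\times_{\mathcal{N}}F$. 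Pulling back a line bundle is exact, so no higher Tor terms arise. This pullback is exactly $\pi^*(e^\lambda)$, which proves the identity.

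Finally, I would check that the supports are compatible. The intersection of the supports of $pr_{12}^*\mathcal{F}$ and $pr_{23}^*(\Delta F)$ is the image of $F_\mathbf{v}\times_{\mathcal{N}}F$ under $\mathrm{id}\times\Delta$, and $pr_{13}$ restricted to it is proper (indeed an isomorphism). So the convolution with supports is well defined, and all the above identities hold in $K^{\breve{G}}$ with supports, as in \cite[\S2.7, \S5.2]{CG97}.

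The main obstacle is bookkeeping rather than content: one has to be careful that the Thom isomorphism identification $K^{\breve{G}}(\Delta F)\simeq K^{\breve{G}}(\Bm)$ sends $e^\lambda$ to $\Delta_*p^*[\mathcal{L}_\lambda]$, rather than to a twist of it by a Koszul-type class. The check is that the structure sheaf of $\Delta F$ corresponds to $1$, which holds by construction via $p^*$. With that settled, the result reduces to the standard statement \cite[Lemma 5.2.?/7.6]{CG97} for the Steinberg variety, and the argument carries over verbatim.
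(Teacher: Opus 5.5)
Your argument is correct: flat base change along $pr_{23}$, the projection formula for the closed embedding $\mathrm{id}\times\Delta$, and the identities $pr_{12}\circ(\mathrm{id}\times\Delta)=pr_{13}\circ(\mathrm{id}\times\Delta)=\mathrm{id}$ reduce convolution with $\Delta_*p^*\mathcal{L}_\lambda$ to tensoring with the pullback of $\mathcal{L}_\lambda$ along the second factor, which is exactly the module structure coming from the cellular fibration \eqref{cfb}. This is essentially the standard computation the paper has in mind, since its proof only defers to the analogous lemma in \cite{LXY26}.
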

\begin{proof}
    The proof is similar to \cite[Lemma 5.9]{LXY26}
\end{proof}

\subsection{Basis of $K^{\breve{G}}(\mathbf{Z}_{\mathbf{v}\mathbf{w}})$}

We choose a split $R(\breve{G})$-homomorphism $$\iota_w: K^{\breve{G}}(\mathbf{T}_{\Ob_{\mathbf{v},w,\mathbf{w}}})\to K^{\breve{G}}(\mathbf{Z}_{\mathbf{v}\mathbf{w}}^{\preceq w})$$ that factors through $K^{\breve{G}}(Z^{\le w}_{\mathbf{v}\mathbf{w}})$ in \eqref{diag:exsq}. Let $\mathcal{B}(P^{w}_{\mathbf{v}\mathbf{w}})$ be an $R(\breve{G})$-basis of $K^{\breve{G}}(\mathbf{T}_{\Ob_{\mathbf{v},w,\mathbf{w}}})\simeq R(\breve{P}^{w}_{\mathbf{v}\mathbf{w}})$ (the choice of $\mathcal{B}(P^{w}_{\mathbf{v}\mathbf{w}})$ is very flexible). For $[\mathcal{O}_{\mathbf{T}_{\Ob_{\mathbf{v},w,\mathbf{w}}}}(\chi)]\in K^{\breve{G}}(\mathbf{T}_{\Ob_{\mathbf{v},w,\mathbf{w}}})$, set $\chi^{w}_{\mathbf{v}\mathbf{w}}=\iota_w([\mathcal{O}_{\mathbf{T}_{\Ob_{\mathbf{v},w,\mathbf{w}}}}(\chi)])$. Clearly, $\chi^w_{\mathbf{v}\mathbf{w}}$ can be represented by a complex on $\mathbf{Z}_{\mathbf{v}\mathbf{w}}$ with support on $\mathbf{Z}_{\mathbf{v}\mathbf{w}}^{\le w}$ by our construction.

\begin{prop}\label{bas2}
    The equivariant K-group $K^{\breve{G}}(\mathbf{Z}^{\preceq w}_{\mathbf{v}\mathbf{w}})$ is a free $R(\breve{G})$-module with a basis
    $$\{\chi^y_{\mathbf{v}\mathbf{w}}\mid \chi\in\mathcal{B}(P^w_{\mathbf{v}\mathbf{w}}),\ y\preceq w,\ y\in\D_{\mathbf{v}\mathbf{w}}\}.$$ In particular,
    $K^{\breve{G}}(\mathbf{Z}_{\mathbf{v}\mathbf{w}})$ is a free $R(\breve{G})$-module of rank $|\D_{\mathbf{v}}|\cdot|\D_{\mathbf{w}}|$ with a basis
    $$\{\chi^w_{\mathbf{v}\mathbf{w}}\mid \chi\in\mathcal{B}(P^w_{\mathbf{v}\mathbf{w}}),\ w\in\D_{\mathbf{v}\mathbf{w}}\}.$$
\end{prop}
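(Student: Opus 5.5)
We argue by induction along the total order $\preceq$ restricted to $\D_{\mathbf{v}\mathbf{w}}$, using the short exact sequences in the bottom row of \eqref{diag:exsq} from Lemma~\ref{exsq. 1}. (In the basis statement, $\chi$ for the index $y$ is understood to range over $\mathcal{B}(P^y_{\mathbf{v}\mathbf{w}})$.)

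\emph{Base case.} Take $y=\id$, the minimal element. Then $\mathbf{Z}^{\preceq \id}_{\mathbf{v}\mathbf{w}}=\mathbf{T}_{\Ob_{\mathbf{v},\id,\mathbf{w}}}$, since the stratum of $\id$ is closed. By the Thom isomorphism for the vector bundle $\mathbf{T}_{\Ob_{\mathbf{v},\id,\mathbf{w}}}\to \Ob_{\mathbf{v},\id,\mathbf{w}}\simeq G/P^{\id}_{\mathbf{v}\mathbf{w}}$, together with induction, we get
$$K^{\breve G}(\mathbf{T}_{\Ob_{\mathbf{v},\id,\mathbf{w}}})\simeq R(\breve P^{\id}_{\mathbf{v}\mathbf{w}}).$$
By Lemma~\ref{lem:PW}, this is a free $R(\breve G)$-module of rank $|W/W^{\id}_{\mathbf{v}\mathbf{w}}|$. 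Hence $\mathcal{B}(P^{\id}_{\mathbf{v}\mathbf{w}})$ is a basis, and $\iota_{\id}$ is the identity.

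\emph{Inductive step.} For $w\in\D_{\mathbf{v}\mathbf{w}}$, we use the exact sequence
$$0\to K^{\breve G}(\mathbf{Z}^{\prec w}_{\mathbf{v}\mathbf{w}})\to K^{\breve G}(\mathbf{Z}^{\preceq w}_{\mathbf{v}\mathbf{w}})\to K^{\breve G}(\mathbf{T}_{\Ob_{\mathbf{v},w,\mathbf{w}}})\to 0.$$
Its left exactness is exactly Lemma~\ref{exsq. 1}. Note that $\mathbf{Z}^{\prec w}_{\mathbf{v}\mathbf{w}}=\mathbf{Z}^{\preceq w'}_{\mathbf{v}\mathbf{w}}$, where $w'$ is the predecessor of $w$ in $\D_{\mathbf{v}\mathbf{w}}$. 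By induction, the left term is free with basis $\{\chi^y_{\mathbf{v}\mathbf{w}}\mid y\prec w\}$. The right term is $R(\breve P^w_{\mathbf{v}\mathbf{w}})$, which is free by Lemma~\ref{lem:PW}, so the sequence splits. The chosen splitting $\iota_w$ sends $\mathcal{B}(P^w_{\mathbf{v}\mathbf{w}})$ to the elements $\chi^w_{\mathbf{v}\mathbf{w}}$. A standard argument then shows that the union of the two families is an $R(\breve G)$-basis of the middle term. To see this, map any element to the right term, subtract its $\iota_w$-lift, and the remainder lies in the left term; this gives spanning. Linear independence follows by projecting to the right term. This proves the first assertion.

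\emph{Rank count.} Taking $w$ maximal gives the basis of $K^{\breve G}(\mathbf{Z}_{\mathbf{v}\mathbf{w}})$. Its rank is
$$\sum_{w\in\D_{\mathbf{v}\mathbf{w}}}|W/W^w_{\mathbf{v}\mathbf{w}}|.$$
This is the main combinatorial point. Since $W^w_{\mathbf{v}\mathbf{w}}=W_\mathbf{v}\cap wW_\mathbf{w}w^{-1}$, the double coset $W_\mathbf{v}wW_\mathbf{w}$ has cardinality $|W_\mathbf{v}||W_\mathbf{w}|/|W^w_{\mathbf{v}\mathbf{w}}|$. Summing over the double cosets, we have
$$\sum_{w\in\D_{\mathbf{v}\mathbf{w}}}\frac{|W_\mathbf{v}||W_\mathbf{w}|}{|W^w_{\mathbf{v}\mathbf{w}}|}=|W|,$$
and therefore
$$\sum_{w\in\D_{\mathbf{v}\mathbf{w}}}\frac{|W|}{|W^w_{\mathbf{v}\mathbf{w}}|}=\frac{|W|^2}{|W_\mathbf{v}||W_\mathbf{w}|}=|\D_\mathbf{v}|\,|\D_\mathbf{w}|.$$

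The only step where care is needed is identifying $K^{\breve G}(\mathbf{T}_{\Ob_{\mathbf{v},w,\mathbf{w}}})$ with $R(\breve P^w_{\mathbf{v}\mathbf{w}})$. Here we use that $\mathbf{T}_{\Ob_{\mathbf{v},w,\mathbf{w}}}$ is a $\breve G$-equivariant vector bundle over $G/P^w_{\mathbf{v}\mathbf{w}}$, with fiber $\mathbb{V}^+_\mathbf{v}\cap w(\mathbb{V}^+_\mathbf{w})$, as in the proof of Lemma~\ref{inter trans}.
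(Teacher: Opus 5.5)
Your proposal is correct and follows the paper's proof. You induct along $\preceq$ via the short exact sequences of Lemma~\ref{exsq. 1}, split them using the freeness of $K^{\breve G}(\mathbf{T}_{\Ob_{\mathbf{v},w,\mathbf{w}}})\simeq R(\breve P^w_{\mathbf{v}\mathbf{w}})$, and use the same double-coset count $\sum_{w\in\D_{\mathbf{v}\mathbf{w}}}|W/W^w_{\mathbf{v}\mathbf{w}}|=|\D_\mathbf{v}|\cdot|\D_\mathbf{w}|$ (the paper writes each summand as $|\D_\mathbf{v}|\cdot|W_\mathbf{v}wW_\mathbf{w}/W_\mathbf{w}|$), and you are right that the first basis should be indexed by $\chi\in\mathcal{B}(P^y_{\mathbf{v}\mathbf{w}})$.
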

\begin{proof}
By our construction, $$\mathrm{Span}_{R(\breve{G})}\{\chi^w_{\mathbf{v}\mathbf{w}}\mid\chi\in\mathcal{B}(P^w_{\mathbf{v}\mathbf{w}})\}\oplus K^{\breve{G}}(\mathbf{Z}^{\prec w}_{\mathbf{v}\mathbf{w}})=K^{\breve{G}}(\mathbf{Z}^{\preceq w}_{\mathbf{v}\mathbf{w}}).$$ 
Now, the first part of the lemma follows from the filtration \eqref{filtra} and induction on $w$.

In the following, we shall determine the $R(\breve{G})$-rank of $K^{\breve{G}}(\mathbf{Z}_{\mathbf{v}\mathbf{w}})$.
    Thanks to Lemma~\ref{lem:PW}, $$K^{\breve{G}}(\mathbf{T}_{\Ob_{\mathbf{v},w,\mathbf{w}}}) \simeq K^{\breve{G}}(\Ob_{\mathbf{v},w,\mathbf{w}}) \simeq R(\breve{P}_{\mathbf{v}\mathbf{w}}^w)\simeq R(\breve{T})^{W_{\mathbf{v}\mathbf{w}}^w}$$ is free over $R(\breve{G})$ with rank $|D_\mathbf{v}|\cdot|W_\mathbf{v} w W_\mathbf{w}/W_\mathbf{w}|$.
Therefore,
    \begin{align*} &\mathrm{rank}_{R(\breve{G})}K^{\breve{G}}(\mathbf{Z}_{\mathbf{v}\mathbf{w}})=\mathrm{rank}_{R(\breve{G})}(\bigoplus_{w \in \D_{\mathbf{v}\mathbf{w}}}K^{\breve{G}}(\mathbf{T}_{\Ob_{\mathbf{v},w,\mathbf{w}}}))=\mathrm{rank}_{R(\breve{G})}(\bigoplus_{w \in \D_{\mathbf{v}\mathbf{w}}}K^{\breve{G}}(\Ob_{\mathbf{v},w,\mathbf{w}}))\\&=\mathrm{rank}_{R(\breve{G})}(\bigoplus_{w \in \D_{\mathbf{v}\mathbf{w}}}K^{\breve{G}}(G/P^{w}_{\mathbf{v}\mathbf{w}}))=\mathrm{rank}_{R(\breve{G})}(\bigoplus_{w \in \D_{\mathbf{v}\mathbf{w}}} R(\breve{T})^{W_{\mathbf{v}\mathbf{w}}^w})\\
        &=\sum_{w \in \D_{\mathbf{v}\mathbf{w}}} |\D_\mathbf{v}|\cdot|W_\mathbf{v} w W_\mathbf{w}/W_\mathbf{w}|=|\D_\mathbf{v}|\cdot|\D_\mathbf{w}|.
    \end{align*}
    The proof is complete.
\end{proof}
 We have a direct consequence as follows.
\begin{cor}\label{rk-KZ_f}
    The equivariant K-group $K^{\breve{G}}(\mathbf{Z}_{\ff})$ is a free $R(\breve{G})$-module with rank $(\sum_{\mathbf{v}\in\Lambda_\ff}|\D_\mathbf{v}|)^2.$
\end{cor}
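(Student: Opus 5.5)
The plan is to deduce the corollary directly from Proposition~\ref{bas2}, using that $\mathbf{Z}_{\ff}$ is a finite disjoint union of its pieces.

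First I use the decomposition $\mathbf{Z}_{\ff}=\bigsqcup_{\mathbf{v},\mathbf{w}\in\Lambda_\ff}\mathbf{Z}_{\mathbf{v}\mathbf{w}}$. Each $\mathbf{Z}_{\mathbf{v}\mathbf{w}}$ is a union of connected components of $\mathbf{Z}_\ff$: it is the preimage of the component $\Fm_\mathbf{v}\times\Fm_\mathbf{w}$ of $\Fm_\ff\times\Fm_\ff$ under the projection, so it is open and closed. Equivariant $K$-theory of coherent sheaves is additive on finite disjoint unions of $\breve{G}$-stable open-closed pieces. Hence
\begin{equation*}
K^{\breve{G}}(\mathbf{Z}_\ff)\simeq\bigoplus_{\mathbf{v},\mathbf{w}\in\Lambda_\ff}K^{\breve{G}}(\mathbf{Z}_{\mathbf{v}\mathbf{w}})
\end{equation*}
as $R(\breve{G})$-modules. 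Since $\Lambda_\ff$ is finite, this is a finite direct sum.

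Second, Proposition~\ref{bas2} says each summand $K^{\breve{G}}(\mathbf{Z}_{\mathbf{v}\mathbf{w}})$ is free over $R(\breve{G})$ of rank $|\D_\mathbf{v}|\cdot|\D_\mathbf{w}|$. A finite direct sum of free modules is free, with rank the sum of the ranks:
\begin{equation*}
\sum_{\mathbf{v},\mathbf{w}\in\Lambda_\ff}|\D_\mathbf{v}|\cdot|\D_\mathbf{w}|=\Big(\sum_{\mathbf{v}\in\Lambda_\ff}|\D_\mathbf{v}|\Big)^2 .
\end{equation*}
The union of the bases from Proposition~\ref{bas2} gives an explicit $R(\breve{G})$-basis of $K^{\breve{G}}(\mathbf{Z}_\ff)$.

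There is no real obstacle. The only point to justify is the additivity in the first step. It follows from the localization sequence in Appendix~\ref{Apped:B} applied to a closed-open decomposition: for a disjoint union $X=X_1\sqcup X_2$ with both pieces closed, the restriction and pushforward maps split the sequence.
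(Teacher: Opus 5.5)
Your proposal is correct and matches the paper's approach: the paper states this corollary as a direct consequence of Proposition~\ref{bas2}. The argument is exactly your decomposition $K^{\breve{G}}(\mathbf{Z}_\ff)=\bigoplus_{\mathbf{v},\mathbf{w}\in\Lambda_\ff}K^{\breve{G}}(\mathbf{Z}_{\mathbf{v}\mathbf{w}})$ followed by summing the ranks $|\D_\mathbf{v}|\cdot|\D_\mathbf{w}|$.
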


\subsection{Realization of affine Hecke algebras}
In this subsection, we recall the K-theoretic realization of affine Hecke algebras with three parameters in \cite{Ka09}.
Let $\C_{a,b,c,\lambda}$ be a one-dimensional module of $\breve{T}$ with weight $(a,b,c,\lambda)$, i.e., $$(t,z_0,z_1,z_2)x=z^a_0\cdot z^b_1\cdot z^c_2\cdot\lambda(t)\cdot x$$ for $(t,z_0,z_1,z_2)\in \breve{T}$ and $x\in\C_{a,b,c,\lambda}$ where $a,b,c\in\Z$. It is also a one-dimensional module of $\breve{B}$ if we trivially extend the action for the unipotent radical of $B$.

We let $\mathbf{e}^{\lambda}$ be the pullback of the line bundle $G\times^{B}\C_{0,0,0,-\lambda}$ to $\Delta F$.
For each $i=1,2,\cdots,d$, denote $\widetilde{T}_{i}=[\mathcal{O}_{\mathbf{T}_{s_i}}]$.

By abuse of notation, we also let $q_i$ be the tautological representation corresponding to the ($i+1$)-th factor in $(\C^\times)^3$ ($i=0,1,2$).
We have the following identification 
\begin{equation*}%\label{eq:KGB=AQ}
K^{\breve{G}}(\Bm)\simeq R(\breve{T})\simeq R(T)[q_0^{\pm 1},q_1^{\pm 1},q_2^{\pm 1}]\simeq\mathcal{A}[Q].
\end{equation*}

\begin{thm}(\cite[Theorem 2.8]{Ka09})\label{exo-H}
  There is an algebra isomorphism $\widetilde{\mathbb{H}}\stackrel{\simeq}{\longrightarrow}K^{\breve{G}}(\mathbf{Z})$ given by $$\vartheta:e^{\lambda}\mapsto \mathbf{e}^{\lambda},\quad T_i\mapsto\begin{cases}
\widetilde{T}_{i}-(1-q_2(\mathbf{e}^{\alpha_i}+1)),&\text{if} \  1\le i<d,\\
\widetilde{T}_{i}+(q_0+q_1)\mathbf{e}^{\epsilon_d}-(1+q_0q_1(\mathbf{e}^{\alpha_d}+1)), &\text{if} \  i=d.
\end{cases}$$
\end{thm}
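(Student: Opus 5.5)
This statement is Kato's theorem, quoted as \cite[Theorem 2.8]{Ka09}, so within the paper it is taken as given. If I were to reprove it, I would follow the standard Kazhdan--Lusztig/Chriss--Ginzburg strategy, adapted to the exotic setting.

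\textbf{Step 1: $K^{\breve{G}}(\mathbf{Z})$ is free of the right rank.} Specialising Proposition~\ref{bas1} to $\mathbf{v}=\mathbf{d}$ shows that $K^{\breve{G}}(\mathbf{Z})$ is a free $R(\breve{T})$-module with basis $\{[\mathcal{O}_{\mathbf{T}_w}]\mid w\in W\}$. By Proposition~\ref{tm coin}, the right action of $R(\breve{T})\simeq K^{\breve{G}}(\Delta F)$ under convolution agrees with this module structure. Consequently $\mathbf{e}^\lambda\mapsto e^\lambda$ defines an embedding $\mathcal{A}[Q]\hookrightarrow K^{\breve{G}}(\mathbf{Z})$, and the toric relations hold automatically.

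\textbf{Step 2: the rank-one computation.} For each simple reflection $s_i$, I would work on $\mathbf{T}_{s_i}$. Its closure is $\mathbf{T}_{\Ob_{\id}}\sqcup\mathbf{T}_{\Ob_{s_i}}$, which is a vector bundle over $G\times^B(P_i/B\times P_i/B)$ with fibre $\mathbb{V}^+_i$. I would compute the convolutions $\widetilde{T}_i*\mathbf{e}^\lambda$, $\mathbf{e}^{\lambda}*\widetilde{T}_i$ and $\widetilde{T}_i*\widetilde{T}_i$ by pushing and pulling along $P_i/B\simeq\mathbb{P}^1$, using Koszul resolutions and the transversality statements (the analogues of Lemma~\ref{inter trans} and Lemma~\ref{intr-2}). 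The one-dimensional weight spaces of $\mathbb{V}^+\setminus\mathbb{V}^+_i$ contribute Euler-class factors:
\begin{itemize}
\item for $i<d$, the single weight $\alpha_i$ contributes $(1-q_2\mathbf{e}^{\alpha_i})$;
\item for $i=d$, the removed weights are $\epsilon_d$, coming from the two copies of $V$ with twists $q_0$ and $q_1$, together with possibly $2\epsilon_d$-type contributions. This is what produces the factor $(1-q_0\mathbf{e}^{\epsilon_d})(1-q_1\mathbf{e}^{\epsilon_d})$.
\end{itemize}
Substituting these into the stated formula for $\vartheta(T_i)$ should verify the quadratic Hecke relations and the Bernstein--Lusztig relations. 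The $i=d$ case yields the numerator $(1+q_0q_1)-(q_0+q_1)e^{\epsilon_d}$.

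\textbf{Step 3: braid relations and bijectivity.} For the braid relations, the cleanest approach avoids direct geometry. I would use the faithful action of $K^{\breve{G}}(\mathbf{Z})$ on $K^{\breve{G}}(F)\simeq R(\breve{T})$, or after localisation on the fixed points $F^{\breve{T}}$, where each $\vartheta(T_i)$ acts by a Demazure--Lusztig type operator. These operators are known to satisfy the braid relations, and faithfulness transfers the relations back. This gives a homomorphism $\vartheta$. It is surjective because $\vartheta(T_w)$ equals $[\mathcal{O}_{\mathbf{T}_w}]$ plus lower terms in the Bruhat filtration \eqref{filtra}, so its image contains the basis of Step 1. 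It is injective because source and target are both free $\mathcal{A}[Q]$-modules of rank $|W|$ (Lemma~\ref{rk-Tf} with $\mathbf{v}=\mathbf{d}$).

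\textbf{Main obstacle.} The hardest part is the $i=d$ computation in Step 2. The fibre of $\mathbb{V}^+$ over $P_d/B$ is not a line bundle quotient of the adjoint type. One has to identify precisely which weight spaces of $V^{\oplus2}\oplus\wedge^2V$ disappear when passing to $\mathbb{V}^+_d$, and how they twist over $\mathbb{P}^1$, in order to get exactly the two-parameter factor. I would first check this directly in rank $d=1$, where $G=\mathrm{SL}_2$ and $\mathbb{V}=V^{\oplus 2}$ plus a trivial summand, and then reduce the general case to it via the Levi $P_d$.
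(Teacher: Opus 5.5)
Your proposal is correct and matches what the paper relies on: the paper gives no proof of its own but quotes Kato's Theorem~2.8, which Kato proves along essentially these Chriss--Ginzburg lines (free basis from the Bruhat-filtered cellular fibration, a rank-one $\mathbb{P}^1$/Koszul computation of $\widetilde{T}_i$ acting on $K^{\breve{G}}(F)\simeq R(\breve{T})$, faithfulness via the finitely many $\breve{T}$-fixed points to import the relations from the polynomial representation of $\widetilde{\mathbb{H}}$, and triangularity plus equal $\mathcal{A}[Q]$-rank for bijectivity). There are two small slips: $\mathbf{T}_{s_i}$ is the bundle with fibre $\mathbb{V}^+_i$ over $\overline{\Ob}_{s_i}=G\times^{P_i}(P_i/B\times P_i/B)$, so it meets $\mathbf{T}_{\Ob_{\id}}=\Delta F$ only in a proper sub-bundle rather than containing it; and $\mathbb{V}$ has no weight $2\epsilon_d$, so $\mathbb{V}^+/\mathbb{V}^+_d$ is exactly the two $\epsilon_d$-lines from the copies of $V$, with the $q_0q_1\mathbf{e}^{\alpha_d}$ term arising from the $\wedge^2$ term of its Koszul complex.
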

\begin{rem}
    The above isomorphism also fits the following commutative diagram
    $$\begin{tikzcd}
	{R(\breve{G})} & {K^{\breve{G}}(\Delta F)} & {K^{\breve{G}}(\mathbf{Z})} \\
	{Z(\widetilde{\mathbb{H}})} & {\mathcal{A}[Q]} & {\widetilde{\mathbb{H}}}
	\arrow[hook, from=1-1, to=1-2]
	\arrow["\simeq", from=1-1, to=2-1]
	\arrow[hook, from=1-2, to=1-3]
	\arrow["\simeq", from=1-2, to=2-2]
	\arrow["\simeq", from=1-3, to=2-3]
	\arrow[hook, from=2-1, to=2-2]
	\arrow[hook, from=2-2, to=2-3]
\end{tikzcd}$$ where  $Z(\widetilde{\mathbb{H}})$ is the center of $\widetilde{\mathbb{H}}$, and the first isomorphism $R(\breve(G))\simeq Z(\widetilde{\mathbb{H}})$ is due to Bernstein (cf. \cite[Theorem~2.9]{Ka09} and \cite[(4.2.10)]{Mc03}).
\end{rem}

\begin{lem}
  There is an algebra isomorphism  $\widetilde{\mathbb{H}}_{loc}\simeq\End_{R(\breve{G})_{loc}}(\mathcal{A}[Q]_{loc})$.
\end{lem}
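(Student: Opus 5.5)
The approach is to use the faithful polynomial (Demazure–Lusztig type) representation of $\widetilde{\mathbb{H}}$ on $\mathcal{A}[Q]$ and compare ranks after localization. Concretely, $\widetilde{\mathbb{H}}$ acts on $\mathcal{A}[Q]\simeq R(\breve{T})$: $e^\lambda$ acts by multiplication and $T_i$ by the operator read off from the Bernstein–Lusztig relations. For $i\neq d$ this is
$$T_i(f)=q_2\, s_if+(1-q_2)\frac{f-s_if}{1-e^{-\alpha_i}}\ (\text{or the analogous form}),$$
and the three-parameter version with numerator $(1+q_0q_1)-(q_0+q_1)e^{\epsilon_d}$ is used for $i=d$. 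Geometrically, this is the convolution action of $K^{\breve{G}}(\mathbf{Z})$ on $K^{\breve{G}}(F)\simeq K^{\breve{G}}(\Bm)\simeq R(\breve{T})$, so by Theorem~\ref{exo-H} it is a well-defined algebra map $\rho:\widetilde{\mathbb{H}}\to\End_{R(\breve{G})}(\mathcal{A}[Q])$. The center $Z(\widetilde{\mathbb{H}})\simeq R(\breve{G})=\mathcal{A}[Q]^W$ acts by multiplication, so $\rho$ is $R(\breve{G})$-linear. It therefore localizes to a map $\rho_{loc}:\widetilde{\mathbb{H}}_{loc}\to\End_{R(\breve{G})_{loc}}(\mathcal{A}[Q]_{loc})$.

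Next I compare sizes. By Lemma~\ref{lem:PW} (with $P=B$, $W'=1$), $\mathcal{A}[Q]$ is free over $R(\breve{G})$ of rank $|W|$. Hence the target is a matrix algebra over the field $R(\breve{G})_{loc}$ of dimension $|W|^2$. On the other side, $\widetilde{\mathbb{H}}$ is free over $\mathcal{A}[Q]$ with basis $\{T_w\}$, so it has rank $|W|\cdot|W|=|W|^2$ over $R(\breve{G})$; Corollary~\ref{rk-KZ_f} with $\Lambda_\ff=\{\mathbf{d}\}$ gives the same count via $\vartheta$. So both sides are $R(\breve{G})_{loc}$-vector spaces of dimension $|W|^2$, and it suffices to prove that $\rho_{loc}$ is injective, or equivalently surjective.

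For this I will use surjectivity via Galois descent. Set $K=\mathrm{Frac}(\mathcal{A}[Q])$, a Galois extension of $K^W=R(\breve{G})_{loc}$ with group $W$, and note $\mathcal{A}[Q]_{loc}=K$ since $\mathcal{A}[Q]$ is integral over $R(\breve{G})$. By Artin's theorem, $\End_{K^W}(K)=\bigoplus_{w\in W}K\,w$ (twisted group algebra). The image of $\rho_{loc}$ contains $K$, since it contains $\mathcal{A}[Q]$ and the inverses of elements of $R(\breve{G})$. It also contains each operator $s_i$, because solving the defining formula gives
$$s_i=\bigl(\text{invertible rational function}\bigr)^{-1}\bigl(T_i-c_i\bigr)$$
with $c_i\in K$. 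The coefficient in front of $s_i$ is a nonzero rational function (using independence of $q_0,q_1,q_2$), hence invertible in $K$. Therefore the image equals $K[W]=\End_{K^W}(K)$, so $\rho_{loc}$ is surjective, and by the dimension count it is an isomorphism.

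The main obstacle is checking precisely that $\widetilde{\mathbb{H}}_{loc}=\widetilde{\mathbb{H}}\otimes_{R(\breve{G})}R(\breve{G})_{loc}$ has dimension exactly $|W|^2$, which needs freeness of $\widetilde{\mathbb{H}}$ over its center. I will get this from Lemma~\ref{lem:PW} together with the PBW basis: $\mathcal{A}[Q]$ is free of rank $|W|$ over $\mathcal{A}[Q]^W$, and $\widetilde{\mathbb{H}}=\bigoplus_w T_w\mathcal{A}[Q]$. A second point to verify is that the $i=d$ coefficient $\frac{(1+q_0q_1)-(q_0+q_1)e^{\epsilon_d}}{e^{\alpha_d}-1}$ yields an invertible coefficient for $s_d$; this holds because $T_d$ contains a term $(\text{nonzero})\cdot s_d$.
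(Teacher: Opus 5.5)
Your proof is correct, but it closes the argument from the opposite side from the paper.

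The paper takes \emph{injectivity} as its input. It cites the faithfulness of the polynomial representation of $\widetilde{\mathbb{H}}$ on $\mathcal{A}[Q]$ (Kato's Theorem~2.7, Macdonald (4.3.10)), uses Bernstein's theorem to land in $\End_{R(\breve{G})}(\mathcal{A}[Q])$, and localizes, with flatness keeping the map injective. It then concludes from the common dimension $|W|^2$.

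You instead prove \emph{surjectivity} of $\rho_{loc}$ directly. You identify $\mathcal{A}[Q]_{loc}$ with $K=\mathrm{Frac}(\mathcal{A}[Q])$ and write $\End_{K^W}(K)=\bigoplus_{w\in W}Kw$ (Artin plus Dedekind independence). You then note that $T_i$ acts as $a_i+b_is_i$ with $a_i\in K$ and $b_i\in K^\times$, so the image contains $K$ and every $s_i$.

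What each route buys:
Your argument is self-contained. It re-derives the faithfulness the paper cites: since $\widetilde{\mathbb{H}}$ is $R(\breve{G})$-torsion-free, injectivity of $\rho_{loc}$ gives injectivity of $\rho$. It also identifies $\widetilde{\mathbb{H}}_{loc}$ concretely with $K\rtimes W$. The paper's route is shorter but outsources the key step to the literature.

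Your displayed formula for $T_i$ uses a different convention from the paper's Bernstein--Lusztig relation. Inducing from $T_i\cdot 1=c\in\{-1,q_2\}$ gives $T_i=\frac{1-q_2}{e^{\alpha_i}-1}+\bigl(c-\frac{1-q_2}{e^{\alpha_i}-1}\bigr)s_i$. For $i=d$ with $c\in\{-1,-q_0q_1\}$, the $s_d$-coefficient is $-\frac{(e^{\epsilon_d}-q_0)(e^{\epsilon_d}-q_1)}{e^{2\epsilon_d}-1}$ or $-\frac{(q_0e^{\epsilon_d}-1)(q_1e^{\epsilon_d}-1)}{e^{2\epsilon_d}-1}$. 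Only $b_i\neq 0$ is used, so nothing breaks.

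The ``main obstacle'' you flag is not one. $\widetilde{\mathbb{H}}\simeq\mathcal{A}[Q]^{\oplus|W|}$ as $R(\breve{G})$-modules and $[K:K^W]=|W|$ together give $\dim\widetilde{\mathbb{H}}_{loc}=|W|^2$. This needs no freeness of $\mathcal{A}[Q]$ over $\mathcal{A}[Q]^W$.
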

\begin{proof}
    It is known (cf. \cite[Theorem~2.7]{Ka09} \& \cite[(4.3.10)]{Mc03}) that there is a faithful representation of $\widetilde{\mathbb{H}}$ on $\mathcal{A}[Q]$ over $\mathcal{A}$, by which we have the embedding $\widetilde{\mathbb{H}}\hookrightarrow\End_{\mathcal{A}}(\mathcal{A}[Q])$. Using Bernstein's theorem $R(\breve(G))\simeq Z(\widetilde{\mathbb{H}})$, we have $\widetilde{\mathbb{H}}\hookrightarrow\End_{R(\breve{G})}(\mathcal{A}[Q])$. Taking the localization, we get $\widetilde{\mathbb{H}}_{loc}\simeq\End_{R(\breve{G})_{loc}}(\mathcal{A}[Q]_{loc})$, since $\dim_{R(\breve{G})_{loc}}(\widetilde{\mathbb{H}}_{loc})=\dim_{R(\breve{G})_{loc}}(\End_{R(\breve{G})_{loc}}(\mathcal{A}[Q]_{loc})=|W|^2.$ The proof is complete.
\end{proof}
\begin{prop}\label{rk-Sloc}
   The localization $\widetilde{\mathbb{S}}_{\ff,loc}$ is a matrix algebra over $R(\breve{G})_{loc}$ with dimension $(\sum_{\mathbf{v}\in\Lambda_\ff}|\D_\mathbf{v}|)^2$. Moreover, the center of $\widetilde{\mathbb{S}}_\ff$ is $R(\breve{G})$.
\end{prop}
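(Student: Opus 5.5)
We use the preceding lemma, $\widetilde{\mathbb{H}}_{loc}\simeq\End_{R(\breve{G})_{loc}}(\mathcal{A}[Q]_{loc})$. Since $\mathcal{A}[Q]_{loc}$ is a vector space over the field $R(\breve{G})_{loc}$ of dimension $|W|$, this says $\widetilde{\mathbb{H}}_{loc}$ is a split simple algebra, i.e. a full matrix algebra $\Mat_{|W|}(K)$ with $K=R(\breve{G})_{loc}$.

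\emph{Step 1: the localized modules.} Here $\widetilde{\mathbb{S}}_{\ff,loc}=\End_{\widetilde{\mathbb{H}}_{loc}}(\widetilde{\mathbb{T}}_{\ff,loc})$. This is legitimate because $\widetilde{\mathbb{T}}_\ff$ is finitely generated, indeed free, over $\widetilde{\mathbb{H}}$ up to direct summands, and localization is flat, so $\Hom$ commutes with localization. Every right $\Mat_{|W|}(K)$-module is a direct sum of copies of the unique simple module $K^{|W|}$ (row vectors). Each $x_\mathbf{v}\widetilde{\mathbb{H}}_{loc}$ is such a module. By Lemma~\ref{rk-Tf} it is free over $\mathcal{A}[Q]$ of rank $|\D_\mathbf{v}|$. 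Since $\mathcal{A}[Q]$ has rank $|W|$ over $R(\breve{G})$ by Lemma~\ref{lem:PW} with $P=T$, the $K$-dimension of $x_\mathbf{v}\widetilde{\mathbb{H}}_{loc}$ is $|\D_\mathbf{v}|\,|W|$. Hence it is isomorphic to $(K^{|W|})^{\oplus|\D_\mathbf{v}|}$, and $\widetilde{\mathbb{T}}_{\ff,loc}\simeq (K^{|W|})^{\oplus N}$ with $N=\sum_{\mathbf{v}\in\Lambda_\ff}|\D_\mathbf{v}|$. We also need $\dim_K$ of a $\widetilde{\mathbb{H}}$-module to equal its $R(\breve{G})$-rank. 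This holds because $\mathcal{A}[Q]\simeq R(\breve{T})$ is free of rank $|W|$ over $R(\breve{G})$, using the Bernstein center identification in the remark after Theorem~\ref{exo-H}.

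\emph{Step 2: the endomorphism algebra and centre after localization.} By Schur's lemma, $\End_{\Mat_{|W|}(K)}(K^{|W|})=K$. Therefore $\widetilde{\mathbb{S}}_{\ff,loc}\simeq\Mat_N(K)$, which has dimension $N^2$ and centre $K$. This is the first assertion.

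\emph{Step 3: the centre of $\widetilde{\mathbb{S}}_\ff$ before localization.} First, $R(\breve{G})\simeq Z(\widetilde{\mathbb{H}})$ acts on $\widetilde{\mathbb{T}}_\ff$ by right multiplication, and these maps are $\widetilde{\mathbb{H}}$-linear. This gives $R(\breve{G})\to Z(\widetilde{\mathbb{S}}_\ff)$. The map is injective because $\widetilde{\mathbb{T}}_\ff$ is free over $\mathcal{A}[Q]$, hence torsion-free over $R(\breve{G})$.

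Second, $\widetilde{\mathbb{S}}_\ff$ is torsion-free over $R(\breve{G})$, being a submodule of $\End_{R(\breve{G})}$ of a free module. So it embeds in $\widetilde{\mathbb{S}}_{\ff,loc}$. A central element $z$ of $\widetilde{\mathbb{S}}_\ff$ stays central after localization, since $\widetilde{\mathbb{S}}_{\ff,loc}=\widetilde{\mathbb{S}}_\ff\otimes K$. Hence $z\in K$, i.e. $z=a/b$ with $a,b\in R(\breve{G})$.

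To show $z\in R(\breve{G})$, restrict $z$ to the summand $x_\mathbf{v}\widetilde{\mathbb{H}}$. Its value on $x_\mathbf{v}$ is $x_\mathbf{v}\cdot(a/b)$, and this must lie in $x_\mathbf{v}\widetilde{\mathbb{H}}$. In the $\mathcal{A}[Q]$-basis $\{x_\mathbf{v}T_w\}$, this element has coordinate $a/b$ on $x_\mathbf{v}$. So $a/b\in\mathcal{A}[Q]\cap K=R(\breve{T})^W=R(\breve{G})$. The last equality uses that $R(\breve{T})$ is integral over $R(\breve{G})$ and $R(\breve{G})$ is normal, or directly that $R(\breve{T})\cap\mathrm{Frac}(R(\breve{G}))=R(\breve{T})^W$ because both sides are $W$-fixed inside $\mathrm{Frac}(R(\breve{T}))$.

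\emph{Main obstacle.} The step needing most care is Step 3. One must check that the right action of $a/b$ on $x_\mathbf{v}$ yields coordinate exactly $a/b$. This holds because central elements commute past $x_\mathbf{v}$, so $x_\mathbf{v}\cdot c=c\,x_\mathbf{v}$ with $c\in\mathcal{A}[Q]^W$. One must also justify that the $R(\breve{G})$-rank count in Step 1 matches the $\mathcal{A}[Q]$-rank times $|W|$.
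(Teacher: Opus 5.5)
Your proof is correct and follows essentially the paper's route. You show that $\widetilde{\mathbb{H}}_{loc}$ is a split matrix algebra over $R(\breve{G})_{loc}$, that $\widetilde{\mathbb{T}}_{\ff,loc}$ is $N=\sum_{\mathbf{v}\in\Lambda_\ff}|\D_\mathbf{v}|$ copies of its unique simple module so that $\widetilde{\mathbb{S}}_{\ff,loc}\simeq\Mat_N(R(\breve{G})_{loc})$, and that the centre of $\widetilde{\mathbb{S}}_\ff$ is the localized centre intersected with $\widetilde{\mathbb{S}}_\ff$. Your Step 3 usefully proves the equality $R(\breve{G})_{loc}\cap\widetilde{\mathbb{S}}_\ff=R(\breve{G})$, which the paper only asserts.

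One small correction: your parenthetical claim that $\widetilde{\mathbb{T}}_\ff$ is a direct summand of a free $\widetilde{\mathbb{H}}$-module is false over $\mathcal{A}$ once some $W_\mathbf{v}\neq\{1\}$ (sign-type modules do not split off integrally). It is also unnecessary. Since $\widetilde{\mathbb{H}}$ is module-finite over the Noetherian ring $R(\breve{G})$, the cyclic modules $x_\mathbf{v}\widetilde{\mathbb{H}}$ are finitely presented, and that already makes $\Hom_{\widetilde{\mathbb{H}}}$ commute with the flat localization.
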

\begin{proof}
    By the above lemma, $\widetilde{\mathbb{H}}_{loc}$ is a matrix algebra over $R(\breve{G})_{loc}$. It has a unique simple (right) module $M$ of dimension $|W|$. So $\widetilde{\mathbb{T}}_{\ff,loc}\simeq M^{\oplus n}$ as right $\widetilde{\mathbb{H}}_{loc}$-modules, where $n=\sum_{\mathbf{v}\in\Lambda_\ff}|\D_\mathbf{v}|.$
    We have \begin{align*}
        \widetilde{\mathbb{S}}_{\ff,loc}&=(\End_{\widetilde{\mathbb{H}}}\widetilde{\mathbb{T}}_{\ff})_{loc}=\End_{\widetilde{\mathbb{H}}_{loc}}\widetilde{\mathbb{T}}_{\ff,loc}\simeq\End_{\widetilde{\mathbb{H}}}(M^{\oplus n})\\&=\End_{\widetilde{\mathbb{H}}_{loc}}(M)\otimes_{R(\breve{G})_{loc}}\Mat_{n}(R(\breve{G})_{loc})=\Mat_{n}(R(\breve{G})_{loc}).
    \end{align*}
Therefore, the center of $\widetilde{\mathbb{S}}_{\ff,loc}$ is $R(\breve{G})_{loc}$. Since $\widetilde{\mathbb{S}}_\ff$ generates the entire algebra $\widetilde{\mathbb{S}}_{\ff,loc}$, the centralizer of it in $\widetilde{\mathbb{S}}_{\ff,loc}$ is $R(\breve{G})_{loc}$. Hence, the center of $\widetilde{\mathbb{S}}_{\ff}$ is $R(\breve{G})_{loc}\cap\widetilde{\mathbb{S}}_{\ff}=R(\breve{G})$.
\end{proof}

\subsection{Distinguished open subvarieties}
For $1\leq i\leq d$, let $P^{-}_{i}=\overline{B^{-}s_iB^{-}}$ be the parabolic subgroup opposite to $P_{i}$ and $P^-_\mathbf{v}$ be the parabolic subgroup opposite to $P_\mathbf{v}$. Let $U^-_i$ (resp., $U^-_{\mathbf{v}}$) be unipotent radical of $P^-_i$ (resp., $P^-_\mathbf{v}$). Let $L_{i}=P^{-}_{i}\cap P_{i}$ be the standard Levi factor. Consider $P^{-}_{i}B/B\simeq U^{-}_{i}\times P_{i}/B$, which is an open subvariety of $\Bm$. Since the canonical projection $p:\mathfrak{F}_\mathbf{v}\to\mathfrak{B}$ is faithfully flat, $P^-_iP_\mathbf{v}/P_\mathbf{v}=p(P^-_iB/B)$ is an open subvariety of $\mathfrak{F}_\mathbf{v}$.
Denote $\mathbb{V}^{+}_{i}=\mathbb{V}^{+}\cap s_{i}(\mathbb{V}^{+})$ and $B_i=L_{i}\cap B$. 
\begin{lem}\label{lem:pi=ULV}
    We have an $\breve{L}_i$-equivariant isomorphism of varieties
    $$\pi^{-1}(P^{-}_{i}B/B)\simeq U^{-}_{i}\times (L_{i}\times^{B_i}\mathbb{V}^{+}/\mathbb{V}^{+}_{i})\times\mathbb{V}^{+}_{i}.$$
\end{lem}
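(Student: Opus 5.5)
Here $\pi:F=G\times^B\mathbb{V}^+\to\Bm$ is the bundle projection, so $\pi^{-1}(P^-_iB/B)=P^-_iB\times^B\mathbb{V}^+$. The plan is to compute this in two stages: first peel off the unipotent factor $U^-_i$, then split $\mathbb{V}^+$ into a $P_i$-stable piece $\mathbb{V}^+_i$ and a one-dimensional quotient $\mathbb{V}^+/\mathbb{V}^+_i$.

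\textbf{Step 1: removing $U^-_i$.} Since $P^-_i=U^-_i\rtimes L_i$, multiplication gives an isomorphism $U^-_i\times L_i\times^{B_i}B\simeq P^-_iB$, because $U^-_i\cap P_i=\{1\}$ and $L_i\cap B=B_i$. Hence
$$\pi^{-1}(P^-_iB/B)\simeq U^-_i\times (L_i\times^{B_i}\mathbb{V}^+).$$
The map is $(u,[l,v])\mapsto[ul,v]$. It is $\breve{L}_i$-equivariant when $\breve{L}_i$ acts on $U^-_i$ by conjugation and on the second factor by left multiplication, with $(\C^\times)^3$ acting on $\mathbb{V}^+$ only. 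This uses that the unipotent radical of $B$ does not enlarge anything: $B=B_i\ltimes U_{P_i}$ and $P_i=L_iU_{P_i}$.

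\textbf{Step 2: splitting $\mathbb{V}^+$.} Write $\beta=\Psi(\alpha_i)$. Because $\Psi$ is $W$-equivariant and $s_i$ permutes $\Pi^+\setminus\{\alpha_i\}$, we get $s_i\Psi(\Pi^+)=(\Psi(\Pi^+)\setminus\{\beta\})\cup\{-\beta\}$. Therefore $\mathbb{V}^+_i=\mathbb{V}^+\cap s_i(\mathbb{V}^+)=\bigoplus_{\lambda\in\Psi(\Pi^+)\setminus\{\beta\}}\mathbb{V}[\lambda]$, which is exactly $\mathbb{V}^+_{\mathbf{v}}$ for $P_{\mathbf{v}}=P_i$. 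So $\mathbb{V}^+_i$ is a $P_i$-module, in particular an $\breve{L}_i$-module. We have $\mathbb{V}^+=\mathbb{V}^+_i\oplus\mathbb{V}[\beta]$ as $\breve{T}$-modules, and $\mathbb{V}^+/\mathbb{V}^+_i\simeq\mathbb{V}[\beta]$ is a one-dimensional $\breve{B}_i$-module. (One-dimensionality holds in Kato's setting: both $\pm\epsilon_i\pm\epsilon_j$ in $\wedge^2V$ and the two $\epsilon_d$-lines in $V^{\oplus2}$ carry distinct $(\C^\times)^3$-weights. If instead $\mathbb{V}[\beta]$ were two-dimensional, the same argument would still go through with a rank-two quotient.)

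This yields a short exact sequence of $\breve{B}_i$-modules $0\to\mathbb{V}^+_i\to\mathbb{V}^+\to\mathbb{V}^+/\mathbb{V}^+_i\to0$. Since $\mathbb{V}^+_i$ extends to an $L_i$-module, the associated bundle $L_i\times^{B_i}\mathbb{V}^+$ fits into an exact sequence of $\breve{L}_i$-bundles over $L_i/B_i\simeq\mathbb{P}^1$. Its sub-bundle $L_i\times^{B_i}\mathbb{V}^+_i\simeq L_i/B_i\times\mathbb{V}^+_i$ is trivial, via $[l,v]\mapsto(lB_i,lv)$.

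\textbf{Step 3: splitting the sequence (main obstacle).} It remains to see that the sequence splits $B_i$-equivariantly, so that $\mathbb{V}^+\simeq\mathbb{V}^+_i\oplus\mathbb{V}[\beta]$ as $\breve{B}_i$-modules. This step needs care and requires a specific check. Now $B_i=T\cdot U_{\alpha_i}$, and $\mathbb{V}[\beta]$ is $U_{\alpha_i}$-stable because $\beta+\Psi(\alpha_i)$-type weights ($\beta+\beta$, or $\beta+\alpha_i$ for $i=d$) are not weights of $\mathbb{V}$. The crucial point is whether $U_{\alpha_i}$ moves $\mathbb{V}[\beta]$ into $\mathbb{V}^+_i$, which happens only if $\beta+\alpha_i$ (equivalently $\beta+\beta$, up to the $\Psi$-twist) is a weight of $\mathbb{V}^+$. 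I would verify directly on weights that $\epsilon_j-\epsilon_{j+1}+(\epsilon_j-\epsilon_{j+1})$ and $\epsilon_d+2\epsilon_d$ are not weights of $\mathbb{V}$, so $\mathbb{V}[\beta]$ is $B_i$-stable and the decomposition $\mathbb{V}^+=\mathbb{V}^+_i\oplus\mathbb{V}[\beta]$ is $B_i$-equivariant.

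\textbf{Conclusion.} With the splitting in hand, $L_i\times^{B_i}\mathbb{V}^+\simeq(L_i\times^{B_i}\mathbb{V}^+/\mathbb{V}^+_i)\times\mathbb{V}^+_i$, using the trivialization of the $L_i$-module part from Step 2. Combining this with Step 1 gives the claimed $\breve{L}_i$-equivariant isomorphism.
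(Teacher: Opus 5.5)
Your proof is correct and follows the same route as the paper: you peel off $U^-_i$ to reach $U^-_i\times L_i\times^{B_i}\mathbb{V}^+$, split $\mathbb{V}^+=\mathbb{V}^+_i\oplus\mathbb{V}[\beta]$ as $\breve{B}_i$-modules, and trivialize $L_i\times^{B_i}\mathbb{V}^+_i$ using that $\mathbb{V}^+_i$ is an $L_i$-module; your weight check, that $\beta+\alpha_i\in\{2(\epsilon_i-\epsilon_{i+1}),\,3\epsilon_d\}$ is not a weight so $U_{\alpha_i}$ acts trivially on $\mathbb{V}[\beta]$, supplies the justification for the $B_i$-splitting that the paper only asserts. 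One small slip is that for $i=d$ the quotient $\mathbb{V}^+/\mathbb{V}^+_d=\mathbb{V}[\epsilon_d]$ is two-dimensional, not one-dimensional: there is one $\epsilon_d$-line in each copy of $V$, told apart only by the $(\C^\times)^3$-action, matching the rank-two Koszul complex in Proposition~\ref{formula2}, but as you anticipated nothing in your argument depends on the rank.
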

 \begin{proof}
     We have $\breve{L}_i$-equivariant isomorphisms of varieties: $$\pi^{-1}(P^{-}_{i}B/B)\simeq U^{-}_{i}\times P_{i}\times^{B}\mathbb{V}^{+}\simeq U^-_i\times L_i\times^{B_i}\mathbb{V}^+.$$ Since $\mathbb{V}^+\simeq\mathbb{V}^+/\mathbb{V}^+_i\oplus\mathbb{V}_i^+$ as $B_i$-modules, we have an $\breve{L}_i$-equivariant isomorphism of vector bundles over $L_i/B_i$: $$L_i\times^{B_i}\mathbb{V}^+\simeq L_i\times^{B_i}(\mathbb{V}^+/\mathbb{V}^+_i \oplus \mathbb{V}_i^+).$$
     Noting that $\mathbb{V}^{+}_{i}$ is a $L_{i}$-module, we have $L_{i}\times^{B_i}\mathbb{V}^{+}_{i}$ is a trivial vector bundle, i.e., $L_{i}\times^{B_i}\mathbb{V}^{+}_{i}\simeq L_{i}/B\times\mathbb{V}^{+}_{i}$. Hence, as $\breve{L}_i$-varieties, $L_{i}\times^{B_i}\mathbb{V}^{+}\simeq (L_{i}\times^{B_i}\mathbb{V}^{+}/\mathbb{V}^{+}_{i})\times\mathbb{V}^{+}_{i}$. The lemma follows.
 \end{proof}

\begin{lem}
    For $\alpha_i\in\Delta_{\mathbf{v}}$, we have an $\breve{L}_i$-equivariant isomorphism $$\pi_{\mathbf{v}}^{-1}(P^{-}_{i}P_{\mathbf{v}}/P_{\mathbf{v}})\simeq U^{-}_{\mathbf{v}}\times\mathbb{V}^{+}_{\mathbf{v}}.$$
\end{lem}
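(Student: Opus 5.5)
The plan is to mimic the proof of Lemma~\ref{lem:pi=ULV}, replacing $B$ by $P_\mathbf{v}$. Since $\alpha_i\in\Delta_\mathbf{v}$, we have $P_i\subseteq P_\mathbf{v}$, hence $L_i\subseteq P_\mathbf{v}$. Moreover $U^-_\mathbf{v}\subseteq U^-_i$ and $P^-_iP_\mathbf{v}=U^-_\mathbf{v}P_\mathbf{v}$, because $U^-_i=(U^-_i\cap U^-_\mathbf{v})\cdot(U^-_i\cap L_\mathbf{v})$ with $U^-_i\cap U^-_\mathbf{v}=U^-_\mathbf{v}$, and the second factor lies in $P_\mathbf{v}$. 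So $P^-_iP_\mathbf{v}/P_\mathbf{v}=U^-_\mathbf{v}P_\mathbf{v}/P_\mathbf{v}$ is the big cell of $\Fm_\mathbf{v}$. The multiplication map $U^-_\mathbf{v}\times P_\mathbf{v}\to U^-_\mathbf{v}P_\mathbf{v}$ is an isomorphism onto an open subset of $G$, since $U^-_\mathbf{v}\cap P_\mathbf{v}=\{1\}$ and the Lie algebras are complementary.

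The key step is
$$\pi_\mathbf{v}^{-1}(U^-_\mathbf{v}P_\mathbf{v}/P_\mathbf{v})=U^-_\mathbf{v}P_\mathbf{v}\times^{P_\mathbf{v}}\mathbb{V}^+_\mathbf{v}\simeq U^-_\mathbf{v}\times\mathbb{V}^+_\mathbf{v},\qquad (u,v)\mapsto [u,v].$$
This map is an isomorphism with inverse $[up,v]\mapsto(u,p\cdot v)$. The inverse is well defined because the decomposition $g=up$ of $g\in U^-_\mathbf{v}P_\mathbf{v}$ is unique and algebraic in $g$.

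Next comes the $\breve{L}_i$-equivariance. Here $\breve{L}_i$ acts on $U^-_\mathbf{v}\times\mathbb{V}^+_\mathbf{v}$ by $l\cdot(u,v)=(lul^{-1},l\cdot v)$, with the $(\C^\times)^3$-factor acting only on $\mathbb{V}^+_\mathbf{v}$. This action is well defined because $L_i\subseteq L_\mathbf{v}$ normalizes $U^-_\mathbf{v}$, and $\mathbb{V}^+_\mathbf{v}$ is a $P_\mathbf{v}$-module, hence an $L_i$-module. Equivariance follows from
$$l\cdot[u,v]=[lu,v]=[lul^{-1}\cdot l,v]=[lul^{-1},l\cdot v].$$

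The only potentially delicate point is the identification $P^-_iP_\mathbf{v}=U^-_\mathbf{v}P_\mathbf{v}$. This is where the hypothesis $\alpha_i\in\Delta_\mathbf{v}$ is used, and it is a standard root-subgroup argument. In contrast to Lemma~\ref{lem:pi=ULV}, no splitting $\mathbb{V}^+=\mathbb{V}^+/\mathbb{V}^+_i\oplus\mathbb{V}^+_i$ is needed. The reason is that the $P_i/B$-direction is absorbed into $P_\mathbf{v}$, so the bundle becomes trivial over the big cell.
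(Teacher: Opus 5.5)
Your proposal is correct and follows the paper's own argument: identify $P^-_iP_\mathbf{v}\simeq U^-_\mathbf{v}\times P_\mathbf{v}$ and then collapse $P_\mathbf{v}\times^{P_\mathbf{v}}\mathbb{V}^+_\mathbf{v}\simeq\mathbb{V}^+_\mathbf{v}$. You additionally supply the details the paper's one-line proof leaves implicit: the big-cell identification using $\alpha_i\in\Delta_\mathbf{v}$, the explicit inverse map, and the check of the $\breve{L}_i$-equivariance.
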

\begin{proof}
As $\breve{L}_i$-varieties, we have $\pi^{-1}_{\mathbf{v}}(P^{-}_iP_\mathbf{v})/P_\mathbf{v}\simeq U^{-}_\mathbf{v}\times P_\mathbf{v}\times^{P_\mathbf{v}}\mathbb{V}^+_\mathbf{v}\simeq U^{-}_\mathbf{v}\times\mathbb{V}^+_\mathbf{v}$.
\end{proof}
Let $M_1=P^{-}_{i}P_{\mathbf{v}}/P_{\mathbf{v}}$, $M_2=M_3=P^{-}_{i}B/B$ and $M_{ij}=M_{i}\times M_{j}$ ($1\leq i,j\leq3$). Let $Z_{12}=Z_{13}=\Ob_{\mathbf{v},\id }\cap M_{12}$, $Z_{23}=\overline{\Ob}_{s_i}\cap M_{23}$ and $\widetilde{Z}_{12}=\widetilde{Z}_{13}=\pi^{-1}_{\mathbf{v},\id}(Z_{12})=\pi^{-1}_{\mathbf{v},\id}(Z_{13})$, $\widetilde{Z}_{23}=\pi^{-1}(Z_{23})$.
Let $f_1:U^{-}_{i}\to U^{-}_{\mathbf{v}}$ be the canonical projection, and $f_2: \mathbb{V}^{+}_{\mathbf{v}}\to\mathbb{V}^{+}$ the canonical embedding.
\begin{lem}\label{L_i decomp}
   We have the following $\breve{L}_i$-equivariant isomorphisms of varieties:
$$\widetilde{Z}_{12}=\widetilde{Z}_{13}\simeq\mathrm{graph}(f_1)\times pt\times L_{i}/B_{i}\times\mathrm{graph}(f_2),$$
   $${\widetilde{Z}_{23}}\simeq\Delta U^{-}_{i}\times (L_{i}/B_{i}\times L_{i}/B_{i}\cup\Delta(L_i\times^{B_i}{\mathbb{V}^+/\mathbb{V}^+_i}))\times\Delta\mathbb{V}_{i}^{+}.$$
   Moreover, the Zariski closure of $\pi^{-1}(\Ob_{s_i}\cap M_{23})$ in $\widetilde{Z}_{23}$ is isomorphic to $$\Delta U^{-}_{i}\times (L_{i}/B_{i}\times L_{i}/B_{i})\times\Delta\mathbb{V}_{i}^{+}.$$
\end{lem}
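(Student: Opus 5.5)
The plan is to write everything in the coordinates supplied by the two preceding lemmas and then compute fibre products factor by factor. By Lemma~\ref{lem:pi=ULV} we have $\pi^{-1}(M_2)=\pi^{-1}(M_3)\simeq U^-_i\times (L_i\times^{B_i}\mathbb{V}^+/\mathbb{V}^+_i)\times\mathbb{V}^+_i$. By the lemma just before it, $\pi_{\mathbf{v}}^{-1}(M_1)\simeq U^-_{\mathbf{v}}\times\mathbb{V}^+_{\mathbf{v}}$. Here $\alpha_i\in\Delta_{\mathbf{v}}$, so $P_i\subseteq P_{\mathbf{v}}$, $U^-_{\mathbf{v}}$ is a quotient of $U^-_i$, and $L_i/B_i=P_i/B\simeq\mathbb{P}^1$. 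All isomorphisms will be taken $\breve{L}_i$-equivariant, so the identifications only need to be checked on a slice and then spread out.

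\textbf{The sets $\widetilde Z_{12}=\widetilde Z_{13}$.} A point of $\Ob_{\mathbf{v},\id}\cap M_{12}$ is a pair $(gP_{\mathbf{v}},gB)$. Write $gB=u\,l B$ with $u\in U^-_i$ and $l\in L_i$; then $gP_{\mathbf{v}}=f_1(u)P_{\mathbf{v}}$, since $P_i\subseteq P_{\mathbf{v}}$. This gives $Z_{12}\simeq\mathrm{graph}(f_1)\times pt\times L_i/B_i$, where the $pt$ records the $L_i/B_i$-coordinate on $M_1$, which is trivial because $L_i\subseteq P_{\mathbf{v}}$. Over such a point, the fibre of $\pi_{\mathbf{v},\id}$ consists of pairs $(x,y)$ with $x\in g\mathbb{V}^+_{\mathbf{v}}$, $y\in g\mathbb{V}^+$ and $x=y$. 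Since $\mathbb{V}^+_{\mathbf{v}}\subseteq\mathbb{V}^+_i$ is $L_i$-stable, we get $g\mathbb{V}^+_{\mathbf{v}}=u\mathbb{V}^+_{\mathbf{v}}$. So the fibre is the diagonal copy of $\mathbb{V}^+_{\mathbf{v}}$ sitting in $\mathbb{V}^+_{\mathbf{v}}\times\mathbb{V}^+$, namely $\mathrm{graph}(f_2)$, and it does not depend on the $L_i/B_i$-coordinate. Hence the vector-bundle part in the $\mathbb{V}^+/\mathbb{V}^+_i$ direction is zero.

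\textbf{The set $\widetilde Z_{23}$.} Since $\overline{\Ob}_{s_i}=\Ob_{\id}\sqcup\Ob_{s_i}$, a pair of Borels in $M_{23}$ lies in $\overline{\Ob}_{s_i}$ exactly when they share the same $U^-_i$-coordinate and lie in the same $P_i$-coset. So $Z_{23}\simeq\Delta U^-_i\times(L_i/B_i\times L_i/B_i)$. For the vector parts, the Steinberg condition $x=y$ with $x\in gl\mathbb{V}^+$ and $y\in gl'\mathbb{V}^+$ splits along $\mathbb{V}^+=\mathbb{V}^+/\mathbb{V}^+_i\oplus\mathbb{V}^+_i$.
\begin{itemize}
\item On $\mathbb{V}^+_i$, which is $L_i$-stable, the condition becomes $\Delta\mathbb{V}^+_i$.
\item On the rank-one quotient $\mathbb{V}^+/\mathbb{V}^+_i=\mathbb{V}[\Psi(\alpha_i)]$ (modulo $\mathbb{V}^+_i$), when $lB\neq l'B$ the two lines $l\mathbb{V}[\Psi(\alpha_i)]$ and $l'\mathbb{V}[\Psi(\alpha_i)]$ must meet only in $0$ modulo $\mathbb{V}^+_i$. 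This is the rank-one computation: inside the $L_i$-module $\mathbb{V}^+_{\mathbf{s}}\supset\mathbb{V}^+_i$, the subspace $\mathbb{V}^+_i+s_i(\mathbb{V}^+)$ collects the $\pm\Psi(\alpha_i)$ weight spaces. The condition therefore forces the quotient component to be $0$ off the diagonal, and to be any common vector on the diagonal.
\end{itemize}
This yields the union $L_i/B_i\times L_i/B_i\cup\Delta(L_i\times^{B_i}\mathbb{V}^+/\mathbb{V}^+_i)$. I will justify the splitting by checking it on the $T$-fixed pairs $(B,B)$ and $(B,s_iB)$ and then transporting by $\breve{L}_i$.

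\textbf{The closure statement.} Over the open orbit $\Ob_{s_i}\cap M_{23}$, the middle factor is $(L_i/B_i\times L_i/B_i)\setminus\Delta$ with zero fibre. Its closure is therefore the full zero section $L_i/B_i\times L_i/B_i$, times the remaining factors, and it does not meet the diagonal line bundle except along its zero section.

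\textbf{Main obstacle.} The delicate step is the middle rank-one analysis. I need to see that for $l\neq l'$ modulo $B_i$, the vector components in $\mathbb{V}^+/\mathbb{V}^+_i$ must vanish, and that this does not perturb the $\mathbb{V}^+_i$-components. To do this I will use that $\mathbb{V}^+=\mathbb{V}^+_i\oplus\mathbb{V}[\Psi(\alpha_i)]$ as $T$-modules and that $\mathbb{V}^+_i$ is $L_i$-stable. Then $\mathbb{V}^+\cap s_i\mathbb{V}^+=\mathbb{V}^+_i$ gives the claim at $(B,s_iB)$, and $L_i$-equivariance spreads it to all pairs with $lB\neq l'B$.
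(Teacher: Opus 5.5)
Your proposal is correct and follows essentially the paper's route: split $Z_{23}$ into its $\Ob_{\id}$ and $\Ob_{s_i}$ pieces, use that $\mathbb{V}^+\cap s_i\mathbb{V}^+=\mathbb{V}^+_i$ is $P_i$-stable to get the trivial $\Delta\mathbb{V}^+_i$ fibre over the off-diagonal $L_i$-orbit of $(B_i,s_iB_i)$, and then take the closure.

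One slip: for $i=d$ the quotient $\mathbb{V}^+/\mathbb{V}^+_d=\mathbb{V}[\epsilon_d]$ is two-dimensional, since it comes from $V^{\oplus 2}$, so ``rank-one'' and ``two lines'' should read ``rank $\le 2$'' and ``planes''. This does not affect the proof, because your actual argument via $\mathbb{V}^+\cap s_i\mathbb{V}^+=\mathbb{V}^+_i$ and $L_i$-transitivity on off-diagonal pairs never uses the rank.
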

  \begin{proof}
 Firstly, we have \begin{align*}
\widetilde{Z}_{12}&\simeq(P^{-}_{i}\times B)\times^{B}\mathrm{graph}(f_2)\simeq\mathrm{graph}(f_1)\times P_{i}\times^{B}\mathrm{graph}(f_2)\\&\simeq\mathrm{graph}(f_1)\times P_{i}\cdot(P_{\mathbf{v}},B)\times\mathrm{graph}(f_2)\simeq\mathrm{graph}(f_1)\times pt\times L_{i}/B_{i}\times\mathrm{graph}(f_2).
  \end{align*}
Secondly,
     $\pi^{-1}(\Ob_{s_i}\cap M_{23})\simeq (P^{-}_{i}B)\times^{B\cap s_{i}B}\Delta\mathbb{V}_{i}^{+}\simeq \Delta U^{-}_{i}\times P_{i}\times^{B\cap s_{i}B}\Delta\mathbb{V}^{+}_{i}$. Noting that $\mathbb{V}^{+}_{i}$ is a $P_{i}$-module, we have
     \begin{align*}
        P_{i}\times^{B\cap s_{i}B}\Delta\mathbb{V}^{+}_{i}&\simeq P_{i}/(B\cap s_{i}B)\times\Delta\mathbb{V}^{+}_{i}\simeq L_{i}\cdot (B_{i},s_{i}B_{i})\times\Delta\mathbb{V}^{+}_{i}\\&=(L_{i}/B_{i}\times L_{i}/B_{i}\backslash\Delta(L_{i}/B_{i}))\times\Delta\mathbb{V}^{+}_{i}.
     \end{align*}
      Hence $\pi^{-1}(\Ob_{s_i}\cap M_{23})\simeq\Delta U^{-}_{i}\times (L_{i}/B_{i}\times L_{i}/B_{i}\backslash \Delta (L_i/B_i))\times\Delta\mathbb{V}_{i}^{+}$. Also, $$\pi^{-1}(\Ob_\id\cap M_{23})=\Delta(P^{-}_iB/B)\simeq \Delta U^{-}_{i}\times\Delta(L_i\times^{B_i}{\mathbb{V}^+/\mathbb{V}^+_i}))\times\Delta\mathbb{V}_{i}^{+}.$$
     Hence $$\widetilde{Z}_{23}=\pi^{-1}(\Ob_{s_i}\cap Z_{23})\cup\pi^{-1}(\Ob_{\id}\cap Z_{23})\simeq\Delta U^{-}_{i}\times (L_{i}/B_{i}\times L_{i}/B_{i}\cup \Delta(L_i\times^{B_i}{\mathbb{V}^+/\mathbb{V}^+_i}))\times\Delta\mathbb{V}_{i}^{+}.$$ Combining the above, the last assertion is clear.
 \end{proof}

 \begin{cor}\label{restriction}  As $\breve{L}_i$-varieties, $\mathbf{T}_{s_i}\cap\widetilde{Z}_{23}\simeq\Delta U^{-}_{i}\times (L_{i}/B_{i}\times L_{i}/B_{i})\times\Delta\mathbb{V}_{i}^{+}$.
 \end{cor}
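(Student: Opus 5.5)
The plan is to reduce the statement to the last assertion of Lemma~\ref{L_i decomp}, using the elementary fact that taking Zariski closure commutes with restriction to an open subset. Recall that $\mathbf{T}_{s_i}=\overline{\pi^{-1}(\Ob_{s_i})}$, with the closure taken in $\mathbf{Z}$, and that $\widetilde{Z}_{23}=\pi^{-1}(\overline{\Ob}_{s_i}\cap M_{23})$.

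First I will check that $\widetilde{Z}_{23}$ is open in the closed subvariety $\pi^{-1}(\overline{\Ob}_{s_i})$ of $\mathbf{Z}$, which contains $\mathbf{T}_{s_i}$. Indeed, $M_{23}=P^-_iB/B\times P^-_iB/B$ is open in $\Bm\times\Bm$. Hence $\widetilde{Z}_{23}=\pi^{-1}(\overline{\Ob}_{s_i})\cap\pi^{-1}(M_{23})$ is open in $\pi^{-1}(\overline{\Ob}_{s_i})$. Since $\pi$ is continuous and $\overline{\Ob}_{s_i}$ is closed, $\mathbf{T}_{s_i}\subseteq\pi^{-1}(\overline{\Ob}_{s_i})$.

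Next I will apply the general topological fact: for a subset $S$ of a space $X$ and an open $U\subseteq X$, one has $\overline{S}\cap U=\overline{S\cap U}^{\,U}$, the closure of $S\cap U$ taken in $U$. Take $X=\pi^{-1}(\overline{\Ob}_{s_i})$, $S=\pi^{-1}(\Ob_{s_i})$ and $U=\widetilde{Z}_{23}$. The closure of $S$ in $X$ equals its closure in $\mathbf{Z}$, because $X$ is closed. This gives
$$\mathbf{T}_{s_i}\cap\widetilde{Z}_{23}=\overline{\pi^{-1}(\Ob_{s_i}\cap M_{23})}^{\,\widetilde{Z}_{23}}.$$
By the last assertion of Lemma~\ref{L_i decomp}, the right-hand side is $\breve{L}_i$-equivariantly isomorphic to $\Delta U^-_i\times(L_i/B_i\times L_i/B_i)\times\Delta\mathbb{V}^+_i$.

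The only point needing care is equivariance. All the identifications in Lemma~\ref{L_i decomp} are $\breve{L}_i$-equivariant, and $\mathbf{T}_{s_i}$ and $\widetilde{Z}_{23}$ are $\breve{L}_i$-stable subvarieties: $\mathbf{T}_{s_i}$ is $\breve{G}$-stable, and $P^-_i$ is normalized by $L_i$. So the isomorphism restricts equivariantly, and the corollary follows. I do not expect a genuine obstacle here; the argument is a formal consequence of the lemma.
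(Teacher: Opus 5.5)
Your proposal is correct and follows the paper's proof. The paper likewise notes that $\widetilde{Z}_{23}$ is open in the closed subvariety $\pi^{-1}(\overline{\Ob}_{s_i})$ (which it loosely calls $\mathbf{T}_i$), so that $\mathbf{T}_{s_i}\cap\widetilde{Z}_{23}$ is the closure of $\pi^{-1}(\Ob_{s_i}\cap M_{23})$ in $\widetilde{Z}_{23}$, and then invokes the last assertion of Lemma~\ref{L_i decomp}; your version simply spells out the point-set step and the $\breve{L}_i$-stability more carefully.
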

 \begin{proof}
     Since $\widetilde{Z}_{23}$ is an open subvariety of $\mathbf{T}_i$, we have
     \begin{align*}      \mathbf{T}_{s_i}\cap\widetilde{Z}_{23}=\overline{\pi^{-1}(\Ob_{s_i}\cap M_{23})}\simeq\Delta U^{-}_{i}\times (L_{i}/B_{i}\times L_{i}/B_{i})\times\Delta\mathbb{V}_{i}^{+},
     \end{align*} where the second and third closures are taken in $\widetilde{Z}_{23}$.
 \end{proof}

 Recall the ambient spaces of $\widetilde{Z}_{12}$ and $\widetilde{Z}_{23}$ are $$\pi_{\mathbf{v}}^{-1}(M_1)\times\pi^{-1}(M_2)\simeq U^{-}_{\mathbf{v}}\times U^{-}_{i}\times pt\times(L_i\times^{B_i}\mathbb{V}^+/\mathbb{V}^+_i)\times \mathbb{V}^+_\mathbf{v}\times\mathbb{V}^+_i$$ and $$\pi^{-1}(M_2)\times\pi^{-1}(M_3)\simeq U^{-}_i\times U^{-}_i\times(L_i\times^{B_i}\mathbb{V}^+/\mathbb{V}^+_i)\times(L_i\times^{B_i}\mathbb{V}^+/\mathbb{V}^+_i)\times\mathbb{V}^+_i\times\mathbb{V}^+_i,$$ respectively.
\begin{cor}
We have $\widetilde{Z}_{12}\circ\widetilde{Z}_{23}=\widetilde{Z}_{13}$. As a consequence, there is a well-defined convolution product
$$K^{\breve{L}_i}(\widetilde{Z}_{12})\times K^{\breve{L}_i}(\widetilde{Z}_{23})\stackrel{*}{\longrightarrow}K^{\breve{L}_i}(\widetilde{Z}_{13}).$$
\end{cor}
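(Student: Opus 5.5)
We verify the set-theoretic identity $\widetilde{Z}_{12}\circ\widetilde{Z}_{23}=\widetilde{Z}_{13}$ directly in the product coordinates of Lemma~\ref{L_i decomp}. The well-definedness of the convolution then follows by checking the properness condition of \cite[\S2.7]{CG97}. Concretely, we work with the open pieces $\pi_{\mathbf{v}}^{-1}(M_1)\simeq U^-_{\mathbf{v}}\times\mathbb{V}^+_{\mathbf{v}}$ and $\pi^{-1}(M_2)=\pi^{-1}(M_3)\simeq U^-_i\times(L_i\times^{B_i}\mathbb{V}^+/\mathbb{V}^+_i)\times\mathbb{V}^+_i$. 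In these coordinates, Lemma~\ref{L_i decomp} presents $\widetilde{Z}_{12}$ and $\widetilde{Z}_{23}$ as products of factors. The composition $\circ$ is compatible with products, since each of the three ambient spaces splits as a product along the $U^-$-factor, the $L_i/B_i$-bundle factor and the $\mathbb{V}^+$-factor. So it suffices to compose factor by factor.

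On the $U^-$-factor we compose $\mathrm{graph}(f_1)\circ\Delta U^-_i=\mathrm{graph}(f_1)$. On the vector-space factor we compose $\mathrm{graph}(f_2)\circ\Delta\mathbb{V}^+_i$. This gives back $\mathrm{graph}(f_2)$ once we note that the $\mathbb{V}^+$-coordinate of $\widetilde{Z}_{12}$ is split compatibly with $\mathbb{V}^+=\mathbb{V}^+/\mathbb{V}^+_i\oplus\mathbb{V}^+_i$. Here we use $\alpha_i\in\Delta_{\mathbf{v}}$, so that $\mathbb{V}^+_{\mathbf{v}}\subseteq\mathbb{V}^+_i$ and hence $f_2$ lands in the $\mathbb{V}^+_i$-summand, with zero component in $\mathbb{V}^+/\mathbb{V}^+_i$. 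On the middle factor, $\widetilde{Z}_{12}$ is $pt\times L_i/B_i$, viewed as the zero section inside the bundle $L_i\times^{B_i}\mathbb{V}^+/\mathbb{V}^+_i$, again by the previous remark. Its composition with $L_i/B_i\times L_i/B_i\cup\Delta(L_i\times^{B_i}\mathbb{V}^+/\mathbb{V}^+_i)$ is contained in $pt\times L_i/B_i$. Conversely, it already contains $pt\times L_i/B_i$ via the diagonal piece (or the first piece). Hence $\widetilde{Z}_{12}\circ\widetilde{Z}_{23}=\widetilde{Z}_{13}$.

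For the convolution to be well defined, we need the projection $p_{13}\colon p_{12}^{-1}(\widetilde{Z}_{12})\cap p_{23}^{-1}(\widetilde{Z}_{23})\to\pi_{\mathbf{v}}^{-1}(M_1)\times\pi^{-1}(M_3)$ to be proper, with image in $\widetilde{Z}_{13}$. The factor computation above shows that, in each coordinate, the fibre of $p_{13}$ over a point of $\widetilde{Z}_{13}$ is determined uniquely on the $U^-$- and $\mathbb{V}^+$-factors. On the middle factor the fibre is a closed subset of the projective variety $L_i/B_i\simeq\mathbb{P}^1$. So $p_{13}$ factors as a closed embedding into $\widetilde{Z}_{13}\times L_i/B_i$ followed by the projection, and is therefore proper. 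The varieties involved are smooth open subsets of $F_{\mathbf{v}}$ and $F$, so the standard construction of \cite[\S5.2]{CG97} gives the $\breve{L}_i$-equivariant convolution.

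The main obstacle is the middle factor. There we must carefully justify that $\widetilde{Z}_{12}$ sits over the zero section of $L_i\times^{B_i}\mathbb{V}^+/\mathbb{V}^+_i$, which uses $\mathbb{V}^+_{\mathbf{v}}\subseteq\mathbb{V}^+_i$. We must also control the non-diagonal component $L_i/B_i\times L_i/B_i$ of $\widetilde{Z}_{23}$, which lives only over the zero section. I would first check this on the explicit $\mathbb{P}^1$ picture given by $L_i/B_i$.
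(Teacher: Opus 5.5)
Your proposal is correct and follows the same route as the paper, which likewise composes factor by factor in the product decomposition of Lemma~\ref{L_i decomp}, getting $\mathrm{graph}(f_1)\circ\Delta U^-_i=\mathrm{graph}(f_1)$, $pt\times L_i/B_i$ on the middle factor, and $\mathrm{graph}(f_2)\circ\Delta\mathbb{V}^+_i=\mathrm{graph}(f_2)$. Your two additional checks are details the paper leaves implicit in its one-line ``the corollary follows'': first, that $\widetilde{Z}_{12}$ lies over the zero section because $\mathbb{V}^+_{\mathbf{v}}\subseteq\mathbb{V}^+_i$ when $\alpha_i\in\Delta_{\mathbf{v}}$; second, that $p_{13}$ is proper because its fibres are $L_i/B_i\simeq\mathbb{P}^1$.
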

\begin{proof}
Set 
\begin{align*}%\label{def:M}
\mathcal{M}=(L_i/B_i\times L_i/B_i)\cup\Delta(L_i\times^{B_i}\mathbb{V}^{+}/\mathbb{V}^+_i).
\end{align*}
    By Lemma \ref{L_i decomp} and using the set-theoretic K\"unneth formula for convolution product, we have \begin{align*}
\widetilde{Z}_{12}\circ\widetilde{Z}_{23}&\simeq(\mathrm{graph}(f_1)\times pt\times L_{i}/B_{i}\times\mathrm{graph}(f_2))\circ(\Delta U^{-}_{i}\times \mathcal{M}\times\Delta\mathbb{V}_{i}^{+})\\&\simeq (\mathrm{graph}(f_1)\circ\Delta U^{-}_{i})\times((pt\times L_{i}/B_i)\circ \mathcal{M})\times(\mathrm{graph}(f_2)\circ\Delta\mathbb{V}^{+}_{i})\\&=\mathrm{graph}(f_1)\times pt\times L_{i}/B_{i}\times\mathrm{graph}(f_2)\simeq\widetilde{Z}_{13}.
    \end{align*} The corollary follows.
\end{proof}

%----------------------------------
\subsection{Key multiplication formulas}
%----------------------------------
There is a natural map $\Phi:K^{\breve{G}}(\mathbf{T}_{\mathbf{v},\id})\to K^{\breve{L}_i}(\widetilde{Z}_{13})$, which is the composition of the forgetful map $K^{\breve{G}}(\mathbf{T}_{\mathbf{v},\id})\to K^{\breve{L}_i}(\mathbf{T}_{\mathbf{v},\id})$ and the restriction map $K^{\breve{L}_i}(\mathbf{T}_{\mathbf{v},\id})\to K^{\breve{L}_i}(\widetilde{Z}_{13})$.

\begin{lem}\label{Phi}
   The map $\Phi$ is an isomorphism of $R(\breve{G})$-modules.
\end{lem}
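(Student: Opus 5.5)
The plan is to identify both sides with representation rings and check that $\Phi$ becomes an isomorphism under these identifications.

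\emph{Source.} Since $\mathbf{T}_{\mathbf{v},\id}=\overline{\pi^{-1}_{\mathbf{v}\mathbf{d}}(\Ob_{\mathbf{v},\id})}$ and $\Ob_{\mathbf{v},\id}=G\cdot(P_\mathbf{v},B)$ is the closed orbit, I expect $\mathbf{T}_{\mathbf{v},\id}=\mathbf{T}_{\Ob_{\mathbf{v},\id}}$ to be a $\breve{G}$-equivariant vector bundle over $\Ob_{\mathbf{v},\id}\simeq G/B$. Its fibre over $(P_\mathbf{v},B)$ is $\mathbb{V}^+_\mathbf{v}\cap\mathbb{V}^+=\mathbb{V}^+_\mathbf{v}$. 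Thom isomorphism and induction then give
\[
K^{\breve{G}}(\mathbf{T}_{\mathbf{v},\id})\simeq K^{\breve{G}}(G/B)\simeq R(\breve{B})\simeq R(\breve{T}).
\]

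\emph{Target.} By Lemma~\ref{L_i decomp}, $\widetilde{Z}_{13}\simeq \mathrm{graph}(f_1)\times pt\times L_i/B_i\times \mathrm{graph}(f_2)$ as $\breve{L}_i$-varieties. Here $\mathrm{graph}(f_1)\simeq U^-_i$ and $\mathrm{graph}(f_2)\simeq \mathbb{V}^+_\mathbf{v}$ are affine spaces on which $\breve{L}_i$ acts linearly, by conjugation on $U^-_i$ and through the module structure on $\mathbb{V}^+_\mathbf{v}$. Hence $\widetilde{Z}_{13}$ is an $\breve{L}_i$-equivariant vector bundle over $L_i/B_i$. Thom isomorphism and induction give
\[
K^{\breve{L}_i}(\widetilde{Z}_{13})\simeq K^{\breve{L}_i}(L_i/B_i)\simeq R(\breve{B}_i)\simeq R(\breve{T}).
\]

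\emph{Compatibility.} The key step is to show that, under these identifications, $\Phi$ becomes the identity of $R(\breve{T})$. I would argue via the commutative square of pullbacks along the zero sections. The zero section $L_i/B_i\hookrightarrow \widetilde{Z}_{13}$ corresponds to the point $(1,B)$ in the $U^-_i$-factor and to $0$ in the fibre. It is the restriction of the zero section $G/B\hookrightarrow \mathbf{T}_{\mathbf{v},\id}$ to the open piece $P^-_iB/B\supset L_i/B_i$. Since pullback to zero sections inverts the Thom isomorphism and commutes with forgetting and restriction, $\Phi$ corresponds to the composition
\[
K^{\breve{G}}(G/B)\to K^{\breve{L}_i}(G/B)\to K^{\breve{L}_i}(L_i/B_i).
\]
Both ends are identified with $R(\breve{T})$ by sending $G\times^{B}\C_\lambda$ to $e^\lambda$ (resp.\ $L_i\times^{B_i}\C_\lambda$ to $e^\lambda$). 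The restriction of $G\times^B\C_\lambda$ to $L_i/B_i=L_iB/B$ is $L_i\times^{B_i}\C_\lambda$, so the composite is the identity on $R(\breve{T})$. Hence $\Phi$ is bijective.

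\emph{$R(\breve{G})$-linearity.} This is clear, since forgetful and restriction maps commute with the $R(\breve{G})$-action given by pulling back characters from a point.

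The main obstacle I anticipate is the identification of $\mathbf{T}_{\mathbf{v},\id}$ with the vector bundle $G\times^{B}\mathbb{V}^+_\mathbf{v}$ over $G/B$, in particular that the closure adds nothing. This should follow from the closedness of $\Ob_{\mathbf{v},\id}$ in $\Fm_\mathbf{v}\times\Bm$ and properness of $\pi_{\mathbf{v}\mathbf{d}}$ on fibres, since the preimage of a closed orbit is closed and hence equals its closure. A secondary point to verify is that the isomorphisms in Lemma~\ref{L_i decomp} are compatible with the embedding $\widetilde{Z}_{13}\subseteq \mathbf{T}_{\mathbf{v},\id}$, so that the zero sections really match; I would check this directly at the base point $(P_\mathbf{v},B,0)$ and then use $\breve{L}_i$-equivariance.
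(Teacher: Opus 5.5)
Your proposal is correct and follows essentially the paper's route. The paper's proof is a commutative diagram of Thom isomorphisms that reduces $\Phi$ to the forgetful-plus-restriction map $K^{\breve{G}}(\Bm)\simeq R(\breve{T})\to K^{\breve{L}_i}(L_i/B_i)\simeq R(\breve{T})$. It passes through the open piece $Z_{13}\simeq U^-_i\times L_i/B_i$ of $\Ob_{\mathbf{v},\id}$ and treats $Z_{13}\to L_i/B_i$ as an affine bundle, whereas you reach the same reduction by pulling back along zero sections; your check that the diagram commutes is more explicit than the paper's.

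One small caveat: conjugation on $U^-_i$ is not literally linear. Either linearize it via the $L_i$-equivariant exponential map $\mathfrak{u}^-_i\simeq U^-_i$, or use the Thom isomorphism for affine bundles, as the paper does.
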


\begin{proof}
Noting that $Z_{13}\simeq P^{-}_{i}B\cdot (P_{\mathbf{v}},B)\simeq U^{-}_{i}\times P_{i}\cdot(P_{\mathbf{v}},B)\simeq U^{-}_{i}\times pt\times L_{i}/B_{i}$, the third projection $Z_{13}\to L_{i}/B_{i}$ is an affine bundle.
We have the following commutative diagram
$$
\begin{tikzcd}
{K^{\breve{G}}(\mathbf{T}_{\mathbf{v},\id})} \arrow[dd,"\Phi"] &  & {K^{\breve{G}}(\Ob_{\mathbf{v},\id })\simeq K^{\breve{G}}(\Bm)} \arrow[d, "f"] \arrow[ll, "Thom"'] \arrow[r, "\simeq"] & R(\breve{T}) \arrow[dd, "\simeq"]                                                  \\
                                                            &  & {K^{\breve{L}_i}(\Ob_{\mathbf{v},\id })} \arrow[d, "Res"]                                &                                                                                                      \\
K^{\breve{L}_i}(\widetilde{Z}_{13})               &  & K^{\breve{L}_i}(Z_{13}) \arrow[ll, "Thom"']                                         & K^{\breve{L}_i}(L_{i}/B_{i})\simeq R(\breve{T}) \arrow[l, "Thom"']
\end{tikzcd}$$
where $Thom$ are Thom isomorphisms, $f$ is the forgetful map, and $Res$ is the restriction map. The lemma follows.
\end{proof}
For a principal $B$-bundle (resp., $B_i$-bundle) $M\to M/B$ (resp., $M\to M/B_i$), let $\mathcal{V}\to M/B$ (resp., $\mathcal{V}\to M/B_i$) be a vector bundle. 
Let $\C_{\lambda}$ be the one-dimensional $T$-module with weight $\lambda\in Q$.
Denote the pullback of the sheaf of regular sections of $M\times^{B}\C_{\lambda}$ (resp., $M\times^{B_i}\C_{\lambda}$) to $\mathcal{V}$ by $\mathcal{O}_{\mathcal{V}}(\lambda)$.

Recall $\widetilde{T}_{i}=[\mathcal{O}_{\mathbf{T}_{s_i}}]\in K^{\breve{G}}(\mathbf{Z})$. Moreover, we denote 
\begin{equation}\label{eq:evlambda}
e_{\mathbf{v}}^\lambda:=[\mathcal{O}_{\mathbf{T}_{\mathbf{v},\id}}(-\lambda)]\in K^{\breve{G}}(\mathbf{T}_{\mathbf{v},\id}).
\end{equation}
\begin{prop}\label{formula2}
For $e_{\mathbf{v}}^{\lambda}\in K^{\breve{G}}(\mathbf{T}_{\mathbf{v},\id})$ and $\alpha_i\in\Delta_\mathbf{v}$, we have
    $$e_{\mathbf{v}}^{\lambda}*\widetilde{T}_{i}=\begin{cases}
\frac{e_{\mathbf{v}}^{\lambda}-e_{\mathbf{v}}^{s_{i}(\lambda)-\alpha_i}}{1-e_{\mathbf{v}}^{-\alpha_i}}-q_2\frac{e_{\mathbf{v}}^{\lambda+\alpha_i}-e_{\mathbf{v}}^{s_{i}(\lambda)-2\alpha_i}}{1-e_{\mathbf{v}}^{-\alpha_i}}, & \text{if} \  i\neq d, \\
\frac{e_{\mathbf{v}}^{\lambda}-e_{\mathbf{v}}^{s_{d}(\lambda)-\alpha_d}}{1-e_{\mathbf{v}}^{-\alpha_d}}-(q_0+q_1)\frac{e_{\mathbf{v}}^{\lambda+\epsilon_d}-e_{\mathbf{v}}^{s_{d}(\lambda)-\epsilon_d-\alpha_d}}{1-e_{\mathbf{v}}^{-\alpha_d}}+q_0q_1\frac{e_{\mathbf{v}}^{\lambda+\alpha_d}-e_{\mathbf{v}}^{s_{d}(\lambda)-2\alpha_d}}{1-e_{\mathbf{v}}^{-\alpha_d}}, & \text{if} \  i=d.
\end{cases}$$
\end{prop}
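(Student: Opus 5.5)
We reduce the convolution $e_{\mathbf{v}}^{\lambda}*\widetilde{T}_i$ to a computation on the distinguished open subvarieties and then to a rank-one computation over $L_i/B_i\simeq\mathbb{P}^1$.

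\textbf{Step 1: localize to the open pieces.} Convolution commutes with restriction to the opens $M_{12},M_{23},M_{13}$. Here $\mathbf{T}_{\mathbf{v},\id}\circ\mathbf{T}_{s_i}\subseteq \mathbf{T}_{\mathbf{v},\id}$, because $\alpha_i\in\Delta_\mathbf{v}$ gives $\Ob_{\mathbf{v},\id}\circ\Ob_{s_i}=\Ob_{\mathbf{v},\id}$. By Lemma~\ref{Phi}, $\Phi$ is an isomorphism onto $K^{\breve{L}_i}(\widetilde{Z}_{13})$, so it suffices to compute $\Phi(e_\mathbf{v}^\lambda)*[\mathcal{O}_{\mathbf{T}_{s_i}\cap\widetilde{Z}_{23}}]$ in $K^{\breve{L}_i}(\widetilde{Z}_{13})$. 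We also check that $\Phi$ intertwines the $R(\breve T)$-actions with the twists $e_\mathbf{v}^\mu$; this uses Proposition~\ref{tm coin} together with the diagram in the proof of Lemma~\ref{Phi}.

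\textbf{Step 2: factor the convolution.} By Lemma~\ref{L_i decomp}, Corollary~\ref{restriction} and the Künneth formula for convolution, the factors $\mathrm{graph}(f_1)\circ\Delta U_i^-$ and $\mathrm{graph}(f_2)\circ\Delta\mathbb{V}_i^+$ contribute trivially, since they are graphs composed with diagonals. The problem then reduces to convolving $[\mathcal{O}_{pt\times L_i/B_i}(-\lambda)]$ with $[\mathcal{O}_{L_i/B_i\times L_i/B_i}]$ inside the ambient space $pt\times(L_i\times^{B_i}\mathbb{V}^+/\mathbb{V}^+_i)$.

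The first class sits on the zero section of $\mathcal{E}=L_i\times^{B_i}(\mathbb{V}^+/\mathbb{V}^+_i)$. The second class is the pullback from $\mathbb{P}^1\times\mathbb{P}^1$, again on zero sections. Because the intermediate factor is the total space of $\mathcal{E}$ while the supports lie on the zero section, the convolution is
\[
\pi_{*}\bigl(\mathcal{O}(-\lambda)\otimes \lambda_{-1}(\mathcal{E}^\vee)\bigr)
\]
pushed along $\mathbb{P}^1\to pt$, and then pulled back to $L_i/B_i$. This is the excess-intersection (Koszul) correction; we take it from the self-intersection formula in Appendix~\ref{Apped:B}.

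\textbf{Step 3: identify $\mathcal{E}$ and apply Weyl character formula.} The weights of $\mathbb{V}^+/\mathbb{V}^+_i$ are those $\Psi(\beta)$ with $\beta\in\Pi^+$ and $s_i\Psi(\beta)\notin\Psi(\Pi^+)$, which forces $\beta=\alpha_i$.
\begin{itemize}
\item For $i\neq d$, the weight $\alpha_i$ occurs only in $\wedge^2V$. So $\mathcal{E}$ is a line bundle of weight $\alpha_i$ twisted by $q_2$, and $\lambda_{-1}(\mathcal{E}^\vee)=1-q_2e^{\alpha_i}$ (the sign convention is to be fixed against the $\breve{G}$-action).
\item For $i=d$, $\Psi(\alpha_d)=\epsilon_d$ occurs in both copies of $V$, with twists $q_0$ and $q_1$. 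So $\mathcal{E}$ has rank $2$ and $\lambda_{-1}(\mathcal{E}^\vee)=(1-q_0e^{\epsilon_d})(1-q_1e^{\epsilon_d})=1-(q_0+q_1)e^{\epsilon_d}+q_0q_1e^{\alpha_d}$.
\end{itemize}
Finally, apply $\pi_*$ for $\mathbb{P}^1=L_i/B_i$ through the Demazure operator $\mu\mapsto \frac{e^{\mu}-e^{s_i\mu-\alpha_i}}{1-e^{-\alpha_i}}$ (Weyl character formula), term by term. This produces exactly the two-term and three-term expressions stated.

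\textbf{Main obstacle.} The delicate step is Step 2. Justifying the Koszul/excess factor needs Tor-independence (cleanness) of the fiber product on the open piece. It also needs the sign and dual conventions to be pinned down: whether the factor is $e^{\lambda+\alpha_i}$ or $e^{\lambda-\alpha_i}$, which is governed by $e^\lambda_\mathbf{v}=[\mathcal{O}(-\lambda)]$ and the $z^{-1}$ action. I would first verify the conventions on the case $\mathbf{v}=\mathbf{d}$, $d=1$, by comparing with Theorem~\ref{exo-H}.
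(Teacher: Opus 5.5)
Your proposal is correct and follows essentially the same route as the paper. The paper also transports the convolution via $\Phi$ to the $\breve{L}_i$-equivariant open pieces and splits it with the K\"unneth formula, using Lemma~\ref{L_i decomp} and Corollary~\ref{restriction}. It then computes the self-intersection of the zero section in $\widetilde{\mathbb{V}}_i=L_i\times^{B_i}(\mathbb{V}^+/\mathbb{V}^+_i)$ by the Koszul complex: rank one with $q_2$ for $i\neq d$, rank two with $q_0,q_1$ for $i=d$. It finishes with the rank-one Weyl character formula. The Tor-independence you flag as the main obstacle is not a separate issue: the Koszul resolution computes $[\mathcal{O}_{L_i/B_i}(-\lambda)]\otimes_{\widetilde{\mathbb{V}}_i}[\mathcal{O}_{L_i/B_i}]$ directly, which is exactly how the paper proceeds.
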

\begin{proof}
     We have a diagram as follows:
   \begin{equation}\label{com diag}
  \begin{tikzcd}
{K^{\breve{G}}(\mathbf{T}_{\mathbf{v},\id})\times K^{\breve{G}}(\mathbf{Z}^{\le s_i})} \arrow[rr, "*"] \arrow[d] &  & {K^{\breve{G}}(\mathbf{T}_{\mathbf{v},\id})} \arrow[d, "\simeq"] \\
K^{\breve{L}_i}(\widetilde{Z}_{12})\times K^{\breve{L}_i}(\widetilde{Z}_{23}) \arrow[rr, "*"]      &  & K^{\breve{L}_i}(\widetilde{Z}_{13}) .
\end{tikzcd}\end{equation}
The left vertical arrow is a composition of the forgetful map $$K^{\breve{G}}(\mathbf{T}_{\mathbf{v},\id})\times K^{\breve{G}}(\mathbf{Z}^{\le s_i})\longrightarrow K^{\breve{L}_i}(\mathbf{T}_{\mathbf{v},\id})\times K^{\breve{L}_i}(\mathbf{Z}^{\le s_i})$$ and the restriction map $$K^{\breve{L}_i}(\mathbf{T}_{\mathbf{v},\id})\times K^{\breve{L}_i}(\mathbf{Z}^{\le s_i})\longrightarrow K^{\breve{L}_i}(\widetilde{Z}_{12})\times K^{\breve{L}_i}(\widetilde{Z}_{23}).     
    $$
The right vertical map is due to Lemma~\ref{Phi}. The above diagram is commutative, due to the exactness of the forgetful functor and the flat base change. Then we can use the bottom arrow of the diagram to calculate the convolution product.

Thanks to Lemma \ref{L_i decomp}, we have two K\"unneth isomorphisms by external tensor product:
$$K^{\breve{L}_i}(\mathrm{graph}(f_1))\otimes_{R(\breve{L}_i)} K^{\breve{L}_i}(pt\times L_{i}/B_{i})\otimes_{R(\breve{L}_i)} K^{\breve{L}_i}(\mathrm{graph}(f_2))\to K^{\breve{L}_i}(\widetilde{Z}_{12})=K^{\breve{L}_i}(\widetilde{Z}_{13}),$$
$$K^{\breve{L}_i}(\Delta U^{-}_{i})\otimes_{R(\breve{L}_i)} K^{\breve{L}_i}(\mathcal{M})\otimes_{R(\breve{L}_i)}K^{\breve{L}_i}(\Delta\mathbb{V}^{+}_{i})\to K^{\breve{L}_i}(\widetilde{Z}_{23})$$ via $(\mathcal{L}_1,\mathcal{L}_2,\mathcal{L}_3)\mapsto\mathcal{L}_1\boxtimes\mathcal{L}_2\boxtimes\mathcal{L}_3$ and $(\mathcal{L}'_1,\mathcal{L}'_2,\mathcal{L}'_3)\mapsto\mathcal{L}'_1\boxtimes\mathcal{L}'_2\boxtimes\mathcal{L}'_3$, respectively. By a K-theoretic version K\"unneth formula for convolution product, we have $$(\mathcal{L}_1\boxtimes\mathcal{L}_2\boxtimes\mathcal{L}_{3})*(\mathcal{L}'_1\boxtimes\mathcal{L}'_2\boxtimes\mathcal{L}'_{3})=(\mathcal{L}_1*\mathcal{L}'_1)\boxtimes(\mathcal{L}_2*\mathcal{L}'_2)\boxtimes(\mathcal{L}_{3}*\mathcal{L}'_{3}).$$ 
Here we recall that the ambient spaces of $\mathrm{graph}(f_1),\ pt\times L_i/B_i,\ \mathrm{graph}(f_2),\ \Delta U^-_i,\ M,\ \Delta\mathbb{V}^+_i$ are $U^-_\mathbf{v}\times U^-_i,\ pt\times\widetilde{\mathbb{V}}_i,\ \mathbb{V}^+_\mathbf{v}\times\mathbb{V}^+_i,\ U^-_i\times U^-_i,\ \widetilde{\mathbb{V}}_i\times\widetilde{\mathbb{V}}_i,\ \mathbb{V}^+_i\times\mathbb{V}^+_i$, respectively, where we simply denote $L_{i}\times^{B_{i}}\mathbb{V}^{+}/\mathbb{V}^{+}_{i}$ by $\widetilde{\mathbb{V}}_{i}$.

It follows from Lemma~\ref{L_i decomp} and Corollary~\ref{restriction} that 
\begin{align*}
    e_{\mathbf{v}}^{\lambda}\mid_{\widetilde{Z}_{12}}&=[\mathcal{O}_{\mathrm{graph}(f_1)}\boxtimes\mathcal{O}_{L_i/B_i}(-\lambda)\boxtimes\mathcal{O}_{\mathrm{graph}(f_2)}], \\
    \widetilde{T}_i\mid_{\widetilde{Z}_{23}}&=[\mathcal{O}_{\Delta U^{-}_{i}}\boxtimes(\mathcal{O}_{L_i/B_i}\boxtimes\mathcal{O}_{L_i/B_i})\boxtimes\mathcal{O}_{\Delta\mathbb{V}^{+}_{i}}].
\end{align*}
Using commutative diagram \eqref{com diag}, we directly compute \begin{align*}
    &e_{\mathbf{v}}^{\lambda}*\widetilde{T}_{i}=e_{\mathbf{v}}^{\lambda}\mid_{\widetilde{Z}_{12}}*\widetilde{T}_{i}\mid_{\widetilde{Z}_{23}}\\
    &=([\mathcal{O}_{\mathrm{graph}(f_1)}\boxtimes\mathcal{O}_{L_i/B_i}(-\lambda)\boxtimes\mathcal{O}_{\mathrm{graph}(f_2)}])*([\mathcal{O}_{\Delta U^{-}_{i}}\boxtimes(\mathcal{O}_{L_i/B_i}\boxtimes\mathcal{O}_{L_i/B_i})\boxtimes\mathcal{O}_{\Delta\mathbb{V}^{+}_{i}}])\\
    &=([\mathcal{O}_{\mathrm{graph}(f_1)}]*[\mathcal{O}_{\Delta U^{-}_{i}}])\boxtimes([\mathcal{O}_{L_i/B_i}(-\lambda)]*[(\mathcal{O}_{L_i/B_i}\boxtimes\mathcal{O}_{L_i/B_i}))]\boxtimes([\mathcal{O}_{\mathrm{graph}(f_2)}]*[\mathcal{O}_{\Delta\mathbb{V}^{+}_{i}}])\\
    &=[\mathcal{O}_{\mathrm{graph}(f_1)}]\boxtimes p_{2*}([\mathcal{O}_{L_i/B_i}(-\lambda)\boxtimes\mathcal{O}_{\widetilde{\mathbb{V}}_{i}}] \otimes_{\widetilde{\mathbb{V}}_{i} \times \widetilde{\mathbb{V}}_{i}} [\mathcal{O}_{L_i/B_i}\boxtimes\mathcal{O}_{L_i/B_i}])\boxtimes[\mathcal{O}_{\mathrm{graph}(f_2)}]\\
    &=[\mathcal{O}_{\mathrm{graph}(f_1)}]\boxtimes (R\Gamma([\mathcal{O}_{L_i/B_i}(-\lambda)] \otimes_{\widetilde{\mathbb{V}}_{i}}[\mathcal{O}_{L_i/B_i}])\cdot[\mathcal{O}_{L_i/B_i}])\boxtimes[\mathcal{O}_{\mathrm{graph}(f_2)}].
\end{align*}

Let us compute $R\Gamma([\mathcal{O}_{L_i/B_i}(-\lambda)] \otimes_{\widetilde{\mathbb{V}}_{i}} [\mathcal{O}_{L_i/B_i}])$. Noting that
$$L_{i}\times^{B_{i}}(\mathbb{V}^{+}/\mathbb{V}^{+}_{i})^{\vee}=
\begin{cases}
    L_{i}\times^{B_{i}}\C_{0,0,1,-\alpha_i}, &\text{if} \  i\neq d,\\
    L_{d}\times^{B_d}(\C_{1,0,0,-\epsilon_d}\oplus \C_{0,1,0,-\epsilon_d}), &\text{if} \  i=d,
\end{cases}$$
we have $A'$-equivariant Koszul complexes $$0\longrightarrow q_{2}\mathcal{O}_{\widetilde{\mathbb{V}}_{i}}(-\alpha_{i})\longrightarrow\mathcal{O}_{\widetilde{\mathbb{V}}_{i}}\longrightarrow\mathcal{O}_{L_{i}/B_{i}}\longrightarrow0\quad (i\neq d),$$
$$0\longrightarrow q_{0}q_{1}\mathcal{O}_{\widetilde{\mathbb{V}}_{d}}(-\alpha_{d})\longrightarrow q_{0}\mathcal{O}_{\widetilde{\mathbb{V}}_{d}}(-\epsilon_d)\oplus q_{1}\mathcal{O}_{\widetilde{\mathbb{V}}_{d}}(-\epsilon_{d})\longrightarrow\mathcal{O}_{\widetilde{\mathbb{V}}_{d}}\longrightarrow\mathcal{O}_{L_{d}/B_{d}}\longrightarrow0\quad (i=d),$$
which leads to
$$[\mathcal{O}_{L_{i}/B_{i}}]=
\begin{cases}
    [\mathcal{O}_{\widetilde{\mathbb{V}}_{i}}]-q_2[\mathcal{O}_{\widetilde{\mathbb{V}}_{i}}(-\alpha_{i})], &\text{if} \  i\neq d,\\
    [\mathcal{O}_{\widetilde{\mathbb{V}}_{d}}]-(q_0+q_1)[\mathcal{O}_{\widetilde{\mathbb{V}}_d}(-\epsilon_d)]+q_0q_1[\mathcal{O}_{\widetilde{\mathbb{V}}_{i}}(-\alpha_{d})], &\text{if} \  i=d.
\end{cases}$$
Therefore,
\begin{align*}
    &R\Gamma([\mathcal{O}_{L_i/B_i}(-\lambda)] \otimes_{\widetilde{\mathbb{V}}_{i}} [\mathcal{O}_{L_i/B_i}])\\
    =&
\small{\begin{cases}
    R\Gamma([\mathcal{O}_{L_i/B_i}(-\lambda)])-q_2 R\Gamma([\mathcal{O}_{L_i/B_i}(-\lambda-\alpha_i)]), &\text{if} \  i\neq d,\\
    R\Gamma([\mathcal{O}_{L_d/B_d}(-\lambda)])-(q_0+q_1)R\Gamma([\mathcal{O}_{L_d/B_d}(-\lambda-\epsilon_d)])+q_0 q_1 R\Gamma([\mathcal{O}_{L_d/B_d}(-\lambda-\alpha_d)]), &\text{if} \  i=d.
\end{cases}}
\end{align*}
By abuse of notation, we simply write $[\mathcal{O}_{L_{i}/B_i}(-\alpha)]$ by $e^{\alpha}$, then $$R\Gamma([\mathcal{O}_{L_{i}/B_i}(-\alpha)])\cdot[\mathcal{O}_{L_i/B_i}]=\frac{e^{\alpha}-e^{s_{i}(\alpha)-\alpha_i}}{1-e^{-\alpha_i}}.$$
Finally, we get
\begin{align*}
&R\Gamma([\mathcal{O}_{L_i/B_i}(-\lambda)]\otimes_{\widetilde{\mathbb{V}}_{i}}[\mathcal{O}_{L_i/B_i}])\cdot[\mathcal{O}_{L_i/B_i}]\\
=&\begin{cases}
\frac{e^{\lambda}-e^{s_{i}(\lambda)-\alpha_i}}{1-e^{-\alpha_i}}-q_2\frac{e^{\lambda+\alpha_i}-e^{s_{i}(\lambda)-2\alpha_i}}{1-e^{-\alpha_i}}, &\text{if} \  i\neq d,\\
\frac{e^{\lambda}-e^{s_{d}(\lambda)-\alpha_d}}{1-e^{-\alpha_d}}-(q_0+q_1)\frac{e^{\lambda+\epsilon_d}-e^{s_{d}(\lambda)-\epsilon_d-\alpha_d}}{1-e^{-\alpha_d}}+q_0q_1\frac{e^{\lambda+\alpha_d}-e^{s_{d}(\lambda)-2\alpha_d}}{1-e^{-\alpha_d}}, &\text{if} \  i=d.
\end{cases}
\end{align*}The proposition follows.
\end{proof}
Let $\rho_{\mathbf{v}} \in Q$ be the Weyl vector associated with $\Pi_\mathbf{v}$, i.e. $\alpha_i^\vee(\rho_\mathbf{v})=\begin{cases}
    1,& \text{if} \  \alpha_i \in \Delta_\mathbf{v},\\
    0,& \text{if} \  \alpha_i \notin \Delta_\mathbf{v}.
\end{cases}$
\begin{cor}\label{cor-mul}
 For $\alpha_i\in\Delta_{\mathbf{v}}$, we have $e_{\mathbf{v}}^{-\rho_{\mathbf{v}}}*\vartheta(T_{i})=-e_{\mathbf{v}}^{-\rho_{\mathbf{v}}}$.
\end{cor}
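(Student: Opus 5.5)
The plan is a direct computation: combine Proposition~\ref{formula2} at $\lambda=-\rho_{\mathbf{v}}$ with the explicit form of $\vartheta(T_i)$ from Theorem~\ref{exo-H}. First I need to know how the toric part of $\vartheta(T_i)$ acts on $e_{\mathbf{v}}^{\mu}$. By Proposition~\ref{tm coin}, right convolution with $\mathbf{e}^{\nu}\in K^{\breve{G}}(\Delta F)$ is the usual $R(\breve{T})$-action. Since both $\mathbf{e}^{\nu}$ and $e_{\mathbf{v}}^{\mu}=[\mathcal{O}_{\mathbf{T}_{\mathbf{v},\id}}(-\mu)]$ are defined with the same sign convention (line bundle of weight $-\nu$, resp.\ $-\mu$), I expect
\[
e_{\mathbf{v}}^{\mu}*\mathbf{e}^{\nu}=e_{\mathbf{v}}^{\mu+\nu},
\]
and scalars $q_j$ act by multiplication. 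This identification is the only point that needs care: one must check it through the restriction to $\widetilde{Z}_{13}$ as in the diagram \eqref{com diag}, where it reduces to tensoring line bundles on $L_i/B_i$.

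Next, the key combinatorial input is that $\alpha_i\in\Delta_{\mathbf{v}}$ gives $\alpha_i^\vee(\rho_{\mathbf{v}})=1$. Hence $s_i(\rho_{\mathbf{v}})=\rho_{\mathbf{v}}-\alpha_i$, so for $\lambda=-\rho_{\mathbf{v}}$ we get $s_i(\lambda)=\lambda+\alpha_i$. Substituting this into Proposition~\ref{formula2} kills the first fraction, because $s_i(\lambda)-\alpha_i=\lambda$. The remaining fractions become genuine Laurent polynomials.

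For $i\neq d$, $s_i(\lambda)-2\alpha_i=\lambda-\alpha_i$, so
\[
\frac{e_{\mathbf{v}}^{\lambda+\alpha_i}-e_{\mathbf{v}}^{\lambda-\alpha_i}}{1-e_{\mathbf{v}}^{-\alpha_i}}=e_{\mathbf{v}}^{\lambda+\alpha_i}+e_{\mathbf{v}}^{\lambda},
\]
and therefore $e_{\mathbf{v}}^{\lambda}*\widetilde{T}_i=-q_2(e_{\mathbf{v}}^{\lambda+\alpha_i}+e_{\mathbf{v}}^{\lambda})$.

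For $i=d$, with $\alpha_d=2\epsilon_d$, we have $s_d(\lambda)-\epsilon_d-\alpha_d=\lambda-\epsilon_d$. The middle fraction is
\[
\frac{e_{\mathbf{v}}^{\lambda+\epsilon_d}-e_{\mathbf{v}}^{\lambda-\epsilon_d}}{1-e_{\mathbf{v}}^{-2\epsilon_d}}=e_{\mathbf{v}}^{\lambda+\epsilon_d},
\]
and the last fraction equals $e_{\mathbf{v}}^{\lambda+\alpha_d}+e_{\mathbf{v}}^{\lambda}$ as in the case $i\neq d$. This gives
\[
e_{\mathbf{v}}^{\lambda}*\widetilde{T}_d=-(q_0+q_1)e_{\mathbf{v}}^{\lambda+\epsilon_d}+q_0q_1(e_{\mathbf{v}}^{\lambda+\alpha_d}+e_{\mathbf{v}}^{\lambda}).
\]

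Finally, I add the toric correction terms from Theorem~\ref{exo-H} using the rule $e_{\mathbf{v}}^{\mu}*\mathbf{e}^{\nu}=e_{\mathbf{v}}^{\mu+\nu}$.

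For $i\neq d$, the correction is $-e_{\mathbf{v}}^{\lambda}+q_2(e_{\mathbf{v}}^{\lambda+\alpha_i}+e_{\mathbf{v}}^{\lambda})$. It cancels the $q_2$-terms and leaves $-e_{\mathbf{v}}^{\lambda}$.

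For $i=d$, the correction is $(q_0+q_1)e_{\mathbf{v}}^{\lambda+\epsilon_d}-e_{\mathbf{v}}^{\lambda}-q_0q_1(e_{\mathbf{v}}^{\lambda+\alpha_d}+e_{\mathbf{v}}^{\lambda})$, which again leaves $-e_{\mathbf{v}}^{\lambda}$.

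Thus $e_{\mathbf{v}}^{-\rho_{\mathbf{v}}}*\vartheta(T_i)=-e_{\mathbf{v}}^{-\rho_{\mathbf{v}}}$ in both cases. The main obstacle is only the sign/normalization check in the first step, that convolution with $\mathbf{e}^{\nu}$ shifts the index of $e_{\mathbf{v}}^{\mu}$ by $+\nu$. Once that is fixed, everything else is bookkeeping.
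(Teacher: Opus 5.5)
Your proposal is correct and follows essentially the same route as the paper: substitute $\lambda=-\rho_{\mathbf{v}}$ into Proposition~\ref{formula2}, and use $s_i(\rho_{\mathbf{v}})=\rho_{\mathbf{v}}-\alpha_i$ to kill the first fraction and collapse the others. The paper then adds the toric correction terms of $\vartheta(T_i)$ from Theorem~\ref{exo-H} via Proposition~\ref{tm coin}, with the same convention $e_{\mathbf{v}}^{\mu}*\mathbf{e}^{\nu}=e_{\mathbf{v}}^{\mu+\nu}$ it uses in the proof of Proposition~\ref{fock}.
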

\begin{proof}
    This is a consequence of a direct calculation using Proposition \ref{tm coin} and Proposition \ref{formula2} as follows. In this proof, we shall always write $e^\lambda$ instead of $e^\lambda_{\mathbf{v}}$ for short.

    For $i\neq d$,
    \begin{align*}
       &e^{-\rho_{\mathbf{v}}}*\vartheta(T_{i})\\=&\frac{e^{-\rho_{\mathbf{v}}}-e^{-s_i(\rho_{\mathbf{v}})-\alpha_i}}{1-e^{-\alpha_i}}-q_2\frac{e^{-\rho_{\mathbf{v}}+\alpha_i}-e^{-s_{i}(\rho_{\mathbf{v}})-2\alpha_i}}{1-e^{-\alpha_i}}+q_2(e^{-\rho_{\mathbf{v}}+\alpha_i}+e^{-\rho_\mathbf{v}})-e^{-\rho_{\mathbf{v}}}\\=&-q_2\frac{1-e^{-2\alpha_i}}{1-e^{-\alpha_i}}e^{-\rho_{\mathbf{v}}+\alpha_i}+q_2(e^{-\rho_{\mathbf{v}}+\alpha_i}+e^{-\rho_\mathbf{v}})-e^{-\rho_{\mathbf{v}}}\\=&-e^{-\rho_{\mathbf{v}}}.
    \end{align*}

    For $i=d$,
    \begin{align*}
        e^{-\rho_{\mathbf{v}}}*\vartheta(T_{d})=&\frac{e^{-\rho_{\mathbf{v}}}-e^{-s_{d}(\rho_{\mathbf{v}})-\alpha_d}}{1-e^{-\alpha_d}}-(q_0+q_1)\frac{e^{-\rho_\mathbf{v}+\epsilon_d}-e^{-s_{d}(\rho_\mathbf{v})-\epsilon_d-\alpha_d}}{1-e^{-\alpha_d}}\\&+q_0q_1\frac{e^{-\rho_\mathbf{v}+\alpha_d}-e^{-s_{d}(\rho_{\mathbf{v}})-2\alpha_d}}{1-e^{-\alpha_d}}+(q_0+q_1)e^{-\rho_\mathbf{v}+\epsilon_d}\\&-q_0q_1(e^{-\rho_{\mathbf{v}}+\alpha_d}+e^{-\rho_{\mathbf{v}}})-e^{-\rho_{\mathbf{v}}}\\=&
        -(q_0+q_1)\frac{1-e^{-2\epsilon_d}}{1-e^{-\alpha_d}}e^{-\rho_\mathbf{v}+\epsilon_d}+q_0q_1\frac{1-e^{-2\alpha_d}}{1-e^{-\alpha_d}}e^{-\rho_\mathbf{v}+\alpha_d}\\&+(q_0+q_1)e^{-\rho_{\mathbf{v}}+\epsilon_d}-q_0q_1(e^{-\rho_{\mathbf{v}}+\alpha_d}+e^{-\rho_\mathbf{v}})-e^{-\rho_\mathbf{v}}\\=&-e^{-\rho_\mathbf{v}}.
    \end{align*}
\end{proof}
\subsection{Restriction formulas}
Let $X$ be a closed $\breve{G}$-subvariety of $Y$. We identify $K^{\breve{G}}(X)$ with $K^{\breve{G}}(Y,X)$. Let $Y'$ be an open $\breve{G}$-subvariety of $Y$, then $X'=X\cap Y'$ becomes an open $\breve{G}$-subvariety of $X$. We identify the restriction morphism $\cdot|_{X'}:K^{\breve{G}}(X)\to K^{\breve{G}}(X')$ with $\cdot|_{Y'}:K^{\breve{G}}(Y,X)\to K^{\breve{G}}(Y',X')$. If $Y''$ is another open $\breve{G}$-subvariety of $Y$ such that $Y''\cap X=Y'\cap X=X'$, then $\cdot|_{Y'}$ and $\cdot|_{Y''}$ determine the same restriction map $\cdot|_{X'}$ under the identification $K^{\breve{G}}(Y,X)=K^{\breve{G}}(X)$.

\begin{lem}\label{res1}
    Let $w$ be the unique shortest element in $W_\mathbf{v}w$ for $\mathbf{v}\in\Lambda_{\ff}$. One has
    $$[\mathcal{O}_{\mathbf{T}_{\mathbf{v},\id}}]*[\mathcal{O}_{\mathbf{T}_w}]\in K^{\breve{G}}(\mathbf{Z}_{\mathbf{v}\mathbf{d}}^{\le w}).$$ 
    Moreover, $$([\mathcal{O}_{\mathbf{T}_{\mathbf{v},\id}}]*[\mathcal{O}_{\mathbf{T}_w}])|_{\mathbf{T}_{\Ob_{\mathbf{v},w}}}=[\mathcal{O}_{\mathbf{T}_{\Ob_{\mathbf{v},w}}}].$$
\end{lem}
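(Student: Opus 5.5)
The plan is to handle the two assertions separately: a support computation for the first, and a local transversality computation for the second. For the first, I use the standard support property of convolution (cf. \cite[\S2.7]{CG97}): the product $[\mathcal{O}_{\mathbf{T}_{\mathbf{v},\id}}]*[\mathcal{O}_{\mathbf{T}_w}]$ is represented by a complex supported on the set-theoretic composition $\mathbf{T}_{\mathbf{v},\id}\circ\mathbf{T}_w$.

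Projecting to flag varieties, $\pi_{\mathbf{v}\mathbf{d}}(\mathbf{T}_{\mathbf{v},\id}\circ\mathbf{T}_w)$ lies in $\overline{\Ob}_{\mathbf{v},\id}\circ\overline{\Ob}_w$. Here $\overline{\Ob}_{\mathbf{v},\id}=\Ob_{\mathbf{v},\id}$ is closed, and $\overline{\Ob}_w=\bigsqcup_{y\le w}\Ob_y$. For $y\le w$ we have $\Ob_{\mathbf{v},\id}\circ\Ob_y=\Ob_{\mathbf{v},y}$, since the composition is just $G\cdot(P_\mathbf{v},yB)$.

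It remains to check that $W_\mathbf{v}y$ contains an element of $\D_{\mathbf{v}}$ that is $\le w_{\mathbf{v}\mathbf{d}}$. This holds because $y\le w$ implies that the longest element of $W_\mathbf{v}y$ is $\le$ the longest element of $W_\mathbf{v}w$, by the standard Bruhat compatibility with parabolic cosets. Hence the support lies in $\mathbf{Z}^{\le w}_{\mathbf{v}\mathbf{d}}$. Since $\mathbf{Z}^{\le w}_{\mathbf{v}\mathbf{d}}$ is closed in $\mathbf{Z}_{\mathbf{v}\mathbf{d}}$, the class lies in $K^{\breve{G}}(\mathbf{Z}^{\le w}_{\mathbf{v}\mathbf{d}})$.

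For the restriction formula, I work over the open set $\pi_{\mathbf{v}\mathbf{d}}^{-1}(\Ob_{\mathbf{v},w})$. Restriction to an open subset commutes with convolution via flat base change, as in the restriction formalism set up just before the lemma. Over the preimage of $\Ob_{\mathbf{v},w}$, only the part of $\mathbf{T}_w$ lying over $\Ob_w$ contributes. Indeed, by the isomorphism $p_{13}$ from the proof of Lemma~\ref{intr-2}, any triple $(x_1,x_2,x_3)$ with $(x_1,x_2)\in\Ob_{\mathbf{v},\id}$, $(x_2,x_3)\in\Ob_y$ and $(x_1,x_3)\in\Ob_{\mathbf{v},w}$ forces $W_\mathbf{v}y=W_\mathbf{v}w$. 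Since $y\le w$ and $w$ is shortest in its coset, this gives $y=w$. We may therefore replace $[\mathcal{O}_{\mathbf{T}_w}]$ by $[\mathcal{O}_{\mathbf{T}_{\Ob_w}}]$ on a suitable open subset of $F\times F$.

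Next I apply Lemma~\ref{intr-2}. The intersection $pr_{12}^{-1}(\mathbf{T}_{\mathbf{v},\id})\cap pr_{23}^{-1}(\mathbf{T}_{\Ob_w})$ is transverse, so the derived tensor product $pr_{12}^*\mathcal{O}\otimes^{\mathbb{L}}pr_{23}^*\mathcal{O}$ equals the structure sheaf of the intersection, with no higher Tor. The projection $pr'_{13}$ is an isomorphism onto $\mathbf{T}_{\Ob_{\mathbf{v},w}}$, so the derived pushforward is $\mathcal{O}_{\mathbf{T}_{\Ob_{\mathbf{v},w}}}$. This gives the stated restriction.

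The main obstacle is the localization step. One must justify that restricting the convolution to the open set $\pi_{\mathbf{v}\mathbf{d}}^{-1}(\Ob_{\mathbf{v},w})$ equals the convolution of the restrictions to appropriate open pieces of the ambient space $F_\mathbf{v}\times F\times F$. This requires properness of $pr_{13}$ on the support, which holds since $F\to\mathcal{N}$ is proper. It also requires checking that the middle-factor open set can be chosen so that $\mathbf{T}_w$ meets it exactly in $\mathbf{T}_{\Ob_w}$ within the relevant fibers. I would take the preimage under $pr_{13}$ of the open set, intersected with the support, and then verify set-theoretically, using the coset argument above, that the support there lies over $\Ob_w$ in the $(2,3)$ factor. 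The remaining identification is then the transversal computation.
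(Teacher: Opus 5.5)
Your proposal is correct and follows essentially the same route as the paper. The first claim is the support inclusion $\mathbf{T}_{\mathbf{v},\id}\circ\mathbf{T}_w\subseteq\mathbf{Z}^{\le w}_{\mathbf{v}\mathbf{d}}$. For the restriction, the paper applies proper base change over the explicit open set $pr_{13}^{-1}(\pi_{\mathbf{v}}^{-1}(S))$ with $S=\bigsqcup_{y\ge w,\,y\in\D_\mathbf{v}}\Ob_{\mathbf{v},y}$ (playing the role of your suitable open subset), notes that only $\mathbf{T}_{\Ob_w}$ contributes there, and concludes with the transversality and isomorphism of Lemma~\ref{intr-2}. Your coset argument ($W_\mathbf{v}y=W_\mathbf{v}w$ together with $y\le w$ forces $y=w$) is exactly what justifies the paper's terse identification of the triple intersection over that open set with $pr_{12}^{-1}(\mathbf{T}_{\mathbf{v},\id})\cap pr_{23}^{-1}(\mathbf{T}_{\Ob_w})$.
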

\begin{proof}
    Noting that $\mathbf{T}_{\mathbf{v},\id}\circ\mathbf{T}_w\subseteq \mathbf{T}_{\mathbf{v},\id}\circ\mathbf{Z}^{\le w}\subseteq\mathbf{Z}_{\mathbf{v}\mathbf{d}}^{\le w}$, the first statement follows. We regard $[\mathcal{O}_{\mathbf{T}_{\mathbf{v},\id}}]\in K^{\breve{G}}(F_\mathbf{v}\times F,\mathbf{T}_{\mathbf{v},\id})$ and $[\mathcal{O}_{\mathbf{T}_w}]\in K^{\breve{G}}(F\times F,\mathbf{Z}^{\le w})$. Set $S=\bigsqcup_{y\ge w,y\in\D_{\mathbf{v}}}{\Ob_{\mathbf{v},y}}$ and $\widetilde{S}=pr^{-1}_{13}(\pi^{-1}_{\mathbf{v}}(S))$.
    Note that $\pi^{-1}_{\mathbf{v}}(S)\cap\mathbf{Z}^{\le w}_{\mathbf{v}\mathbf{d}}=\mathbf{T}_{\Ob_{\mathbf{v},w}}$.
    By a proper base change,
    \begin{align*}
        ([\mathcal{O}_{\mathbf{T}_{\mathbf{v},\id}}]*[\mathcal{O}_{\mathbf{T}_w}])|_{\mathbf{T}_{\Ob_{\mathbf{v},w}}}&=([\mathcal{O}_{\mathbf{T}_{\mathbf{v},\id}}]*[\mathcal{O}_{\mathbf{T}_w}])|_{\pi^{-1}_{\mathbf{v}}(S)}\\=&pr_{13*}((pr_{12}^{*}[\mathcal{O}_{\mathbf{T}_{\mathbf{v},\id}}])|_{\widetilde{S}}\otimes_{\widetilde{S}} (pr_{23}^*[\mathcal{O}_{\mathbf{T}_w}])|_{\widetilde{S}})=pr_{13*}([\mathcal{O}_{X}]\otimes_{\widetilde{S}} [\mathcal{O}_Y]),
    \end{align*}
    where $X=\widetilde{S}\cap pr^{-1}_{12}(\mathbf{T}_{\mathbf{v},\id})$ and $Y=\widetilde{S}\cap pr^{-1}_{23}(\mathbf{T}_w)$.
    By $X\cap Y=pr^{-1}_{12}(\mathbf{T}_{\mathbf{v},\id})\cap pr^{-1}_{23}(\mathbf{T}_{\Ob_w})$ and Lemma~\ref{intr-2}, we know that $X$ and $Y$ intersect transversely in $\widetilde{S}$. Hence, $$pr_{13*}([\mathcal{O}_X]\otimes_{\widetilde{S}} [\mathcal{O}_Y])=pr_{13*}[\mathcal{O}_{X\cap Y}]=[\mathcal{O}_{\mathbf{T}_{\Ob_{\mathbf{v},w}}}].$$ The proof is complete.
\end{proof}

We denote $e^\lambda_{\mathbf{w},\id,\mathbf{u}}=[\mathcal{O}_{\mathbf{T}_{\mathbf{w},\id,\mathbf{u}}}(-\lambda)]$. 
\begin{lem}\label{res2}
    For $\mathbf{v},\mathbf{w},\mathbf{u}\in\Lambda_\ff$ with $P_{\mathbf{u}}\subseteq P_\mathbf{w}$ and $w\in\D_{\mathbf{v}\mathbf{w}}$, we have $$\chi^w_{\mathbf{v}\mathbf{w}}*e^0_{\mathbf{w},\id,\mathbf{u}}\in K^{\breve{G}}(\mathbf{Z}^{\preceq w}_{\mathbf{v}\mathbf{u}}).$$ Moreover, $$(\chi^w_{\mathbf{v}\mathbf{w}}*e^0_{\mathbf{w},\id,\mathbf{u}})|_{\mathbf{T}_{\Ob_{\mathbf{v},w,\mathbf{u}}}}=[\mathcal{O}_{\mathbf{T}_{\Ob_{\mathbf{v},w,\mathbf{u}}}}(\chi)].$$
\end{lem}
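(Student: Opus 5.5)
The plan follows the proof of Lemma~\ref{res1}: first a support computation for the set-theoretic composition, then a restriction to an open piece on which the convolution reduces to a transverse intersection, using Lemma~\ref{inter trans} in place of Lemma~\ref{intr-2}.

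\emph{Support.} By construction $\chi^w_{\mathbf{v}\mathbf{w}}$ is represented by a complex supported on $\mathbf{Z}^{\le w}_{\mathbf{v}\mathbf{w}}$, and $e^0_{\mathbf{w},\id,\mathbf{u}}$ is supported on $\mathbf{T}_{\mathbf{w},\id,\mathbf{u}}$. Hence the convolution is supported on $\mathbf{Z}^{\le w}_{\mathbf{v}\mathbf{w}}\circ\mathbf{T}_{\mathbf{w},\id,\mathbf{u}}$. At the level of flag varieties, $\Ob_{\mathbf{v},y,\mathbf{w}}\circ\Ob_{\mathbf{w},\id,\mathbf{u}}=\Ob_{\mathbf{v},y,\mathbf{u}}$ whenever $P_\mathbf{u}\subseteq P_\mathbf{w}$; this is the fibre computation in the proof of Lemma~\ref{inter trans}. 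So the support lies over $\bigsqcup_{y}\Ob_{\mathbf{v},y,\mathbf{u}}$, with $y$ running over double cosets $W_\mathbf{v}yW_\mathbf{w}$ whose longest element is $\le w_{\mathbf{v}\mathbf{w}}$.

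Next I would check that the longest element $y_{\mathbf{v}\mathbf{u}}$ of $W_\mathbf{v}yW_\mathbf{u}\subseteq W_\mathbf{v}yW_\mathbf{w}$ satisfies $y_{\mathbf{v}\mathbf{u}}\le y_{\mathbf{v}\mathbf{w}}\le w_{\mathbf{v}\mathbf{w}}$. Since $\le$ implies $\preceq$ and $w_{\mathbf{v}\mathbf{u}}\le w_{\mathbf{v}\mathbf{w}}$, the support then lies in $\mathbf{Z}^{\preceq w}_{\mathbf{v}\mathbf{u}}$, interpreted via the representatives in $\D_{\mathbf{v}\mathbf{u}}$. The indexing convention for $w$ and $\mathbf{Z}^{\preceq w}_{\mathbf{v}\mathbf{u}}$ needs care here: the claim should be read as the closed union of strata of $\mathbf{Z}_{\mathbf{v}\mathbf{u}}$ indexed by elements $\preceq w_{\mathbf{v}\mathbf{u}}$. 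This gives the first statement.

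\emph{Restriction.} Let $S=\bigsqcup\Ob_{\mathbf{v},y,\mathbf{u}}$ be the union of orbits that are open in the support and contain $\Ob_{\mathbf{v},w,\mathbf{u}}$. Concretely, take the preimage of the complement of the closed union of smaller strata. Set $\widetilde S=pr_{13}^{-1}(\pi_{\mathbf{v}\mathbf{u}}^{-1}(S))\subseteq F_{\mathbf{v}\mathbf{w}\mathbf{u}}$, which is open. By proper and flat base change, as in Lemma~\ref{res1},
\[
(\chi^w_{\mathbf{v}\mathbf{w}}*e^0_{\mathbf{w},\id,\mathbf{u}})|_{\mathbf{T}_{\Ob_{\mathbf{v},w,\mathbf{u}}}}=pr_{13*}\bigl(pr_{12}^*\chi^w_{\mathbf{v}\mathbf{w}}|_{\widetilde S}\otimes pr_{23}^*[\mathcal{O}_{\mathbf{T}_{\mathbf{w},\id,\mathbf{u}}}]|_{\widetilde S}\bigr).
\]
Inside $\widetilde S$, the only part of $\mathbf{Z}^{\le w}_{\mathbf{v}\mathbf{w}}$ whose composition with $\mathbf{T}_{\mathbf{w},\id,\mathbf{u}}$ meets $\pi^{-1}_{\mathbf{v}\mathbf{u}}(\Ob_{\mathbf{v},w,\mathbf{u}})$ is the open stratum $\mathbf{T}_{\Ob_{\mathbf{v},w,\mathbf{w}}}$. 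There $\chi^w_{\mathbf{v}\mathbf{w}}$ restricts to $[\mathcal{O}_{\mathbf{T}_{\Ob_{\mathbf{v},w,\mathbf{w}}}}(\chi)]$ by the definition of $\iota_w$.

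Lemma~\ref{inter trans} then says that $pr_{12}^{-1}(\mathbf{T}_{\Ob_{\mathbf{v},w,\mathbf{w}}})$ and $pr_{23}^{-1}(\mathbf{T}_{\mathbf{w},\id,\mathbf{u}})$ meet transversely. It also says that $pr'_{13}$ maps their intersection isomorphically onto $\mathbf{T}_{\Ob_{\mathbf{v},w,\mathbf{u}}}$. Therefore the derived tensor product is $\mathcal{O}$ of the intersection twisted by $pr_{12}^*\chi$, and its pushforward is $[\mathcal{O}_{\mathbf{T}_{\Ob_{\mathbf{v},w,\mathbf{u}}}}(\chi)]$. For the last identification I use that $\chi$, viewed as a $\breve P^w_{\mathbf{v}\mathbf{w}}$-character, pulls back to the corresponding character of $\breve P^w_{\mathbf{v}\mathbf{u}}$ under the isomorphism of orbits.

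\emph{Main obstacle.} The hardest step is the middle one: showing that no other stratum of $\mathbf{Z}^{\le w}_{\mathbf{v}\mathbf{w}}$ contributes over $\Ob_{\mathbf{v},w,\mathbf{u}}$. We only know the support of $\chi^w_{\mathbf{v}\mathbf{w}}$, not the classes it carries on the lower strata. I would handle this by choosing $S$ so that $\pi_{\mathbf{v}\mathbf{u}}^{-1}(S)$ meets the support only in $\mathbf{T}_{\Ob_{\mathbf{v},w,\mathbf{u}}}$, and then arguing that $p_{12}^{-1}(\Ob_{\mathbf{v},y,\mathbf{w}})\cap p_{23}^{-1}(\Ob_{\mathbf{w},\id,\mathbf{u}})$ maps into $\Ob_{\mathbf{v},y,\mathbf{u}}$. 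Since $y<w$ in the relevant sense, that orbit is disjoint from $S$, so the lower strata of $\chi^w_{\mathbf{v}\mathbf{w}}$ restrict to zero on $\widetilde S$.
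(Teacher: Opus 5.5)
Your proof is correct and follows the paper's skeleton: the support bound, restriction to $\widetilde S=pr_{13}^{-1}\pi_{\mathbf{v}\mathbf{u}}^{-1}(S)$, transversality from Lemma~\ref{inter trans}, and push-forward along the isomorphism $pr'_{13}$.

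\textbf{Where you differ: the twist by $\chi$.} The paper first takes $\chi=\chi'\cdot w(\chi'')\in\mathscr{R}^w_{\mathbf{v}\mathbf{w}}$. On the open set the twist is then an ambient class ${p'}^*(\chi')\otimes{p''}^*(\chi'')$. With that, it only needs $[\mathcal{O}_X]\otimes[\mathcal{O}_Y]=[\mathcal{O}_{X\cap Y}]$, the projection formula and $\pi_2=\pi\circ\pi_3$. It then recovers arbitrary $\chi\in R(\breve P^w_{\mathbf{v}\mathbf{w}})$ via \eqref{loc gal} and torsion-freeness of $K^{\breve G}(\mathbf{T}_{\Ob_{\mathbf{v},w,\mathbf{u}}})$.

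You treat arbitrary $\chi$ in one step. This is legitimate and avoids \eqref{loc gal}, which the paper imports from elsewhere. But you should say why it works. Let $\mathcal{E}$ be the locally free $\mathcal{O}_X$-module giving the twist, with $X=pr_{12}^{-1}(\mathbf{T}_{\Ob_{\mathbf{v},w,\mathbf{w}}})$ and $Y=pr_{23}^{-1}(\mathbf{T}_{\mathbf{w},\id,\mathbf{u}})$ in the open set. Locally $\mathcal{E}$ is $\mathcal{O}_X^{\oplus r}$, so transversality gives $\mathcal{T}or_i(\mathcal{E},\mathcal{O}_Y)=0$ for $i>0$ and $\mathcal{E}\otimes\mathcal{O}_Y=\mathcal{E}|_{X\cap Y}$, canonically and hence equivariantly. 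Your final identification is the pull-back along the projection $G/P^w_{\mathbf{v}\mathbf{u}}\to G/P^w_{\mathbf{v}\mathbf{w}}$, i.e.\ restriction $R(\breve P^w_{\mathbf{v}\mathbf{w}})\to R(\breve P^w_{\mathbf{v}\mathbf{u}})$. It is not an isomorphism of orbits; it is the paper's $\pi_2=\pi\circ\pi_3$.

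\textbf{Two points to tighten.} First, $\Ob_{\mathbf{v},y,\mathbf{w}}\circ\Ob_{\mathbf{w},\id,\mathbf{u}}$ is not $\Ob_{\mathbf{v},y,\mathbf{u}}$ in general. It is the preimage of $\Ob_{\mathbf{v},y,\mathbf{w}}$ under $\Fm_\mathbf{v}\times\Fm_\mathbf{u}\to\Fm_\mathbf{v}\times\Fm_\mathbf{w}$, i.e.\ the union of $\Ob_{\mathbf{v},y',\mathbf{u}}$ over $y'\in W_\mathbf{v}yW_\mathbf{w}$. The fibre computation in Lemma~\ref{inter trans} only gives single-point fibres, not a single image orbit. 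For example, with $P_\mathbf{v}=P_\mathbf{u}=B$, $W_\mathbf{w}=\{\id,s\}$ and $w=s$, the composite is $\Ob_{\id}\sqcup\Ob_s$. The statement of Lemma~\ref{inter trans} has the same slip and is only accurate after restricting to $\widetilde S$. This is harmless for you: your inequality $y'_{\mathbf{v}\mathbf{u}}\le y_{\mathbf{v}\mathbf{w}}$ holds for every such $y'$. So the support bound survives, lower strata still miss $S$, and over $S$ only the dense orbit $\Ob_{\mathbf{v},w,\mathbf{u}}$ of the preimage of $\Ob_{\mathbf{v},w,\mathbf{w}}$ remains.

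Second, ``the lower strata of $\chi^w_{\mathbf{v}\mathbf{w}}$ restrict to zero'' has no literal meaning, since $\chi^w_{\mathbf{v}\mathbf{w}}$ does not split along strata. The precise mechanism, which is what the paper does, is to shrink the ambient space. Set $\widetilde S'=pr_{12}^{-1}\pi_{\mathbf{v}\mathbf{w}}^{-1}(\bigsqcup_{y\ge w}\Ob_{\mathbf{v},y,\mathbf{w}})$ and $Z=pr_{12}^{-1}(\mathbf{Z}^{\le w}_{\mathbf{v}\mathbf{w}})\cap pr_{23}^{-1}(\mathbf{T}_{\mathbf{w},\id,\mathbf{u}})$. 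Your disjointness argument shows that the support $Z\cap\widetilde S$ of the restricted tensor product lies in $\widetilde S\cap\widetilde S'$. On $\widetilde S\cap\widetilde S'$, $pr_{12}^{-1}(\mathbf{Z}^{\le w}_{\mathbf{v}\mathbf{w}})$ coincides with $pr_{12}^{-1}(\mathbf{T}_{\Ob_{\mathbf{v},w,\mathbf{w}}})$. So the tensor product may be computed there, where $\chi^w_{\mathbf{v}\mathbf{w}}$ is the class of a vector bundle on the top stratum.
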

\begin{proof}
Since $\mathbf{Z}^{\le w}_{\mathbf{v}\mathbf{w}}\circ\mathbf{T}_{\mathbf{w},\id,\mathbf{u}}=\mathbf{Z}_{\mathbf{v}\mathbf{u}}^{\le w}\subseteq\mathbf{Z}^{\preceq w}_{\mathbf{v}\mathbf{u}}$, the first statement follows.
We regard $\chi^w_{\mathbf{v}\mathbf{w}}\in K^{\breve{G}}(F_{\mathbf{v}\mathbf{w}},\mathbf{Z}^{\le w}_{\mathbf{v}\mathbf{w}})$ and $e^0_{\mathbf{w},\id,\mathbf{u}}\in K^{\breve{G}}(F_{\mathbf{w}\mathbf{u}},\mathbf{T}_{\mathbf{w},\id,\mathbf{u}})$.
We first assume $\chi\in R(\breve{P}_\mathbf{v})R(w\breve{P}_{\mathbf{w}}w^{-1})$. 
Without loss of generality, we let $\chi=\chi'\cdot w(\chi'')$, where $\chi'\in R(\breve{P}_\mathbf{v})$ and $\chi''\in R(\breve{P}_\mathbf{w})$. Then $\chi^w_{\mathbf{v}\mathbf{w}}|_{\mathbf{T}_{\Ob_{\mathbf{v},w,\mathbf{w}}}}=p^*_1(\chi')\otimes p^*_2(\chi'')\otimes [\mathcal{O}_{\mathbf{T}_{\Ob_{\mathbf{v},w,\mathbf{w}}}}],$
where $p_1:\mathbf{T}_{\Ob_{\mathbf{v},w,\mathbf{w}}}\to\Fm_{\mathbf{v}}$ and $p_2:\mathbf{T}_{\Ob_{\mathbf{v},w,\mathbf{w}}}\to\Fm_{\mathbf{w}}$ are canonical projections.
Set $S=\bigsqcup_{y_{\mathbf{v}\mathbf{u}}^+\ge w,y\in\D_{\mathbf{v}\mathbf{u}}}\Ob_{\mathbf{v},y,\mathbf{u}}$ and let $\widetilde{S}=pr_{13}^{-1}(\pi^{-1}_{\mathbf{v}\mathbf{u}}(S))$.
Set $Z=pr_{12}^{-1}(\mathbf{Z}^{\le w}_{\mathbf{v}\mathbf{w}})\cap pr_{23}^{-1}(\mathbf{T}_{\mathbf{w},\id,\mathbf{u}})$ and $Z'=Z\cap\widetilde{S}$.
By a proper base change,
\begin{align*}(\chi^w_{\mathbf{v}\mathbf{w}}*e^0_{\mathbf{w},\id,\mathbf{u}})|_{\mathbf{T}_{\Ob_{\mathbf{v},w,\mathbf{u}}}}=(\chi_{\mathbf{v}\mathbf{w}}^{w} \star e_{\mathbf{w}, \id}^0)|_{\pi^{-1}_{\mathbf{v}\mathbf{u}}(S)}=pr'_{13*}((pr_{12}^{*}(\chi_{\mathbf{v}\mathbf{w}}^{w})\otimes pr_{23}^{*}(e^0_{\mathbf{w},\id,\mathbf{u}}))|_{\widetilde{S}}).
\end{align*} 
Here $pr'_{13}:Z'\to\mathbf{T}_{\Ob_{\mathbf{v},w,\mathbf{u}}}$ is the restriction of $pr_{13}$, which is an isomorphism thanks to Lemma~\ref{inter trans}.
 Let $S'=\bigsqcup_{y\ge w,y\in\D_{\mathbf{v}\mathbf{w}}}\Ob_{\mathbf{v},y,\mathbf{w}}$ and $\widetilde{S}'=pr_{12}^{-1}(\pi_{\mathbf{v}\mathbf{w}}^{-1}(S'))$.
Note that $pr_{12}^{*}(\chi_{\mathbf{v}\mathbf{w}}^{w})\otimes pr_{23}^{*}(e^0_{\mathbf{w},\id,\mathbf{u}})\in K^{\breve{G}}(F_{\mathbf{v}\mathbf{w}\mathbf{u}},Z)=K^{\breve{G}}(Z)$ and $Z\cap\widetilde{S}=Z\cap\widetilde{S}\cap\widetilde{S}'=Z'$. We have \begin{align*}
    (pr_{12}^{*}(\chi_{\mathbf{v}\mathbf{w}}^{w})\otimes_{F_{\mathbf{vwu}}} pr_{23}^{*}(e^0_{\mathbf{w},\id,\mathbf{u}}))|_{\widetilde{S}}&=(pr_{12}^{*}(\chi_{\mathbf{v}\mathbf{w}}^{w})\otimes_{F_{\mathbf{vwu}}} pr_{23}^{*}(e^0_{\mathbf{w},\id,\mathbf{u}}))|_{\widetilde{S}\cap\widetilde{S}'}\\&=pr_{12}^{*}(\chi_{\mathbf{v}\mathbf{w}}^{w})|_{\widetilde{S}\cap\widetilde{S}'}\otimes_{\widetilde{S}\cap\widetilde{S}'} pr_{23}^{*}(e^0_{\mathbf{w},\id,\mathbf{u}})|_{\widetilde{S}\cap\widetilde{S}'}\in K^{\breve{G}}(Z').
\end{align*}

    Denote $X=pr^{-1}_{12}(\mathbf{Z}^{\le w}_{\mathbf{v}\mathbf{w}})\cap\widetilde{S}\cap\widetilde{S}'$ and $Y=pr^{-1}_{23}(\mathbf{T}_{\mathbf{w},\id,\mathbf{u}})\cap\widetilde{S}\cap\widetilde{S}'$. They intersect transversely in $\widetilde{S}\cap\widetilde{S}'$, and their intersection is $X\cap Y=Z'$.
There come the following commutative diagrams
$$\begin{tikzcd}
X \arrow[d, "p"'] \arrow[r, "j", hook]             & \widetilde{S}\cap\widetilde{S}' \arrow[d, "p'"] \\
{\mathbf{T}_{\Ob_{\mathbf{v},w,\mathbf{w}}}} \arrow[r, "p_1"] & \Fm_\mathbf{v}
\end{tikzcd},
\ \begin{tikzcd}
X \arrow[d, "p"'] \arrow[r, "j", hook]             & \widetilde{S}\cap\widetilde{S}' \arrow[d, "p''"] \\
{\mathbf{T}_{\Ob_{\mathbf{v},w,\mathbf{w}}}} \arrow[r, "p_2"] & \Fm_\mathbf{w}
\end{tikzcd},\ \begin{tikzcd}
X \arrow[d, "p"'] \arrow[r, hook]                 & pr^{-1}_{12}(\mathbf{Z}_{\mathbf{v}\mathbf{w}}^{\le w}) \arrow[d, "pr_{12}"] \\
{\mathbf{T}_{\Ob_{\mathbf{v},w,\mathbf{w}}}} \arrow[r, hook] & \mathbf{Z}_{\mathbf{v}\mathbf{w}}^{\le w}
\end{tikzcd},$$
by which we have \begin{align*}
    pr^{*}_{12}(\chi_{\mathbf{v}\mathbf{w}}^{w})|_{\widetilde{S}\cap\widetilde{S}'}&=j_{*}(p^*(p^*_1(\chi')\otimes_{\mathbf{T}_{\Ob_{\mathbf{v},w,\mathbf{w}}}} p^*_2(\chi'')))=j_*j^*({p'}^*(\chi')\otimes_{\widetilde{S}\cap\widetilde{S}'} {p''}^*(\chi''))\\&={p'}^*(\chi')\otimes_{\widetilde{S}\cap\widetilde{S}'} {p''}^*(\chi'')\in K^{\breve{G}}(\widetilde{S}\cap\widetilde{S}',X)
\end{align*}
and $pr_{23}^*(e^0_{\mathbf{w},\id,\mathbf{u}})|_{\widetilde{S}\cap\widetilde{S}'}=[\mathcal{O}_{Y}]\in K^{\breve{G}}(\widetilde{S}\cap\widetilde{S}',Y)$. Moreover, we have a commutative diagram
$$\begin{tikzcd}
                                                                       & Z' \arrow[ld, "\pi_1"'] \arrow[d, "\pi_2"] \arrow[rd, "\pi_3"] \arrow[ldd, "pr'_{13}"] &                           \\
\Fm_\mathbf{v}                                                             & \Fm_\mathbf{w}                                                                                & \Fm_\mathbf{u} \arrow[l, "\pi"'] \\
{\mathbf{T}_{\Ob_{\mathbf{v},w,\mathbf{u}}}} \arrow[u, "p_1"] \arrow[rru, "p_3"'] &                                                                                        &
\end{tikzcd}$$ where $\pi:\Fm_\mathbf{u}\to\Fm_\mathbf{w}$ is the canonical map induced by $P_\mathbf{u}\subseteq P_\mathbf{w}$, which implies an inclusion $R(\breve{P}_\mathbf{w})\hookrightarrow R(\breve{P}_\mathbf{u})$, and the other arrows are all canonical.
Hence \begin{align*}
(\chi^w_{\mathbf{v}\mathbf{w}}*e^0_{\mathbf{w},\id,\mathbf{u}})|_{\mathbf{T}_{\Ob_{\mathbf{v},w,\mathbf{u}}}}&={pr'}_{13*}(pr^{*}_{12}(\chi_{\mathbf{v}\mathbf{w}}^{w})|_{\widetilde{S}\cap\widetilde{S}'}\otimes_{\widetilde{S}\cap\widetilde{S}'} pr_{23}^*(e^0_{\mathbf{w},\id,\mathbf{u}})|_{\widetilde{S}\cap\widetilde{S}'})\\&=pr'_{13*}({p'}^*(\chi')\otimes_{\widetilde{S}\cap\widetilde{S}'} {p''}^*(\chi'')\otimes_{\widetilde{S}\cap\widetilde{S}'} [\mathcal{O}_X]\otimes_{\widetilde{S}\cap\widetilde{S}'} [\mathcal{O}_{Y}])\\&=
pr'_{13*}(\pi_1^*(\chi')\otimes_{Z'} \pi_2^*(\chi''))\\&=pr'_{13*}(\pi_1^*(\chi')\otimes_{Z'} \pi_3^*(\chi''))\\&=pr'_{13*}{pr'}_{13}^*(p^*_1(\chi')\otimes_{\mathbf{T}_{\Ob_{\mathbf{v},w,\mathbf{u}}}} p^*_3(\chi''))\\&=p^*_1(\chi')\otimes_{\mathbf{T}_{\Ob_{\mathbf{v},w,\mathbf{u}}}} p^*_3(\chi'')=[\mathcal{O}_{\mathbf{T}_{\Ob_{\mathbf{v},w,\mathbf{u}}}}(\chi)].
\end{align*}

Now let $\chi\in R(\breve{P}^w_{\mathbf{v}\mathbf{w}})$. By \eqref{loc gal}, there exists a nonzero $\chi'''\in R(\breve{G})$ such that $\chi'''\cdot\chi\in\mathscr{R}_{\mathbf{v}\mathbf{w}}^w$. By the above discussion, we have $$((\chi'''\cdot\chi)^w_{\mathbf{v}\mathbf{w}}*e^0_{\mathbf{w},\id,\mathbf{u}})|_{\mathbf{T}_{\Ob_{\mathbf{v},w,\mathbf{u}}}}=[\mathcal{O}_{\mathbf{T}_{\Ob_{\mathbf{v},w,\mathbf{u}}}}(\chi'''\cdot\chi)].$$ Since the maps here are all $R(\breve{G})$-linear, we have $$\chi'''\cdot(\chi^w_{\mathbf{v}\mathbf{w}}*e^0_{\mathbf{w},\id,\mathbf{u}})|_{\mathbf{T}_{\Ob_{\mathbf{v},w,\mathbf{u}}}}=\chi'''\cdot[\mathcal{O}_{\mathbf{T}_{\Ob_{\mathbf{v},w,\mathbf{u}}}}(\chi)].$$ Note that $K^{\breve{G}}(\mathbf{T}_{\Ob_{\mathbf{v},w,\mathbf{u}}})$ is a free $R(\breve{G})$-module; in particular, it has no torsion. Hence $$(\chi^w_{\mathbf{v}\mathbf{w}}*e^0_{\mathbf{w},\id,\mathbf{u}})|_{\mathbf{T}_{\Ob_{\mathbf{v},w,\mathbf{u}}}}=[\mathcal{O}_{\mathbf{T}_{\Ob_{\mathbf{v},w,\mathbf{u}}}}(\chi)].$$ The proof is complete.
\end{proof}

%------------------------------------------------
\subsection{Realization of $\widetilde{\mathbb{S}}_\mathbf{f}$}
%----------------------------------------------
\begin{prop}\label{fock}
\
    There exists an $\widetilde{\mathbb{H}}$-module isomorphism $$\widetilde{\phi}:\widetilde{\mathbb{T}}_{\ff}\simeq K^{\breve{G}}(F_{\ff}\times_{\mathcal{N}}F),\ x_{\mathbf{v}}\mapsto e_{\mathbf{v}}^{-\rho_{\mathbf{v}}}.$$
\end{prop}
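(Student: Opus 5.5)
We construct $\widetilde{\phi}$ summand by summand. Fix $\mathbf{v}\in\Lambda_\ff$. The target $K^{\breve{G}}(F_\mathbf{v}\times_{\mathcal{N}}F)$ is a right $K^{\breve{G}}(\mathbf{Z})$-module via convolution. Through Kato's isomorphism $\vartheta$ of Theorem~\ref{exo-H}, it becomes a right $\widetilde{\mathbb{H}}$-module. We then define $\widetilde{\phi}_\mathbf{v}\colon x_\mathbf{v}\widetilde{\mathbb{H}}\to K^{\breve{G}}(F_\mathbf{v}\times_{\mathcal{N}}F)$ by $x_\mathbf{v}h\mapsto e_\mathbf{v}^{-\rho_\mathbf{v}}*\vartheta(h)$.

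Well-definedness amounts to checking that the right annihilator of $x_\mathbf{v}$ kills $e_\mathbf{v}^{-\rho_\mathbf{v}}$. By Lemma~\ref{rann}, this annihilator is the right ideal $I_\mathbf{v}$ generated by $1+T_i$ for $\alpha_i\in\Delta_\mathbf{v}$. By Corollary~\ref{cor-mul}, $e_\mathbf{v}^{-\rho_\mathbf{v}}*\vartheta(1+T_i)=0$ for each such $i$. Hence the map $x_\mathbf{v}\widetilde{\mathbb{H}}\simeq \widetilde{\mathbb{H}}/I_\mathbf{v}\to K^{\breve{G}}(F_\mathbf{v}\times_{\mathcal{N}}F)$ is a well-defined $\widetilde{\mathbb{H}}$-module homomorphism. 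Summing over $\mathbf{v}$ gives $\widetilde{\phi}$.

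For bijectivity we compare bases over $\mathcal{A}[Q]\simeq R(\breve{T})$. By Lemma~\ref{rk-Tf}, $x_\mathbf{v}\widetilde{\mathbb{H}}$ is free over $\mathcal{A}[Q]$ with basis $\{x_\mathbf{v}T_w\mid w\in\D_\mathbf{v}\}$. It is more convenient to use the basis $\{x_\mathbf{v}T_{w'}\}$, where $w'$ runs over the shortest representatives of the cosets $W_\mathbf{v}w$; this is an equally valid basis by the standard triangularity argument. By Proposition~\ref{bas1}, the target is free over $R(\breve{T})$ with basis $\{[\mathcal{O}_{\mathbf{T}_{\mathbf{v},w}}]\}$, and by Proposition~\ref{tm coin} the action of $e^\lambda\in\mathcal{A}[Q]$ on the target is the natural $R(\breve{T})$-action. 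So $\widetilde{\phi}_\mathbf{v}$ is $\mathcal{A}[Q]$-linear, and it suffices to show that the images of the chosen basis vectors form a basis, unitriangular with respect to the filtration \eqref{cfb} by the $\mathbf{Z}^{\preceq w}_{\mathbf{v}\mathbf{d}}$.

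\textbf{Step 1: the image of $x_\mathbf{v}$.} The element $e_\mathbf{v}^{-\rho_\mathbf{v}}$ is a line-bundle twist of $[\mathcal{O}_{\mathbf{T}_{\mathbf{v},\id}}]$ on the bottom cell. Over $R(\breve{T})$ this differs from $[\mathcal{O}_{\mathbf{T}_{\mathbf{v},\id}}]$ only by the invertible factor $e^{\rho_\mathbf{v}}$, because $\mathbf{T}_{\mathbf{v},\id}\simeq G\times^B\mathbb{V}^+_\mathbf{v}$ is a vector bundle over $\mathfrak{B}$ and so the twist is just the pullback of $e^{\rho_\mathbf{v}}$.

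\textbf{Step 2: leading terms.} Write $\vartheta(T_{w'})=[\mathcal{O}_{\mathbf{T}_{w'}}]+(\text{terms supported on }\mathbf{Z}^{<w'})$, which holds by Theorem~\ref{exo-H} and induction on length. Then Lemma~\ref{res1} shows that $e_\mathbf{v}^{-\rho_\mathbf{v}}*\vartheta(T_{w'})$ lies in $K^{\breve{G}}(\mathbf{Z}^{\le w'}_{\mathbf{v}\mathbf{d}})$ and restricts on the open piece $\mathbf{T}_{\Ob_{\mathbf{v},w'}}$ to a unit multiple of $[\mathcal{O}_{\mathbf{T}_{\Ob_{\mathbf{v},w'}}}]$. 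The lower-order terms are supported on smaller strata because $\mathbf{T}_{\mathbf{v},\id}\circ\mathbf{Z}^{\le y}\subseteq\mathbf{Z}^{\le y}_{\mathbf{v}\mathbf{d}}$.

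\textbf{Step 3: conclusion.} Each graded piece $K^{\breve{G}}(\mathbf{T}_{\Ob_{\mathbf{v},w}})\simeq R(\breve{T})$ is free of rank one, generated by $[\mathcal{O}_{\mathbf{T}_{\Ob_{\mathbf{v},w}}}]$. Combining the exact sequences of Lemma~\ref{exsq. 1} with induction along $\preceq$ then shows that the images span $K^{\breve{G}}(F_\mathbf{v}\times_{\mathcal{N}}F)$. Since both sides are free of the same rank $|\D_\mathbf{v}|$ over $\mathcal{A}[Q]$, the map is an isomorphism.

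The main obstacle is Step 2. We must control the lower terms of $\vartheta(T_{w'})$ and check that the unit produced by the $\rho_\mathbf{v}$-twist and the $e^\lambda$ corrections in $\vartheta(T_i)$ really is a unit of $R(\breve{T})$, not merely a nonzero element. If this is delicate, the fallback is the rank argument: after localization, surjectivity onto each graded piece plus equal ranks gives injectivity, and surjectivity integrally follows from the leading coefficient being a monomial.
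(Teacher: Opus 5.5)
Your proposal is correct and is essentially the paper's argument: well-definedness from Lemma~\ref{rann} and Corollary~\ref{cor-mul}, surjectivity by unitriangularity along the filtration \eqref{cfb} via Lemma~\ref{res1} and Proposition~\ref{tm coin}, and injectivity from equal $\mathcal{A}[Q]$-ranks. The only difference is that you push the specific basis $x_\mathbf{v}T_{w'}$ through $\vartheta$, which creates your Step~2 obstacle (the leading term of $\vartheta(T_{w'})$ and the unit twist); the paper sidesteps this because $\vartheta$ is onto, so $e^0_\mathbf{v}=e^{-\rho_\mathbf{v}}_\mathbf{v}*\mathbf{e}^{\rho_\mathbf{v}}$ and $e^0_\mathbf{v}*[\mathcal{O}_{\mathbf{T}_{w'}}]$ already lie in the image, and the latter has leading coefficient exactly $1$ by Lemma~\ref{res1}.
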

\begin{proof}
    By Lemma \ref{rann}, Theorem \ref{exo-H} and Corollary \ref{cor-mul}, one has $r_{\widetilde{\mathbb{H}}}(x_\mathbf{v})\subseteq r_{\widetilde{\mathbb{H}}}(e^{-\rho_\mathbf{v}}_{\mathbf{v},\id })$. Hence, there exists a homomorphism of (right) $\widetilde{\mathbb{H}}$-modules $\widetilde{\phi}:\widetilde{\mathbb{T}}_{\ff}\to K^{\breve{G}}(F_{\ff}\times_{\mathcal{N}}F)$ via $x_{\mathbf{v}}\mapsto e_{\mathbf{v}}^{-\rho_{\mathbf{v}}}$. To show the surjectivity, it is enough to show that $K^{\breve{G}}(F_\mathbf{v}\times_{\mathcal{N}}F)$ is generated by $e^{-\rho_\mathbf{v}}_{\mathbf{v},\id }$ as a (right) $\widetilde{\mathbb{H}}$-module. By Lemma \ref{tm coin}, we have $[\mathcal{O}_{\mathbf{T}_{\mathbf{v},w}}]*\mathbf{e}^{\lambda}=e^\lambda\cdot[\mathcal{O}_{\mathbf{T}_{\mathbf{v},w}}]$. In particular, $e_{\mathbf{v}}^0=e^{-\rho_\mathbf{v}}_{\mathbf{v},\id }*\mathbf{e}^{\rho_\mathbf{v}}$ is in the image $\mathrm{Im}\widetilde{\phi}$. It remains to show $[\mathcal{O}_{\mathbf{T}_{\mathbf{v},w}}]\in\mathrm{Im}\widetilde{\phi}$. 
    By Lemma \ref{res1} and Proposition \ref{bas1}, we know for any $w\in\D_{\mathbf{v}}$, 
    $$e_{\mathbf{v}}^0*[\mathcal{O}_{\mathbf{T}_{w}}]=[\mathcal{O}_{\mathbf{T}_{\mathbf{v},w}}]+\sum\limits_{y<w,y\in\D_{\mathbf{v}}}a_y[\mathcal{O}_{\mathbf{T}_{\mathbf{v},y}}],\quad \text{where } a_y\in\mathcal{A}[Q],$$ 
    which shows $[\mathcal{O}_{\mathbf{T}_{\mathbf{v},w}}]\in \mathrm{Im}\widetilde{\varphi}$ by a standard argument about the upper triangular matrices. Now we have a surjective morphism $\widetilde{\phi}$ of $\mathcal{A}[Q]$-modules. By Lemma \ref{rk-Tf} and Proposition \ref{bas1}, $\widetilde{\mathbb{T}}_\ff$ and $K^{\breve{G}}(F_\ff\times_{\mathcal{N}}F)$ are free $\mathcal{A}[Q]$-modules with the same rank $\sum_{\mathbf{v}\in\Lambda_{\ff}}|\D_{\mathbf{v}}|$. Thus, the surjectivity of $\widetilde{\phi}$ also implies its injectivity.
\end{proof}

\begin{rem}
If we take $\Lambda_\ff=\{\mathbf{v}\}$, a single orbit, then the above proposition gives an equivariant K-theoretical realization of the anti-spherical module (associated with $W_\mathbf{v}$) of $\widetilde{\mathbb{H}}$ of type $\widetilde{C}_d$ with three parameters. 
\end{rem}

Now, we have a commutative diagram of algebra homomorphisms:
\begin{equation*}%\label{diag:main}
   \begin{tikzcd}
\widetilde{\mathbb{S}}_{\ff} \arrow[r, equal]                    & \End_{\widetilde{\mathbb{H}}}(\widetilde{\mathbb{T}}_{\ff})                                     \\
K^{\breve{G}}(\mathbf{Z}_\ff) \arrow[u, "\widetilde{\psi}"', dotted] \arrow[r] & \End_{\widetilde{\mathbb{H}}}(K^{\breve{G}}(F_{\ff}\times_{\mathcal{N}}F)) \arrow[u, "\simeq"']
\end{tikzcd}
\end{equation*}
where the vertical map on the right is due to Proposition \ref{fock}, the horizon map on the bottom is due to the associativity of convolution, and  $\widetilde{\psi}$ is the unique algebra homomorphism that makes the diagram commute. Precisely, $\widetilde{\psi}:K^{\breve{G}}(\mathbf{Z}_\ff) \to \widetilde{\mathbb{S}}_\ff$ is uniquely determined by $\widetilde{\psi}(a)\cdot b=a*b$ for $a\in K^{\breve{G}}(\mathbf{Z}_\ff)$ and $b\in K^{\breve{G}}(F_\ff \times_\mathcal{N}F)$, where we identify $K^{\breve{G}}(F_\ff \times_\mathcal{N}F)$ with $\widetilde{\mathbb{T}}_\ff$ under the isomorphism $\widetilde{\phi}$.

\begin{thm}\label{main}
The homomorphism $\widetilde{\psi}$ is an isomorphism between $K^{\breve{G}}(\mathbf{Z}_{\ff})$ and $\widetilde{\mathbb{S}}_{\ff}$.
\end{thm}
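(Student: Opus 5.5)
We prove injectivity of $\widetilde{\psi}$ after localization, and surjectivity by comparing with the basis of Proposition~\ref{bas2} through a triangularity argument; this is the strategy of \cite{LXY26}. The ranks already agree. By Corollary~\ref{rk-KZ_f}, $K^{\breve{G}}(\mathbf{Z}_\ff)$ is a free $R(\breve{G})$-module of rank $n^2$, where $n=\sum_{\mathbf{v}\in\Lambda_\ff}|\D_\mathbf{v}|$. By Proposition~\ref{rk-Sloc}, $\widetilde{\mathbb{S}}_{\ff,loc}\simeq\Mat_n(R(\breve{G})_{loc})$. Moreover, $\widetilde{\psi}$ is $R(\breve{G})$-linear, since $R(\breve{G})$ acts centrally on both sides, compatibly with Bernstein's identification $R(\breve{G})\simeq Z(\widetilde{\mathbb{H}})$.

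\textbf{Step 1 (injectivity).} Since $K^{\breve{G}}(\mathbf{Z}_\ff)$ is torsion free, it suffices to show that $\widetilde{\psi}_{loc}$ is injective. Then $\widetilde{\psi}_{loc}$ is an injective map between $R(\breve{G})_{loc}$-vector spaces of the same dimension $n^2$, hence an isomorphism. For injectivity, I will show that $K^{\breve{G}}(\mathbf{Z}_\ff)$ acts faithfully on $K^{\breve{G}}(F_\ff\times_\mathcal{N}F)$. Take $a\in K^{\breve{G}}(\mathbf{Z}_{\mathbf{v}\mathbf{w}})$ and act on $e^0_{\mathbf{w},\id}$. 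By the analogue of Lemma~\ref{res2} with $\mathbf{u}=\mathbf{d}$, one has $\chi^w_{\mathbf{v}\mathbf{w}}*e^0_{\mathbf{w},\id}\in K^{\breve{G}}(\mathbf{Z}^{\preceq w}_{\mathbf{v}\mathbf{d}})$, and its restriction to $\mathbf{T}_{\Ob_{\mathbf{v},w}}$ equals $[\mathcal{O}_{\mathbf{T}_{\Ob_{\mathbf{v},w}}}(\chi)]$. Here $P_\mathbf{d}=B\subseteq P_\mathbf{w}$, so Lemma~\ref{inter trans} applies.

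Write $a=\sum c_{\chi,w}\chi^w_{\mathbf{v}\mathbf{w}}$ in the basis of Proposition~\ref{bas2}. Let $w$ be maximal for $\preceq$ among the indices with nonzero coefficients. Restricting $a*e^0_{\mathbf{w},\id}$ to $\mathbf{T}_{\Ob_{\mathbf{v},w}}$ gives $\sum_\chi c_{\chi,w}[\mathcal{O}(\chi)]$. Note that $K^{\breve{G}}(\mathbf{T}_{\Ob_{\mathbf{v},w,\mathbf{w}}})\simeq R(\breve{P}^w_{\mathbf{v}\mathbf{w}})$ injects into $K^{\breve{G}}(\mathbf{T}_{\Ob_{\mathbf{v},w}})\simeq R(\breve{P}^w_{\mathbf{v}\mathbf{d}})=R(\breve{T})^{W_\mathbf{v}\cap wW_\mathbf{d}w^{-1}}$, the map being an inclusion of invariant rings. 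So this restriction is nonzero, and hence $a*e^0_{\mathbf{w},\id}\neq 0$. The blocks for distinct $(\mathbf{v},\mathbf{w})$ land in distinct summands, so faithfulness follows.

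One subtlety needs care. The triangularity requires that the strata $\mathbf{T}_{\Ob_{\mathbf{v},y}}$ with $y\succ w$ receive no contribution, and that only $w$ contributes on $\mathbf{T}_{\Ob_{\mathbf{v},w}}$. This follows from the support statement $\mathbf{Z}^{\le y}_{\mathbf{v}\mathbf{w}}\circ\mathbf{T}_{\mathbf{w},\id}\subseteq\mathbf{Z}^{\le y}_{\mathbf{v}\mathbf{d}}$, together with the fact that the $\D_{\mathbf{v}\mathbf{d}}$-representatives coming from distinct double cosets are distinct. For this I will use the total order on longest representatives $w_{\mathbf{v}\mathbf{w}}$.

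\textbf{Step 2 (surjectivity).} This is the main obstacle, since the equal-rank argument only works after localization. Let $f\in\widetilde{\mathbb{S}}_\ff$, viewed through $\widetilde{\phi}$ as an $\widetilde{\mathbb{H}}$-endomorphism of $K^{\breve{G}}(F_\ff\times_\mathcal{N}F)$. The map $f$ is determined by the images $f(e^{-\rho_\mathbf{w}}_{\mathbf{w}})$. Each image lies in $K^{\breve{G}}(F_\mathbf{v}\times_\mathcal{N}F)$ and is annihilated on the right by $1+\vartheta(T_i)$ for $\alpha_i\in\Delta_\mathbf{w}$.

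The plan is to peel off leading terms, using induction along the filtration \eqref{filtra} with respect to $\preceq$. The element $f(e^0_\mathbf{w})$ has a top stratum $\mathbf{T}_{\Ob_{\mathbf{v},w}}$. Its restriction there is a class in $R(\breve{P}^w_{\mathbf{v}\mathbf{d}})$, and the $W_\mathbf{w}$-invariance encoded by the annihilator condition should force this class to lie in $R(\breve{P}^w_{\mathbf{v}\mathbf{w}})$; moreover $w$ must be the longest element of its double coset. I would try to prove this by restricting to the distinguished open sets $P_i^-B/B$, using the local computation of Proposition~\ref{formula2}. Given this, the class equals the restriction of some $\chi^w_{\mathbf{v}\mathbf{w}}*e^0_{\mathbf{w},\id}$. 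Subtracting $\widetilde{\psi}(\chi^w_{\mathbf{v}\mathbf{w}})$ lowers the support, and induction finishes.

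If this direct argument proves awkward, the fallback follows \cite{LXY26}. From Step 1, the image of $\widetilde{\psi}$ is an $R(\breve{G})$-lattice $L\subseteq\widetilde{\mathbb{S}}_\ff$ of full rank with $L_{loc}=\widetilde{\mathbb{S}}_{\ff,loc}$. One then shows that $\widetilde{\mathbb{S}}_\ff$ is free with a basis indexed by $\Xi_\ff\times\mathcal{B}(P^w_{\mathbf{v}\mathbf{w}})$, compatible with this leading-term description. Comparing leading terms stratum by stratum then gives $L=\widetilde{\mathbb{S}}_\ff$.
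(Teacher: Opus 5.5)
Your Step 1 is the paper's argument: triangularity of $\chi^w_{\mathbf{v}\mathbf{w}}*e^0_{\mathbf{w},\id}$ from Lemma~\ref{res2} gives injectivity, and the rank count from Corollary~\ref{rk-KZ_f} and Proposition~\ref{rk-Sloc} makes $\widetilde{\psi}_{loc}$ an isomorphism. The gap is Step 2, where neither route amounts to an argument.

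In route (a), the decisive claim is that the top coefficient of $f(e^0_{\mathbf{w}})$ on $\mathbf{T}_{\Ob_{\mathbf{v},y}}$ is $W^y_{\mathbf{v}\mathbf{w}}$-invariant and that $y$ is longest in $W_\mathbf{v}yW_\mathbf{w}$. You only assert this (``should force''). The tool you name cannot deliver it as it stands. Proposition~\ref{formula2} and the open sets $P_i^-B/B$ control convolution only near $\mathbf{T}_{\mathbf{v},\id}$ and only for $\alpha_i\in\Delta_\mathbf{v}$. You would need right convolution with $\widetilde{T}_i$, for $\alpha_i\in\Delta_\mathbf{w}$, near an arbitrary stratum. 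Route (b) presupposes a basis of $\widetilde{\mathbb{S}}_\ff$ ``compatible with the leading-term description'', which is essentially the theorem itself.

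The missing idea is that Step 1 already gives you the invariance for free. Only integrality remains, and that is a saturation statement.
\begin{itemize}
\item Write $f=\widetilde{\psi}_{loc}(\mathcal{M})$ with $\mathcal{M}\in K^{\breve{G}}(\mathbf{Z}_\ff)_{loc}$.
\item Suppose $\mathcal{M}$ is not integral. Subtract integral leading parts; these lie in the image of $\widetilde{\psi}$, so the difference still preserves $\widetilde{\mathbb{T}}_\ff$. You may then assume $\mathcal{M}\in\sum_\chi a_\chi\chi^w_{\mathbf{v}\mathbf{w}}+K^{\breve{G}}(\mathbf{Z}^{\prec w}_{\mathbf{v}\mathbf{w}})_{loc}$, with some $a_\chi\in R(\breve{G})_{loc}\setminus R(\breve{G})$.
\item The element $\mathcal{M}*e^0_{\mathbf{w},\id}$ must lie in the integral module $K^{\breve{G}}(F_\mathbf{v}\times_{\mathcal{N}}F)$. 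The filtration pieces are saturated by the split sequences of Lemma~\ref{cfl}, so it lies in $K^{\breve{G}}(\mathbf{Z}^{\preceq w}_{\mathbf{v}\mathbf{d}})$.
\item Hence its restriction to $\mathbf{T}_{\Ob_{\mathbf{v},w}}$, which equals $\sum_\chi a_\chi\chi$ by Lemma~\ref{res2}, lies in $R(\breve{T})$.
\item Now $R(\breve{T})\cap R(\breve{P}^w_{\mathbf{v}\mathbf{w}})_{loc}=R(\breve{P}^w_{\mathbf{v}\mathbf{w}})$, because the denominators come from $R(\breve{G})$ and are $W$-invariant. (The paper instead extends $\mathcal{B}(P^w_{\mathbf{v}\mathbf{w}})$ to an $R(\breve{G})$-basis of $R(\breve{T})$ via Steinberg.) This forces every $a_\chi\in R(\breve{G})$, a contradiction.
\end{itemize}
This proves $\End_{R(\breve{G})}(\widetilde{\mathbb{T}}_\ff)\cap K^{\breve{G}}(\mathbf{Z}_\ff)_{loc}=K^{\breve{G}}(\mathbf{Z}_\ff)$. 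Together with $\widetilde{\mathbb{S}}_\ff=\widetilde{\mathbb{S}}_{\ff,loc}\cap\End_{R(\breve{G})}(\widetilde{\mathbb{T}}_\ff)$, this gives surjectivity. There is no need to determine the leading stratum of $f(e^0_\mathbf{w})$ directly.
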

\begin{proof}
Firstly, let us show the injectivity of $\widetilde{\psi}$.
Lemma~\ref{res2} implies $\chi^w_{\mathbf{v}\mathbf{w}}*e^0_{\mathbf{w},\id}\in\chi^w_{\mathbf{v}}+K^{\breve{G}}(\mathbf{Z}^{\prec w}_{\mathbf{v}})$, for any $w\in\D_{\mathbf{v}\mathbf{w}}$ and $\chi\in R(\breve{P}^w_{\mathbf{v}\mathbf{w}})$. Take any nonzero element $\mathcal{M}\in K^{\breve{G}}(\mathbf{Z}_\ff)$. Without loss of generality, we may assume $\mathcal{M}\in K^{\breve{G}}(\mathbf{Z}^{\preceq w}_{\mathbf{v}\mathbf{w}})\backslash K^{\breve{G}}(\mathbf{Z}^{\prec w}_{\mathbf{v}\mathbf{w}})$, i.e. $\mathcal{M}\in\chi^w_{\mathbf{v}\mathbf{w}}+K^{\breve{G}}(\mathbf{Z}^{\prec w}_{\mathbf{v}\mathbf{w}})$ for $\chi\neq0$. By Lemma \ref{res2} again, we have $(\widetilde{\psi}(\mathcal{M})\cdot e^0_{\mathbf{w},\id})|_{\mathbf{T}_{\Ob_{\mathbf{v},w}}}=(\mathcal{M}*e^0_{\mathbf{w},\id})|_{\mathbf{T}_{\Ob_{\mathbf{v},w}}}=\chi\neq0$. Hence $\mathrm{Ker}\widetilde{\psi}=0$ and $\widetilde{\psi}$ is injective.

    Since $\mathrm{rank}_{R(\breve{G})}(K^{\breve{G}}(\mathbf{Z}_\ff))=\dim_{R(\breve{G})_{loc}} \widetilde{\mathbb{S}}_{\ff,loc}$ by Corollary~\ref{rk-KZ_f} and Proposition~\ref{rk-Sloc}, we obtain an isomorphism $\widetilde{\psi}_{loc}:K^{\breve{G}}(\mathbf{Z}_\ff)_{loc}\simeq\widetilde{\mathbb{S}}_{\ff,loc}$. Consider the following commutative diagram
    \begin{equation*}
  \begin{tikzcd}
K^{\breve{G}}(\mathbf{Z}_\ff) \arrow[rr, "\widetilde{\psi}", hook] \arrow[d, hook]             &  & \widetilde{\mathbb{S}}_\ff \arrow[d, hook] \arrow[r, hook]         & \End_{R(\breve{G})}(\widetilde{\mathbb{T}}_\ff) \arrow[d, hook]         \\
K^{\breve{G}}(\mathbf{Z}_\ff)_{loc} \arrow[rr, phantom] \arrow[rr, "\simeq"] &  & \widetilde{\mathbb{S}}_{\ff,loc} \arrow[r, hook] & \End_{R(\breve{G})}(\widetilde{\mathbb{T}}_\ff)_{loc}
\end{tikzcd}
\end{equation*}
where the right square is clearly a Cartesian diagram.
We will show that the biggest square is also Cartesian, which forces the left square to be Cartesian as well, and hence $\widetilde{\psi}$ is an isomorphism.
If not, then there exists $\mathcal{M} \in (\End_{R(\breve{G})}(\widetilde{\mathbb{T}}_\ff)\cap K^{\breve{G}}(\mathbf{Z}_\ff)_{loc})\backslash K^{\breve{G}}(\mathbf{Z}_{\ff})$. Without loss of generality, we may assume $\mathcal{M}\in K^{\breve{G}}(\mathbf{Z}^{\preceq w}_{\mathbf{v}\mathbf{w}})_{loc} \backslash K^{\breve{G}}(\mathbf{Z}^{\prec w}_{\mathbf{v}\mathbf{w}})_{loc}$,
then $\mathcal{M} \in \sum\limits_{\chi\in\mathcal{B}(P^w_{\mathbf{v}\mathbf{w}})} a_\chi\chi_{\mathbf{v}\mathbf{w}}^w+K^{\breve{G}}(\mathbf{Z}^{\prec w}_{\mathbf{v}\mathbf{w}})_{loc}$, where $a_\chi\in R(\breve{G})_{loc}$ such that $a_\chi\in R(\breve{G})_{loc}\backslash R(\breve{G})$ for at least one $\chi$. By \cite[Theorem 2.2]{St75}, $R(\breve{P}^w_{\mathbf{v}})\simeq R(\breve{T})\simeq K^{\breve{G}}(\mathbf{T}_{\Ob_{\mathbf{v},w}})$ is a free module over $R(\breve{P}^w_{\mathbf{v}\mathbf{w}})\simeq K^{\breve{G}}(\mathbf{T}_{\Ob_{\mathbf{v},w,\mathbf{w}}})$. There exists an $R(\breve{G})$-basis of $R(\breve{P}^w_{\mathbf{v}\mathbf{w}})$
which can be extended to an $R(\breve{G})$-basis of $R(\breve{P}^w_{\mathbf{v}})$. Hence we may assume $\mathcal{B}(P^w_{\mathbf{v}\mathbf{w}})\subseteq\mathcal{B}(P^w_{\mathbf{v}})$. Thanks to Lemma~\ref{res2}, $$(\mathcal{M}*e^0_{\mathbf{w},\id})|_{\mathbf{T}_{\Ob_{\mathbf{v},w}}}=\sum\limits_{\chi\in\mathcal{B}(P^w_{\mathbf{v}\mathbf{w}})\subseteq\mathcal{B}(P^w_{\mathbf{v}})}a_\chi\chi\notin K^{\breve{G}}(\mathbf{T}_{\Ob_{\mathbf{v},w}}).$$ Hence, $\mathcal{M}*e^0_{\mathbf{w},\id}\notin\widetilde{\mathbb{T}}_\ff$, which forces $\mathcal{M}\notin\End_{R(\breve{G})}(\widetilde{\mathbb{T}}_\ff)$, a contradiction. So we have $\End_{R(\breve{G})}(\widetilde{\mathbb{T}}_\ff)\cap K^{\breve{G}}(\mathbf{Z}_\ff)_{loc}=K^{\breve{G}}(\mathbf{Z}_{\ff})$, which implies that the largest square is Cartesian and hence $\widetilde{\psi}$ is an isomorphism.
\end{proof}

\begin{cor}
For $\mathbf{v},\mathbf{w}\in\Lambda_\ff$, we have$$\Hom_{\widetilde{\mathbb{H}}}(x_\mathbf{w}\widetilde{\mathbb{H}},x_\mathbf{v}\widetilde{\mathbb{H}})\simeq K^{\breve{G}}(\mathbf{Z}_{\mathbf{v}\mathbf{w}}).$$
\end{cor}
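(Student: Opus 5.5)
The corollary should follow from Theorem~\ref{main} by cutting down with idempotents, so the plan is to identify the idempotents on both sides and check that $\widetilde{\psi}$ matches them. For each $\mathbf{v}\in\Lambda_\ff$, let $\mathbf{1}_\mathbf{v}\in\widetilde{\mathbb{S}}_\ff=\End_{\widetilde{\mathbb{H}}}(\widetilde{\mathbb{T}}_\ff)$ be the projection onto the summand $x_\mathbf{v}\widetilde{\mathbb{H}}$. Then $\mathbf{1}_\mathbf{v}\widetilde{\mathbb{S}}_\ff\mathbf{1}_\mathbf{w}=\Hom_{\widetilde{\mathbb{H}}}(x_\mathbf{w}\widetilde{\mathbb{H}},x_\mathbf{v}\widetilde{\mathbb{H}})$. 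On the geometric side, let $\mathbf{1}^{geo}_\mathbf{v}=[\mathcal{O}_{\Delta F_\mathbf{v}}]\in K^{\breve{G}}(\mathbf{Z}_{\mathbf{v}\mathbf{v}})\subseteq K^{\breve{G}}(\mathbf{Z}_\ff)$ be the class of the diagonal. Convolution with supports gives $\mathbf{1}^{geo}_\mathbf{v}*K^{\breve{G}}(\mathbf{Z}_\ff)*\mathbf{1}^{geo}_\mathbf{w}=K^{\breve{G}}(\mathbf{Z}_{\mathbf{v}\mathbf{w}})$. This holds because $\mathbf{Z}_\ff=\bigsqcup\mathbf{Z}_{\mathbf{v}'\mathbf{w}'}$ and convolution $K^{\breve{G}}(\mathbf{Z}_{\mathbf{v}\mathbf{v}'})\times K^{\breve{G}}(\mathbf{Z}_{\mathbf{w}'\mathbf{w}})$ vanishes unless $\mathbf{v}'=\mathbf{w}'$, while the diagonal class acts as the identity on $K^{\breve{G}}(\mathbf{Z}_{\mathbf{v}\mathbf{w}})$.

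The key step is to show $\widetilde{\psi}(\mathbf{1}^{geo}_\mathbf{v})=\mathbf{1}_\mathbf{v}$. By the defining property $\widetilde{\psi}(a)\cdot b=a*b$, it suffices to compute $\mathbf{1}^{geo}_\mathbf{v}*b$ for $b\in K^{\breve{G}}(F_{\mathbf{u}}\times_{\mathcal{N}}F)$. This product equals $b$ if $\mathbf{u}=\mathbf{v}$ and $0$ otherwise, for the same support reason. Under $\widetilde{\phi}$ of Proposition~\ref{fock}, which sends $x_\mathbf{u}\widetilde{\mathbb{H}}$ onto $K^{\breve{G}}(F_\mathbf{u}\times_{\mathcal{N}}F)$ summand by summand, this is exactly the projection onto $x_\mathbf{v}\widetilde{\mathbb{H}}$. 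The one point to check is that $\widetilde{\phi}$ respects the decomposition indexed by $\Lambda_\ff$. That is clear, since $x_\mathbf{u}\mapsto e_\mathbf{u}^{-\rho_\mathbf{u}}\in K^{\breve{G}}(F_\mathbf{u}\times_{\mathcal{N}}F)$ and the right $\widetilde{\mathbb{H}}$-action preserves each summand.

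Since $\widetilde{\psi}$ is an algebra isomorphism by Theorem~\ref{main}, it carries $\mathbf{1}^{geo}_\mathbf{v}K^{\breve{G}}(\mathbf{Z}_\ff)\mathbf{1}^{geo}_\mathbf{w}$ isomorphically onto $\mathbf{1}_\mathbf{v}\widetilde{\mathbb{S}}_\ff\mathbf{1}_\mathbf{w}$. Combined with the two identifications in the first paragraph, this gives $K^{\breve{G}}(\mathbf{Z}_{\mathbf{v}\mathbf{w}})\simeq\Hom_{\widetilde{\mathbb{H}}}(x_\mathbf{w}\widetilde{\mathbb{H}},x_\mathbf{v}\widetilde{\mathbb{H}})$. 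The only mildly delicate point is justifying that the diagonal class is a unit for convolution on $K^{\breve{G}}(\mathbf{Z}_{\mathbf{v}\mathbf{w}})$. This is standard (cf.\ \cite[\S2.7]{CG97}), since $F_\mathbf{v}$ is smooth. I do not expect a genuine obstacle here.
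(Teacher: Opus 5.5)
Your proposal is correct and is essentially the paper's argument: the paper also cuts $\widetilde{\psi}$ down by the identity idempotents $\id_\mathbf{v}$ of $\widetilde{\mathbb{S}}_\ff$ and the classes $[\mathcal{O}_{\mathbf{T}_{\mathbf{v},\id,\mathbf{v}}}]$, and $\mathbf{T}_{\mathbf{v},\id,\mathbf{v}}=\Delta F_\mathbf{v}$, so these are exactly your diagonal classes. The only difference is how the two idempotents are matched: the paper gets it from Theorem~\ref{main} in the single-orbit case $\Lambda_\ff=\{\mathbf{v}\}$, whereas you check directly from $\widetilde{\psi}(a)\cdot b=a*b$ that $\widetilde{\psi}$ sends the diagonal class to the projection onto $x_\mathbf{v}\widetilde{\mathbb{H}}$, which is slightly more explicit.
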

\begin{proof}
    By Theorem \ref{main}, we have $\End_{\widetilde{\mathbb{H}}}(x_\mathbf{v}\widetilde{\mathbb{H}})\simeq K^{\breve{G}}(\mathbf{Z}_{\mathbf{v}\mathbf{v}})$ by taking $\Lambda_\ff=\{\mathbf{v}\}$. Let $\id_\mathbf{v}\in\End_{\widetilde{\mathbb{H}}}(x_\mathbf{v}\widetilde{\mathbb{H}})$ be the identity, which corresponds to $[\mathcal{O}_{\mathbf{v},\id,\mathbf{v}}]$ in $K^{\breve{G}}(\mathbf{Z}_{\mathbf{v}\mathbf{v}})$. Recall $$\mathbb{S}_\ff=\End_{\widetilde{\mathbb{H}}}(\bigoplus\limits_{\mathbf{v}\in\Lambda_\ff}x_\mathbf{v}\widetilde{\mathbb{H}})=\bigoplus\limits_{\mathbf{v},\mathbf{w}\in\Lambda_\ff}\Hom_{\widetilde{\mathbb{H}}}(x_\mathbf{w}\widetilde{\mathbb{H}},x_\mathbf{v}\widetilde{\mathbb{H}}).$$ We have \begin{align*}
        \Hom_{\widetilde{\mathbb{H}}}(x_\mathbf{w}\widetilde{\mathbb{H}},x_\mathbf{v}\widetilde{\mathbb{H}})=\id_\mathbf{v}\cdot\mathbb{S}_\ff\cdot\id_\mathbf{w}\simeq[\mathcal{O}_{\mathbf{v},\id,\mathbf{v}}]*K^{\breve{G}}(\mathbf{Z}_\ff)*[\mathcal{O}_{\mathbf{w},\id,\mathbf{w}}]=K^{\breve{G}}(\mathbf{Z}_{\mathbf{v}\mathbf{w}})
    \end{align*} as desired.
\end{proof}

\subsection{Geometric Howe duality}
%----------------------------------------
Let $Q_\g\subseteq Q$ be another finite $W$-invariant subset. We use the subscript $\g$ to indicate the notion associated with $Q_\g$, such as $\widetilde{\mathbb{S}}_\g$, $F_\g$, etc. Denote $\widetilde{\mathbb{T}}_{\ff\g}=K^{\breve{G}}(F_\ff\times_\mathcal{N} F_\g)$, which admits a left $\widetilde{\mathbb{S}}_\ff$-action and a right $\widetilde{\mathbb{S}}_\g$-action under the convolution product.

\begin{lem}\label{lem:howe}
    If for each $\mathbf{w} \in \Lambda_\ff$ there exists an orbit $\mathbf{u} \in \Lambda_\g$ such that $P_\mathbf{u} \subseteq P_\mathbf{w}$, then $$\widetilde{\mathbb{S}}_\ff \simeq \End_{\widetilde{\mathbb{S}}_\g}(\widetilde{\mathbb{T}}_{\ff\g}).$$
\end{lem}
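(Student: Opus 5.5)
The plan is to reduce the statement to pure Hecke-algebra algebra via Theorem~\ref{main} and its corollary, and then run a double-centralizer argument. Applying Theorem~\ref{main} and the corollary following it to the finite set $\Lambda_\ff\cup\Lambda_\g$, I identify
\[
\widetilde{\mathbb{T}}_{\ff\g}=K^{\breve{G}}(F_\ff\times_{\mathcal N}F_\g)\simeq\bigoplus_{\mathbf{w}\in\Lambda_\ff,\ \mathbf{u}\in\Lambda_\g}\Hom_{\widetilde{\mathbb{H}}}(x_\mathbf{u}\widetilde{\mathbb{H}},x_\mathbf{w}\widetilde{\mathbb{H}})=\Hom_{\widetilde{\mathbb{H}}}(\widetilde{\mathbb{T}}_\g,\widetilde{\mathbb{T}}_\ff).
\]
Under this identification, convolution becomes composition of homomorphisms, so the left $\widetilde{\mathbb{S}}_\ff$-action is post-composition and the right $\widetilde{\mathbb{S}}_\g$-action is pre-composition. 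It then suffices to show that the natural map
\[
\Theta:\End_{\widetilde{\mathbb{H}}}(\widetilde{\mathbb{T}}_\ff)\to\End_{\widetilde{\mathbb{S}}_\g}(\Hom_{\widetilde{\mathbb{H}}}(\widetilde{\mathbb{T}}_\g,\widetilde{\mathbb{T}}_\ff)),\qquad \Theta(\varphi)(f)=\varphi\circ f,
\]
is bijective.

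First I construct comparison maps. Fix $\mathbf{w}\in\Lambda_\ff$ and choose $\mathbf{u}\in\Lambda_\g$ with $P_\mathbf{u}\subseteq P_\mathbf{w}$, so that $W_\mathbf{u}\subseteq W_\mathbf{w}$ and $\Delta_\mathbf{u}\subseteq\Delta_\mathbf{w}$. By Lemma~\ref{rann}, $r_{\widetilde{\mathbb{H}}}(x_\mathbf{u})=I_\mathbf{u}\subseteq I_\mathbf{w}=r_{\widetilde{\mathbb{H}}}(x_\mathbf{w})$. Hence
\[
\pi_\mathbf{w}:x_\mathbf{u}\widetilde{\mathbb{H}}\to x_\mathbf{w}\widetilde{\mathbb{H}},\qquad x_\mathbf{u}a\mapsto x_\mathbf{w}a,
\]
is a well-defined surjective $\widetilde{\mathbb{H}}$-map. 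Factoring over coset representatives gives $x_\mathbf{w}=x_\mathbf{u}h_\mathbf{w}$ for some $h_\mathbf{w}\in\mathbb{H}$, so there is also an injective map $\iota_\mathbf{w}:x_\mathbf{w}\widetilde{\mathbb{H}}\hookrightarrow x_\mathbf{u}\widetilde{\mathbb{H}}$. The composite $\iota_\mathbf{w}\pi_\mathbf{w}$ is left multiplication by $x_\mathbf{u}h_\mathbf{w}x_\mathbf{u}^{-1}$, more precisely $x_\mathbf{u}a\mapsto x_\mathbf{u}h_\mathbf{w}a$. It lies in $\End_{\widetilde{\mathbb{H}}}(x_\mathbf{u}\widetilde{\mathbb{H}})\subseteq\widetilde{\mathbb{S}}_\g$. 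On the other side, $\pi_\mathbf{w}\iota_\mathbf{w}=c_\mathbf{w}\cdot\mathrm{id}$, where $c_\mathbf{w}\in\mathcal{A}$ is the nonzero scalar with $x_\mathbf{w}h_\mathbf{w}=c_\mathbf{w}x_\mathbf{w}$. This scalar is a Poincaré-type polynomial, which I would obtain using $x_\mathbf{w}T_i=-x_\mathbf{w}$ from Lemma~\ref{rann}.

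Injectivity of $\Theta$ is then immediate. If $\varphi\circ f=0$ for all $f$, then $\varphi\circ\pi_\mathbf{w}=0$ for every $\mathbf{w}$; since each $\pi_\mathbf{w}$ is surjective, $\varphi=0$.

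For surjectivity, take $F\in\End_{\widetilde{\mathbb{S}}_\g}(\Hom(\widetilde{\mathbb{T}}_\g,\widetilde{\mathbb{T}}_\ff))$ and define $\varphi$ on $x_\mathbf{w}\widetilde{\mathbb{H}}$ by $\varphi(\pi_\mathbf{w}(y))=F(\pi_\mathbf{w})(y)$. The main obstacle is well-definedness, namely showing that $F(\pi_\mathbf{w})$ kills $\ker\pi_\mathbf{w}$. I would handle it with $\widetilde{\mathbb{S}}_\g$-linearity:
\[
F(\pi_\mathbf{w})\circ(\iota_\mathbf{w}\pi_\mathbf{w})=F(\pi_\mathbf{w}\iota_\mathbf{w}\pi_\mathbf{w})=c_\mathbf{w}F(\pi_\mathbf{w}).
\]
If $z\in\ker\pi_\mathbf{w}$, the left side vanishes on $z$, so $c_\mathbf{w}F(\pi_\mathbf{w})(z)=0$. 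Since $\widetilde{\mathbb{T}}_\ff$ is free over $\mathcal{A}[Q]$ by Lemma~\ref{rk-Tf}, hence torsion-free over $\mathcal{A}$, this gives $F(\pi_\mathbf{w})(z)=0$. The resulting $\varphi$ is $\widetilde{\mathbb{H}}$-linear.

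It remains to check that $\Theta(\varphi)=F$. Given $f:x_{\mathbf{u}'}\widetilde{\mathbb{H}}\to x_\mathbf{w}\widetilde{\mathbb{H}}$, I would again use the $c_\mathbf{w}$ trick. Set $g=\iota_\mathbf{w}\circ f$, which belongs to $\widetilde{\mathbb{S}}_\g$. Then
\[
c_\mathbf{w}F(f)=F(\pi_\mathbf{w}g)=F(\pi_\mathbf{w})\circ g=\varphi\circ\pi_\mathbf{w}\circ g=c_\mathbf{w}\,\varphi\circ f,
\]
and cancelling the nonzero scalar $c_\mathbf{w}$ by torsion-freeness gives $F(f)=\varphi\circ f$. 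This finishes the argument. The only genuinely delicate point is the explicit identity $x_\mathbf{w}h_\mathbf{w}=c_\mathbf{w}x_\mathbf{w}$ with $c_\mathbf{w}\neq0$ in the three-parameter setting. If it proves awkward to verify directly, I would instead localize at $R(\breve{G})$ as in Proposition~\ref{rk-Sloc}, where everything is a matrix algebra and the double centralizer property is clear, and then descend using the torsion-freeness of $\widetilde{\mathbb{T}}_\ff$.
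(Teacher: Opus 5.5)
Your proof is correct, but it takes a different route from the paper's. The paper stays on the geometric side. It uses Lemma~\ref{res2} to obtain the triangularity $\chi^w_{\mathbf{v}\mathbf{w}}*e^0_{\mathbf{w},\id,\mathbf{u}}\in\chi^w_{\mathbf{v}\mathbf{u}}+K^{\breve{G}}(\mathbf{Z}^{\prec w}_{\mathbf{v}\mathbf{u}})$, which gives injectivity; convolution with $e^0_{\mathbf{w},\id,\mathbf{u}}$ does the job of your $\pi_\mathbf{w}$. It then gets surjectivity from the localization/lattice argument of Theorem~\ref{main}, namely $\End_{\widetilde{\mathbb{S}}_\g}(\widetilde{\mathbb{T}}_{\ff\g})=\End_{R(\breve{G})}(\widetilde{\mathbb{T}}_{\ff\g})\cap\widetilde{\mathbb{S}}_{\ff,loc}=\widetilde{\mathbb{S}}_\ff$. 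You instead invoke Theorem~\ref{main} (for $\Lambda_\ff\cup\Lambda_\g$) only once, to rewrite $\widetilde{\mathbb{T}}_{\ff\g}$ as $\Hom_{\widetilde{\mathbb{H}}}(\widetilde{\mathbb{T}}_\g,\widetilde{\mathbb{T}}_\ff)$. After that you run an elementary double-centralizer argument over $\mathcal{A}$. The hypothesis enters only through $x_\mathbf{w}=x_\mathbf{u}h_\mathbf{w}$, and the quasi-splitting $\pi_\mathbf{w}\iota_\mathbf{w}=c_\mathbf{w}$, together with torsion-freeness of $\widetilde{\mathbb{T}}_\ff$ over $\mathcal{A}[Q]$ (Lemma~\ref{rk-Tf}), replaces localization. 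The point you flagged as delicate is fine. Summing over minimal length representatives $y$ of $W_\mathbf{u}\backslash W_\mathbf{w}$, $c_\mathbf{w}=\sum_{y}(-1)^{\ell(y)}q_y^{-1}$ is a sum of monomials $q_2^{-a}(-q_0q_1)^{-b}$ with positive coefficients and constant term $1$, so it is nonzero. Your route buys a complete, transparent surjectivity step; the paper only sketches it ``similarly to Theorem~\ref{main}'', relying on the localized double centralizer and a basis-extension step. It also shows the lemma is a formal consequence of Theorem~\ref{main} and Lemmas~\ref{rk-Tf} and~\ref{rann}. The paper's route stays inside the convolution framework and avoids any Hecke-algebra computation of $c_\mathbf{w}$.
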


\begin{proof}
    Similarly to the proof of Theorem \ref{main}, by Lemma \ref{res2}, we see that if $P_\mathbf{u} \subseteq P_\mathbf{w}$, then
    \begin{align*}
        \chi_{\mathbf{v}\mathbf{w}}^{w} \star e_{\mathbf{w}, \id, \mathbf{u}}^0\in\chi^{w}_{\mathbf{v}\mathbf{u}}+K^{\breve{G}}(\mathbf{Z}^{\prec w}_{\mathbf{v}\mathbf{u}}), \quad \forall w \in \D_{\mathbf{v}\mathbf{w}} \ \text{and} \ \chi \in R(\breve{P}_{\mathbf{v}\mathbf{w}}^w).
    \end{align*} Therefore,  $\widetilde{\mathbb{S}}_\ff \to \End_{\widetilde{\mathbb{S}}_\g}(\widetilde{\mathbb{T}}_{\ff\g})$ is injective. Similarly to the arguments for Theorem \ref{main}, we have $$\End_{\widetilde{\mathbb{S}}_\g}(\widetilde{\mathbb{T}}_{\ff\g})=\End_{R(\breve{G})}(\widetilde{\mathbb{T}}_{\ff\g}) \cap \widetilde{\mathbb{S}}_{\ff,loc}=\widetilde{\mathbb{S}}_\ff.$$
\end{proof}

Thanks to Lemma~\ref{lem:howe}, we immediately obtain the following double centralizer property, which can be regarded as a quantized Howe duality of affine type at the Schur algebra level.
\begin{thm}\label{thm:howe}
If the subsets of minimal parabolic subgroups of $\{P_\mathbf{v}~|~\mathbf{v}\in\Lambda_\ff\}$ and of $\{P_\mathbf{v}~|~\mathbf{v}\in\Lambda_\g\}$ coincide, then $\widetilde{\mathbb{S}}_\ff$ and $\widetilde{\mathbb{S}}_\g$ admit a double centralizer property on $\widetilde{\mathbb{T}}_{\ff\g}$, saying
\begin{align*}
\widetilde{\mathbb{S}}_\ff \simeq \End_{\widetilde{\mathbb{S}}_\g}(\widetilde{\mathbb{T}}_{\ff\g}),\quad
\End_{\widetilde{\mathbb{S}}_\ff}(\widetilde{\mathbb{T}}_{\ff\g}) \simeq \widetilde{\mathbb{S}}_\g.
\end{align*}
\end{thm}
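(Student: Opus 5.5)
The plan is to apply Lemma~\ref{lem:howe} twice, once in each direction, after checking that the hypothesis of the theorem gives the hypothesis of the lemma in both cases. The tools are the convolution structure on $\widetilde{\mathbb{T}}_{\ff\g}=K^{\breve{G}}(F_\ff\times_\mathcal{N}F_\g)$ and the symmetry obtained by swapping the two factors.

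\textbf{Step 1: verify the hypothesis for $\ff$ over $\g$.} Take $\mathbf{w}\in\Lambda_\ff$. The set $\{P_\mathbf{v}\mid\mathbf{v}\in\Lambda_\ff\}$ is finite and partially ordered by inclusion, so $P_\mathbf{w}$ contains some minimal element $P_{\mathbf{w}'}$ with $\mathbf{w}'\in\Lambda_\ff$. By hypothesis the minimal parabolics of the two families coincide, so there is $\mathbf{u}\in\Lambda_\g$ with $P_\mathbf{u}=P_{\mathbf{w}'}\subseteq P_\mathbf{w}$. Lemma~\ref{lem:howe} then gives $\widetilde{\mathbb{S}}_\ff\simeq\End_{\widetilde{\mathbb{S}}_\g}(\widetilde{\mathbb{T}}_{\ff\g})$.

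\textbf{Step 2: the other isomorphism by symmetry.} The swap $\sigma:F_\ff\times F_\g\to F_\g\times F_\ff$ restricts to a $\breve{G}$-isomorphism $F_\ff\times_\mathcal{N}F_\g\simeq F_\g\times_\mathcal{N}F_\ff$. Hence $\widetilde{\mathbb{T}}_{\ff\g}\simeq\widetilde{\mathbb{T}}_{\g\ff}$.

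Under this isomorphism, the left $\widetilde{\mathbb{S}}_\ff$- and right $\widetilde{\mathbb{S}}_\g$-actions turn into right and left actions via the anti-involutions $\sigma_*$ of $K^{\breve{G}}(\mathbf{Z}_\ff)$ and $K^{\breve{G}}(\mathbf{Z}_\g)$. That $\sigma_*$ reverses convolution is the standard fact from \cite[\S2.7]{CG97}, applied with smooth ambient spaces and proper supports.

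Applying Lemma~\ref{lem:howe} with the roles of $\ff$ and $\g$ exchanged is allowed, since the hypothesis is symmetric. This gives $\widetilde{\mathbb{S}}_\g\simeq\End_{\widetilde{\mathbb{S}}_\ff}(\widetilde{\mathbb{T}}_{\g\ff})$. Transporting through $\sigma$ turns this into $\End_{\widetilde{\mathbb{S}}_\ff}(\widetilde{\mathbb{T}}_{\ff\g})\simeq\widetilde{\mathbb{S}}_\g$, with left and right actions interchanged. Commutants are unaffected by replacing an algebra with its opposite, so the conclusion holds as stated.

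The main obstacle is the bookkeeping in Step~2: the lemma is stated for the left algebra $\widetilde{\mathbb{S}}_\ff$ acting on $\widetilde{\mathbb{T}}_{\ff\g}$, so the symmetry must be set up carefully. If that becomes awkward, the fallback is to rerun the proof of Lemma~\ref{lem:howe} directly on the right. One would use right convolution with $e^0_{\mathbf{u},\id,\mathbf{w}}$ and the right-hand analogue of Lemma~\ref{res2}, proved by the same transversality argument as Lemma~\ref{inter trans} with the factors swapped, to get injectivity of $\widetilde{\mathbb{S}}_\g\to\End_{\widetilde{\mathbb{S}}_\ff}(\widetilde{\mathbb{T}}_{\ff\g})$. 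The Cartesian-square argument from Theorem~\ref{main} then gives surjectivity.
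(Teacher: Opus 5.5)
Your proposal is correct and takes the same approach as the paper, which derives the theorem directly from Lemma~\ref{lem:howe} and calls the deduction immediate. Your Step~1 checks the lemma's hypothesis using the coincidence of minimal parabolics, and your Step~2 uses the swap anti-involution to obtain the second isomorphism from the lemma with $\ff$ and $\g$ exchanged, making explicit the symmetry the paper leaves implicit.
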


%===================================
\section{Construction of affine iquantum groups}\label{sec:consiquan}
%===================================
In this section, we realize affine iquantum groups $\mathbb{U}^{\imath\imath}$ and $\mathbb{U}^{\imath\jmath}$ using the equivariant K-theory. To adapt to the notation used in the prior literature, we use the parameter $q$ instead of $q_2$ with $q^2=q_2$ and take $\K=\C(q_0,q_1,q)$.

%------------------------------------------------------------
\subsection{Affine iquantum groups of type AIII}
%------------------------------------------------------------
The universal quasi-split affine iquantum groups $\mathbb{U}^{\imath\imath}$ and $\mathbb{U}^{\imath\jmath}$ of type AIII over the field $\K$ are defined, respectively, via the following two Satake diagrams. Each Satake diagram is a Dynkin diagram of affine type A with an involution $\tau$ on the set $\mathbb{I}$ of nodes. 

\begin{center}
$\mathbb{U}^{\imath\imath}=\mathbb{U}^{\imath\imath}_{2n}$ (type $\mathrm{AIII}_{2n-1}^{\imath\imath}$):
\begin{tikzpicture}[scale=.2, baseline=0]

\node at (4,0.85) {\tiny $1$};
\node at (12,0.75) {\tiny $n-1$};

\node at (4,-3.65) {\tiny $2n-1$};
\node at (12,-3.65) {\tiny $n+1$};
    
\node at (0,-0.75) {\tiny $0$};
\node at (16,-0.75) {\tiny $n$};
       
\node at (8,0) {$\dots$};
\node at (8,-3) {$\dots$};
    
\foreach \x in {2,6} 
{\draw[thick,xshift=\x cm] (\x, 0) circle (0.3); 
\draw[thick,xshift=\x cm] (\x, -3) circle (0.3);}
    
\foreach \x in {0,8} 
\draw[thick,xshift=\x cm] (\x, -1.5) circle (0.3); 
   
\foreach \x in 0
{\draw[thick,xshift=\x cm] (\x,-1.2) ++(0.5,0) -- +(3,1.2);
\draw[thick,xshift=\x cm] (\x,-1.8) ++(0.5,0) -- +(3,-1.2);}
   
\foreach \x in {2,4.5}
{\draw[thick,xshift=\x cm] (\x,0) ++(0.5,0) -- +(2,0);
\draw[thick,xshift=\x cm] (\x,-3) ++(0.5,0) -- +(2,0);}

\foreach \x in {6}
{\draw[thick,xshift=\x cm] (\x,0) ++(0.5,0) -- +(3,-1.2);
\draw[thick,xshift=\x cm] (\x,-3) ++(0.5,0) -- +(3,1.2);}
    
\foreach \x in {4} 
\draw[thick,<->, blue, bend right=50] (\x+0.3,-2.5) to (\x+0.3,-0.5);
\foreach \x in {12} 
\draw[thick,<->, blue, bend left=50] (\x-0.3,-2.5) to (\x-0.3,-0.5);
    
\foreach \x in 0 
\draw[thick,<->, blue, bend left=100,looseness=10] (\x-0.4,-1.7) to (\x-0.4,-1.3);
\foreach \x in {16} 
\draw[thick,<->, blue, bend right=100,looseness=10] (\x+0.4,-1.7) to (\x+0.4,-1.3);

\end{tikzpicture}
\\[12pt]
$\mathbb{U}^{\imath\jmath}=\mathbb{U}^{\imath\jmath}_{2n+1}$ (type $\mathrm{AIII}_{2n}^{\imath\jmath}$):
\begin{tikzpicture}[scale=.2, baseline=0]

\node at (4,0.85) {\tiny $1$};
\node at (12,0.75) {\tiny $n-1$};
\node at (16,0.75) {\tiny $n$};
    
\node at (4,-3.85) {\tiny $2n$};
\node at (12,-3.85) {\tiny $n+2$};
\node at (16,-3.85) {\tiny $n+1$};
    
\node at (0,-0.6) {\tiny $0$};
       
\node at (8,0) {$\dots$};
\node at (8,-3) {$\dots$};
    
\foreach \x in {2,6,8} 
{\draw[thick,xshift=\x cm] (\x, 0) circle (0.3); 
\draw[thick,xshift=\x cm] (\x, -3) circle (0.3);}
    
\foreach \x in 0 
\draw[thick,xshift=\x cm] (\x, -1.5) circle (0.3); 
   
\foreach \x in 0
{\draw[thick,xshift=\x cm] (\x,-1.2) ++(0.5,0) -- +(3,1.2);
\draw[thick,xshift=\x cm] (\x,-1.8) ++(0.5,0) -- +(3,-1.2);}
   
\foreach \x in {2,4.5}
{\draw[thick,xshift=\x cm] (\x,0) ++(0.5,0) -- +(2,0);
\draw[thick,xshift=\x cm] (\x,-3) ++(0.5,0) -- +(2,0);}

\foreach \x in {6}
{\draw[thick,xshift=\x cm] (\x,0) ++(0.5,0) -- +(3,0);
\draw[thick,xshift=\x cm] (\x,-3) ++(0.5,0) -- +(3,0);}
    
\foreach \x in {8}
\draw[thick,xshift=\x cm] (\x,-0.5) -- +(0,-2);
    
\foreach \x in {4} 
\draw[thick,<->, blue, bend right=50] (\x+0.3,-2.5) to (\x+0.3,-0.5);
\foreach \x in {12,16} 
\draw[thick,<->, blue, bend left=50] (\x-0.3,-2.5) to (\x-0.3,-0.5);
    
\foreach \x in 0 
\draw[thick,<->, blue, bend left=100,looseness=10] (\x-0.4,-1.7) to (\x-0.4,-1.3);

\end{tikzpicture}
\end{center}

In \cite{LWZ24}, it was shown that $\mathbb{U}^{\imath\imath}$ admits a Drinfeld new presentation, which was rewritten in \cite{SW24} with
generators
\begin{align*}
B_{i,l}, \quad \Theta_{i,m}, \quad \K_i^{\pm1} \quad \text{and}\quad C^{\pm1} \qquad (1 \leq i \leq 2n-1, l \in \Z, m \in \N)
\end{align*}
 subject to
\begin{align}\label{eq:1}
&\text{$C$ is central}, \quad [\K_i,\K_j]=[\K_i,\Theta_{j}(w)]=0,\\
&\Theta_{i}(z)\Theta_{j}(w)=\Theta_{j}(w)\Theta_i(z),\\
&\K_i\mathbf{B}_j(w)=q^{c_{\tau i,j}-c_{ij}}\mathbf{B}_j(w)\K_i,\\
&\mathbf{B}_j(w)\Theta_i(z)=\frac{(1-q^{c_{ij}}zw^{-1})(1-q^{-c_{\tau i,j}}zwC)}{(1-q^{-c_{ij}}zw^{-1})(1-q^{c_{\tau i,j}}zwC)}\Theta_i(z)\mathbf{B}_j(w),
\\
&[\mathbf{B}_i(z),\mathbf{B}_{\tau i}(w)]=\frac{\mathbf{\Delta}(zw)}{q-q^{-1}}(\K_{\tau i}\Theta_i(z)-\K_{i}\Theta_{\tau i}(w)), \quad \text{if} \  c_{i, \tau i}=0, 
\\
&\mathbf{B}_i(z)\mathbf{B}_{j}(w)=\mathbf{B}_j(w)\mathbf{B}_{i}(z), \quad \text{if} \  c_{ij}=0, \tau i\neq j,
\\\label{eq:6}
&(q^{c_{ij}}z-w)\mathbf{B}_i(z)\mathbf{B}_j(w)+(q^{c_{ij}}w-z)\mathbf{B}_j(w)\mathbf{B}_i(z)=0, \quad\text{if} \  i\neq \tau j,\\
&(q^2z-w)\mathbf{B}_n(z)\mathbf{B}_n(w)+(q^2w-z)\mathbf{B}_n(w)\mathbf{B}_n(z)\\
\nonumber&=\frac{q^{-2}\mathbf{\Delta}(zw)\K_n}{q-q^{-1}} \frac{(z-q^2w)(w-q^2z)}{z-w}(\Theta_n(z)-\Theta_n(w)),
\end{align}
and the Serre relations
\begin{align}\label{eq:7}
&\mathbb{S}_{i,j}(w_1,w_2|z)=0, \quad \text{if} \  c_{ij}=-1, \ i\neq \tau j \ \text{or} \ \tau i,
\end{align}
\begin{align}
&(z-qw_1)(z-qw_2)\mathbb{S}_{n,n\pm1}(w_1,w_2|z)=\\
\nonumber&\K_i\mathbf{\Delta}(w_1w_2)\frac{z(qw_1-q^{-1}w_2)(q^{-1}w_1-qw_2)}{w_1-w_2}\big(\Theta_n(w_2)\mathbf{B}_{n\pm1}(z)-\Theta_n(w_1)\mathbf{B}_{n\pm1}(z)\big),
\end{align}
where 
\begin{align*}
\mathbf{B}_i(z)=\sum_{l\in\Z}B_{i,l}z^l,\quad
\mathbf{\Theta}_i(z)=1+\sum_{m\in\N}(q-q^{-1})\Theta_{i,m}z^m,\quad
\mathbf{\Delta}(z)=\sum_{k\in\Z}C^kz^k
\end{align*}
and
\begin{align*}
&\mathbb{S}_{ij}(w_1,w_2|z)\\
&=\mathbf{B}_j(z)\mathbf{B}_i(w_1)\mathbf{B}_i(w_2)-[2]\mathbf{B}_i(w_1)\mathbf{B}_j(z)\mathbf{B}_i(w_2)+\mathbf{B}_i(w_1)\mathbf{B}_i(w_2)\mathbf{B}_j(z)+\{w_1 \leftrightarrow w_2\}.
\end{align*}

A Drinfeld new presentation for $\mathbb{U}^{\imath\jmath}$ was recently obtained in \cite{LPWZ25}, saying that $\mathbb{U}^{\imath\jmath}$ is generated by 
\begin{align*}
B_{i,l}, \quad \Theta_{i,m}, \quad \K_i^{\pm1} \quad \text{and}\quad C^{\pm1} \qquad (1 \leq i \leq 2n, l \in \Z, m \in \N)
\end{align*}
subject to \eqref{eq:1}-\eqref{eq:6}, \eqref{eq:7} and 
\begin{align}
    &(q^{-1}z-w)\mathbf{B}_n(z) \mathbf{B}_{n+1}(w)+(q^{-1}w-z)\mathbf{B}_{n+1}(w) \mathbf{B}_n(z)\\
    \nonumber=&\mathbf{\Delta}(wz)\frac{(w-q^{-1}z)\K_{n+1}\mathbf{\Theta}_n(w)+(z-q^{-1}w)\K_n\mathbf{\Theta}_{n+1}(z)}{q-q^{-1}},
    \end{align}
    \begin{align}
    \label{eq:8}&\mathbb{S}_{i,\tau i}(w_1,w_2|z)=\\
    \nonumber&[2]\mathbf{\Delta}(w_2z)(\frac{(qz-w_2)\K_i \mathbf{\Theta}_{\tau i}(z)\mathbf{B}_i(w_1)}{(qw_1-q^{-1}w_2)(qw_1^{-1}z-1)}+\frac{(qw_2-z)\mathbf{B}_i(w_1)\K_{\tau i} \mathbf{\Theta}_i(w_2)}{(q^{-1}w_1-qw_2)(w_1^{-1}z-q)})+\{w_1 \leftrightarrow w_2\},\\
\nonumber&\qquad\qquad\qquad\qquad\qquad\qquad\qquad\qquad\qquad\qquad\qquad\qquad
    \mbox{ if } i=n \ \text{or} \ n+1,
\end{align}
where $\eqref{eq:8}$ is a reformulation of \cite[(4.36)]{LPWZ25} by \cite[(4.30)]{LPWZ25}.

\begin{rem}
Note that for {\em universal} iquantum groups, all the generation relations listed above involve only a single parameter $q$. This stands in contrast to iquantum groups with parameters, which are reproduced from universal ones via a central reduction and carry additional parameters. Nevertheless, as we emphasize, the other two parameters will appear naturally in the geometric construction given below.
\end{rem}

%-----------------------------------------
\subsection{Polynomial representation}
%-----------------------------------------

Let $\mathbf{R}$ be the ring of $\K$-valued regular functions on $(\C^\times)^d$, which can be identified as $$\mathbf{R}=\K \otimes_\mathcal{A} K^{\breve{G}}(\Bm)$$ via $e^{\epsilon_i}(x)=x_i$ for any $x=(x_1,\ldots, x_d) \in (\C^\times)^d$. Define a $W$-action on $\{1,\ldots,2d\}$ via $$s_i(j)=\begin{cases}
    j+1, &\text{if} \  j=i \text{ or } 2d-i,\\
    j-1, &\text{if} \  j=i+1 \text{ or } 2d-i+1,\\
    j, &\text{otherwise}.
\end{cases}$$
Then the $W$-action on $(\C^\times)^d$, induced by the one on $\mathbf{R}$, satisfies
$$w(x_1,\ldots,x_d)=(x_{w^{-1}(1)},\ldots,x_{w^{-1}(d)})$$
where $x_i=x_{2d-i+1}^{-1}$ ($\forall i=1,2,\ldots, 2d$). For $\mathbf{v}\in\Lambda_{n,d}^\mathfrak{c}$, let $\mathbf{R}^\mathbf{v}$ denote the ring of $\K$-valued regular functions on $(\C^\times)^d/W_\mathbf{v}$, which can be identified as $$\mathbf{R}^{W_\mathbf{v}} \simeq \K \otimes_\mathcal{A} K^{\breve{G}}(\Fm_\mathbf{v}).$$ Denote
$$\mathbf{P}^\mathfrak{c}=\bigoplus_{\mathbf{v}\in\Lambda_{n,d}^\mathfrak{c}}\mathbf{R}^\mathbf{v}\simeq \K \otimes_\mathcal{A} K^{\breve{G}}(\bigsqcup_{\mathbf{v}\in\Lambda_{n,d}^\mathfrak{c}} \Fm_\mathbf{v}) \qquad(\mathfrak{c}=\imath,\jmath).$$
In particular, we use $(2d)$ to represent the unique element in $\Lambda_{0,d}^\jmath$, i.e., the unique $1$-step partition.

Each $\mathbf{v}\in\Lambda_{n,d}^\jmath$ (resp., $\Lambda_{n,d}^\imath$) determines a set partition $[\mathbf{v}]=([\mathbf{v}]_1,[\mathbf{v}]_2,\ldots,[\mathbf{v}]_{2n+1})$ (resp., $[\mathbf{v}]=([\mathbf{v}]_1,[\mathbf{v}]_2,\ldots,[\mathbf{v}]_{2n})$) of $\{1,2,\ldots,2d\}$, where $[\mathbf{v}]_i=\{\bar{v}_{i-1}+1, \bar{v}_{i-1}+2,\ldots,\bar{v}_i\}$ (recall $\tilde{v}_i$ in \eqref{def:tildevi}). Take $w \in W$ and $x \in (\C^\times)^d$. Write $I=w[\mathbf{v}]=(I_1,\ldots,I_{2n+1})$. We denote by $x_I$ the $W_\mathbf{v}$-orbit containing
$(x_{w(1)},\ldots,x_{w(d)})$, which is well defined since any $w'\in W$ with $w'[\mathbf{v}]=I$ yields the same orbit $x_I$. For $1 \leq r \leq 2d$, let $\tau_r^\pm I$ be the set partition with $r$ shifted from $I_s$ to $I_{s\pm1}$ and $2d+1-r$ from $I_{2n+1-s}$ (resp., $I_{2n-s}$) to $I_{2n+1\mp1-s}$ (resp., $I_{2n\mp1-s}$). For $1\leq j\leq d$, let $\iota_jI$ be the set partition obtained from $I$ by swapping the elements $j$ and $2d+1-j$.

%---------------------
\subsection{Realization of $\mathbb{U}^{\imath\imath}$}
%----------------------
In this subsection, we take $\mathfrak{c}=\imath$.

For $1\leq i\leq 2n-1$ but $i\neq n$ and $\mathbf{v} \in \Lambda_{n,d-1}^{\imath}$,
let $p_1:\Ob_{\mathbf{v}+\mathbf{e}_i^\theta,\id,\mathbf{v}+\mathbf{e}_{i+1}^\theta}\to \Fm_{\mathbf{v}+\mathbf{e}_i^\theta}$ and $\pi:\mathbf{T}_{\mathbf{v}+\mathbf{e}_i^\theta,\id,\mathbf{v}+\mathbf{e}_{i+1}^\theta}\to \Ob_{\mathbf{v}+\mathbf{e}_i^\theta,\id,\mathbf{v}+\mathbf{e}_{i+1}^\theta}$ be projections.
Define 
$$\mathscr{B}_{i,\mathbf{v},k}'=e^{k\epsilon_{\bar{v}_i+1}} \pi^\ast(\mathrm{Det}(T_{p_1}^\ast)) \in K^{\breve{G}}(\mathbf{T}_{\mathbf{v}+\mathbf{e}_i^\theta,\id,\mathbf{v}+\mathbf{e}_{i+1}^\theta})$$
and 
 \begin{align*}
 \mathscr{B}_{i,k}'=\sum_{\mathbf{v} \in \Lambda_{n,d-1}^\imath}(-q)^{-\rank \Omega_{p_1}}\mathscr{B}_{i,\mathbf{v},k}'=\sum_{\mathbf{v} \in \Lambda_{n,d-1}^\imath}(-q)^{-v_i}\mathscr{B}_{i,\mathbf{v},k}' \in K^{\breve{G}}(\mathbf{Z}_n^\imath),
 \end{align*}
 where $\Omega_{p_1}$ is the sheaf of relative $1$-forms along $p_1$.

For $i=n$ and $\mathbf{v} \in \Lambda_{n,d-1}^\imath$, let $p_1:\Ob_{\mathbf{v}+\mathbf{e}_n^\theta,s_d,\mathbf{v}+\mathbf{e}_n^\theta}\to \Fm_{\mathbf{v}+\mathbf{e}_n^\theta}$ and $\pi:\mathbf{T}_{\Ob_{\mathbf{v}+\mathbf{e}_n^\theta,s_d,\mathbf{v}+\mathbf{e}_n^\theta}} \to \Ob_{\mathbf{v}+\mathbf{e}_n^\theta,s_d,\mathbf{v}+\mathbf{e}_n^\theta}$ be projections.
However, the orbit $\Ob_{\mathbf{v}+\mathbf{e}_n^\theta,s_d,\mathbf{v}+\mathbf{e}_n^\theta}$ is not closed. Therefore, we have to introduce the localization as follows. For any $R(\breve{G})$-module (resp., $\mathbf{R}^{(2d)}$-module) $M$, 
let $$\underline{M}=\K \otimes_{\mathcal{A}} M[\prod_{\alpha \in \Pi}\frac1{1-e^\alpha}]\qquad
(\mbox{resp.}\quad \underline{M}=M[\prod_{\alpha \in \Pi}\frac1{1-e^\alpha}]),$$
e.g. $\underline{K}^{\breve{G}}(\mathbf{Z}^\mathfrak{c})$, $\underline{\mathbf{R}}^{\mathbf{v}}$, $\underline{\mathbf{P}}^\mathfrak{c}$, and so on. 

\begin{rem}
The localized spaces defined above are smaller than the ones introduced in \cite{SW24}, but are large enough. The following is an explanation. Let $f: X\to Y$ be a morphism between smooth $G$-varieties $X$ and $Y$ such that $f|_{X^T}$ is proper. Assume that there exists a smooth closed $T$-subvariety $X'\supseteq X^T$ such that $f|_{X'}$ is also proper and $\bigwedge^\bullet(T_C^*X) \in R(\breve{T})[\mathcal{O}_C]$ for each connected component $C$ of $X'$. Denote $i: X' \to X$ the embedding map. Note that $(f|_{X'})_*$ is a homomorphism of $R(\breve{T})$-modules. We define $f_*$ by 
\begin{align*}
    f_*[\mathcal{F}]=(f|_{X'})_*\big(\sum_{C}\frac{(i|_{C})^*[\mathcal{F}]}{\bigwedge^\bullet(T_C^*X)}\big),
\end{align*}
which coincides with the localization formula defined in \cite[(2.1)]{SW24}. In our case, let $\mathbf{v}\in \Lambda_{n,d}^\imath$. We take $X=\mathbf{T}_{\Ob_{\mathbf{v},s_d,\mathbf{v}}}$, $X'=\pi_{\mathbf{v}\mathbf{v}}^{-1}(\Ob_{\mathbf{v},s_d,\mathbf{v}}^T)$ and $Y=F_\mathbf{v}$. Since
\begin{align*}
    [\mathcal{O}_{X'}]=\big(\prod_{\alpha \in \Pi^-\backslash(\Pi_\mathbf{v}^- \cap s_d(\Pi_\mathbf{v}^-))}(1-e^\alpha)\big)[\mathcal{O}_X],
\end{align*}
then $i^*(\prod_{\alpha \in \Pi^-\backslash(\Pi_\mathbf{v}^- \cap s_d(\Pi_\mathbf{v}^-))}(1-e^\alpha))[\mathcal{F}]=\bigwedge^\bullet(T_{X'}^*X)i^*[\mathcal{F}]$.
Therefore, we have
\begin{align*}
    (\prod_{\alpha \in \Pi}(1-e^\alpha))(f_*[\mathcal{F}])&=f_*((\prod_{\alpha \in \Pi}(1-e^\alpha))[\mathcal{F}])\\
    &=(f|_{X'})_*i^*(\prod_{\alpha \in \Pi_\mathbf{v}^- \cap s_d(\Pi_\mathbf{v}^-)}(1-e^\alpha)[F]) \in K^{\breve{T}}(Y).
\end{align*}
The claim in this remark follows.
\end{rem}

After the localization, we can define 
$$\mathscr{B}_{n,\mathbf{v},k}'=e^{k\epsilon_d} \pi^*(\mathrm{Det}(T_{p_1}^*)) \in \underline{K}^{\breve{G}}(\mathbf{T}_{\mathbf{v}+\mathbf{e}_n^\theta,s_d,\mathbf{v}+\mathbf{e}_n^\theta})$$
and
$$\mathscr{B}_{n,k}'=\sum_{\mathbf{v} \in \Lambda_{n,d-1}^\imath}(-q)^{-\rank \Omega_{p_1}}\mathscr{B}_{n,\mathbf{v},k}'=\sum_{\mathbf{v} \in \Lambda_{n,d-1}^\imath}(-q)^{-v_n-1}\mathscr{B}_{n,\mathbf{v},k}'\in \underline{K}^{\breve{G}}(\mathbf{Z}_n^\imath).$$

For $1\leq i\leq 2n-1$, let $\mathscr{B}_i'(z)=\sum_{k \in \Z} \mathscr{B}_{i,k}'z^{-k}$.

For any subset $I\subset \{1,2,\ldots,2d\}$, we denote
$$\Phi_I(z)=\prod_{t\in I}\theta(z/x_t)\quad\text{where}\quad \theta(z)=\frac{qz-1}{z-q}.$$
For $1 \leq i \leq 2n-1$, define 
$\hat{B}_{i,k}\in\bigoplus_{\mathbf{v}\in\Lambda_{n,d}^\imath}\mathrm{Hom}_{\underline{\mathbf{R}}^{(2d)}}(\underline{\mathbf{R}}^{\mathbf{v}},\underline{\mathbf{R}}^\mathbf{v})$ by
$$(\hat{B}_{i,k}f)(x_{[\mathbf{v}]})=\sum_{r\in [\mathbf{v}]_{n}}x_r^k\Phi_{(\tau_r^+[\mathbf{v}])_i}(qx_r)f(x_{\tau_r^+ [\mathbf{v}]})\in \underline{\mathbf{R}}^\mathbf{v}, \quad \forall f\in \underline{\mathbf{R}}^{\mathbf{v}-\mathbf{e}_i^\theta+\mathbf{e}_{i+1}^\theta},\ x \in (\C^\times)^d,$$
and denote $$\hat{B}_i(z)=\sum_{k\in\mathbb{Z}}\hat{B}_{i,k}q^{ki}z^k.$$

\begin{lem}\label{ope}
    For $1\leq i\leq 2n-1$, we have
    \begin{align*}
        \hat{B}_i(z)f=\begin{cases}
            \mathscr{B}_i'(q^{-i}z^{-1})f, & \mbox{if} \ i\neq n,\\
         \frac{1-q^{2n+2}z^2}{(1-q^nq_0z)(1-q^nq_1z)}\mathscr{B}_n'(q^{-n}z^{-1})f, & \mbox{if} \ i=n.
        \end{cases}
    \end{align*}
\end{lem}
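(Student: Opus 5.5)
We compute the action of $\mathscr{B}_i'(z)$ on $\underline{\mathbf{P}}^\imath$ by convolution and compare coefficients with $\hat{B}_i(z)$. The standard tool is the Lefschetz--Riemann--Roch (torus localization) formula for convolution of correspondences. Concretely, $\mathbf{P}^\imath\simeq \K\otimes_\mathcal{A} K^{\breve{G}}(F_{\ff}\times_{\mathcal N}F_{(2d)})$, where $F_{(2d)}=G\times^G\mathbb{V}^+_{(2d)}=\{0\}$ is a point, so $F_\mathbf{v}\times_\mathcal{N}F_{(2d)}$ is the zero section $\Fm_\mathbf{v}$. The convolution of a class supported on $\mathbf{T}_{\mathbf{v}',w,\mathbf{v}''}$ with $f\in K^{\breve{G}}(\Fm_{\mathbf{v}''})$ is therefore
\[
pr_{1*}\bigl(\mathcal{E}\otimes^{\mathbb L}_{F_{\mathbf{v}'}\times F_{\mathbf{v}''}} pr_2^*\mathcal{O}_{\Fm_{\mathbf{v}''}}(f)\bigr).
\]
First, restrict everything to $\breve{T}$-equivariant K-theory (faithful after localization) and evaluate at a $T$-fixed point $x_{[\mathbf{v}]}$ of $\Fm_\mathbf{v}$.

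\textbf{Case $i\neq n$.} The orbit $\Ob_{\mathbf{v}+\mathbf{e}_i^\theta,\id,\mathbf{v}+\mathbf{e}_{i+1}^\theta}$ is closed and $p_1$ is a projective-space bundle. The fiber over $x_{[\mathbf{v}]}$ has $T$-fixed points indexed by $r$ in the relevant block, and these correspond to the set partitions $\tau_r^+[\mathbf{v}]$. At each fixed point we must compute three ingredients:
\begin{enumerate}
\item the restriction of $e^{k\epsilon}\,\mathrm{Det}(T^*_{p_1})$, which yields $x_r^k$ times a product of characters;
\item the tangent-space denominator $\bigwedge^\bullet$ of $T^*$ of the fiber;
\item the Koszul factor from the derived intersection $\mathbf{T}\cap (F_{\mathbf{v}'}\times \Fm_{\mathbf{v}''})$.
\end{enumerate}
The third factor is $\bigwedge^\bullet$ of the exotic fiber directions, namely the weights of $\mathbb{V}^+_{\mathbf{v}'}\cap w\mathbb{V}^+_{\mathbf{v}''}$ missing from the normal directions. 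For $i\neq n$, the $V^{\oplus2}$ part contributes nothing new, and the $\wedge^2V$ part contributes exactly the $q_2=q^2$-twisted weights $\epsilon_r-\epsilon_t$. Combining these with the denominators $(1-x_t/x_r)$ and the normalisation $(-q)^{-v_i}$, each $t$ in the block contributes $\theta(qx_r/x_t)$ after the substitution $z\mapsto q^{-i}z^{-1}$. We then check that the product over $t$ is exactly $\Phi_{(\tau_r^+[\mathbf{v}])_i}(qx_r)$, which gives the first formula. This mirrors Vasserot's type A computation and \cite{SW24}, and the sign and power bookkeeping ($\mathrm{Det}(T^*_{p_1})$ cancelling half of the $q$-powers) is routine.

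\textbf{Case $i=n$.} This is the main obstacle. The orbit $\Ob_{\cdot,s_d,\cdot}$ is not closed, so we use the localized pushforward $f_*$ from the Remark. In that formula $X'=\pi^{-1}(\Ob^T)$ and the only fixed points are those where $r$ moves across the middle, i.e.\ $\tau_r^+$ through the $\iota$-swap. The computation proceeds exactly as in the case $i\neq n$, with one new feature: the exotic weights now include $\pm\epsilon_d$ from the two copies of $V$, carrying $q_0$ and $q_1$, together with $2\epsilon_d$-type contributions from $\wedge^2V$ realised as $\epsilon_r+\epsilon_{2d+1-r}$ shifts. These produce extra Koszul factors $(1-q_0x_r^{\pm1})(1-q_1x_r^{\pm1})$ and an extra tangent denominator $(1-x_r^{\pm2})$. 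We plan to isolate precisely these factors at each fixed point and show that, after the substitution $z\mapsto q^{-n}z^{-1}$ and the normalisation $(-q)^{-v_n-1}$, they collapse to a ratio independent of $r$ up to the $x_r^k$ generating variable. Summed as a series, that ratio is the prefactor $\frac{1-q^{2n+2}z^2}{(1-q^nq_0z)(1-q^nq_1z)}$, and moving it to the other side gives the stated identity.

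The delicate points in the $i=n$ case are the following. One must check that the $q_0,q_1$ terms, arising as in the Koszul complex in the proof of Proposition~\ref{formula2}, appear only through $x_r$ at the swapped index and not through other $t$. This holds because the weight spaces $\mathbb{V}[\pm\epsilon_t]$ for $t\neq r$ lie in both $\mathbb{V}^+_{\mathbf{v}'}$ and $s_d\mathbb{V}^+_{\mathbf{v}''}$, so they cancel. One must also match the remaining $\wedge^2V$ weights to $\Phi_{(\tau_r^+[\mathbf{v}])_n}(qx_r)$, using $x_{2d+1-t}=x_t^{-1}$.
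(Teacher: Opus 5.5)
Your overall route is the paper's: compute the convolution action on $\underline{\mathbf{P}}^\imath$ by torus localization at the fixed points indexed by $\tau_r^+[\mathbf{v}]$. For $i\neq n$ this is literally the computation of \cite[\S6]{SW24}. For $i=n$ the paper's proof is a single substitution in \cite[Proposition~6.2]{SW24}: $\bigwedge_{q^2}T_{p_1}$ is replaced by $(1-q_0e^{\epsilon_d})(1-q_1e^{\epsilon_d})\prod_{j\in[\mathbf{v}]_n\setminus\{d\}}(1-q^2e^{\epsilon_d-\epsilon_j})$. Consequently the $r$-th summand of $(\mathscr{B}'_{n,k}f)(x_{[\mathbf{v}]})$ is the $r$-th summand of $(\hat{B}_{n,k}f)(x_{[\mathbf{v}]})$ multiplied by $g(x_r)=\frac{(1-q_0x_r)(1-q_1x_r)}{1-q^2x_r^2}$. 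Your $i=n$ plan, as written, would not produce this factor.

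The concrete problem is your claim of ``$2\epsilon_d$-type contributions from $\wedge^2V$ realised as $\epsilon_r+\epsilon_{2d+1-r}$ shifts'', together with the plan to match the remaining $\wedge^2V$ weights with $\Phi_{(\tau_r^+[\mathbf{v}])_n}(qx_r)$. The module $\wedge^2V$ has no weights $\pm2\epsilon_i$. Also $\epsilon_r+\epsilon_{2d+1-r}=0$ here, since $x_{2d+1-r}=x_r^{-1}$. This is exactly why Kato's $\Psi$ sends $\pm2\epsilon_i$ to $\pm\epsilon_i$: the long-root direction is carried only by the two copies of $V$.

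Hence the $t=2d+1-r$ factor of $\Phi_{(\tau_r^+[\mathbf{v}])_n}(qx_r)$, namely $\theta(qx_r^2)=\frac{1-q^2x_r^2}{q(1-x_r^2)}$, cannot be matched with any exotic weight. Its denominator is the tangent denominator $1-x_r^2$ of the long-root direction, which is present in both settings. Its numerator $1-q^2x_r^2$, the $q^2$-Koszul factor of the cotangent setting, is \emph{replaced} by $(1-q_0x_r)(1-q_1x_r)$. That replacement is the whole content of the lemma. If you also keep a factor $1-q^2x_r^2$ coming from a nonexistent $\wedge^2V$ weight, the ratio becomes $(1-q_0x_r)(1-q_1x_r)$, and the numerator $1-q^{2n+2}z^2$ of the prefactor is lost.

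Note also that $g(x_r)$ is not independent of $r$. It becomes the $r$-independent prefactor only because the generating series $\sum_{k}x_r^k(q^nz)^k$ is a formal delta function, which lets you replace $x_r$ by the corresponding function of $z$. With these two corrections your argument coincides with the paper's.
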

\begin{proof}
We claim 
\begin{align*}
    (\mathscr{B}_{i,k}'f)(x_{[\mathbf{v}]})=\begin{cases}
        \sum_{r \in [\mathbf{v}]_i} x_r^k\Phi_{(\tau_r^+[\mathbf{v}])_i}(qx_r)(f(x_{\tau_r^+[\mathbf{v}]})), & \mbox{if} \ i\neq n,\\
     \sum_{r \in [\mathbf{v}]_n} x_r^k\frac{(1-q_0x_r)(1-q_1x_r)}{1-q^2x_r^2}\Phi_{(\tau_r^+[\mathbf{v}])_n}(qx_r)(f(x_{\tau_r^+[\mathbf{v}]})), & \mbox{if} \ i=n.
    \end{cases}
\end{align*}
For $i\neq n$, the calculation is the same as that in \cite[\S 6]{SW24}. For $i=n$, we only need to replace $\bigwedge_{q^2}T_{p_1}$ in the proof of \cite[Proposition 6.2]{SW24} with $(1-q_0e^{\epsilon_d})(1-q_1e^{\epsilon_d})\prod_{i \in [\mathbf{v}]_n\backslash\{d\}}(1-q^2e^{\epsilon_d-\epsilon_i})$.

Then comparing with the action of $\hat{B}_{i,k}$, the lemma follows.
\end{proof}

For $1\leq i\leq 2n-1$, the operator $\hat{\Theta}_i(z)$  was defined as the action on $\underline{\mathbf{R}}^{\mathbf{v}}$ by multiplication with the rational function $q^{v_{i+1}-v_i}\Phi_{i,\mathbf{v}}(z^{-1})$, where
$$\Phi_{i,\mathbf{v}}(z):=
\Phi_{[\mathbf{v}]_i}(q^{1-i}z)\Phi_{[\mathbf{v}]_{i+1}}(q^{-1-i}z)^{-1}. 
$$
Write
\begin{align*}
    \hat{\Theta}_i'(z)&=\hat{\Theta}_i(z), \ \text{for} \ i \neq n,\\
    \hat{\Theta}_n'(z)
    &=\frac{(1-q^nq_0z)(1-q^nq_1z)(1-q^nq_0^{-1}z)(1-q^nq_1^{-1}z)}{(1-q^{2n+2}z^2)(1-q^{2n-2}z^2)}\hat{\Theta}_n(z).
\end{align*}

\begin{thm}
\label{thm:ii}
The assignment, for $1\leq i \leq 2n-1$,
\begin{align*}
 C \mapsto q^{2n}, \quad \mathbf{\Theta}_i(z) \mapsto \hat{\Theta}_i'(z),
    \quad \K_i \mapsto (q_0q_1q^{-1})^{\delta_{ni}}q^{-v_i+v_{i+1}},
    \quad \mathbf{B}_i(z) \mapsto  \mathscr{B}_i'(q^{-i}z^{-1}), 
\end{align*}
gives a $\K$-algebra homomorphism from $\mathbb{U}^{\imath\imath}$ to $\underline{K}^{\breve{G}}(\mathbf{Z}^\imath_{n})$.
\end{thm}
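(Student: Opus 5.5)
Since $\underline{K}^{\breve{G}}(\mathbf{Z}^\imath_n)$ acts faithfully on $\underline{\mathbf{P}}^\imath$, we will verify the defining relations of $\mathbb{U}^{\imath\imath}$ inside the operator algebra on $\underline{\mathbf{P}}^\imath$ rather than inside the $K$-group. Faithfulness follows from the localized version of Theorem~\ref{main} and Proposition~\ref{fock}: the convolution action of $K^{\breve{G}}(\mathbf{Z}^\imath_n)$ on $K^{\breve{G}}(F^\imath_n\times_{\mathcal{N}}F)$, after localization, restricts faithfully to the $W$-anti-invariant/$\mathbf{R}^{\mathbf{v}}$-parts, which identify with $\underline{\mathbf{R}}^{\mathbf{v}}$ via $K^{\breve{G}}(\Fm_\mathbf{v})$. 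The assignment is therefore a well-defined algebra homomorphism once the images, viewed as operators, satisfy \eqref{eq:1}--\eqref{eq:7} and the two $n$-specific relations.

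First I will identify each image with a known operator. By Lemma~\ref{ope}, the image of $\mathbf{B}_i(z)$ is $\hat B_i(z)$ for $i\neq n$. For $i=n$ it is $\hat B_n(z)$ multiplied by the scalar rational function $g(z)=\frac{(1-q^nq_0z)(1-q^nq_1z)}{1-q^{2n+2}z^2}$. The images of $\mathbf{\Theta}_i$ and $\K_i$ are multiplication operators: they are $\hat\Theta_i$ for $i\neq n$, and a rescaling of $\hat\Theta_n$ by an explicit function of $z$, together with the constants $(q_0q_1q^{-1})^{\delta_{ni}}$. The strategy is to reduce to the single-parameter computation of \cite{SW24}. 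That computation showed that $\hat B_i(z),\hat\Theta_i(z),\K_i\mapsto q^{v_{i+1}-v_i}, C\mapsto q^{2n}$ satisfy all the relations, and its operator formulas do not involve the geometry. Every relation not involving index $n$ (commutativity, the $\K$–$\mathbf{B}$ relation for $i\neq n$, the $\mathbf{B}$–$\Theta$ relation, \eqref{eq:6}, \eqref{eq:7}) is then inherited verbatim. For relations involving $\mathbf{B}_n$ only homogeneously on both sides, such as the $\mathbf{B}_j$–$\Theta_n$ exchange, \eqref{eq:6} with $j=n$, and the Serre relations where $\mathbf{B}_n$ appears to first order, the scalar factor $g(z)$ commutes through and can be cancelled. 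Any $\K_n$ constant factors $q_0q_1q^{-1}$ cancel likewise.

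The genuine work lies in the relations where $\mathbf{B}_n$ appears quadratically against a $\Theta_n$ term. These are the $\mathbf{B}_n(z)\mathbf{B}_n(w)$ relation and the Serre relation for $\mathbb{S}_{n,n\pm1}$. Here the left side acquires the factor $g(z)g(w)$ and the right side involves $\hat\Theta_n'$ and $\K_n$. I will redo the \cite{SW24} computation directly on $\underline{\mathbf{P}}^\imath$. Applying $\mathscr{B}_n'$ twice to $f\in\underline{\mathbf{R}}^{\mathbf{v}}$ produces a sum over ordered pairs $r\neq r'$ in $[\mathbf{v}]_n$, plus terms with $r'=2d+1-r$ coming from the non-closed orbit $\Ob_{\mathbf{v},s_d,\mathbf{v}}$ (the $\iota_j$ swaps). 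The off-diagonal pairs cancel as in the single-parameter case, because the extra factors $\frac{(1-q_0x_r)(1-q_1x_r)}{1-q^2x_r^2}$ are symmetric in the pair. The remaining terms produce a sum of residues of a rational function in $z$. Evaluating them by partial fractions, using $\Delta(zw)$ to impose $zw=q^{-2n}$, should give exactly $\hat\Theta_n'(z)-\hat\Theta_n'(w)$ times $\K_n$. The normalization of $\hat\Theta_n'$ by $\frac{(1-q^nq_0^{\pm1}z)(1-q^nq_1^{\pm1}z)}{(1-q^{2n\pm2}z^2)}$ and the constant $q_0q_1q^{-1}$ in $\K_n$ are designed precisely for this. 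The Serre relation $\mathbb{S}_{n,n\pm1}$ then follows by the same residue bookkeeping combined with the $\mathbf{B}_n\mathbf{B}_n$ identity.

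The main obstacle is this diagonal/residue computation for $\mathbf{B}_n\mathbf{B}_n$. The three-parameter factors must recombine, under the substitution $zw=q^{-2n}$ (i.e. $w\leftrightarrow q^{-2n}z^{-1}$ and $x\leftrightarrow x^{-1}$), into the prescribed $\hat\Theta_n'$. I would first check it for $d=1$, where $\underline{\mathbf{R}}^{\mathbf{v}}$ is one variable, to pin down the constants before doing the general case.
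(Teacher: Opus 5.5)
Your skeleton matches the paper's proof. You check the relations on the polynomial representation, use Lemma~\ref{ope} to write $\mathscr{B}_i'(q^{-i}z^{-1})=\hat B_i(z)$ for $i\neq n$ and $\mathscr{B}_n'(q^{-n}z^{-1})=g(z)\hat B_n(z)$, and import \cite[Theorem 4.7]{SW24}. Your treatment of every relation in which $\mathbf{B}_n$ occurs at most once is fine.

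The gap is exactly at the step you call the main obstacle. For the $\mathbf{B}_n(z)\mathbf{B}_n(w)$ relation and for $\mathbb{S}_{n,n\pm1}$ you give no argument. You only plan to redo the residue computation of \cite{SW24}, assert that it ``should give exactly'' $\K_n(\hat\Theta_n'(z)-\hat\Theta_n'(w))$, and propose calibrating constants at $d=1$. As written, these two relations are unproved. The missing observation is that they need no new computation.

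Since $g$ is a scalar function of $z$, Lemma~\ref{ope} gives $\mathscr{B}_n'(q^{-n}z^{-1})\mathscr{B}_n'(q^{-n}w^{-1})=g(z)g(w)\hat B_n(z)\hat B_n(w)$ identically, diagonal terms $r'=2d+1-r$ included. So the left side of the \cite{SW24} relation is simply multiplied by $g(z)g(w)$. The right side carries $\mathbf{\Delta}(zw)$, which under $C\mapsto q^{2n}$ localizes to $zw=q^{-2n}$. There a direct check gives
\begin{align*}
h(z)&:=\frac{(1-q^nq_0z)(1-q^nq_1z)(1-q^nq_0^{-1}z)(1-q^nq_1^{-1}z)}{(1-q^{2n+2}z^2)(1-q^{2n-2}z^2)},\\
g(z)\,g(q^{-2n}z^{-1})&=-q_0q_1q^{-2}\,h(z),\qquad h(q^{-2n}z^{-1})=h(z).
\end{align*}
Here $h=\hat\Theta_n'/\hat\Theta_n$ is exactly the normalizing factor in the definition of $\hat\Theta_n'$. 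Hence
\[
\mathbf{\Delta}(zw)\,g(z)g(w)\bigl(\hat\Theta_n(z)-\hat\Theta_n(w)\bigr)=-q_0q_1q^{-2}\,\mathbf{\Delta}(zw)\bigl(\hat\Theta_n'(z)-\hat\Theta_n'(w)\bigr).
\]
So the \cite{SW24} relation, multiplied by $g(z)g(w)$, is precisely the required one, with the image of $\K_n$ rescaled by the constant $-q_0q_1q^{-2}$. That rescaling is what the factor $(q_0q_1q^{-1})^{\delta_{ni}}$ in the statement accounts for. Note also that $g\equiv h\equiv 1$ at $q_0=q$, $q_1=-q$, so no separate $d=1$ calibration is needed.

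The relation $\mathbb{S}_{n,n\pm1}$ is handled identically with $(w_1,w_2)$ in place of $(z,w)$. Its left side acquires $g(w_1)g(w_2)$, and its right side carries $\mathbf{\Delta}(w_1w_2)$ times $\Theta_n(w_2)-\Theta_n(w_1)$. This substitution is all the paper's one-line proof does. Your residue computation would presumably also succeed, but it is left undone and is unnecessary.

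Separately, your faithfulness justification is muddled. The clean reason is this: $K^{\breve G}(\mathbf{Z}^\imath_n)$ is $R(\breve G)$-free (Corollary~\ref{rk-KZ_f}), and it becomes a full matrix algebra after localization at $(0)$ (Proposition~\ref{rk-Sloc} with Theorem~\ref{main}). Inverting $\prod_{\alpha\in\Pi}(1-e^\alpha)\in R(\breve G)$ keeps it torsion-free. So it acts faithfully on any module with nonzero generic fiber, such as $\underline{\mathbf{P}}^\imath$.
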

\begin{proof}
    Using \cite[Theorem 4.7]{SW24} and Lemma \ref{ope}, we can check the relations by representing $\mathscr{B}_i'(z), \hat{\Theta}_i'(z)$ with $\hat{B}_i(z), \hat{\Theta}_i(z)$.
\end{proof}

%---------------------
\subsection{Realization of $\mathbb{U}^{\imath\jmath}$}
%----------------------
In this subsection, we take $\mathfrak{c}=\jmath$.

For $1\leq i\leq 2n$ and $\mathbf{v} \in \Lambda_{n,d-1}^{\jmath}$,
let $p_1:\Ob_{\mathbf{v}+\mathbf{e}_i^\theta,\id,\mathbf{v}+\mathbf{e}_{i+1}^\theta}\to \Fm_{\mathbf{v}+\mathbf{e}_i^\theta}$ and $\pi:\mathbf{T}_{\mathbf{v}+\mathbf{e}_i^\theta,\id,\mathbf{v}+\mathbf{e}_{i+1}^\theta}\to \Ob_{\mathbf{v}+\mathbf{e}_i^\theta,\id,\mathbf{v}+\mathbf{e}_{i+1}^\theta}$ be projections.
Define 
$$\mathscr{B}_{i,\mathbf{v},k}'=e^{k\epsilon_{\bar{v}_i+1}} \pi^\ast(\mathrm{Det}(T_{p_1}^\ast)) \in K^{\breve{G}}(\mathbf{T}_{\mathbf{v}+\mathbf{e}_i^\theta,\id,\mathbf{v}+\mathbf{e}_{i+1}^\theta}).$$
Denote 
 \begin{align*}
 \mathscr{B}_i'(z)=\sum_{k \in \Z} \mathscr{B}_{i,k}'z^{-k} 
 \end{align*}
 where
 \begin{align*}
 \mathscr{B}_{i,k}'=\sum_{\mathbf{v} \in \Lambda_{n,d-1}^\jmath}(-q)^{-\rank \Omega_{p_1}}\mathscr{B}_{i,\mathbf{v},k}'=\sum_{\mathbf{v} \in \Lambda_{n,d-1}^\jmath}(-q)^{-v_i-\delta_{i,n+1}}\mathscr{B}_{i,\mathbf{v},k}' \in K^{\breve{G}}(\mathbf{Z}_n^\jmath).
 \end{align*}

For $1\leq i \leq 2n$, define $\hat{B}_{i,k}\in\bigoplus_{\mathbf{v}\in\Lambda_{n,d}^\jmath}\mathrm{Hom}_{\mathbf{R}^{(2d)}}(\mathbf{R}^{\mathbf{v}-\mathbf{e}_i^\theta+\mathbf{e}_{i+1}^\theta},\mathbf{R}^\mathbf{v})$ by
$$(\hat{B}_{i,k}f)(x_{[\mathbf{v}]})=\sum_{j\in [\mathbf{v}]_i}x_j^k\Phi_{[\mathbf{v}]_i\setminus\{j\}}(qx_j)f(x_{\tau_j^+ [\mathbf{v}]})\in \mathbf{R}^\mathbf{v}, \quad \forall f\in \mathbf{R}^{\mathbf{v}-\mathbf{e}_i^\theta+\mathbf{e}_{i+1}^\theta},\ x \in (\C^\times)^d.$$  
We denote
\begin{align*}
\hat{B}_i(z)&=\sum_{k\in\mathbb{Z}}\hat{B}_{i,k}q^{ki}z^k.
\end{align*}

Comparing the actions on $\mathbf{P}^\jmath$, we obtain the following lemma.
\begin{lem}\label{ope2}
    It holds
    \begin{align*}
\hat{B}_i(z)f&=(\frac{1-q^{2n+4}z^2}{(1-q^{n+1}q_0z)(1-q^{n+1}q_1z)})^{\delta_{i,n+1}} \mathscr{B}_i'(q^{-i}z^{-1})f.
    \end{align*}
\end{lem}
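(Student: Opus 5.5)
We prove Lemma~\ref{ope2} the same way as Lemma~\ref{ope}. The first step is an explicit formula for the action of $\mathscr{B}_{i,k}'$ on $\mathbf{P}^\jmath\simeq\K\otimes_\mathcal{A}K^{\breve{G}}(\Fm_n^\jmath)$. The second step is to compare it term by term with $\hat{B}_{i,k}$.

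The convolution of $\mathscr{B}_{i,\mathbf{v},k}'$ with a class $f$ on $\Fm_{\mathbf{v}+\mathbf{e}_{i+1}^\theta}$ reduces to $p_{1*}$ of a class on the closed orbit $\Ob_{\mathbf{v}+\mathbf{e}_i^\theta,\id,\mathbf{v}+\mathbf{e}_{i+1}^\theta}$, twisted by the Koszul class of the vector bundle part of $\mathbf{T}_{\mathbf{v}+\mathbf{e}_i^\theta,\id,\mathbf{v}+\mathbf{e}_{i+1}^\theta}$. This orbit is closed, so no localization is needed. The fibres of $p_1$ are Grassmannians of lines, so we evaluate $p_{1*}$ with the Weyl--Atiyah--Bott localization formula at the $T$-fixed points. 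These are indexed by $r\in[\mathbf{v}]_i$, and the fixed point $r$ corresponds to the set partition $\tau_r^+[\mathbf{v}]$. Each fixed point contributes
\[
x_r^k\cdot\frac{\text{(Koszul factor of the exotic fibre)}}{\text{(tangent weights of the fibre)}}\cdot f(x_{\tau_r^+[\mathbf{v}]}).
\]
The expected outcome is
\[
(\mathscr{B}_{i,k}'f)(x_{[\mathbf{v}]})=\sum_{r\in[\mathbf{v}]_i}x_r^k\,c_{i}(x_r)\,\Phi_{[\mathbf{v}]_i\setminus\{r\}}(qx_r)\,f(x_{\tau_r^+[\mathbf{v}]}),
\]
where $c_i\equiv1$ for $i\neq n+1$, and
\[
c_{n+1}(x)=\frac{(1-q_0x)(1-q_1x)}{1-q^2x^2}.
\]

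For $i\neq n+1$, the relevant part of $\mathbb{V}^+_{\mathbf{v}}$ consists only of weights $\epsilon_a-\epsilon_b$ coming from $\wedge^2V$, each with parameter $q_2=q^2$. The computation is then identical to \cite[\S6]{SW24}. The power $(-q)^{-v_i}$, together with $\mathrm{Det}(T^*_{p_1})$, turns each factor $(1-q^2x_r/x_t)/(1-x_r/x_t)$ into $\theta(qx_r/x_t)$, and this produces $\Phi_{[\mathbf{v}]_i\setminus\{r\}}(qx_r)$.

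For $i=n+1$ the step crosses the middle block $[\mathbf{v}]_{n+1}$ of the type C flag. The fibre of $p_1$ is $\mathbb{P}(V_{n+1}/V_n)$, and $V_{n+1}/V_n$ carries the induced symplectic form. Moving $r$ therefore also moves the dual index $2d+1-r$, and the isotropy condition contributes additional short-root weights. In the exotic representation the root $2\epsilon_r$ is replaced by the two weights $\epsilon_r$, coming from the two copies of $V$ and carrying parameters $q_0$ and $q_1$. As in the proof of Lemma~\ref{ope}, we replace the $\bigwedge_{q^2}T_{p_1}$ of \cite[Proposition 6.2]{SW24} by
\[
(1-q_0e^{\epsilon_r})(1-q_1e^{\epsilon_r})\prod_{t}(1-q^2e^{\epsilon_r-\epsilon_t}).
\]
Here $t$ runs over the remaining indices of the block. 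This yields $c_{n+1}$, and the exponent shift $\delta_{i,n+1}$ in the normalization $(-q)^{-v_i-\delta_{i,n+1}}$ accounts for the extra fibre dimension.

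Finally we substitute $z\mapsto q^{-i}z^{-1}$ in the generating series. This turns $c_{n+1}(x_r)$ into the prefactor $\frac{(1-q^{n+1}q_0z)(1-q^{n+1}q_1z)}{1-q^{2n+4}z^2}$ appearing in Lemma~\ref{ope2}; the exact powers of $q$ are to be matched against the normalization $q^{ki}$ in $\hat B_i(z)$. Inverting this prefactor gives the stated identity, and the case $i\neq n+1$ is immediate.

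The main obstacle is the $i=n+1$ fixed-point computation. We must identify exactly which weights of $\mathbb{V}^+_{\mathbf{v}+\mathbf{e}^\theta_i}$ and $\mathbb{V}^+_{\mathbf{v}+\mathbf{e}^\theta_{i+1}}$ fail to cancel in the conormal Koszul class. We also need to check that the $q$-powers coming from the $\wedge^2V$ weights $\pm\epsilon_r\pm\epsilon_t$ reorganize into $\Phi$ together with the single factor $1-q^2x_r^2$. We would first verify this in rank $d=1$, $n=0$, where only the $\epsilon$-weights appear, and then carry out the general bookkeeping.
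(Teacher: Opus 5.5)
Your proposal is essentially the paper's own argument: the paper likewise redoes the localization computation of \cite{SW24,LSX26} with the long-root factor $1-q^2x_r^2$ replaced by $(1-q_0x_r)(1-q_1x_r)$. It obtains $(\mathscr{B}_{i,k}'f)(x_{[\mathbf{v}]})=\sum_{r\in[\mathbf{v}]_i}x_r^k\bigl(\frac{(1-q_0x_r)(1-q_1x_r)}{1-q^2x_r^2}\bigr)^{\delta_{i,n+1}}\Phi_{[\mathbf{v}]_i\setminus\{r\}}(qx_r)f(x_{\tau_r^+[\mathbf{v}]})$ and then compares termwise with $\hat B_{i,k}$, exactly as you plan.

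When you do that final comparison (which the paper also does in one line), check the evaluation point carefully. With $\mathscr{B}_i'(z)=\sum_k\mathscr{B}_{i,k}'z^{-k}$ and $\hat B_i(z)=\sum_k\hat B_{i,k}q^{ki}z^k$ as written, the series $\sum_k(q^izx_r)^k$ pins $x_r$ to $q^{-i}z^{-1}$, but the displayed prefactor is the reciprocal of the factor evaluated at $x_r=q^{i}z$. So your claim that the substitution ``turns $c_{n+1}(x_r)$ into'' the displayed prefactor does not follow as written; you must fix the $z\leftrightarrow z^{-1}$ convention consistently to land exactly on the stated formula.
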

\begin{proof}
Imitating the calculation steps given in \cite{LSX26}, we have that
\begin{align*}
    (\mathscr{B}_{i,k}'f)(x_{[\mathbf{v}]})&=\sum_{r \in [\mathbf{v}]_i} x_r^k (\frac{(1-q_0x_r)(1-q_1x_r)}{1-q^2x_r^2})^{\delta_{i,n+1}}\Phi_{[\mathbf{v}]_i \backslash \{r\}}(qx_r)(f(x_{\tau_r^+[\mathbf{v}]})).
\end{align*}
Then comparing with the action of $\hat{B}_{i,k}$, the lemma follows.
\end{proof}

For $1\leq i\leq 2n$, the operator $\hat{\Theta}_i(z)$  was defined as the action on $\mathbf{R}^{\mathbf{v}}$ by multiplication with the rational function $q^{v_{i+1}-v_i-\delta_{n+1,i}}\Phi_{i,\mathbf{v}}(z^{-1})$, where
$$\Phi_{i,\mathbf{v}}(z):=
(\theta_1(q^{-2n-1}z^2))^{\delta_{i,n+1}}\Phi_{[\mathbf{v}]_i}(q^{1-i}z)\Phi_{[\mathbf{v}]_{i+1}}(q^{-1-i}z)^{-1}. 
$$

Write
\begin{align*}
    \hat{\Theta}_i'(z)&=\hat{\Theta}_i(z), \ \text{for} \ i \neq n,n+1,\\
    \hat{\Theta}_n'(z)%&=1+(q-q^{-1})\sum_{k\in \N^+} \hat{\Theta}_{n,k}'z^k=(q-q^{-1})\sum_{k\in \N} \hat{\Theta}_{n,k}'z^k\\
    &=\frac{(1-q^{n+1}q_0z)(1-q^{n+1}q_1z)}{1-q^{2n+4}z^2}\hat{\Theta}_n(z),\\
    \hat{\Theta}_{n+1}'(z)%&=1+(q-q^{-1})\sum_{k\in \N^+} \hat{\Theta}_{n+1,k}'z^k=(q-q^{-1})\sum_{k\in \N} \hat{\Theta}_{n+1,k}'z^k\\
    &=\frac{(1-q^nq_0^{-1}z)(1-q^nq_1^{-1}z)}{1-q^{2n-2}z^2}\hat{\Theta}_{n+1}(z).
\end{align*}

\begin{thm}
\label{thm:ij}
The assignment, for $1\leq i \leq 2n$,
\begin{align*}
    C \mapsto q^{2n+1}, \quad \K_i \mapsto (-q_0q_1q^{-1})^{\delta_{ni}}q^{-v_i+v_{i+1}},  \quad \mathbf{\Theta}_i(z) \mapsto \hat{\Theta}_i'(z), \quad \mathbf{B}_i(z) \mapsto \mathscr{B}_i'(q^{-i}z^{-1})
\end{align*}
gives a $\K$-algebra homomorphism from $\mathbb{U}^{\imath\jmath}$ to $K^{\breve{G}}(\mathbf{Z}^\jmath_{n})\otimes_{\mathcal{A}}{\mathbb{K}}$.
\end{thm}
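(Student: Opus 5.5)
The plan mirrors the proof of Theorem~\ref{thm:ii}: reduce the verification of every defining relation of $\mathbb{U}^{\imath\jmath}$ to the one-parameter polynomial realization of $\mathbb{U}^{\imath\jmath}$ from \cite{LSX26}, via Lemma~\ref{ope2}.

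\emph{Step 1: reduce to a faithful polynomial representation.} By Proposition~\ref{fock} (with $\Lambda_\ff=\Lambda^\jmath_{n,d}$) and Theorem~\ref{main}, $K^{\breve{G}}(\mathbf{Z}^\jmath_n)\otimes_{\mathcal{A}}\K$ acts faithfully by convolution on $K^{\breve{G}}(F_\ff\times_{\mathcal{N}}F)\otimes\K$. Restricting along the zero sections, and using that $\Fm_n^\jmath$ carries only closed orbits of the relevant type, it also acts on $\mathbf{P}^\jmath$. To see this action is faithful on the span of the images of the generators, I would argue as follows. After tensoring with the fraction field of $R(\breve{G})$, the algebra $\widetilde{\mathbb{S}}_{\ff,loc}$ is a full matrix algebra by Proposition~\ref{rk-Sloc}. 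Hence any nonzero module on which it acts, such as $\mathbf{P}^\jmath_{loc}$, which is nonzero in each block $\mathbf{v}$, is faithful. Consequently, a relation holds in $K^{\breve{G}}(\mathbf{Z}_n^\jmath)\otimes\K$ as soon as it holds for the operators on $\mathbf{P}^\jmath$.

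\emph{Step 2: transport the operators.} Lemma~\ref{ope2} expresses $\mathscr{B}_i'(q^{-i}z^{-1})$ as $\hat{B}_i(z)$ up to the scalar rational factor $g(z)=\frac{(1-q^{n+1}q_0z)(1-q^{n+1}q_1z)}{1-q^{2n+4}z^2}$, which occurs only for $i=n+1$. The $\hat\Theta_i'$ are likewise the $\hat\Theta_i$ twisted by scalar factors, and $\K_i$ is modified only by the constant $-q_0q_1q^{-1}$ at $i=n$. For the generators $\hat B_i$, $\hat\Theta_i$, $\K_i$, the results of \cite{LSX26}, built on the Drinfeld presentation of \cite{LPWZ25}, give the relations \eqref{eq:1}--\eqref{eq:6} and \eqref{eq:7} with $C\mapsto q^{2n+1}$. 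Relations that are homogeneous in each $\mathbf{B}_j$ and involve no $\Theta$ on the right-hand side survive rescaling by scalar series in the spectral parameter. These are the commutation relations, \eqref{eq:6}, \eqref{eq:7}, and the $\K\mathbf{B}$ and $\mathbf{B}\Theta$ relations, so for them one only needs $g$ to commute with everything, which holds because it is a scalar function of $z$.

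\emph{Step 3 (main obstacle): the inhomogeneous relations.} The genuinely new checks are the $\mathbf{B}_n\mathbf{B}_{n+1}$ relation and the Serre relation \eqref{eq:8} for $i=n,n+1$. In these, a product of $\mathbf{B}$'s carrying the factor $g(w)^{-1}$ (from $\mathbf{B}_{n+1}(w)$) must match $\K_{n+1}\Theta_n(w)$ and $\K_n\Theta_{n+1}(z)$ terms. I would substitute and verify that the modified $\hat\Theta_n'$, $\hat\Theta_{n+1}'$ and the constant $-q_0q_1q^{-1}$ in $\K_n$ exactly absorb $g$ after specializing $C=q^{2n+1}$. Concretely, $\mathbf{\Delta}(zw)$ becomes a sum of delta functions supported at $zw=q^{-2n-1}$. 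On that support, $g(w)^{-1}$ rewritten in terms of $z$ equals the prefactor of $\hat\Theta_{n+1}'(z)$, and the same holds with $z$ and $w$ swapped for $\hat\Theta_n'$. I would first check this identity of rational functions on $zw=q^{-2n-1}$ directly, which is where the choices of shifts $q^{n+1}$, $q^n$ and $q^{2n\pm\cdots}$ must cohere. The Serre relation \eqref{eq:8} I would then handle by the same delta-support substitution, term by term.
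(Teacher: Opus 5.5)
Your proposal takes essentially the same route as the paper. The paper's entire proof invokes the one-parameter realization \cite[Theorem~4.4]{LSX26} and transports each relation through Lemma~\ref{ope2}, so the only new checks are at the nodes $n,n+1$, where the scalar factor attached to $\mathscr{B}'_{n+1}$ must be absorbed by $\hat\Theta'_n$, $\hat\Theta'_{n+1}$ and the twist of $\K_n$; your Step~1 (faithfulness of the polynomial representation via Proposition~\ref{rk-Sloc}) spells out a reduction the paper leaves implicit. One correction to Step~3: with your $g$, Lemma~\ref{ope2} reads $\mathscr{B}'_{n+1}(q^{-n-1}w^{-1})=g(w)\hat B_{n+1}(w)$, so the factor carried is $g(w)$, not $g(w)^{-1}$; it is exactly the prefactor of $\hat\Theta'_n(w)$, and on $zw=q^{-2n-1}$ it equals $-q_0q_1q^{-2}$ times the prefactor of $\hat\Theta'_{n+1}(z)$, that constant being the ratio of the three-parameter image of $\K_n$ to its one-parameter specialization (where $q_0q_1=-q^2$).
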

\begin{proof}
    Using \cite[Theorem 4.4]{LSX26} and Lemma \ref{ope2}, we can check the relations by representing $\mathscr{B}_i'(z), \hat{\Theta}_i'(z)$ with $\hat{B}_i(z), \hat{\Theta}_i(z)$.
\end{proof}
%==================

\appendix
\section{Affine types B/C/F/G with two parameters}\label{Apped:A}

In this appendix, we provide an equivariant K-theoretic construction of affine quantum Schur algebras of types B/C/F/G with two parameters, following Antor's recent work \cite{An25}. Since the arguments for the two-parameter case are similar to the previous case of three parameters, we shall not provide proofs in this appendix.

%--------------------------------------------------------
\subsection{Exotic representation}
%--------------------------------------------------------
Let $G$ be a simply connected simple algebraic group of non simply-laced type defined over an algebraic closed field $\mathbf{k}$ with the special characteristic. Recall that the characteristic $p$ of $\mathbf{k}$ is called special for $G$ if it is equal to the ratio's square of the lengths of the long and short roots of $G$, i.e.
\begin{itemize}
    \item $p=2$ when $G$ is of type $B_n$, $C_n$ or $F_4$;
    \item $p=3$ when $G$ is of type $G_2$.
\end{itemize}
Let $\mathfrak{g}$ be the adjoint representation of $G$, so that the set of its non-zero weights is just the root system $\Delta$ of $G$.

It was showed in \cite{Ho82} and \cite{Hi84} that if $p$ is special for $G$ then there is a $G$-subspace $\mathfrak{g}_s\subseteq\mathfrak{g}$ whose non-zero weights are the short roots of $G$. Antor \cite{An25} introduced the following exotic representation of $G$: $$\mathbb{V}=\mathfrak{g}_s\oplus\mathfrak{g}/\mathfrak{g}_s,$$ which admits a canonical $G\times(\mathbf{k}^\times)^2$-action via
$$(g,z_1,z_2)\cdot(v_1,v_2)=(z^{-1}_1gv_1,z^{-1}_2gv_2)$$ where $(g,z_1,z_2)\in G\times(\mathbf{k}^\times)^2$ and $(v_1,v_2)\in \mathbb{V}$.

In this section, we reset $\mathcal{A}=\mathbb{Z}[q_1^{\pm1},q_2^{\pm1}]$.
Define a map $\mathbf{q}:\Delta\to\{q_1,q_2\}$ such that $\mathbf{q}(\alpha)=q_1$ (resp., $q_2$) if $\alpha$ is a short root (resp., long root).

%--------------------------------------------------------
\subsection{Affine Hecke and Schur algebras with two parameters}
%--------------------------------------------------------
In this section, we reset $\mathcal{A}=\mathbb{Z}[q_1^{\pm1},q_2^{\pm1}]$.
Define a map $\mathbf{q}:\Delta\to\{q_1,q_2\}$ such that $\mathbf{q}(\alpha)=q_1$ (resp., $q_2$) if $\alpha$ is a short root (resp., long root).

As before, let $W$ be the Weyl group of $G$ associated with a fixed maximal torus $T\subseteq G$, and $Q$ the weight lattice.
    The affine Hecke algebra $\widetilde{\mathbb{H}}_{q_1,q_2}$ (associated with $G$) with unequal parameters is the $\mathcal{A}$-algebra with a basis $\{T_we^\lambda\mid w\in W,\ \lambda\in Q\}$ and the following relations:
    \begin{itemize}
        \item $(T_\alpha+1)(T_\alpha-\mathbf{q}(\alpha))=0$, $\alpha\in\Delta$;
        \item $T_wT_{w'}=T_{ww'}$, if $\ell(ww')=\ell(w)+\ell(w')$;
        \item $e^\lambda e^\mu=e^{\lambda+\mu}$, $\lambda,\mu\in Q$;
        \item $T_\alpha e^\lambda-e^{s_\alpha\lambda}T_\alpha=(1-\mathbf{q}(\alpha))\frac{e^\lambda-e^{s_\alpha\lambda}}{e^\alpha-1}$, $\alpha\in\Delta$.
    \end{itemize}

Suppose that $w=s_{i_1}\cdots s_{i_k}$ is a reduced form of $w\in W$. Define $q_{w}=q_{s_{i_1}}\cdots q_{s_{i_k}}$, where $q_{s_i}=-\mathbf{q}(\alpha_i)$. It is known that $q_w$ is independent of the choice of a reduced form of $w$.

Set
$$\mathbf{x}_{\mathbf{v}}=\sum_{w\in W_{\mathbf{v}}}q_{w}^{-1}T_{w}\in\widetilde{\mathbb{H}}_{q_1,q_2}\quad \text{and} \quad \widetilde{\mathbb{T}}_{\ff,q_1,q_2}=\bigoplus_{\mathbf{v}\in\Lambda_{\ff}}\mathbf{x}_\mathbf{v}\widetilde{\mathbb{H}}_{q_1,q_2}.$$
The two-parameter affine quantum Schur algebra $\widetilde{\mathbb{S}}_{\ff,q_1,q_2}$ is an $\mathcal{A}$-algebra defined as $$\widetilde{\mathbb{S}}_{\ff,q_1,q_2}=\mathrm{End}_{\widetilde{\mathbb{H}}_{q_1,q_2}}(\widetilde{\mathbb{T}}_{\ff,q_1,q_2}).$$

%--------------------------------------------------------
\subsection{Geometric construction}
%--------------------------------------------------------
Fix a Borel subgroup $B$ such that $T\subseteq B\subseteq G$. We use the same notations as in the main text for $\mathrm{Sp}_{2d}(\C)$ to denote corresponding notions for $G$ in this appendix, such as the set $\Lambda_\ff$ of $W$-orbit, the parabolic subgroup $P_\mathbf{v}$, the partial flag variety $\Fm_\mathbf{v}$, etc. 

For $\lambda\in\Lambda_\ff$, set $$\mathbb{V}^{+}_\mathbf{v}=\bigoplus\limits_{\lambda\in\Pi^+\backslash\Pi^+_\mathbf{v}}\mathbb{V}[\lambda],$$ which is a $P_\mathbf{v}$-module.
Define the exotic cotangent bundle of the partial flag variety $\Fm_\mathbf{v}$ as follows: $$F_\mathbf{v}=G\times^{P_\mathbf{v}}\mathbb{V}^+_\mathbf{v},$$ which is an analog of the cotangent bundle of $\Fm_\mathbf{v}$. Using the canonical projection $F_\mathbf{v}\to \mathbb{V}$, we 
define the exotic generalized Steinberg variety
$$\mathbf{Z}_\ff=\bigsqcup\limits_{\mathbf{v},\mathbf{w}\in\Lambda_\ff}\mathbf{Z}_{\mathbf{v}\mathbf{w}}\quad\text{with}\quad \mathbf{Z}_{\mathbf{v}\mathbf{w}}=F_\mathbf{v}\times_{\mathbb{V}}F_\mathbf{w}.$$

As in the main text, we fix a regular $W$-orbit $\mathbf{u}_0$. When $\mathbf{v},\mathbf{w}=\mathbf{u}_0$, we rewrite $F=F_{\mathbf{u}_0}$ and $\mathbf{Z}=\mathbf{Z}_{\mathbf{u}_0\mathbf{u}_0}=F\times_{\mathbb{V}}F$, which are just the objects studied in \cite{An25}.

We denote by $\mathbf{e}^\lambda$ the pull-back of $G\times^{B}\mathbf{k}_{-\lambda}$ to $F$. Let $\pi:\mathbf{Z}\to\Bm\times\Bm$ be the projection.
Recall $\Bm\times\Bm=\bigsqcup\limits_{w\in W}\Ob_{w}$. Denote $\mathbf{T}_{\Ob_w}=\pi^{-1}(\Ob_w)$, and let $\mathbf{T}_w$ be its Zariski closure in $\mathbf{Z}$.

In \cite{An25}, Antor provided a K-theoretic construction of affine Hecke algebra with two parameters using the variety $\mathbf{Z}$. Here we give a variation of Antor's isomorphism, to align with Kato's expression convention.
\begin{thm}[{cf. \cite[Theorem 2.19]{An25}}]
    There is an isomorphism of algebras
    $$\widetilde{\mathbb{H}}_{q_1,q_2}\simeq K^{G\times(\mathbf{k}^\times)^2}(\mathbf{Z}),$$
    via $$\vartheta:e^{\lambda}\mapsto \mathbf{e}^{\lambda},\quad T_\alpha\mapsto[\mathcal{O}_{\mathbf{T}_{s_\alpha}}]-(1-\mathbf{q}(\alpha)(\mathbf{e}^{\alpha}+1)).$$
\end{thm}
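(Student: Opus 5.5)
The plan is to deduce this two-parameter statement from Antor's isomorphism \cite[Theorem 2.19]{An25}. Only the normalization of the generators $T_\alpha$ has to be adjusted to match Kato's convention, exactly as in Theorem~\ref{exo-H}. Antor's theorem already gives an algebra isomorphism $\widetilde{\mathbb{H}}_{q_1,q_2}\simeq K^{G\times(\mathbf{k}^\times)^2}(\mathbf{Z})$. Under it, $e^\lambda$ goes to the class of a line bundle pulled back from $\Bm$. Each simple generator goes to an explicit combination of $[\mathcal{O}_{\mathbf{T}_{s_\alpha}}]$ and $\mathbf{e}^{\mu}$, possibly with a different sign or twist convention. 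I will therefore compare the two assignments on generators and check that they differ by an automorphism of $\widetilde{\mathbb{H}}_{q_1,q_2}$, for instance $T_\alpha\mapsto -\mathbf{q}(\alpha)T_\alpha^{-1}$ or a twist $e^\lambda\mapsto e^{-\lambda}$. Composing then gives $\vartheta$.

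Independently, I can verify directly that $\vartheta$ is well defined and bijective. The argument follows Kato's proof, adapted through Lemma~\ref{lem:pi=ULV}, Lemma~\ref{L_i decomp} and Proposition~\ref{formula2}.

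\textbf{Step 1 (Demazure-type formula).} For a simple root $\alpha$ with minimal parabolic $P_\alpha$, the space $\mathbb{V}^+/\mathbb{V}^+_\alpha$ is one-dimensional of weight $\alpha$ and carries $q$-character $\mathbf{q}(\alpha)$. The reason is that $\mathfrak{g}_s$ contains the short root spaces and $\mathfrak{g}/\mathfrak{g}_s$ contains the long ones. The Koszul resolution of $\mathcal{O}_{L_\alpha/B_\alpha}$ inside the line bundle $\widetilde{\mathbb{V}}_\alpha$ then gives $[\mathcal{O}_{L_\alpha/B_\alpha}]=1-\mathbf{q}(\alpha)e^{-\alpha}$. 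Running the computation of Proposition~\ref{formula2} with $\mathbf{v}$ regular, I obtain a formula for $e^\lambda*[\mathcal{O}_{\mathbf{T}_{s_\alpha}}]$.

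\textbf{Step 2 (Hecke and Bernstein relations).} The formula from Step 1 yields the quadratic relation $(T_\alpha+1)(T_\alpha-\mathbf{q}(\alpha))=0$ and the Bernstein--Lusztig relation for $T_\alpha=[\mathcal{O}_{\mathbf{T}_{s_\alpha}}]-(1-\mathbf{q}(\alpha)(\mathbf{e}^\alpha+1))$. This is a rank-one computation of the same shape as the $i\neq d$ case of Corollary~\ref{cor-mul}.

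\textbf{Step 3 (braid relations).} For reduced products, $[\mathcal{O}_{\mathbf{T}_{\Ob_w}}]*[\mathcal{O}_{\mathbf{T}_{\Ob_{w'}}}]$ restricts to $[\mathcal{O}_{\mathbf{T}_{\Ob_{ww'}}}]$ by transversality, as in Lemma~\ref{inter trans}. The braid relations then follow from faithfulness of the action on $K^{G\times(\mathbf{k}^\times)^2}(F)\simeq\mathcal{A}[Q]$, whose polynomial representation is faithful.

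\textbf{Step 4 (bijectivity).} The cellular fibration of $\mathbf{Z}$ over $\Bm$ makes $K(\mathbf{Z})$ free over $\mathcal{A}[Q]$ with basis indexed by $W$. The images $\vartheta(T_w)$ are unitriangular with respect to the classes $[\mathcal{O}_{\mathbf{T}_w}]$ in the Bruhat filtration, so $\vartheta$ is bijective.

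The main obstacle is the characteristic-$p$ geometry of the exotic nilcone in Step 3. One needs the identity $\mathbf{T}_{s}\circ\mathbf{T}_{w}=\mathbf{T}_{sw}$ on the open part, together with vanishing of higher Tor. In characteristic $2$ or $3$ this relies on Antor's verification that $\mathbb{V}^+\cap w\mathbb{V}^+$ behaves like $\mathfrak{n}\cap w\mathfrak{n}$. I would import that verification from \cite{An25} rather than reprove it.
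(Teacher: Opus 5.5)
Your first route is the paper's route. The paper gives no argument beyond citing Antor's Theorem~2.19 and re-normalizing the generators to Kato's convention, i.e.\ the $i\neq d$ case of Theorem~\ref{exo-H} with $q_2$ replaced by $\mathbf{q}(\alpha)$.

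When you carry out that comparison, note that neither sample map you list is an automorphism on its own. With $e^\lambda$ fixed, $T_\alpha\mapsto-\mathbf{q}(\alpha)T_\alpha^{-1}=-T_\alpha+\mathbf{q}(\alpha)-1$ turns the Bernstein--Lusztig commutator for $\langle\lambda,\alpha^\vee\rangle=1$ into $(\mathbf{q}(\alpha)-1)e^{\lambda}$ instead of $(1-\mathbf{q}(\alpha))e^{\lambda-\alpha}$. The map $e^\lambda\mapsto e^{-\lambda}$ alone fails in the same way. Only the composite of the two is an automorphism.

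Your independent verification, however, has a genuine gap.

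(i) Proposition~\ref{formula2} assumes $\alpha\in\Delta_{\mathbf{v}}$, so it cannot be run with $\mathbf{v}$ regular. In that case $\mathbf{T}_{\mathbf{v},\id}=\Delta F$, and $e^\lambda*[\mathcal{O}_{\mathbf{T}_{s_\alpha}}]$ is merely the twist $e^\lambda\cdot[\mathcal{O}_{\mathbf{T}_{s_\alpha}}]$, with no Demazure-type content. What you need is the action of $[\mathcal{O}_{\mathbf{T}_{s_\alpha}}]$ on the polynomial module $K(F)$, or Proposition~\ref{formula2} with $\Delta_{\mathbf{v}}=\{\alpha\}$. Also, in the statement's convention $\mathbf{e}^\lambda\leftrightarrow\mathcal{O}(-\lambda)$ the Koszul class is $1-\mathbf{q}(\alpha)e^{\alpha}$, not $1-\mathbf{q}(\alpha)e^{-\alpha}$. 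Since the theorem is precisely a choice of normalization, that sign is not cosmetic.

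(ii) More seriously, Steps 2 and 3 produce relations among operators on a module. These pass to $K(\mathbf{Z})$ only if the action map $a\colon K(\mathbf{Z})\to\mathrm{End}(K(F))$ is injective. Faithfulness of the polynomial representation of $\widetilde{\mathbb{H}}_{q_1,q_2}$ on $\mathcal{A}[Q]$ does not give this. It yields injectivity of $\vartheta$ only once $\vartheta$ is known to be well defined, so invoking it for the braid relations (and for the quadratic and Bernstein--Lusztig relations) is circular. The comparison of $\mathbb{V}^+\cap w\mathbb{V}^+$ with $\mathfrak{n}\cap w\mathfrak{n}$ that you single out is immediate from the weight decomposition, since each root weight space of $\mathbb{V}$ is a line; that is not where the difficulty lies.

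A non-circular fix runs as follows. Write $R=R(G\times(\mathbf{k}^\times)^2)$.
\begin{enumerate}
\item By the cellular fibration and Steinberg's theorem, $K(\mathbf{Z})$ is free over $R$ of rank $|W|^2$.
\item The image of $a$ contains the Demazure--Lusztig operators and the $\mathbf{e}^\lambda$, hence the image of the polynomial representation.
\item After localization that image is all of $\mathrm{End}_{R_{loc}}(\mathcal{A}[Q]_{loc})$, by Bernstein's theorem and the dimension count in the lemma preceding Proposition~\ref{rk-Sloc}.
\item So $a_{loc}$ is a surjection between $R_{loc}$-spaces of equal dimension $|W|^2$, hence injective.
\item Torsion-freeness of $K(\mathbf{Z})$ then makes $a$ injective.
\end{enumerate}
Alternatively, take this faithfulness from Antor together with his isomorphism, which is what your first route does anyway.
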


Similarly to \eqref{eq:evlambda}, let us define the element $e_{\mathbf{v}}^\lambda:=[\mathcal{O}_{\mathbf{T}_{\mathbf{v},\id}}(-\lambda)]\in K^{G\times(\mathbf{k}^\times)^2}(F_{\ff}\times_{\mathbb{V}}F)$. 

%In this case, Corollary~\ref{}
%\begin{lem}
%Let $\alpha\in\Delta_\mathbf{v},$ we have $e_{\mathbf{v}}^{-\rho_\mathbf{v}}*\vartheta(T_\alpha)=-e_{\mathbf{v}}^{-\rho_\mathbf{v}}$.
%\end{lem}

\begin{thm}[Geometric affine quantum Schur duality with two parameters]\label{main2}
\
\begin{itemize}
    \item[(1)] There exists an $\widetilde{\mathbb{H}}_{q_1,q_2}$-module isomorphism $\widetilde{\phi}:\widetilde{\mathbb{T}}_{\ff,q_1,q_2}\to K^{G\times(\mathbf{k}^\times)^2}(F_{\ff}\times_{\mathbb{V}}F)$ via $\mathbf{x}_{\mathbf{v}}\mapsto e_{\mathbf{v}}^{-\rho_{\mathbf{v}}}$, where $\rho_{\mathbf{v}} \in Q$ is the Weyl vector associated with $\Pi_\mathbf{v}$.
    \item[(2)] The isomorphism $\widetilde{\phi}$ induces an algebra isomorphism $\widetilde{\psi}:K^{G\times(\mathbf{k}^\times)^2}(\mathbf{Z}_{\ff})\to\widetilde{\mathbb{S}}_{\ff,q_1,q_2}$.
\end{itemize}
\end{thm}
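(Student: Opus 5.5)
The plan is to transport the proof of Theorem~\ref{main} (and of Proposition~\ref{fock}) to Antor's setting, replacing Kato's exotic module by $\mathbb{V}=\mathfrak{g}_s\oplus\mathfrak{g}/\mathfrak{g}_s$ and $(\C^\times)^3$ by $(\mathbf{k}^\times)^2$.

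\textbf{Geometric preliminaries.} First I re-establish the analogues of Lemma~\ref{lem int}, Lemma~\ref{inter trans} and Lemma~\ref{intr-2}. Their proofs use only two facts: the nonzero weights of $\mathbb{V}$ are in $W$-equivariant bijection with $\Pi$, and the longest-representative property of $\D_{\mathbf{v}\mathbf{w}}$. Both facts hold here, each weight space being one-dimensional and equal to a root space of $\mathfrak{g}_s$ or of $\mathfrak{g}/\mathfrak{g}_s$.

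\textbf{Structure of the K-groups.} Next I obtain the filtration of Lemma~\ref{exsq. 1}. This uses a $\breve{T}$-cellular fibration of $\mathbf{Z}_{\mathbf{v}\mathbf{w}}$, with injectivity descended via faithful flatness of $R(\breve{T})$ over $R(\breve{G})$ (Steinberg's theorem applies since $G$ is simply connected). From it I get the bases of Propositions~\ref{bas1} and~\ref{bas2}, and hence that $K^{G\times(\mathbf{k}^\times)^2}(\mathbf{Z}_\ff)$ is free over $R(\breve{G})$ of rank $(\sum_{\mathbf{v}}|\D_\mathbf{v}|)^2$. On the algebra side, Lemma~\ref{rk-Tf}, Lemma~\ref{rann} and Proposition~\ref{rk-Sloc} are purely Hecke-algebraic. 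They go through verbatim, using Bernstein's center theorem and the faithful polynomial representation for $\widetilde{\mathbb{H}}_{q_1,q_2}$.

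\textbf{Part (1).} The key local computation is the analogue of Proposition~\ref{formula2} and Corollary~\ref{cor-mul}. For $\alpha\in\Delta_\mathbf{v}$ I localize to $P^-_\alpha B/B$ as in Lemma~\ref{L_i decomp}. Now $\mathbb{V}^+/\mathbb{V}^+_\alpha$ is the single line $\mathbb{V}[\alpha]$, twisted by $\mathbf{q}(\alpha)$, since $\alpha$ short puts it in $\mathfrak{g}_s$ and $\alpha$ long puts it in $\mathfrak{g}/\mathfrak{g}_s$. The Koszul complex therefore has two terms, giving
\[
[\mathcal{O}_{L_\alpha/B_\alpha}]=[\mathcal{O}_{\widetilde{\mathbb{V}}_\alpha}]-\mathbf{q}(\alpha)[\mathcal{O}_{\widetilde{\mathbb{V}}_\alpha}(-\alpha)].
\]
This yields exactly the $i\neq d$ formula with $q_2$ replaced by $\mathbf{q}(\alpha)$, and the same computation gives $e_\mathbf{v}^{-\rho_\mathbf{v}}*\vartheta(T_\alpha)=-e_\mathbf{v}^{-\rho_\mathbf{v}}$. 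By Lemma~\ref{rann} the map $\mathbf{x}_\mathbf{v}\mapsto e_\mathbf{v}^{-\rho_\mathbf{v}}$ is then well defined. Surjectivity follows from Lemma~\ref{res1}, via unitriangularity with respect to the Bruhat order. Injectivity follows from equality of the $\mathcal{A}[Q]$-ranks.

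\textbf{Part (2).} I define $\widetilde{\psi}$ by convolution and run the proof of Theorem~\ref{main}. Injectivity comes from Lemma~\ref{res2}: the leading term of $\mathcal{M}*e^0_{\mathbf{w},\id}$ restricted to $\mathbf{T}_{\Ob_{\mathbf{v},w}}$ is $\chi\neq0$. Localization at $(0)$ with equal ranks makes $\widetilde{\psi}_{loc}$ an isomorphism. The Cartesian-square argument, using bases $\mathcal{B}(P^w_{\mathbf{v}\mathbf{w}})\subseteq\mathcal{B}(P^w_\mathbf{v})$, then gives integrality.

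\textbf{Expected obstacle.} The main difficulty is the positive-characteristic input: the identity \eqref{loc gal}, used in Lemma~\ref{res2}, and the cellular fibration / Thom isomorphism statements over $\mathbf{k}$ of special characteristic. I would handle these by citing Antor's verification that Kato's geometric lemmas hold in his setting. I would also note that \eqref{loc gal} is a statement about representation rings, which are characteristic-independent for reductive groups.
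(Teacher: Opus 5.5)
Your proposal is correct and matches the paper's approach. The paper omits the proof, saying only that it is the same as the three-parameter case; what you write is exactly that transport of Proposition~\ref{fock} and Theorem~\ref{main}. Your one essential modification is also the right one: each simple root now gives a single line $\mathbb{V}[\alpha]$ twisted by $\mathbf{q}(\alpha)$, so one recovers the $i\neq d$ formula with $q_2$ replaced by $\mathbf{q}(\alpha)$, and hence $e_\mathbf{v}^{-\rho_\mathbf{v}}*\vartheta(T_\alpha)=-e_\mathbf{v}^{-\rho_\mathbf{v}}$.
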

\begin{cor}
    Let $\mathbf{v},\mathbf{w}\in\Lambda_\ff$, we have
    $$\Hom_{\widetilde{\mathbb{H}}_{q_1,q_2}}(\mathbf{x}_\mathbf{w}\widetilde{\mathbb{H}}_{q_1,q_2},\mathbf{x}_\mathbf{v}\widetilde{\mathbb{H}}_{q_1,q_2})\simeq K^{G\times(\mathbf{k}^\times)^2}(\mathbf{Z}_{\mathbf{v}\mathbf{w}}).$$
\end{cor}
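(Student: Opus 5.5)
The plan is to copy the three-parameter argument of Section~\ref{sec:conssch} in the two-parameter setting. Write $\breve{G}=G\times(\mathbf{k}^\times)^2$, so that $\mathcal{A}=\mathbb{Z}[q_1^{\pm1},q_2^{\pm1}]$. The corollary will then be read off from Theorem~\ref{main2}(2) by cutting with idempotents.

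\textbf{Step 1: geometry of $\mathbf{Z}_{\mathbf{v}\mathbf{w}}$.} First I would reestablish the geometric inputs.
\begin{itemize}
\item The analogues of Lemma~\ref{lem int}, Lemma~\ref{inter trans} and Lemma~\ref{intr-2} hold. Their proofs used only that the weight set of $\mathbb{V}^+_{\mathbf{v}}$ is $W$-equivariantly indexed by $\Pi^+\backslash\Pi^+_{\mathbf{v}}$. In Antor's setting this is tautological, because the nonzero weights of $\mathfrak{g}_s\oplus\mathfrak{g}/\mathfrak{g}_s$ are exactly the roots.
\item The $\breve{T}$-equivariant cellular fibration then gives the filtration of Lemma~\ref{exsq. 1}.
\item Steinberg's theorem (Lemma~\ref{lem:PW}), which holds in any characteristic for simply connected $G$, gives the analogue of Proposition~\ref{bas2}. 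Thus $K^{\breve{G}}(\mathbf{Z}_{\mathbf{v}\mathbf{w}})$ is free over $R(\breve{G})$ of rank $|\D_{\mathbf{v}}||\D_{\mathbf{w}}|$.
\end{itemize}

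\textbf{Step 2: the module isomorphism, part (1).} Next comes the key multiplication formula, the analogue of Proposition~\ref{formula2}. For a simple root $\alpha\in\Delta_{\mathbf{v}}$, the quotient $\mathbb{V}^+/(\mathbb{V}^+\cap s_\alpha\mathbb{V}^+)$ is a single line of weight $\alpha$ with $\mathbf{q}(\alpha)$-twist. This holds in both the short and the long case, since Antor's $\mathbb{V}$ has only one copy of each root. The Koszul resolution is therefore two-term, as in the case $i\neq d$, and gives
\[
e^\lambda_{\mathbf{v}}*\widetilde{T}_\alpha=\frac{e_{\mathbf{v}}^{\lambda}-e_{\mathbf{v}}^{s_\alpha\lambda-\alpha}}{1-e_{\mathbf{v}}^{-\alpha}}-\mathbf{q}(\alpha)\frac{e_{\mathbf{v}}^{\lambda+\alpha}-e_{\mathbf{v}}^{s_\alpha\lambda-2\alpha}}{1-e_{\mathbf{v}}^{-\alpha}}.
\]
The computation in Corollary~\ref{cor-mul} for $i\neq d$ then yields $e^{-\rho_{\mathbf{v}}}_{\mathbf{v}}*\vartheta(T_\alpha)=-e^{-\rho_{\mathbf{v}}}_{\mathbf{v}}$. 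Combining this with the annihilator description of Lemma~\ref{rann}, the triangularity of Lemma~\ref{res1}, and equality of $\mathcal{A}[Q]$-ranks, the argument of Proposition~\ref{fock} gives part (1).

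\textbf{Step 3: the algebra isomorphism, part (2).} I would define $\widetilde{\psi}$ through the action on $K^{\breve{G}}(F_\ff\times_{\mathbb{V}}F)\simeq\widetilde{\mathbb{T}}_{\ff,q_1,q_2}$. Injectivity follows from Lemma~\ref{res2}: the leading term of $\mathcal{M}*e^0_{\mathbf{w},\id}$ on $\mathbf{T}_{\Ob_{\mathbf{v},w}}$ recovers $\chi$. Surjectivity follows from three ingredients:
\begin{itemize}
\item the localization $\widetilde{\mathbb{S}}_{\ff,loc}\simeq\Mat_n(R(\breve{G})_{loc})$, obtained via Bernstein's centre theorem and the faithful polynomial representation, exactly as in Proposition~\ref{rk-Sloc};
\item the rank count of Step 1;
\item the Cartesian-square argument of Theorem~\ref{main}.
\end{itemize}

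\textbf{Step 4: the corollary.} Apply part (2) with $\Lambda_\ff=\{\mathbf{v}\}$. The identity $\id_{\mathbf{v}}$ corresponds to $[\mathcal{O}_{\mathbf{T}_{\mathbf{v},\id,\mathbf{v}}}]$, and this class acts as the identity under convolution. Hence
\[
\Hom(\mathbf{x}_{\mathbf{w}}\widetilde{\mathbb{H}}_{q_1,q_2},\mathbf{x}_{\mathbf{v}}\widetilde{\mathbb{H}}_{q_1,q_2})=\id_{\mathbf{v}}\,\widetilde{\mathbb{S}}_{\ff,q_1,q_2}\,\id_{\mathbf{w}}\simeq K^{\breve{G}}(\mathbf{Z}_{\mathbf{v}\mathbf{w}}).
\]

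\textbf{Main obstacle.} I expect the hardest part to be the positive-characteristic input in Step 1: the localization identity \eqref{loc gal} and the cellular fibration and Thom isomorphism arguments over $\mathbf{k}$. The first thing I would try is to rely on Antor's verification that the fibration lemma holds in the special characteristic, and to use that Steinberg's freeness theorem is characteristic-free.
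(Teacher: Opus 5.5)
Your proposal is correct and matches the paper's route. Step~4 is exactly the paper's argument for the three-parameter corollary after Theorem~\ref{main}: it reads the statement off Theorem~\ref{main2}(2) by cutting with the diagonal idempotents $[\mathcal{O}_{\mathbf{T}_{\mathbf{v},\id,\mathbf{v}}}]$. Steps~1--3 re-sketch the transplant of Section~\ref{sec:conssch}, which the appendix leaves to the reader; the only real change there is that the Koszul complex for every simple root is the two-term one, with $\mathbf{q}(\alpha)$ in place of $q_2$.
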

Denote $\widetilde{\mathbb{T}}_{\ff\g,q_1,q_2}=K^{G\times(\mathbf{k}^\times)^2}(F_\ff\times_\mathcal{N} F_\g)$, which admits a left $\widetilde{\mathbb{S}}_{\ff,q_1,q_2}$-action and a right $\widetilde{\mathbb{S}}_{\g,q_1,q_2}$-action under the convolution product.
\begin{thm}
If the family of minimal parabolic subgroups of $\{P_\mathbf{v}~|~\mathbf{v}\in\Lambda_\ff\}$ and of $\{P_\mathbf{v}~|~\mathbf{v}\in\Lambda_\g\}$ coincide, then $\widetilde{\mathbb{S}}_{\ff,q_1,q_2}$ and $\widetilde{\mathbb{S}}_{\g,q_1,q_2}$ admit a double centralizer property on $\widetilde{\mathbb{T}}_{\ff\g,q_1,q_2}$.
\end{thm}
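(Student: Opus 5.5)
The plan is to transport the proof of Theorem~\ref{thm:howe} (via Lemma~\ref{lem:howe}), and behind it Theorem~\ref{main}, to Antor's two-parameter exotic setting. Throughout, $\breve{G}=G\times(\mathbf{k}^\times)^2$ and $\mathcal{A}=\mathbb{Z}[q_1^{\pm1},q_2^{\pm1}]$. The argument is symmetric in $\ff$ and $\g$, so it suffices to prove $\widetilde{\mathbb{S}}_{\ff,q_1,q_2}\simeq\End_{\widetilde{\mathbb{S}}_{\g,q_1,q_2}}(\widetilde{\mathbb{T}}_{\ff\g,q_1,q_2})$. The hypothesis gives, for each $\mathbf{w}\in\Lambda_\ff$, some $\mathbf{u}\in\Lambda_\g$ with $P_\mathbf{u}\subseteq P_\mathbf{w}$: take a minimal member of $\{P_\mathbf{v}\mid \mathbf{v}\in\Lambda_\ff\}$ contained in $P_\mathbf{w}$, which also occurs among $\{P_\mathbf{v}\mid \mathbf{v}\in\Lambda_\g\}$. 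The left action of $\widetilde{\mathbb{S}}_{\ff,q_1,q_2}\simeq K^{\breve{G}}(\mathbf{Z}_\ff)$ (Theorem~\ref{main2}(2)) on $\widetilde{\mathbb{T}}_{\ff\g,q_1,q_2}$ commutes with the right $K^{\breve{G}}(\mathbf{Z}_\g)$-action by associativity of convolution. This yields the natural map $K^{\breve{G}}(\mathbf{Z}_\ff)\to\End_{\widetilde{\mathbb{S}}_{\g}}(\widetilde{\mathbb{T}}_{\ff\g})$, and the task is to show it is bijective.

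First I would re-establish the two-parameter analogues of the geometric inputs. The analogue of Lemma~\ref{lem int} uses only $W$-equivariance of the weight set of $\mathbb{V}$ (here the weights are exactly $\Pi$, so $\Psi$ is the identity), so the proof carries over verbatim. The same holds for the transversality statement of Lemma~\ref{inter trans} and for the filtration and basis of Lemma~\ref{exsq. 1} and Proposition~\ref{bas2}. For these I need the cellular fibration of $\mathbf{Z}_{\mathbf{v}\mathbf{w}}$ under $\breve{T}$ and Steinberg's freeness results (Lemma~\ref{lem:PW}), both of which are characteristic-free. The localization identity \eqref{loc gal} also carries over. Then the restriction formula of Lemma~\ref{res2} holds verbatim:
\[
\chi^w_{\mathbf{v}\mathbf{w}}*e^0_{\mathbf{w},\id,\mathbf{u}}\in\chi^w_{\mathbf{v}\mathbf{u}}+K^{\breve{G}}(\mathbf{Z}^{\prec w}_{\mathbf{v}\mathbf{u}}).
\]

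Injectivity then follows as in Theorem~\ref{main}. Take a nonzero $\mathcal{M}$ whose leading term with respect to the filtration \eqref{filtra} is $\chi^w_{\mathbf{v}\mathbf{w}}$ with $\chi\neq0$. Acting on $e^0_{\mathbf{w},\id,\mathbf{u}}\in\widetilde{\mathbb{T}}_{\ff\g}$ and restricting to $\mathbf{T}_{\Ob_{\mathbf{v},w,\mathbf{u}}}$ gives $\chi\neq0$, so $\mathcal{M}$ acts nontrivially. For surjectivity I would localize at $(0)\subset R(\breve{G})$. There, $\widetilde{\mathbb{H}}_{loc}$ is a matrix algebra over $R(\breve{G})_{loc}$, so $\widetilde{\mathbb{T}}_{\ff\g,loc}$ is a bimodule between two matrix algebras over the same field, and classical double centralizer gives $\End_{\widetilde{\mathbb{S}}_{\g,loc}}(\widetilde{\mathbb{T}}_{\ff\g,loc})=\widetilde{\mathbb{S}}_{\ff,loc}$. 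A rank count using Corollary~\ref{rk-KZ_f} and Proposition~\ref{rk-Sloc} confirms this. Hence $\End_{\widetilde{\mathbb{S}}_\g}(\widetilde{\mathbb{T}}_{\ff\g})=\End_{R(\breve{G})}(\widetilde{\mathbb{T}}_{\ff\g})\cap\widetilde{\mathbb{S}}_{\ff,loc}$.

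The main obstacle is the integrality step: showing this intersection equals $K^{\breve{G}}(\mathbf{Z}_\ff)$. I would copy the Cartesian-square argument of Theorem~\ref{main}. Suppose an element lies in $K^{\breve{G}}(\mathbf{Z}^{\preceq w}_{\mathbf{v}\mathbf{w}})_{loc}$ with leading coefficients $a_\chi\in R(\breve{G})_{loc}$, not all integral. Choose $\mathcal{B}(P^w_{\mathbf{v}\mathbf{w}})\subseteq\mathcal{B}(P^w_{\mathbf{v}\mathbf{u}})$, using that $R(\breve{P}^w_{\mathbf{v}\mathbf{u}})$ is free over $R(\breve{P}^w_{\mathbf{v}\mathbf{w}})$ by \cite[Theorem 2.2]{St75}. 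Then its image of $e^0_{\mathbf{w},\id,\mathbf{u}}$ restricts to $\sum a_\chi\chi\notin K^{\breve{G}}(\mathbf{T}_{\Ob_{\mathbf{v},w,\mathbf{u}}})$, which contradicts integrality. Steinberg's theorem requires $\pi_1$-freeness of the derived group, and the simply connected hypothesis on $G$ supplies this in characteristic $2,3$ as well. Finally, I would apply the symmetric argument to obtain $\End_{\widetilde{\mathbb{S}}_\ff}(\widetilde{\mathbb{T}}_{\ff\g})\simeq\widetilde{\mathbb{S}}_\g$.
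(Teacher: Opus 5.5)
Your proposal is correct and follows the paper's intended route. The appendix gives no separate proof; it transports Lemma~\ref{lem:howe} and Theorem~\ref{thm:howe} to Antor's setting. Injectivity there comes from the restriction formula of Lemma~\ref{res2}, applied to $e^0_{\mathbf{w},\id,\mathbf{u}}$ with $P_\mathbf{u}\subseteq P_\mathbf{w}$. The identity $\End_{R(\breve{G})}(\widetilde{\mathbb{T}}_{\ff\g})\cap\widetilde{\mathbb{S}}_{\ff,loc}=\widetilde{\mathbb{S}}_\ff$ comes from the localization-and-integrality argument of Theorem~\ref{main}. Your extra details only spell out what the paper leaves as ``similarly''. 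These are the classical double centralizer over the localized matrix algebras, and extending a basis of $R(\breve{P}^w_{\mathbf{v}\mathbf{w}})$ to one of $R(\breve{P}^w_{\mathbf{v}\mathbf{u}})$.
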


%=============================================
\section{Algebraic equivariant K-theory} \label{Apped:B}
This appendix summarizes some standard facts about algebraic equivariant K-theory. 

\subsection{Basic notions}

Let $X$ be a $G$-variety. Denote by $\mathrm{Coh}^G(X)$ the category of $G$-equivariant coherent sheaves on $X$. Let $K^G(X)$ be the Grothendieck group of $\mathrm{Coh}^G(X)$. The $K$-group carries a canonical $R(G)$-module structure. For $\mathcal{F}\in\mathrm{Coh}^G(X)$, let $[\mathcal{F}]$ denote its class in $K^G(X)$. Let us recall some properties of the equivariant K-theory.
\begin{itemize}
\item[(a)] Push-forward. \\
For any proper morphism $f:X\to Y$ between $G$-varieties, there is a direct image $f_*:K^G(X)\to K^G(Y)$ via $[\mathcal{F}]\mapsto \sum_i (-1)^i[R^if_*\mathcal{F}]$, which is a finite sum due to the properness. If $f$ is the structure morphism, then $f_*$ becomes the derived global sections functor $R\Gamma$.
\item[(b)] Pull-back. \\
If $f:X\to Y$ is flat (for instance, an open immersion) or Y is a smooth $G$-variety (or just a base change of such morphisms), there is an inverse image $f^*: K^G(Y)\to K^G(X)$.
\item[(c)] Proper base change. \\
If $G$-varieties $X_1,X_2,Y_1,Y_2$ admit a Cartesian square $$\begin{tikzcd}
X_1 \arrow[r, "f_1"] \arrow[d, "g"'] & Y_1 \arrow[d, "h"] \\
X_2 \arrow[r, "f_2"]                 & Y_2
\end{tikzcd}$$ with $f_2$ being flat and $h$ being proper, then $f_{1*}\circ g^*=h^*\circ f_{2*}$.
\item[(d)] External tensor product. \\
Let $X, Y$ be $G$-varieties. There is an external tensor product $\boxtimes:K^G(X)\times K^G(Y)\to K^G(X\times Y)$ via $([\mathcal{F}],[\mathcal{F}'])\mapsto [\mathcal{F}\boxtimes\mathcal{F}']$.
\item[(e)] Derived tensor product.\\
Let $X$ be a smooth $G$-variety.
Let $\mathcal{F},\mathcal{F}'\in\mathrm{Coh}^G(X)$. 
The derived tensor product is $\otimes_X :K^G(X)\times K^G(X)\to K^G(X)$ via $([\mathcal{F}],[\mathcal{F}'])\mapsto\sum_i(-1)^i[\mathcal{T}or_i^{\mathcal{O}_X}(\mathcal{F},\mathcal{F}')],$ which makes $K^G(X)$ a commutative ring. 
Moreover, the derived tensor product commutes with the pull-back.
\item[(f1)] Let $i:X\to Y$ be a closed immersion of $G$-varieties. In this case, the higher direct image vanishing. Let $\mathcal{F}\in\mathrm{Coh}^G(X)$, then $i_*[\mathcal{F}]=[i_*\mathcal{F}]$, which is a class of coherent sheaves on $Y$ supported by $X$. Denote by $\mathrm{Coh}^G(Y,X)$ the full subcategory of $\mathrm{Coh}^G(Y)$ consisting of the coherent sheaves on $Y$ supported by $X$. Let $K^G(Y,X)$ be the Grothendieck group of $\mathrm{Coh}^G(Y,X)$, then $i_*$ induces a natural isomorphism $K^G(X)\simeq K^G(Y,X)$.
\item[(f2)] Let $X'$ be a closed $G$-subvariety of $X$. If $f:X\to Y$ is a $G$-morphism such that $f|_{X'}$ is proper, then there is a well-defined direct image $f_*:K^G(X,X')\to K^G(Y)$.
\item[(f3)] Let $X'$ and $X''$ be two closed $G$-subvarieties of a smooth $G$-variety $X$. Let $\mathcal{F}$ and $\mathcal{G}$ be sheaves in $\mathrm{Coh}^G(X,X')$ and $\mathrm{Coh}^G(X,X'')$, respectively. Then the Tor sheaves $\mathcal{T}or_i^{\mathcal{O}_X}(\mathcal{F}, \mathcal{G})$ is supported by $X'\cap X''$, hence there exists a well-defined map (by abuse of notation) $$\otimes_X :K^G(X')\times K^G(X'')\to K^G(X'\cap X'').$$
\item[(g)] Projection formula.\\
Let $f:X\to Y$ be a $G$-equivariant proper morphism of smooth $G$-varieties. Let $\mathcal{F}$ be an equivariant coherent sheaf on $X$, and $\mathcal{E}$ an equivariant locally free sheaf on $Y$. Then we have the following formula $$f_*([\mathcal{F}]\otimes f^*[\mathcal{E}])=f_*[\mathcal{F}]\otimes [\mathcal{E}].$$
\item[(h)] Thom isomorphism.\\
Let $\pi:E\to X$ be a $G$-equivariant vector bundle. There is a natural isomorphism $\pi^*: K^G(X)\simeq K^G(E)$.
\item[(i)] Koszul complex.\\
Let $\pi:E\to X$ be a $G$-equivariant vector bundle of rank $r$, and $E^\vee$ its dual bundle. Let $i:X\to E$ be the zero section. One has a $G$-equivariant locally free resolution of $i_*\mathcal{O}_X$:$$0 \rightarrow\pi^*(\wedge^r E^\vee)\rightarrow\cdots\rightarrow\pi^*(\wedge^2E^\vee)\rightarrow\pi^*(\wedge^1E^\vee)\rightarrow\mathcal{O}_E\rightarrow i_*\mathcal{O}_X.$$ 
\end{itemize}

%------------------------------------
\subsection{Convolution product}
%------------------------------------
Let $M_1,M_2$ and $M_3$ be smooth $G$-varieties. Let $$p_{ij}:M_1\times M_2\times M_3\to M_i\times M_j \quad (1\leq i<j\leq3)$$ be the projections along the factor not named, which is naturally $G$-equivariant. Let $Z_{12}\subseteq M_1\times M_2$ and $Z_{23}\subseteq M_2\times M_3$ be closed G-subvarieties. Assume that the restriction of $p_{13}$ to $p_{12}^{-1}(Z_{12})\cap p_{23}^{-1}(Z_{23})$ is proper and let $Z_{13}=Z_{12}\circ Z_{23}$ be its image. Define a convolution \begin{align*}
*:K^G(Z_{12})\times K^G(Z_{23})&\to K^G(Z_{13}),\\([\mathcal{F}],[\mathcal{G}])&\mapsto p_{13*}(p^*_{12}[\mathcal{F}]\otimes_{M_1 \times M_2 \times M_3} p^*_{23}[\mathcal{G}]),
\end{align*}
where we regard $p^*_{12}[\mathcal{F}]\in K^{G}(M_1\times M_2\times M_3,p_{12}^{-1}(Z_{12}))$ and $p^*_{23}[\mathcal{F}]\in K^{G}(M_1\times M_2\times M_3,p_{23}^{-1}(Z_{23}))$ respectively, the tensor product is defined as in (f3) and $p_{13*}$ is well-defined as in (f2), and finally we use the isomorphism in (f1).

%------------------------------------
\subsection{Cellular fibration lemma}
%------------------------------------
We recall the cellular fibration lemma in \cite[5.5.1]{CG97}.
Let $H$ be an algebraic group and let $\pi: F\to X$ be a morphism of $H$-varieties with $X$ being smooth. We call $F$ an $H$-cellular fibration over $X$ if $F$ is equipped with a finite decreasing filtration $F=F^n\supseteq F^{n-1}\supseteq\cdots\supseteq F^0=\emptyset$ such that for any $i=1,2,\cdots,n,$ the following holds:
\begin{itemize}
    \item[(a)] $F^i$ is a closed $H$-subvariety; furthermore, the restriction $\pi: F^i\to X$ is an $H$-equivariant locally trivial fibration.
    \item[(b)] The map $\pi_i: F^i\backslash F^{i-1}\to X$, the restriction of $\pi$, is an $H$-equivariant affine fibration.
\end{itemize}
Consider the following maps $X\stackrel{\widetilde{\pi}_i}\leftarrow\overline{F^i\backslash F^{i-1}}\stackrel{\iota_i}\hookrightarrow F$, where $\widetilde{\pi}_i$ is the restriction of $\pi$ on $\overline{F^i\backslash F^{i-1}}$ and $\iota_i$ is the natural embedding.
\begin{lem}[Cellular Fibration Lemma]\label{cfl}
    In the above setup, the following holds.
    \begin{itemize}
        \item[(a)] For each $i=1,\cdots,n$, there is a canonical short exact sequence $$0\longrightarrow K^H(F^{i-1})\longrightarrow K^H(F^i)\longrightarrow K^H(F^i\backslash F^{i-1})\longrightarrow0.$$
        \item[(b1)] All exact sequences in (a) are split as $K^H(X)$-modules, and $K^H(F)$ is a free $K^H(X)$-module with a basis $$\{[\mathcal{O}_{\overline{F^i\backslash F^{i-1}}}] ~|~ i=1,\cdots,n\}.$$
        \item[(b2)] If $K^H(X)$ is a free $R(H)$-module with basis $\mathcal{F}_1,\cdots,\mathcal{F}_{m_i}$ then all exact sequences in (a) are split as $R(H)$-modules. Moreover, $K^H(F)$ is a free $R(H)$-module with a basis $$\{\iota_{i*}\widetilde{\pi}_i^*[\mathcal{F}_j] ~|~ i=1,\cdots,n, \ j=1,\cdots,m_i\}.$$
        \item[(c)] Let $H'\subseteq H$ be a closed algebraic subgroup and suppose that the assumption of (b2) holds for both $H$ and $H'$. If the natural map $$R(H')\otimes_{R(H)}K^H(X)\to K^{H'}(X)$$ is an isomorphism, then so is the map $$R(H')\otimes_{R(H)}K^H(F)\to K^{H'}(F).$$
    \end{itemize}
\end{lem}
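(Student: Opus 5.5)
The plan is to argue by induction on $i$ along the filtration $F^0\subseteq F^1\subseteq\cdots\subseteq F^n=F$. The main tools are the localization long exact sequence in equivariant (higher) algebraic $K$-theory and the Thom isomorphism (h) for affine fibrations. Write $U_i=F^i\backslash F^{i-1}$, an open $H$-subvariety of $F^i$. For the closed immersion $F^{i-1}\hookrightarrow F^i$ with open complement $U_i$, Quillen's localization theorem, in Thomason's equivariant form, gives
$$\cdots\to K_1^H(F^i)\to K_1^H(U_i)\xrightarrow{\ \partial\ } K^H(F^{i-1})\to K^H(F^i)\to K^H(U_i)\to 0 .$$
Part (a) is then equivalent to showing that the restriction $K_1^H(F^i)\to K_1^H(U_i)$ is surjective, so that $\partial=0$. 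Right exactness needs nothing further.

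Both surjectivity statements, in degrees $0$ and $1$, will come from a single section built out of $X$. Since $\pi_i:U_i\to X$ is an $H$-equivariant affine fibration, the Thom isomorphism gives $\pi_i^*:K_j^H(X)\xrightarrow{\sim}K_j^H(U_i)$ for $j=0,1$. Next, consider $\widetilde{\pi}_i:\overline{U_i}\to X$. By (a) of the setup, $\pi|_{F^i}$ is a locally trivial fibration, and $X$ is smooth. Hence pull-back along $\widetilde{\pi}_i$ is defined on $K_j^H(X)$: one can use locally free resolutions on the smooth base, or equivalently flatness of $\widetilde{\pi}_i$ after checking local triviality. Define
$$s_i=\iota_{i*}\widetilde{\pi}_i^*:K_j^H(X)\to K_j^H(F^i).$$
Restricting $s_i$ to the open set $U_i$ gives $\pi_i^*$, because $U_i$ is open and dense in $\overline{U_i}$ and restriction commutes with flat pull-back. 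Therefore $\mathrm{res}\circ s_i\circ(\pi_i^*)^{-1}$ is the identity on $K_j^H(U_i)$. This yields surjectivity in degree $1$, hence $\partial=0$ and (a). It also yields a $K^H(X)$-linear splitting of the sequence in degree $0$, and linearity follows from the projection formula (g).

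For (b1), induction on $i$ using the split sequences shows that $K^H(F)$ is free over $K^H(X)$ on the elements $s_i(1)=[\mathcal{O}_{\overline{U_i}}]$. For (b2), substitute the given $R(H)$-basis $\{\mathcal{F}_j\}$ of $K^H(X)$ into $s_i$; this produces the stated $R(H)$-basis $\{\iota_{i*}\widetilde{\pi}_i^*[\mathcal{F}_j]\}$. For (c), tensor the split short exact sequences of (a) for $H$ with $R(H')$; they remain exact because they are split. Compare them with the corresponding sequences for $H'$ through the natural change-of-groups maps. These maps commute with $\iota_{i*}$, with $\widetilde{\pi}_i^*$ and with the Thom isomorphisms, so on each graded piece the comparison map is identified with $R(H')\otimes_{R(H)}K^H(X)\to K^{H'}(X)$, which is an isomorphism by hypothesis. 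The five lemma together with induction on $i$ then gives the isomorphism for $F$.

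The main obstacle I expect is the vanishing of $\partial$, that is, the injectivity in (a). It forces a passage to higher $K_1$ and needs care that the pull-back $\widetilde{\pi}_i^*$ from the smooth base $X$ to the possibly singular closure $\overline{U_i}$ is well defined on $K_1$. My first attempt is to exploit the local triviality of $\pi|_{F^i}$: it makes $\widetilde{\pi}_i$ flat, at least after restricting to the relevant closed subvarieties. Flat pull-back is then defined on all $K$-groups and compatible with restriction to opens.
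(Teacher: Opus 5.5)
Your proof is correct and follows essentially the standard argument of \cite[5.5.1]{CG97}, which the paper cites without reproving it. The steps match: the localization sequence, vanishing of the connecting map because $K_1^H(F^i)\to K_1^H(F^i\backslash F^{i-1})$ is onto by the Thom isomorphism, splitting by $\iota_{i*}\widetilde{\pi}_i^*(-)=(-)\cdot[\mathcal{O}_{\overline{F^i\backslash F^{i-1}}}]$, and the five lemma for (c). The worry you raise at the end is harmless: for the $K_1$ step you can pull back flatly along the locally trivial map $\pi|_{F^i}:F^i\to X$ and then restrict, and $\widetilde{\pi}_i^*$ is defined anyway because $X$ is smooth.
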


\end{document}